\documentclass[a4paper]{article}
\usepackage{amsthm,amssymb,amsmath,enumerate,graphicx,epsf}
\usepackage{bbold}
\usepackage{mathtools,enumitem}
\usepackage{authblk}
\usepackage[percent]{overpic}
\usepackage{hyperref}
\RequirePackage{latexsym} \RequirePackage{amsthm}
\RequirePackage{amsmath}
\usepackage{authblk}
\usepackage{tikz}
\usepackage[dvipsnames]{xcolor}
\usepackage{subcaption}
\usepackage{geometry}
\geometry{left=38mm,right=38mm, top=38mm, bottom=38mm,marginparwidth=20mm}

\usepackage{enumerate}
\RequirePackage{amssymb} \RequirePackage{makeidx}
\usepackage{dsfont}
\usepackage{mathrsfs}
\usepackage[noabbrev,capitalise,nameinlink]{cleveref}

\newcommand{\COLORON}{1}
\newcommand{\NOTESON}{0}
\newcommand{\Debug}{0} 

\usepackage{amsthm,amssymb,amsmath,bbm,enumerate,graphicx,epsf,stmaryrd,accents}

\usepackage{authblk}

\hyphenation{com-pac-ti-fi-cation}

\newcommand{\comment}[1]{}
\newcommand{\COMMENT}[1]{}

\definecolor{darkgray}{rgb}{0.3,0.3,0.3}
\definecolor{lgreen}{rgb}{0.065, 0.225, 0.065}
\newcommand{\defi}[1]{{\color{darkgray}\emph{#1}}}
\newcommand{\defim}[1]{{\color{darkgray}{#1}}}



\comment{
	\begin{lemma}\label{}	
\end{lemma}
\begin{proof}

\end{proof}

\begin{theorem}\label{}
\end{theorem} 
\begin{proof} 	

\end{proof}

}



\newtheorem{proposition}{Proposition}[section]

\newtheorem{theorem}[proposition]{Theorem}
\newtheorem{corollary}[proposition]{Corollary}

\newtheorem{lemma}[proposition]{Lemma}
\newtheorem{observation}[proposition]{Observation}
\newtheorem{conjecture}{{Conjecture}}[section]

\newtheorem{question}[conjecture]{{Question}}

\newtheorem{examp}[proposition]{Example}

\newtheorem{configuration}[proposition]{Configuration}

\theoremstyle{definition}
\newtheorem{construction}[proposition]{Construction}


\newcommand{\kreis}[1]{\mathaccent"7017\relax #1}

\newcommand{\FIG}{0}

\ifnum \NOTESON = 1 \newcommand{\note}[1]{ 

\hspace*{-30pt}
	{\color{blue}  NOTE: \color{Turquoise}{\small  \tt \begin{minipage}[c]{1.1\textwidth}  #1 \end{minipage} \ignorespacesafterend }} 
	
	}
\else \newcommand{\note}[1]{} \fi

\newcommand{\afsubm}[1]{ \ifnum \Debug = 1 {\mymargin{#1}}
\fi} 

\ifnum \Debug = 1 
\else  \fi

\ifnum \FIG = 1 \newcommand{\fig}[1]{Figure ``{#1}''}
\else \newcommand{\fig}[1]{Figure~\ref{#1}} \fi

\ifnum \FIG = 1 
\else  \fi

\ifnum \Debug = 1 \usepackage[notref,notcite]{showkeys}
\fi

\ifnum \COLORON = 0 \renewcommand{\color}[1]{}
\fi



\newcommand{\N}{\ensuremath{\mathbb N}}
\newcommand{\R}{\ensuremath{\mathbb R}}

\newcommand{\cb}{\ensuremath{\mathcal B}}

\newcommand{\ce}{\ensuremath{\mathcal E}}

\newcommand{\cp}{\ensuremath{\mathcal P}}

\newcommand{\cu}{\ensuremath{\mathcal U}}
\newcommand{\cv}{\ensuremath{\mathcal V}}


\newcommand{\sm}{\backslash}

\newcommand{\rad}{\ensuremath{\text{rad}}}


\makeatletter
\DeclareRobustCommand{\cev}[1]{%
  \mathpalette\do@cev{#1}%
}
\newcommand{\do@cev}[2]{%
  \fix@cev{#1}{+}%
  \reflectbox{$\m@th#1\vec{\reflectbox{$\fix@cev{#1}{-}\m@th#1#2\fix@cev{#1}{+}$}}$}%
  \fix@cev{#1}{-}%
}
\newcommand{\fix@cev}[2]{%
  \ifx#1\displaystyle
    \mkern#23mu
  \else
    \ifx#1\textstyle
      \mkern#23mu
    \else
      \ifx#1\scriptstyle
        \mkern#22mu
      \else
        \mkern#22mu
      \fi
    \fi
  \fi
}

\makeatother


\newcommand{\nin}{\ensuremath{{n\in\N}}}

\newcommand{\pth}[2]{\ensuremath{#1}\text{--}\ensuremath{#2}~path}

 

\newcommand{\g}{\ensuremath{G\ }}
\newcommand{\G}{\ensuremath{G}}








\newcommand{\Lr}[1]{Lemma~\ref{#1}}

\newcommand{\Tr}[1]{Theorem~\ref{#1}}
\newcommand{\Trs}[1]{Theorems~\ref{#1}}
\newcommand{\Sr}[1]{Section~\ref{#1}}

\newcommand{\Prr}[1]{Pro\-position~\ref{#1}}

\newcommand{\Cnr}[1]{Con\-jecture~\ref{#1}}
\newcommand{\Or}[1]{Observation~\ref{#1}}




\renewcommand{\iff}{if and only if}
\newcommand{\fe}{for every}

\newcommand{\st}{such that}

\newcommand{\ti}{there is}

\newcommand{\obda}{without loss of generality}

\newcommand{\leth}{large enough that}






\newcommand{\labtequ}[2]{
 \begin{equation} \label{#1} 	\begin{minipage}[c]{0.9\textwidth}  #2 \end{minipage} \ignorespacesafterend \end{equation} } 

\newcommand{\labtequtag}[3]{%
 \begin{equation} \label{#1} 	\begin{minipage}[c]{0.9\textwidth}  #2 \end{minipage} \ignorespacesafterend \tag{#3} \end{equation} }

\newcommand{\mymargin}[1]{
 \ifnum \Debug = 1
  \marginpar{%
    \begin{minipage}{\marginparwidth}\small%
      \begin{flushleft}%
        {\color{blue}#1}%
      \end{flushleft}%
   \end{minipage}%
  }%
 \fi
}%

\newcommand{\extras}[1]{
 \ifnum \Debug = 1
\section{Extras} #1
 \fi
}%

\newcommand{\mySection}[2]{}



















\newcommand{\asm}{\ensuremath{\preceq_\infty}}
\newcommand{\nasm}{\ensuremath{\not\preceq_\infty}}
\newcommand{\thmin}{\ensuremath{\preceq_\mathrm{thin}}}
\newcommand{\cmin}{\ensuremath{\preceq_\mathrm{c}}}

\newcommand{\css}{coarsely self-similar}
\newcommand{\good}{compressible}
\newcommand{\bad}{incompressible}

\newcommand{\blue}[1]{{\textcolor{blue}{#1}}}

\DeclareMathOperator{\dist}{dist}
\DeclareMathOperator{\SO}{S_0}
\DeclareMathOperator{\SI}{S_1}
\DeclareMathOperator{\TO}{T_0}
\DeclareMathOperator{\TI}{T_1}
\DeclareMathOperator{\RR}{R}

\DeclareMathOperator{\SP}{S\Delta}
\DeclareMathOperator{\TP}{T\Delta}
\DeclareMathOperator{\SSS}{\mathbf{SS}}
\DeclareMathOperator{\TTT}{\mathbf{TT}}
\DeclareMathOperator{\RTT}{\mathbf{RTT}}
\DeclareMathOperator{\RRR}{\mathbf{RRR}}

\DeclareMathOperator{\Adh}{Adh}

\DeclareMathOperator{\Ext}{Ext}

\begin{document}

\title{Small counterexamples to the fat minor conjecture}

\author[1]{Sandra Albrechtsen\thanks{Supported by the Alexander von Humboldt Foundation in the framework of the Alexander von Humboldt Professorship of Daniel Král' endowed by the Federal Ministry of Education and Research.}}
\affil[1]{{Institute of Mathematics}\\ {Leipzig University}\\ {Augustusplatz 10}\\ {04109 Leipzig}\\ {Germany}}
\author[2]{Marc Distel\thanks{Supported by Australian Government Research Training Program Scholarship.}}
\affil[2]{{School of Mathematics}\\ {Monash University}\\ {Melbourne}\\ {Australia}}
\author[3]{Agelos Georgakopoulos\thanks{Supported by EPSRC grant  EP/V009044/1.}}
\affil[3]{  {Mathematics Institute}\\ {University of Warwick}\\  {CV4 7AL, UK}}

\date{January 9, 2026}
\maketitle

\begin{abstract}
We narrow the gap between the family of graphs that do and the family of graphs that do not satisfy the fat minor conjecture by obtaining much simpler counterexamples than were previously known, including $K_t, t\geq 6$ and $K_{s,t}, s,t\geq 4$ and $K_{2,2,2}$. 

This is achieved by establishing a `coarse self-similarity' property of the graphs used by Nguyen, Scott and Seymour to disprove the `coarse Menger conjecture'. This property may be of independent interest.
\end{abstract}

{\bf{Keywords:} } coarse graph theory, quasi-isometry, fat minor, self-similarity.\\

{\bf{MSC 2020 Classification:}} 05C83, 05C10, 05C63, 51F30.

\maketitle

\section{Introduction}

We continue the study of \defi{$K$-fat minors}, a geometric analogue of the classical notion of graph minor whereby branch sets are required to be at least some distance $K$ from each other, and the edges connecting them are replaced by long paths, also at distance  $K$ from each other, and from their non-incident branch sets; see \Sr{sec FM} for details. This notion belongs to a more general framework of studying large-scale properties of graphs from a geometric perspective, called `coarse graph theory', paralleling Gromov's approach to geometric group theory \cite{GroAsyInv,DruKapBook}. For more on the motivation of coarse graph theory we refer the reader to \cite{ADGK2t,BBEGLPS,GeoPapMin}. Another central notion of this area is that of a quasi-isometry, a notion of similarity of metric spaces (defined in Section~\ref{subsec:QuasiIsometries}) that has played a  seminal role in geometric group theory. 

In this paper we continue the study of the following influential, though false, conjecture of Papasoglu and the third author connecting fat minors and quasi-isometries, narrowing the gap between graphs known to satisfy it and graphs known to fail it:

\begin{conjecture}[\cite{GeoPapMin}] \label{conj fat min}
For every finite graph $J$ and every $K\in \N$ there exist $M,A\in \N$  such that every graph with no $K$-fat $J$~minor is $(M,A)$-quasi-isometric to a graph with no $J$~minor.
\end{conjecture}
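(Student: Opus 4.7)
The plan is to refute \Cnr{conj fat min} for the specific small graphs $J$ listed in the abstract, rather than to prove it. Our strategy, as indicated by the abstract, is to use the counterexamples $G_n$ of Nguyen--Scott--Seymour (NSS) to the coarse Menger conjecture as building blocks, once we establish that these graphs satisfy a \emph{coarse self-similarity} property.

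The first task is to formulate and prove this self-similarity: for any $M,A$ there should be some $n_0=n_0(M,A)$ such that every graph $H$ which is $(M,A)$-quasi-isometric to $G_n$, with $n\ge n_0$, contains (in an appropriate coarse sense) a copy of a still-large NSS configuration $G_{n'}$. The point is that the obstruction exhibited by the NSS construction---many pairwise coarsely-disjoint paths between two vertices which nonetheless admit no small separator---cannot be ironed out by quasi-isometric perturbation, but reappears at some scale visible to the QI. Proving this requires a detailed structural analysis of the NSS construction, tracking which features survive distortion below the QI-tolerance $A$; this will be the principal technical difficulty.

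Given coarse self-similarity, the construction of a counterexample for a target $J$ is schematic: take $|V(J)|$ ``terminal gadgets'' and join pairs corresponding to edges of $J$ by NSS graphs (possibly with shared terminals, which is what presumably forces $J$ to be ``large enough'' combinatorially, e.g.\ $K_t$ for $t\ge 6$ rather than $K_5$). Call the result $G^{(J)}$. Two things must then be verified. First, $G^{(J)}$ has no $K$-fat $J$-minor for suitably large $K$: a hypothetical $K$-fat $J$-minor would force, in at least one NSS gadget, a pattern of coarsely-disjoint paths joining the two local terminals, contradicting the defining property of the NSS graph. Second, any graph $H$ which is $(M,A)$-quasi-isometric to $G^{(J)}$ must contain a classical $J$-minor: apply coarse self-similarity inside each NSS gadget to locate in $H$ a smaller NSS copy joining the images of its terminals, and use these copies in parallel to realise all edges of $J$ simultaneously as branch-set connections. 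Since $M,A$ were arbitrary, this refutes \Cnr{conj fat min} for $J$.

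The main obstacle is precisely the coarse self-similarity step, and for a good reason: this is the new structural property of the NSS graphs asserted in the abstract, and it is what allows the descent from complicated counterexamples (requiring large or intricate $J$) to the short explicit list $\{K_t:t\ge 6\}\cup\{K_{s,t}:s,t\ge 4\}\cup\{K_{2,2,2}\}$. A secondary concern is the precise threshold on $J$: the fact that $K_5$ and $K_{3,3}$ are not on the list is suspicious and should be understood as a consequence of planarity restrictions on $J$-minor-free graphs, which may allow a QI-perturbation of the assembled graph to avoid a $J$-minor when $J$ is planar-forbidden in the Kuratowski sense. Checking that the coarse self-similarity constants actually work out for the claimed $J$, and no smaller ones, is thus the final delicate piece of the argument.
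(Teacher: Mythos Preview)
Your proposal correctly identifies coarse self-similarity of the NSS graphs as the central new ingredient, and that matches the paper. But the construction you outline --- gluing one NSS gadget along each edge of $J$ to form a composite $G^{(J)}$ --- is not what the paper does, and your argument for why $G^{(J)}$ has no $K$-fat $J$-minor would not go through as stated. The ``defining property'' of the NSS graph is that any \emph{three} $\SSS$--$\TTT$ paths come close; two such paths can be far apart (indeed the paper uses the 2-path coarse Menger theorem positively, inside the self-similarity proof). So a fat $J$-model restricted to a single gadget need not produce the contradiction you describe. Gluing constructions of this flavour are essentially what earlier work (\cite{DHIM,ADWeakCounterex}) used, and those yielded only large $J$.

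The paper's route to small $J$ is different: the counterexample graph is (the 9th power of) the NSS graph $G_\infty$ \emph{itself}, with no assembly. One then needs two structural facts about NSS graphs: (i) $K_{2,2,2}$ is not a $9$-fat minor of $G_\infty$ (Theorem~\ref{thm:NSSGraph:NoFatO:Intro}), proved by a tree-decomposition and case analysis in Section~\ref{sec O fat}; and (ii) $K_7$ and $K_{4,t}$ \emph{are} ($2$-fat) minors of sufficiently large $G_{k,d}$ (explicit pictures). Self-similarity then says every $H$ quasi-isometric to $G_\infty^9$ contains $G_{k,d}$, hence $K_7$, as a $2$-fat minor. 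Your remark that $K_5$ and $K_{3,3}$ are absent ``because of planarity'' is also off: the paper shows $K_5$ and $K_{3,t}$ are \emph{asymptotic} minors of the NSS graphs (Section~\ref{sec K3t}), so the NSS method simply cannot apply to them --- whether they are compressible remains open.
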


We call a graph $J$ \defi{\good}, if it satisfies this conjecture (for all $K$), and  \defi{\bad} otherwise. We will provide much smaller incompressible graphs than those previously found by Davies, Hickingbotham, Illingworth and McCarty \cite{DHIM} and by Davies and the first author~\cite{ADWeakCounterex}. In particular, we will prove that $K_{2,2,2}$ is \bad, and we will use this to deduce that $K_t, t\geq 6$ and $K_{s,t}, s,t\geq 4$ are \bad. 

It was previously known that the following graphs are \good: cycles \cite{GeoPapMin}, $K_{1,t},t\geq 1$ \cite{GeoPapMin,NgScSeAsyII}
$K^-_4$ \cite{FujPapCoa, AJKW}, $K_{2,3}$ \cite{CDNRV,FujPapCoa},  and $K_4$ \cite{AJKW}. In a companion paper we prove that $K_{2,t},t\geq 1$ are also \good\ \cite{ADGK2t}. Combined with our aforementioned counterexamples, these results single out the Kuratowski graphs $K_5, K_{3,3}$ as `critical' cases for which it is not known whether they are \good. This is essentially the `coarse Kuratowski conjecture' of \cite{GeoPapMin}, which seeks to characterize the graphs quasi-isometric to a planar graph. 

The results we will prove strengthen the ones mentioned above. Our first one is

\begin{theorem} \label{thm:Counterexample:Octahedron}
    For every $M, A \in \N$ there exists a graph \g with no $3$-fat $K_{2,2,2}$ minor \st\ each graph that is $(M,A)$-quasi-isometric to \g has a $2$-fat $K_7$ minor.
\end{theorem}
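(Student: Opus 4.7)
My plan is to construct $G$ by stacking many copies of the Nguyen--Scott--Seymour coarse-Menger gadgets, glued together along the combinatorial pattern of $K_7$, with the scale of the gadgets chosen as a function of $M,A$. Concretely, I would start from a single NSS graph $H_n$ of parameter $n=n(M,A)$ chosen so that its ``thin'' cut-structure survives any $(M,A)$-perturbation, and use the coarse self-similarity announced in the abstract: every $H_n$ contains inside itself, at every scale $K$, a quasi-isometric copy of $H_N$ for arbitrarily large $N$. The graph $G$ is then a $\binom{7}{2}$-fold combination of such gadgets, one per edge of $K_7$, with the seven ``vertex regions'' identified consistently; this is the natural object whose $K_7$-shape is forced at every scale.

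The first step would be to verify that $G$ has no $3$-fat $K_{2,2,2}$ minor. Since $K_{2,2,2}$ has three independent non-edges and each pair of non-adjacent branch sets must be kept at distance $\geq 3$ while twelve edge-paths fit between them at mutual distance $\geq 3$, I would show that such a configuration forces the NSS gadgets on at least one ``vertex region'' to supply more fat-pairwise-disjoint connectors than they are designed to allow. This is essentially the fine-scale analogue of the NSS obstruction: the coarse Menger failure translates into the impossibility of realising the $K_{2,2,2}$ branch-set--plus--edge-path pattern at the $3$-fat level. This is the step I expect to be the main obstacle, because one has to be sure that no tricky routing through several gadgets simultaneously evades the count; the coarse self-similarity is used here only to ensure that the fine-scale bad pattern is preserved after passing to subgadgets, so that a minimal counterexample can be analysed in a single NSS piece.

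The second step is to show that any graph $G'$ which is $(M,A)$-quasi-isometric to $G$ contains a $2$-fat $K_7$ minor. I would fix a quasi-isometry $f\colon G\to G'$ and, using the self-similarity, locate inside $G$ a copy of the NSS gadget on a scale so much larger than $M,A$ that the image of its seven vertex regions under $f$ are pairwise at distance at least $3$ (hence $2$-fat-separated after a one-step fattening), while the image of each NSS path between two vertex regions provides a path in $G'$ at distance at least $2$ from all other such images and from the non-incident vertex regions. Since the seven vertex regions come together with a connector between every pair (one per edge of $K_7$, supplied by a separate gadget), this yields a $2$-fat $K_7$ minor in $G'$.

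Finally, I would collect the two parts: the explicit value of $n(M,A)$ needed in step two fixes the graph $G$, and the structural argument of step one shows it has no $3$-fat $K_{2,2,2}$ minor. The main technical payoff is that the whole construction has a single moving scale parameter, so that one can play off the $3$-fat obstruction at the fine scale against the $2$-fat $K_7$ guarantee at the coarse scale. I expect that making step one airtight, by ruling out branch-set layouts that do not respect the $K_7$-combinatorial blueprint of $G$, will require the most care.
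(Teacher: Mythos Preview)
Your construction has a fundamental problem in Step~1. You propose to glue $\binom{7}{2}$ NSS gadgets along the edge-pattern of $K_7$, with seven ``vertex regions'' shared between incident gadgets. But $K_{2,2,2}$ is a \emph{subgraph} of $K_7$: pick any six of your seven vertex regions as branch sets, and for each of the twelve edges of $K_{2,2,2}$ take a single path through the corresponding NSS gadget. Provided the gadgets and vertex regions are chosen on a large enough scale (which you need anyway for Step~2), this is a fat $K_{2,2,2}$ model. The NSS obstruction is about the impossibility of finding \emph{two} far-apart paths through one gadget; it says nothing against routing a \emph{single} path through each of twelve separate gadgets. So your graph $G$, as described, almost certainly \emph{does} contain a $3$-fat $K_{2,2,2}$ minor, and Step~1 fails outright rather than merely being ``the main obstacle''.

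The paper's approach avoids this by not building any $K_7$-shaped object at all. The graph $G$ is simply the $9$th power $G_\infty^9$ of the NSS graph itself. Two separate facts are proved about the bare NSS graph: (i) $K_{2,2,2}$ is not a $9$-fat minor of it (this is the genuinely hard structural result, proved via a tree-decomposition of $G_{k,d}$ and a delicate case analysis of how a hypothetical fat $K_{2,2,2}$ model would meet the adhesion sets), and (ii) $K_7$ \emph{is} a $2$-fat minor of $G_{k,d}$ for $k\ge 13$, found explicitly inside a single NSS graph. The power trick converts (i) into ``no $3$-fat $K_{2,2,2}$ in $G_\infty^9$''. For the other direction, coarse self-similarity says any graph $H$ quasi-isometric to $G_\infty$ contains $G_\infty$ as a $2$-fat minor; since $K_7\preceq G_\infty$, one gets $K_7$ as a $2$-fat minor of $H$. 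The whole point is that the NSS graph simultaneously contains $K_7$ as an ordinary minor while excluding $K_{2,2,2}$ as a fat minor --- a single object carries both properties, and no gluing is needed.
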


Theorem~\ref{thm:Counterexample:Octahedron} is stronger than saying that $K_6$ is \bad\ in several ways, because $K_{2,2,2} \subset K_6 \subset K_7$, and the $K_7$ minor we guarantee is $2$-fat. It is also much stronger than saying that $K_{2,2,2}$ is \bad. We point out that $K_{2,2,2}$ is a planar graph --- isomorphic to the 1-skeleton of the octahedron (Figure~\ref{fig:O}). 

Likewise, our next result says that $K_{4,t}$ is \bad\ \fe\ $t\geq 4$, and much more: 

\begin{theorem} \label{thm:FatK4t}
    For every $t \in \N$ and $M, A \in \N$ there exists a graph $G$ with no $3$-fat $K_{4,4}$ minor \st\ each graph that is $(M,A)$-quasi-isometric to \g has a $2$-fat $K_{4,t}$ minor.
\end{theorem}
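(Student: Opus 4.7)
The plan is to adapt the proof of Theorem~\ref{thm:Counterexample:Octahedron} to the bipartite setting, using the same coarse self-similarity mechanism applied to the NSS-style graphs, but tuned so that one side of the bipartition can be grown to size~$t$.

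I would start from the Nguyen--Scott--Seymour construction, reparameterised so that the resulting graph $G = G(M,A,t)$ carries two distinguished collections of terminals: a fixed quadruple $R = \{r_1,\ldots,r_4\}$ and a large set $C$ of candidate ``column'' terminals, with the NSS depth and spacing parameters chosen large in terms of $M$, $A$, and $t$ to support the iteration below. I would then verify that $G$ has no $3$-fat $K_{4,4}$ minor, mirroring the corresponding step in the proof of Theorem~\ref{thm:Counterexample:Octahedron}: a putative $3$-fat $K_{4,4}$ minor would yield $16$ pairwise $3$-far connecting paths between two quadruples of branch sets, which is exactly the kind of pattern ruled out by the NSS failure of coarse Menger.

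Next, I would invoke the coarse self-similarity property of NSS graphs (the paper's key tool, used already for Theorem~\ref{thm:Counterexample:Octahedron}) to analyse any graph $H$ that is $(M,A)$-quasi-isometric to $G$. Self-similarity lets us locate, inside any such $H$, many nested quasi-isometric copies of smaller NSS graphs, each anchored at an image of $R$. From each copy I would extract one column branch set attached by a path to each of the four $R$-branch sets, and iterate this across $t$ different scales of the self-similar structure. Reusing the same four $R$-branch sets against the $t$ extracted columns then assembles a $2$-fat $K_{4,t}$ minor in $H$.

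The main technical obstacle is ensuring that the $t$ column branch sets extracted from different scales remain pairwise $2$-far in $H$, and each $2$-far from the three $R$-branch sets it is not supposed to meet, in spite of the $(M,A)$-distortion of the quasi-isometry. This forces one to choose the NSS depth and the scale gap in $G$ as explicit increasing functions of $M$, $A$, and $t$, so that the original $3$-far spacing in $G$ survives as $2$-far spacing in $H$ over all $t$ levels simultaneously. Once this spacing bookkeeping is in place, everything else is parallel to the proof of Theorem~\ref{thm:Counterexample:Octahedron}, with ``$K_7$ from six scales'' replaced by ``$K_{4,t}$ from $t$ scales anchored on the same quadruple $R$''.
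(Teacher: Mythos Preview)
Your proposal has a genuine gap in the step showing that $G$ has no $3$-fat $K_{4,4}$ minor. You write that a putative $3$-fat $K_{4,4}$ minor ``would yield $16$ pairwise $3$-far connecting paths between two quadruples of branch sets, which is exactly the kind of pattern ruled out by the NSS failure of coarse Menger''. This does not work: the coarse Menger failure in the NSS graph concerns paths between the two specific terminal sets $\SSS(G)$ and $\TTT(G)$, not between arbitrary quadruples of branch sets that a hypothetical fat minor might place anywhere in $G$. Indeed, the NSS graphs do contain nontrivial fat minors (the paper exhibits a $2$-fat $K_7$ and a $2$-fat $K_{4,t}$ inside them), so one cannot exclude fat minors just by pointing at the coarse Menger counterexample property. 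In the paper this step is the main technical burden: one proves that the octahedron $K_{2,2,2}$ is not a $9$-fat minor of any NSS graph (Theorem~\ref{thm:NSSGraph:NoFatO:Intro}), a lengthy argument via a bespoke tree-decomposition and case analysis; since $K_{2,2,2}\preceq K_{4,4}$, this gives no $9$-fat $K_{4,4}$, and the power trick (passing to $G^9$) converts $9$-fat to $3$-fat.

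Your plan for producing a $2$-fat $K_{4,t}$ in $H$ is also more complicated than necessary. Rather than extracting one ``column'' per scale and tracking the spacing across $t$ scales, the paper first exhibits $K_{4,t}$ directly as a (even $2$-fat) minor of a single sufficiently large $G_{k,d}$, and then applies coarse self-similarity (Theorem~\ref{thm:css:finite}) once to embed that entire $G_{k,d}$ as a $2$-fat minor in any $H$ quasi-isometric to the NSS graph. This avoids all the multi-scale bookkeeping you flag as the ``main technical obstacle''.
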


All known counterexamples to \Cnr{conj fat min} \cite{DHIM,ADWeakCounterex} are based on a construction of Nguyen, Scott and Seymour~\cite{NgScSeCou} (see \fig{fig:NSSGraph} in Section~\ref{sec:constr NSS}), which we will call the \defi{NSS graph(s)}, that disproves the `coarse Menger conjecture' of \cite{AHJKW,GeoPapMin}, and the same holds for our counterexamples. Our proofs of \Trs{thm:Counterexample:Octahedron} and~\ref{thm:FatK4t} are based on a new coarse property of graphs that we introduce that may be of independent interest, which we prove to hold for the NSS graphs: we say that a graph (family) $X$ is \defi{\css}, if $X \preceq Y$ holds for every graph (family) $Y$ quasi-isometric to $X$, where $\preceq$ stands for the minor relation. 
A key result of this paper is  
\begin{theorem} \label{thm css}
    The infinite NSS graph is \css.
\end{theorem}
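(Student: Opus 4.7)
The plan is to reduce \Tr{thm css} to a purely combinatorial self-similarity statement for the infinite NSS graph $X$: namely, that for every $K\in\N$, the graph $X$ contains itself as a $K$-fat minor. Given such a statement, a standard \emph{push-forward along a quasi-isometry} argument delivers a minor model of $X$ inside any $Y$ quasi-isometric to $X$, which is exactly what is required.

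\smallskip
\textbf{Step 1: Fat self-minor.} I would argue directly from the recursive description of the NSS graph (see \fig{fig:NSSGraph} in \Sr{sec:constr NSS}). The NSS construction is obtained by iterating a fixed gadget at finer and finer scales, so that each scale is an inflated copy of its predecessor. Consequently, after discarding the coarsest finitely many levels and retaining only the sufficiently deep strata of the hierarchy, one is left with a subgraph isomorphic to $X$ that is realised inside $X$ with each vertex and edge replaced by a ball, respectively a path, of diameter $\geq K$, and all these branch objects pairwise at distance $\geq K$. This yields the desired $K$-fat model $\{B_v\}_{v\in V(X)} \cup \{P_{uv}\}_{uv\in E(X)}$ of $X$ inside $X$.

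\smallskip
\textbf{Step 2: Transfer through the quasi-isometry.} Given $Y$ that is $(M,A)$-quasi-isometric to $X$ via a map $f:V(X)\to V(Y)$ with quasi-inverse $g$, pick $K$ sufficiently large compared to $M$ and $A$, and apply Step~1. Define, for each $v\in V(X)$ and each edge $uv\in E(X)$, the candidates $B'_v$ and $P'_{uv}$ in $Y$ as the $A$-neighbourhoods of $f(B_v)$ and $f(P_{uv})$, respectively. Applying the quasi-isometry inequality to any edge of $X$ shows that the images of its endpoints lie at distance at most $M+A$ in $Y$, so that the $A$-thickening in $Y$ of any connected subgraph of $X$ is again connected; hence every $B'_v$ and $P'_{uv}$ is connected. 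Disjointness follows because pairs of these subgraphs of $X$ are at distance $\geq K$, so their $f$-images are at distance $\geq K/M - A$ in $Y$, which exceeds $2A$ once $K$ is chosen large. Adjacency between each $B'_v$ and $P'_{uv}$ is inherited from $X$ because the endpoints of $P_{uv}$ lie in $B_u \cup B_v$. This presents $X$ as a minor of $Y$.

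\smallskip
\textbf{Main obstacle.} Step~1 is the heart of the argument. The inductive nature of the NSS construction makes it plausible that ``zooming in'' reproduces the whole graph, but one must verify that a single inner copy carries along not only the branch sets of a model of $X$ but also the connecting paths, and that every pair of constituents is genuinely far apart in the ambient metric of $X$, not merely in the intrinsic metric of the inner copy. Making this precise amounts to identifying the correct layer of the hierarchy and establishing that the mutual distances between constituents grow linearly with the depth parameter; this should reduce to careful bookkeeping in the NSS construction, but is where the genuine work lies.
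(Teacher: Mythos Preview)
Your reduction is sound in principle --- if $G_\infty$ were a $K$-fat minor of itself for every $K$, the push-forward of Step~2 would finish the job --- but Step~1 as you describe it does not work. ``Discarding the coarsest levels'' of an NSS graph $G_{k',d'}$ and looking at a deep sub-trigon $\Delta$ gives you an honest \emph{subgraph} isomorphic to a smaller $G_{k,d'}$, not a fat model of it: the tree part of $\Delta$ sits inside $B(G_{k',d'})$ with adjacent tree vertices still at distance~$1$ and a vertex and its grandparent still at distance~$2$. No choice of depth makes these distances grow. So your construction has fatness at most~$2$ regardless of how far in you zoom, and $2$-fatness is useless for the push-forward once the quasi-isometry constants are large.

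This is not a technicality. The defining feature of the NSS graph is precisely that any two $\SSS$--$\TTT$ paths come within distance~$2$ of each other unless one of them is forced through the root (\Lr{lem:TwoPathsInNSSGraph}). Any explicit fat model of $G_{k,d}$ in a larger NSS graph must negotiate this obstruction when realising the crossed identification $\TI(\Delta_0)=\SO(\Delta_1)$, $\TO(\Delta_0)=\SI(\Delta_1)$ from the recursive gluing, and doing so is the substantive content of the proof, not bookkeeping.

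The paper sidesteps the question of arbitrarily fat self-minors entirely. It proves the quantitative \Tr{thm:css:finite} by building a $2$-fat model directly in the target $H$: inductively, two natural models of $G_{k-1,d}$ are found in the images $\nabla_0,\nabla_1$ of two far-apart subtrigons, and then linked. The linking step applies the two-path coarse Menger theorem (\Tr{C Menger}) \emph{inside $H$} to produce two $\TTT(\nabla_0)$--$\SSS(\nabla_1)$ paths that are far apart outside a ball around the root, and then uses a crossing/nested linkage argument (\Lr{Lem cross}, \Or{obs NL}) to repair their endpoint pattern when it comes out wrong. The paper explicitly flags this coarse-Menger step as the interesting ingredient; this is exactly where your ``careful bookkeeping'' would have to do real work.
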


More precisely, we will prove ---in \Sr{sec css}--- the following finer version, whereby we will provide a 2-fat model. 

\begin{theorem} \label{thm:css:finite}
    For every $M \geq 1$, $A \geq 0$ and $k, d \in \N$, there exist $K, D \in \N$ such that the following holds for all $k'\geq K$ and $d'\geq D$: If $G_{k',d'}$ is $(M,A)$-quasi-isometric to a graph $H$, then $H$ contains $G_{k,d}$ as a $2$-fat minor.
\end{theorem}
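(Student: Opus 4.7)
The plan is to split the argument into two phases: first, find a ``fat'' model of $G_{k,d}$ inside $G_{k',d'}$ itself with large pairwise separation; second, push this model through the given quasi-isometry to obtain a $2$-fat model in $H$.

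For the first phase, I would choose a constant $C = C(M,A)$, of order $M(M+A)$, large enough so that an $(M,A)$-quasi-isometric distortion followed by an $(M+A)$-thickening still preserves a distance gap of at least $2$. I then need to show that for $K = K(k,C)$ and $D = D(d,C)$ sufficiently large, $G_{k',d'}$ contains $G_{k,d}$ as a $C$-fat minor, i.e.\ with branch sets and connecting paths pairwise at $G_{k',d'}$-distance at least $C$. This step exploits the parameterised structure of the NSS construction (recalled in Section~\ref{sec:constr NSS}): increasing $d$ scales up the girth-like separations between ``strands,'' while increasing $k$ replicates the combinatorial skeleton, so that a suitable sub-region of $G_{k',d'}$ realises a copy of $G_{k,d}$ with the required slack. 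I expect this combinatorial ``internal self-similarity'' to be the main obstacle, and the one that forces $K$ and $D$ to depend on $M, A$ via $C$.

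For the second phase, let $f : V(G_{k',d'}) \to V(H)$ be an $(M,A)$-quasi-isometry, and let the family $(B_v)_{v \in V(G_{k,d})}$ of branch sets together with paths $(P_{uv})_{uv \in E(G_{k,d})}$ realise the $C$-fat model produced above. I would define $B'_v$ to be the closed $(M+A)$-neighbourhood of $f(B_v)$ in $H$, and define $P'_{uv}$ by concatenating short $H$-paths between $f(x_i)$ and $f(x_{i+1})$ for consecutive vertices $x_i$ along $P_{uv}$. Each $B'_v$ is connected because the images of adjacent vertices in $B_v$ lie within $H$-distance $M+A$, and similarly each $P'_{uv}$ is a path in $H$ connecting $B'_u$ to $B'_v$.

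Finally, I would verify $2$-fatness. The quasi-isometry condition implies that for any $x, y \in V(G_{k',d'})$ with $d_{G_{k',d'}}(x,y) \geq C$, the images satisfy $d_H(f(x),f(y)) \geq C/M - A$, which by the choice of $C$ exceeds $2(M+A) + 2$. Hence the $(M+A)$-thickened images of well-separated pieces of the $C$-fat model remain at $H$-distance at least $2$, so the $B'_v$ and $P'_{uv}$ form a $2$-fat $G_{k,d}$ minor in $H$. The proof therefore hinges on the internal self-similarity of the NSS family; the transport step is essentially bookkeeping once $C$ is chosen correctly.
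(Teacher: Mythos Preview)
Your Phase 2 is correct and standard: $K$-fat minors survive quasi-isometries with a controlled loss of fatness, so if you had a $C$-fat model of $G_{k,d}$ in $G_{k',d'}$ with $C$ large in terms of $M,A$, the transport would indeed yield a $2$-fat model in $H$.

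The gap is entirely in Phase 1, and it is not a technicality. Your heuristic --- ``increasing $d$ scales up separations, increasing $k$ replicates the skeleton, so a suitable sub-region realises $G_{k,d}$ with slack'' --- does not work. In $G_{k',d'}$ the binary-tree edges are \emph{never} subdivided: adjacent tree vertices remain at distance $1$ no matter how large $k'$ and $d'$ are. Hence no subgraph of $G_{k',d'}$ isomorphic to $G_{k,d}$ is $C$-fat for $C>2$, and any $C$-fat model must use branch sets of nontrivial diameter. Once you try to build such a model inductively from two sub-models in leftmost and rightmost subtrigons, you face the real obstacle: connecting $\TTT(\Delta_0)$ to $\SSS(\Delta_1)$ by two far-apart paths with the \emph{correct crossing pattern} (matching the identifications $\TI(\Delta_0)=\SO(\Delta_1)$, $\TO(\Delta_0)=\SI(\Delta_1)$ in the NSS gluing). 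Naive choices either overlap on the bottom path or force the two paths through nearby roots of intermediate trigons, bringing them within distance $2$ of each other.

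This crossing problem is precisely what the paper's proof is about. The paper does not split into your two phases; it builds the model directly in $H$ by induction on $k$. The key step is to find two $\SSS(\nabla)$--$\TTT(\nabla)$ paths in $H$ at large distance (via the two-path coarse Menger theorem, Theorem~\ref{C Menger}, with failure ruled out by pulling back a separating ball and invoking Lemma~\ref{lem:NSSGraph:NoSmallSep}), and then, if their crossing pattern is wrong, to reroute them inside a well-chosen subtrigon using the linkage dichotomy of Lemma~\ref{Lem cross} together with the explicit nested linkage of Observation~\ref{obs NL}. Note that your Phase~1 is exactly the theorem applied to the identity quasi-isometry, so it is equivalent to the full statement; any proof of Phase~1 would need the same coarse-Menger-plus-rerouting machinery, just specialised to $G$ rather than $H$. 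Your reduction therefore does not actually reduce the difficulty.
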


\noindent Here $G_{k,d}$ and $G_{k',d'}$ denote NSS graphs (see Section~\ref{sec:constr NSS}). In particular, this implies that $G_{K,D}$ itself contains $G_{k,d}$ as a $2$-fat minor.
\medskip

Interestingly, our proof of this makes use of the 2-path version of the coarse Menger conjecture, which has been proved to be true \cite{AHJKW,GeoPapMin}, applied to graphs used to disprove the 3-path version in~\cite{NgScSeCou}. A further consequence of Theorem~\ref{thm:css:finite} is that NSS graphs are themselves \bad\  (\Sr{sec:Counterex:Proof}). 

\medskip
Another important ingredient for the proof of \Trs{thm:Counterexample:Octahedron} and~\ref{thm:FatK4t} is

\begin{theorem} \label{thm:NSSGraph:NoFatO:Intro}
    $K_{2,2,2}$ is not a $9$-fat minor of the NSS graph. 
\end{theorem}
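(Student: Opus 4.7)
The plan is to argue by contradiction. Suppose $K_{2,2,2}$ is a $9$-fat minor of an NSS graph $G$, with branch sets $A_1, A_2, B_1, B_2, C_1, C_2$ indexed so that the three colour classes of the octahedron are $\{A_i\},\{B_i\},\{C_i\}$, together with the $12$ connecting paths $P_{XY}$ for each edge $XY$ of $K_{2,2,2}$.

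The key combinatorial feature of $K_{2,2,2}$ I shall exploit is that every pair of non-adjacent vertices, for instance $a_1, a_2$, has four common neighbours $b_1, b_2, c_1, c_2$. Focusing on three of the four corresponding middle branch sets, say $B_1, B_2, C_1$, the concatenation $P_{A_1 X} \cdot \sigma_X \cdot P_{X A_2}$, where $\sigma_X$ is a path inside $X$ joining the two endpoints that the connecting paths leave behind, is a walk from $A_1$ to $A_2$ through $X$. After fixing $u \in A_1$ and $v \in A_2$, prepending a path in $A_1$ from $u$ to the start of $P_{A_1 X}$ and appending a path in $A_2$ from the end of $P_{X A_2}$ to $v$, I then extract three simple $uv$-paths $Q_{B_1}, Q_{B_2}, Q_{C_1}$ in $G$.

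By the $9$-fatness assumption, the three middle branch sets $B_1, B_2, C_1$ and their six incident connecting paths are pairwise at distance $\geq 9$, and each connecting path is at distance $\geq 9$ from every non-incident branch set. Hence the three paths $Q_{B_1}, Q_{B_2}, Q_{C_1}$ are pairwise at distance $\geq 9$ at every point lying outside $A_1 \cup A_2$; any closeness between two of them can only occur inside one of the two endpoint branch sets $A_1, A_2$.

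This configuration is exactly what the NSS graphs were designed to forbid. The NSS construction (\Sr{sec:constr NSS}) produces graphs admitting three internally disjoint $uv$-paths between suitable vertices, yet excluding any three $uv$-paths that are pairwise coarsely separated away from $u, v$: this is precisely the failure of the coarse $3$-path Menger conjecture that NSS establish. Applying this property to the vertices $u \in A_1, v \in A_2$ yields the desired contradiction.

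The main obstacle I anticipate is matching the exact quantitative form of the NSS non-existence statement with the coarse separation obtained above. The paths $Q_X$ are only shown to be pairwise $9$-far outside $A_1 \cup A_2$, whereas the natural formulation of NSS's forbidden configuration asks for paths far apart throughout. I expect to resolve this by using that the near-endpoint closeness is confined to the $1$-neighbourhoods of the branch sets $A_1, A_2$, which can be absorbed into the endpoint vertices $u, v$; the uniformity of the NSS obstruction across all vertex pairs $u, v$ at sufficient distance is what makes this absorption harmless. A secondary routine point is that since $A_1, A_2$ are at distance $\geq 9$, the vertices $u, v$ are indeed far apart, as required for the NSS property to apply.
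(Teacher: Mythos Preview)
Your approach rests on a property of NSS graphs that does not hold. The Nguyen--Scott--Seymour obstruction (Lemma~\ref{lem:TwoPathsInNSSGraph}) concerns only paths between the \emph{designated} sets $\SSS(G)$ and $\TTT(G)$; there is no ``uniformity of the NSS obstruction across all vertex pairs $u,v$'', and no theorem forbidding three $A_1$--$A_2$ paths that are pairwise far apart outside $A_1\cup A_2$ for arbitrary connected sets $A_1,A_2$ in an NSS graph. Indeed such configurations abound: by Proposition~\ref{prop:K3t}, $K_{3,t}$ is a $K$-fat minor of sufficiently large NSS graphs for every $K$ and every $t$, and in any such model two branch sets on the $t$-side (take $t\geq 2$) are joined by three paths through the three branch sets on the $3$-side, pairwise at distance $\geq K$ outside the two endpoint branch sets. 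This is precisely the configuration your argument claims cannot occur. So the step ``applying this property to the vertices $u\in A_1$, $v\in A_2$ yields the desired contradiction'' does not go through, and the obstacle you flag at the end is not a technical matching issue but the entire difficulty.

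The paper's proof is correspondingly much more structural. It never invokes a three-path obstruction; it uses only the two-path property (two far-apart $\SSS(\Delta)$--$\TTT(\Delta)$ paths force one of them to be the tree path), applied repeatedly to carefully chosen trigons~$\Delta$. A tree-decomposition of $G$ with adhesion sets $\Adh(\Delta)$ is used to locate a trigon whose adhesion meets four of the six branch sets with the remaining two lying on opposite sides (Lemma~\ref{lem:SomeAdhesionIntersectsONicely}); an extended case analysis then manufactures, in each case, a \emph{pair} of far-apart $\SSS(\Delta)$--$\RTT(\Delta)$ paths built from different combinations of branch sets and branch paths of the hypothetical $O$-model, and derives a contradiction from Lemma~\ref{lem:TwoPathsInNSSGraph} or its variants (Corollary~\ref{cor:TwoPAthsInNSSGraph:TopAndRight}, Lemma~\ref{lem:TwoPathsInNSSGraph:ExteriorOfSubpyramid}). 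Locating the right trigon and exhausting the cases is what occupies all of Section~\ref{sec O fat}.
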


We prove \Trs{thm:Counterexample:Octahedron} and~\ref{thm:FatK4t} (in \Sr{sec:Counterex:Proof}) by combining the two last theorems with the observation that $K_{2,2,2}$ is a minor of $K_7$ and $K_{4,t}, t\geq 4$, which in turn are minors of the NSS graph (Theorems~\ref{thm:NSSGraph:K7} and \ref{thm:NSSGraph:K4t}), and using a well-known power trick (Theorem~\ref{thm:FromKFatTo3Fat}) to reduce the fatness from 9 to 3. We prove Theorem~\ref{thm:NSSGraph:NoFatO:Intro} in \Sr{sec O fat}.

\medskip
In Section~\ref{sec susp} we prove that  if $J$ is a counterexample to \Cnr{conj fat min} then so is its \defi{suspension $S(J)$}, i.e.\ the graph  obtained from $J$ by adding a new vertex $s_J$ and joining $s_J$ to each $v \in V(J)$  with an edge.
Combining this with Theorem~\ref{thm:Counterexample:Octahedron}, and handling complete bipartite graphs in a similar fashion ---although with additional difficulty--- we deduce 
\begin{corollary} \label{cor Kt}
$K_t,t\geq 6$ and $K_{s,t}, s,t\geq 4$ are \bad.
\end{corollary}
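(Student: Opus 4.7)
The plan is to derive \Cr{cor Kt} from \Trs{thm:Counterexample:Octahedron} and~\ref{thm:FatK4t} together with the suspension theorem of \Sr{sec susp} and a bipartite analogue of it. I would treat complete graphs and complete bipartite graphs in parallel: each case has a small base instance handled directly by one of the two main counterexample theorems, and an inductive step that iterates a suspension-style operation.

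For $K_t$ with $t \geq 6$: observe that the graph $G$ produced by \Tr{thm:Counterexample:Octahedron} (for any given $M, A$) is simultaneously a witness that $K_6$ and $K_7$ are \bad. Indeed, since $K_{2,2,2} \subseteq K_6 \subseteq K_7$, any $3$-fat $K_t$ minor of $G$ for $t \in \{6,7\}$ would restrict to a $3$-fat $K_{2,2,2}$ minor, contradicting \Tr{thm:Counterexample:Octahedron}; while every $(M,A)$-quasi-isometric graph has a $2$-fat $K_7$ minor and thus an ordinary $K_t$ minor for $t \leq 7$. For $t \geq 8$ I would then apply the suspension theorem inductively: since $S(K_{t-1}) = K_t$, the assumption that $K_{t-1}$ is \bad\ propagates to $K_t$.

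For $K_{s,t}$ with $s, t \geq 4$, assume WLOG that $s \leq t$. The base case $s = 4$ follows from \Tr{thm:FatK4t} in the analogous manner: since $K_{4,4} \subseteq K_{4,t}$ for every $t \geq 4$, the constructed graph has no $3$-fat $K_{4,t}$ minor, while its $(M,A)$-quasi-isometric images contain a $2$-fat $K_{4,t}$ minor. For the inductive step $s \geq 5$, I would establish and iterate a \emph{bipartite suspension} theorem: if a bipartite graph $J$ with bipartition $(X,Y)$ is \bad, then so is the graph $S_b(J)$ obtained from $J$ by adding a new vertex $x$ to the $X$-class and a new vertex $y$ to the $Y$-class, each adjacent to every vertex of the opposite class (the new vertex on that side included). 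Since $S_b(K_{s-1,t-1}) = K_{s,t}$, induction on $s$ would then yield the corollary in full, noting that the inductive hypothesis applies to $K_{s-1,t-1}$ because $s-1 \geq 4$ and $t-1 \geq s-1 \geq 4$.

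The principal obstacle is the proof of this bipartite suspension theorem, which I expect to be the source of the "additional difficulty" mentioned in the excerpt. The ordinary suspension theorem presumably extends a counterexample $G$ for $J$ to a counterexample for $S(J)$ by a gadget that forces every quasi-isometric image of the enlarged graph to contain a branch set playing the role of the new universal vertex. In the bipartite setting one must place two new branch sets in a correlated way, one adjacent only to $Y$-side branch sets and one only to $X$-side, and must keep track of the latent bipartition structure of $G$ surviving quasi-isometry. Once this is in hand, the rest of the argument is routine bookkeeping.
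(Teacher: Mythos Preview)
Your treatment of $K_t$ matches the paper's: \Tr{thm:Counterexample:Octahedron} handles $K_6$ (and $K_7$) directly, and then \Tr{thm:susp} with $S(K_{t-1})=K_t$ inducts upward.

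For $K_{s,t}$, however, the paper does \emph{not} develop a bipartite suspension theorem; your ``principal obstacle'' simply does not arise. Assuming $s\le t$, the paper iterates the \emph{ordinary} suspension construction from the proof of \Tr{thm:susp} exactly $s-4$ times on $X=G_\infty$, producing a graph $Y_{s-4}$. Both halves of incompressibility then follow from elementary containments. For one direction, $K_{s,t}\subseteq S^{\,s-4}(K_{4,t})$ (place the $s-4$ apex vertices on the small side), so iterating \eqref{SminY} shows that every graph quasi-isometric to $Y_{s-4}$ has a $K_{s,t}$ minor. For the other direction, arguing as in \eqref{SasminY}, a $9$-fat model of $K_{s,t}$ in $Y_{s-4}$ would, after deleting the at most $s-4$ branch sets or paths meeting the successive apex vertices, leave a $9$-fat model of $K_{s,t}$ minus some $s-4$ vertices in $X=G_\infty$; since $s\le t$, any such deletion still contains $K_{4,4}\supseteq K_{2,2,2}$, contradicting \Tr{thm:NSSGraph:NoFatO:Intro}. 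So the ``additional difficulty'' alluded to in the introduction is only this asymmetric bookkeeping, not a new structural theorem. Your bipartite suspension $S_b$ might well be provable, but it is a genuinely harder statement than what is needed, and your proposal leaves it as a black box.
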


Comparing  Theorem~\ref{thm:FatK4t} with our result that $K_{2,t}$ is \good\  \cite{ADGK2t}, and with the aforementioned `coarse Kuratowski conjecture', makes the following question particularly interesting: 

\begin{question} \label{Q K3t}
    Are there functions $f: \N \rightarrow \N^2$ and $s: \N \rightarrow \N$ such that every graph with no $K$-fat $K_{3,t}$ minor is $f(K)$-quasi-isometric to a graph with no $K_{3,s(t)}$ minor? Can we choose $s(t) = t$?
\end{question}

In \Sr{sec K3t} we will prove the following fact, which shows that if the $K_{3,t}, t\geq 3$ turns out to be \bad, then this cannot be proved by the method we used here to prove that $K_{4,t}, t\geq 4$ is \bad: 

\begin{proposition} \label{prop:K3t:Intro}
    For every $K,t\in \N$, there is $k \in \N$ such that every NSS graph $G_{k',d'}$ with $k' \geq k$ and $d'\geq K$ contains $K_{3,t}$ as a $K$-fat minor.
\end{proposition}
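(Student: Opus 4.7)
The plan is to build $K_{3,t}$ as a $K$-fat minor directly from the tripod-like structure of the NSS graphs. Recall from \Sr{sec:constr NSS} that $G_{k',d'}$ is built around three `rails' of length proportional to $d'$ joined by $k'$ many `cross-tripods' indexed $1,\dots,k'$, each of which essentially provides a Steiner-tree-like gadget meeting all three rails. A $K_{3,t}$ minor is visible already at the level of the underlying classical minor: take the three rails as the $3$-side and take $t$ of the cross-tripods as the $t$-side. My job is to make this spatial enough to be $K$-fat by spacing the tripods widely in the $k'$-direction and by trimming the rails around them.

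Concretely, I would set $k := C(K,t)$ for a suitable constant $C$ (something of the form $C(K,t) = (2K+2)(t+2)$ should suffice) and, given $k'\ge k$ and $d'\ge K$, proceed as follows. First, choose $t$ tripod indices $j_1<\dots<j_t$ in $\{1,\dots,k'\}$ whose consecutive gaps, and whose gaps from the ends of the index interval, are all at least $2K+2$; the cross-tripod $T_s$ at index $j_s$ is declared to be the branch set $B_s$. Next, for $i=1,2,3$ define $A_i$ to be the $i$-th rail with an open ball of radius $K$ around each cross-tripod attachment point removed, except that at each chosen $T_s$ we retain a short stub of the rail glued to $A_i$ (these stubs realise the edges $A_iB_s$ of $K_{3,t}$). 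The $3t$ edge-paths of the $K_{3,t}$-model are then just these stubs, absorbed into either the incident $A_i$ or the incident $B_s$.

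The main obstacle is verifying the $K$-fat condition for the non-incident pairs, especially the three pairs $\{A_i,A_{i'}\}$. The three rails come within bounded distance of each other precisely at (and only near) the cross-tripods --- this is essentially the reason the NSS graphs violate the coarse Menger conjecture for three paths --- so deleting radius-$K$ balls around every attachment point of every cross-tripod (not just the chosen ones) should coarsely disconnect the rails. What remains of each $A_i$ between two consecutive deleted regions is a long segment of length at least $d'\ge K$ lying at distance $\ge K$ from the other two rails, which is exactly the required separation. The remaining fatness conditions, between distinct $B_s,B_{s'}$ and between $A_i$ and non-incident $B_{s}$, then follow from the index-gap choice $|j_s-j_{s'}|\ge 2K+2$ combined with the fact that the metric on $G_{k',d'}$ is comparable to the $\ell_1$-metric on its $(k',d')$-index grid; this amounts to routine but careful book-keeping once the rail separation has been established. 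The delicate point throughout is calibrating the deleted neighbourhoods so that they are large enough to separate the $A_i$'s from one another, while small enough that the retained stubs at the chosen tripods remain genuinely attached to a long connected rail segment.
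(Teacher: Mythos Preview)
Your description of the NSS graph is not the NSS graph. You write that $G_{k',d'}$ is ``built around three `rails' of length proportional to $d'$ joined by $k'$ many `cross-tripods''', but the construction in \Sr{sec:constr NSS} has nothing of the sort: $G_{k',d'}$ consists of a single long bottom path (of length $\approx 2^{k'}d'$, not $d'$) together with a subdivided binary tree $B(G)$ of height $k'$ whose $2^{k'-1}$ leaves attach to that bottom path by pairs of length-$d'$ paths. There are not three parallel rails, and there are not $k'$ tripods. Your gloss on why the graph violates coarse Menger is likewise off: the sets $\SSS(G),\TTT(G)$ each have size two, and the point (Lemmas~\ref{lem:NSSGraph:NoSmallSep} and~\ref{lem:TwoPathsInNSSGraph}) is that two far-apart $\SSS$--$\TTT$ paths exist but three do not, while no two small balls separate. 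Since your proposed branch sets $A_1,A_2,A_3$ do not correspond to any subgraphs of $G_{k',d'}$, the rest of the argument has nothing to stand on.

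For comparison, the paper's proof is an induction on $t$ and uses the recursive structure rather than any parallel-rail picture. The three branch sets on the $3$-side are structurally heterogeneous: one contains $\SO(G)$, one contains $\TI(G)$, and one contains the whole tree path in $B(G)$ from $\RR(G)$ to $\TO(G)$. The inductive step raises the height by $4K{+}1$, plants the (flipped) $K_{3,t}$ model inside a small subtrigon, takes a short segment of the tree path above that subtrigon's root as the new $t$-side branch set $B^{t+1}$, and extends each of the three $B_i$ along the bottom path or along tree paths so that each reaches $B^{t+1}$ via a branch path of length $\ge K$. The fatness comes for free from the tree metric, not from deleting balls around attachment points.
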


Similarly, we observe that $K_5$ is a $K$-fat minor of $G_{k,K}$ for every sufficiently large $k \in \N$ (Proposition~ \ref{prop:FatK5}), so if $K_5$ is \bad\ (which is an interesting open problem), then this cannot be proved by using NSS graphs.

\section{Preliminaries}

We follow the basic graph-theoretic terminology of~\cite{Bibel}. We do not include $0$ in $\N$. We denote $\N \cup \{0\}$ by $\N_0$.

Let $G$ be a graph $G$ and $X,Y \subseteq V(G)$. An \defi{$X$--$Y$ path} intersects $X$ precisely in its first and $Y$ precisely in its last vertex. 

\subsection{Distances}

Let $G$ be a graph.
We write~\defi{$d_G(v, u)$} for the distance of the two vertices~$v$ and~$u$ in~$G$. 
For two sets~$U$ and~$U'$ of vertices of~$G$, we write~\defi{$d_G(U, U')$} for the minimum distance of two elements of~$U$ and~$U'$, respectively.
If one of~$U$ or~$U'$ is just a singleton, then we omit the braces, writing $d_G(v, U') := d_G(\{v\}, U')$ for $v \in V(G)$.

Given a set~$U$ of vertices of~$G$, the \defi{ball (in~$G$) around~$U$ of radius $r \in \N$}, denoted by~\defi{$B_G(U, r)$}, is the set of all vertices in~$G$ of distance at most~$r$ from~$U$ in~$G$.
If~$U = \{v\}$ for some~$v \in V(G)$, then we omit the braces, writing~$B_G(v, r)$ for the ball (in $G$) around~$v$ of radius~$r$.

Further, the \defi{radius}~\defi{$\rad(G)$} of~$G$ is the smallest number~$k \in \N_0$ such that $V(G) = B_G(v, k)$ for some vertex $v \in V(G)$ or $\infty$ if such a $k \in \N_0$ does not exist.
Additionally, if $U \subseteq V(G)$, then the \defi{radius of $U$ in $G$} is the smallest number $k \in \N_0$ such that there exists some vertex $v$ of $G$ with $U \subseteq B_G(v, k)$ or $\infty$ if such a $k \in \N_0$ does not exist. 

If $Y$ is a subgraph of $G$, then we abbreviate $d_G(U,V(Y))$, $\rad_G(V(Y))$ and $B_G(V(Y),r)$ as \defi{$d_G(U,Y)$}, \defi{$\rad_G(Y)$} and \defi{$B_G(Y,r)$}, respectively.

\subsection{Quasi-isometries} \label{subsec:QuasiIsometries}

For $M \in \R_{\geq 1}$ and $A \in \R_{\geq 0}$, an \defi{$(M, A)$-quasi-isometry} from a graph~$H$ to a graph~$G$ is a map~$\varphi : V(H) \rightarrow V(G)$ such that
\begin{enumerate}[label=\rm{(Q\arabic*)}]
    \item \label{quasiisom:1} $M^{-1} \cdot d_H(g,h) - A \leq d_G(\varphi(g),\varphi(h)) \leq M\cdot d_H(g,h)+A$ for every $g,h \in V(H)$, and
    \item \label{quasiisom:2} for every $v \in V(G)$ there is $h \in V(H)$ such that $d_G(v,\varphi(h)) \leq A$.
\end{enumerate}

The following is a well-known fact. 

\begin{lemma}\label{lem:inversequasiisom}
    If a graph $H$ is $(M,A)$-quasi-isometric to a graph $G$, then $G$ is $(M,3AM)$- quasi-isometric to $H$. \qed
\end{lemma}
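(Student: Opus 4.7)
The plan is to build an explicit quasi-inverse $\psi : V(G) \to V(H)$ by using condition \ref{quasiisom:2} for $\varphi$ to choose, for each $v \in V(G)$, a vertex $\psi(v) \in V(H)$ with $d_G(v, \varphi(\psi(v))) \leq A$. Once $\psi$ is fixed, the two quasi-isometry conditions for $\psi$ with constants $(M, 3AM)$ follow by routine triangle-inequality manipulations, each of which I sketch below.

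First, I would verify \ref{quasiisom:2} for $\psi$. Given $h \in V(H)$, set $g := \varphi(h)$. Let $h' := \psi(g)$, so that $d_G(\varphi(h), \varphi(h')) \leq A$ by the choice of $\psi$. Applying the lower bound of \ref{quasiisom:1} for $\varphi$ yields $M^{-1} d_H(h,h') - A \leq A$, hence $d_H(h, h') \leq 2AM \leq 3AM$. Thus every $h \in V(H)$ lies within distance $3AM$ of some $\psi(g)$.

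Next, I would verify \ref{quasiisom:1} for $\psi$. Fix $v, w \in V(G)$ and write $h := \psi(v)$, $h' := \psi(w)$, so that $d_G(v, \varphi(h)), d_G(w, \varphi(h')) \leq A$. For the upper bound, the triangle inequality gives
\[
d_G(\varphi(h), \varphi(h')) \leq d_G(v,w) + 2A,
\]
and plugging this into the lower half of \ref{quasiisom:1} for $\varphi$ yields $d_H(h,h') \leq M\, d_G(v,w) + 3AM$. For the lower bound, the upper half of \ref{quasiisom:1} for $\varphi$ gives $d_G(\varphi(h), \varphi(h')) \leq M\, d_H(h,h') + A$, and combined with the triangle inequality
\[
d_G(v,w) \leq d_G(v,\varphi(h)) + d_G(\varphi(h),\varphi(h')) + d_G(\varphi(h'),w) \leq M\, d_H(h,h') + 3A
\]
one obtains $d_H(h,h') \geq M^{-1} d_G(v,w) - 3A/M \geq M^{-1} d_G(v,w) - 3AM$, where the last step uses $M \geq 1$.

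There is no real obstacle here; the only care needed is in confirming that the weaker of the two natural constants (namely $3AM$, compared to the sharper $2AM$ or $3A/M$) is the right one to state, so that both the surjectivity-up-to-$3AM$ and the lower additive defect $-3AM$ are satisfied simultaneously. The bound $M \geq 1$ is exactly what makes $3A/M \leq 3AM$, so the symmetric constant $3AM$ works uniformly in all the estimates.
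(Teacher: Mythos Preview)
Your proof is correct and follows exactly the construction the paper indicates in the remark following the lemma: define $\psi(v)$ to be any $h$ with $d_G(v,\varphi(h))\le A$, then verify \ref{quasiisom:1} and \ref{quasiisom:2} by triangle-inequality manipulations. The paper itself omits these routine verifications, so your write-up simply fills in the details of the intended argument.
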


\noindent We remark that given an $(M,A)$-quasi-isometry $\varphi$ from $H$ to $G$, one can obtain an $(M, 3AM)$-quasi-isometry from $G$ to $H$ by mapping every vertex $v$ of $G$ to some (arbitrary) vertex $h$ of $H$ such that $d_G(v, \varphi(h)) \leq A$.
\medskip 

The following fact about $(M,A)$-quasi-isometries is immediate from the definition, since for any edge~$uv$ of~$G$ it holds by \ref{quasiisom:1} that $d_H(\varphi(u), \varphi(v)) \leq M+A$. 

\begin{lemma} \label{lem:QIPreservesConn}
    Let $M \geq 1$, $A \geq 0$, and let $\varphi$ be an $(M,A)$-quasi-isometry from a graph $G$ to a graph $H$. If $U \subseteq V(G)$ is connected, then $B_H(\varphi(U), M+A)$ is connected. \qed
\end{lemma}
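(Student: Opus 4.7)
The plan is to show that for any two vertices $x, y \in B_H(\varphi(U), M+A)$ there is a walk from $x$ to $y$ entirely contained in $B_H(\varphi(U), M+A)$. By definition of the ball, there exist $u, v \in U$ with $d_H(x, \varphi(u)) \leq M+A$ and $d_H(y, \varphi(v)) \leq M+A$, so I can fix shortest $H$-paths $P_x$ from $x$ to $\varphi(u)$ and $P_y$ from $y$ to $\varphi(v)$ of length at most $M+A$. Every vertex on $P_x$ lies within distance $M+A$ of $\varphi(u) \in \varphi(U)$, and similarly for $P_y$, so both paths are contained in $B_H(\varphi(U), M+A)$.

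It remains to connect $\varphi(u)$ to $\varphi(v)$ by a walk inside $B_H(\varphi(U), M+A)$. Since $G[U]$ is connected, there is a path $u = u_0, u_1, \ldots, u_k = v$ in $G$ with all $u_i \in U$. For each consecutive pair, property \ref{quasiisom:1} gives $d_H(\varphi(u_i), \varphi(u_{i+1})) \leq M \cdot 1 + A = M+A$, so I can pick a shortest $H$-path $Q_i$ between them. Each vertex of $Q_i$ lies within distance $M+A$ of $\varphi(u_i) \in \varphi(U)$, hence $Q_i \subseteq B_H(\varphi(U), M+A)$.

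Concatenating $P_x$, $Q_0, Q_1, \ldots, Q_{k-1}$, and the reverse of $P_y$ yields a walk from $x$ to $y$ in the subgraph of $H$ induced by $B_H(\varphi(U), M+A)$, proving connectedness. There is no real obstacle here: the argument is just that shortest paths realising the quasi-isometry bound do not stray far from the image of $U$, and the extra slack $M+A$ in the radius of the ball is exactly what is needed to absorb both the multiplicative factor $M$ on edges of $G$ and the additive error $A$.
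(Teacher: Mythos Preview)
Your proof is correct and follows exactly the approach the paper indicates: the paper states the lemma with only the one-line justification that for any edge $uv$ of $G$ one has $d_H(\varphi(u),\varphi(v)) \leq M+A$ by \ref{quasiisom:1}, and your argument is precisely the natural unpacking of that hint into a full proof.
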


The next lemma essentially says that quasi-isometries preserve separators (in a coarse sense).

\begin{lemma}[\rm{\cite[Lemma~2.3]{ADWeakCounterex}}] \label{lem:QIPreservesSeps}
    Let $M \geq 1$, $A \geq 0$ and let $\varphi$ be an $(M,A)$-quasi-isometry from a graph $G$ to a graph $H$. If $X \subseteq V(G)$ separates $Y \subseteq V(G)$ and $Z \subseteq V(G)$ in $G$, then $B_H(\varphi(X), r)$ separates $\varphi(Y)$ and $\varphi(Z)$ in $H$ where $r := r(M,A) = M(M(3A+1)+2M^{-1}A+1)$.
\end{lemma}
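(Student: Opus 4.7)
The plan is a pullback argument through a quasi-inverse. Suppose, for contradiction, that some path $P=h_0h_1\cdots h_n$ in $H$ from $\varphi(Y)$ to $\varphi(Z)$ avoids $B_H(\varphi(X),r)$, and pick $y_0\in Y$, $z_n\in Z$ with $\varphi(y_0)=h_0$ and $\varphi(z_n)=h_n$. Applying \Lr{lem:inversequasiisom} (via the construction in the remark following it) I obtain an $(M,3AM)$-quasi-isometry $\psi\colon V(H)\to V(G)$ with the additional property that $d_H(h,\varphi(\psi(h)))\le A$ for every $h\in V(H)$.

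Next I would lift $P$ to a $y_0$--$z_n$ walk $W$ in $G$: each pair $\psi(h_i),\psi(h_{i+1})$ lies at $G$-distance at most $M\cdot 1+3AM=M(3A+1)$, and the two attachment distances $d_G(y_0,\psi(h_0))$, $d_G(z_n,\psi(h_n))$ are each at most $2MA$ (obtained by applying the lower Lipschitz bound of $\varphi$ to the pairs $\varphi(y_0)=h_0$, $\varphi(\psi(h_0))$ and $\varphi(z_n)=h_n$, $\varphi(\psi(h_n))$, which lie at $H$-distance $\le A$). Joining each such pair by a $G$-geodesic and concatenating produces $W$. Because $X$ separates $Y$ from $Z$ in $G$, some vertex $x\in X$ lies on $W$.

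To reach the desired contradiction I would show $\varphi(x)$ is within $H$-distance $r$ of some $h_i$. If $x$ sits on the lifted segment between $\psi(h_i)$ and $\psi(h_{i+1})$, then $d_G(x,\psi(h_i))\le M(3A+1)$, and the upper Lipschitz bound of $\varphi$ combined with $d_H(\varphi(\psi(h_i)),h_i)\le A$ yields
\[
d_H(\varphi(x),h_i)\le M\cdot M(3A+1)+A+A=M^2(3A+1)+2A<r;
\]
the case in which $x$ lies on one of the short endpoint-geodesics gives an even smaller bound (namely $2M^2A+2A$). Either case contradicts $d_H(h_i,\varphi(X))>r$. The only genuine obstacle is bookkeeping: $r$ must absorb the worst-case distortion $M^2(3A+1)+2A$ from the main pullback, and the stated $r=M(M(3A+1)+2M^{-1}A+1)$ is calibrated to do exactly this with a safety margin of $M$.
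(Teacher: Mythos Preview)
The paper does not prove this lemma; it is quoted verbatim from \cite{ADWeakCounterex} (Lemma~2.3 there) and used as a black box, so there is no in-paper argument to compare against.

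As for the proposal itself: the pullback-through-quasi-inverse strategy is the natural one and your bookkeeping is sound. The key inequalities check: each lifted segment in $G$ has length at most $M(3A+1)$, the endpoint attachments have length at most $2MA$, and pushing a point $x$ on such a segment back to $H$ gives $d_H(\varphi(x),h_i)\le M^2(3A+1)+2A$, which is strictly less than $r=M^2(3A+1)+2A+M$. One cosmetic slip: in the endpoint case you actually get $d_H(\varphi(x),h_0)\le M\cdot 2MA+A=2M^2A+A$, not $2M^2A+2A$, but this only strengthens the bound. The argument is complete and correct.
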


\subsection{Fat minors} \label{sec FM}

Let $J, G$ be graphs.
A \defi{model} $(\cu,\ce)$ of $J$ in $G$ is a collection $\cu$ of disjoint, connected sets $U_x \subseteq V(G), x\in V(J)$, and a collection $\ce$ of internally disjoint $U_{x}$--$U_{y}$ paths $E_{e}$, one for each edge $e=xy$ of $J$, \st\ $E_{e}$ is disjoint from every $U_z$ with $z \neq x, y$.
The $U_x$ are the \defi{branch sets} and the $E_e$ are the \defi{branch paths} of the model.
A model $(\cu, \ce)$ of $J$ in $G$ is \defi{$K$-fat} for $K \in \N$ if $\dist_G(Y,Z) \geq K$ for every two distinct $Y,Z \in \cu \cup \ce$ unless $Y = E_e$ and $Z = U_x$ for some vertex $x \in V(J)$ incident to $e \in E(J)$, or vice versa.
We say that $J$ is a \defi{($K$-fat) minor} of $G$, and denote it by $J \preceq G$ ($J \preceq_K G$), if $G$ contains a ($K$-fat) model of $X$.
We remark that the $0$-fat minors of $G$ are precisely its minors. 

We say that $J$ is an \defi{asymptotic minor} of $G$, and denote it by $J \preceq_\infty G$, if $J$ is a $K$-fat minor of $G$ for all $K \in \N$.

\subsection{Coarse Menger for two paths}

We need the following `coarse' version of Menger's theorem for two paths, which was proved by Albrechtsen, Huynh, Jacobs, Knappe and Wollan~\cite[Theorem~2]{AHJKW} and, independently, by Georgakopoulos and Papasoglu~\cite[Theorem~8.1]{GeoPapMin}; the version we state here is from the latter paper.

\begin{theorem}[\cite{AHJKW,GeoPapMin}] \label{C Menger}
Let $G$ be a graph, and $S,T$ two subsets of $V(G)$. For every $K>0$, there is either
\begin{enumerate}[label=\rm{(\roman*)}]
\item \label{paths} two paths joining $S$ to $T$ at distance at least $\mu K$ in $G$ from each other, where $\mu\geq 1/544$ is a universal constant, or
\item \label{S} is a set $U \subset V(G)$ with $\rad(U)\leq K$ such that $G - U$ contains no $S$--$T$ path. 
\end{enumerate}
\end{theorem}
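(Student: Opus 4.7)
The plan is to prove the contrapositive: assuming no $U \subseteq V(G)$ with $\rad(U) \leq K$ separates $S$ from $T$ in $G$, I will construct two $S$--$T$ paths at pairwise distance at least $\mu K$ in $G$ for a universal constant $\mu > 0$.

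As a first step, I would fix a shortest $S$--$T$ path $P = v_0 v_1 \dots v_n$ in $G$ and select reference vertices $v_{i_1}, v_{i_2}, \dots$ along $P$ spaced a suitably large multiple $cK$ apart. By the assumption, each ball $B_G(v_{i_j}, K)$ fails to separate $S$ from $T$, so there exists an $S$--$T$ path $Q_j$ disjoint from $B_G(v_{i_j}, K)$. Each $Q_j$ thus provides a \emph{local} witness of being far from $P$ near $v_{i_j}$; the real task is to promote such local avoidance to a \emph{globally} far-apart pair of paths, which is one of the two things making this statement strictly stronger than classical Menger.

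The core step---and what I expect to be the main obstacle---is a geometric extremal argument. Suppose for contradiction that no two $S$--$T$ paths are at pairwise distance $\geq \mu K$. Then each $Q_j$ must come within $\mu K$ of $P$ somewhere outside $B_G(v_{i_j}, K)$; call the points of closest approach the \emph{returns} of $Q_j$. Using that $P$ is a shortest $S$--$T$ path, and comparing the returns of several consecutive $Q_j$, I would argue that these returns must cluster around a single vertex $x \in V(G)$, and that a bounded-radius neighbourhood of this cluster forms a set $U$ with $\rad(U) \leq K$ whose removal leaves no $S$--$T$ path---contradicting the standing assumption. The careful triangle-inequality bookkeeping balancing the spacing $cK$, the margin $\mu K$, and the radius $K$ of the forbidden balls is precisely what forces the explicit constant $\mu \geq 1/544$.

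Once some $Q_j$ has been shown to remain at distance $\geq \mu K$ from a sufficiently long middle segment of $P$, I would finish by restricting each of $P$ and $Q_j$ to appropriate subpaths from $S$ to $T$ whose endpoints lie in parts of $S$ (respectively, $T$) that are themselves $\geq \mu K$ apart; if no such choice of endpoints existed, then $S$ or $T$ would already be contained in a ball of radius $\leq K$, yielding case (ii) trivially. Calibrating the constants $c$ and $\mu$ so that both the extremal step and this endpoint-separation step close simultaneously, with a single universal $\mu$ independent of $G$, $S$, $T$, and $K$, is the bulk of the technical work.
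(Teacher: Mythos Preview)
The paper does not prove this theorem. Theorem~\ref{C Menger} is stated in the preliminaries section as a quoted result, with the proof attributed to \cite{AHJKW} and \cite{GeoPapMin}; the paper merely invokes it later (in the proof of Theorem~\ref{thm:css:finite:maintext}) without reproving it. So there is no ``paper's own proof'' to compare your proposal against.

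As a standalone sketch, your outline is in the right spirit---the actual proofs in \cite{AHJKW,GeoPapMin} do proceed by taking a shortest $S$--$T$ path $P$, using the non-separability hypothesis to produce detour paths avoiding balls around selected vertices of $P$, and then arguing that if every detour comes back close to $P$ one can assemble a bounded-radius separator. But your write-up is a plan, not a proof: the claim that the ``returns must cluster around a single vertex $x$'' is doing all the work and is asserted without justification. In the published arguments this step requires a careful iterative or inductive construction (rerouting along subpaths of the $Q_j$, controlling how far the combined path strays from $P$, and tracking several nested distance scales), and the constant $544$ arises from exactly that bookkeeping. Your final paragraph about separating endpoints in $S$ and $T$ is also not how the published proofs conclude; the two far-apart paths are built directly, not by post-hoc trimming. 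If you want to turn this into a genuine proof you would need to supply the clustering/rerouting argument explicitly; as written it is a reasonable heuristic but has a substantial gap at its core.
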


\section{Construction of the NSS graph} \label{sec:constr NSS}

For the proof of Theorem~\ref{thm:Counterexample:Octahedron}, we need to construct for every $M,A \in \N$ some graph $G$ that is not $(M,A)$-quasi-isometric to a graph with no $K_7$ minor. As already mentioned in the introduction, these graphs $G$ will be the graphs from~\cite{NgScSeCou}. We recall their construction in what follows (see Figure~\ref{fig:NSSGraph}).
\medskip

\begin{figure}[ht]
    \centering
    \scalebox{0.72}{%
\begin{tikzpicture}[scale=1/2,auto=left]

\tikzstyle{every node}=[inner sep=1.5pt, fill=black,circle,draw]
\node[blue,ultra thick] (v1) at (1,0) {};
\node[blue, ultra thick] (v34) at (34,0) {};
\draw[dotted, thick] (v1)--(v34);
\node[blue,ultra thick] (v2) at (2,0) {};
\node (v3) at (3,0) {};
\node (v4) at (4,0) {};
\node (v5) at (5,0) {};
\node (v6) at (6,0) {};
\node (v7) at (7,0) {};
\node (v8) at (8,0) {};
\node (v9) at (9,0) {};
\node (v10) at (10,0) {};
\node (v11) at (11,0) {};
\node (v12) at (12,0) {};
\node (v13) at (13,0) {};
\node (v14) at (14,0) {};
\node (v15) at (15,0) {};
\node (v16) at (16,0) {};
\node (v17) at (17,0) {};
\node (v18) at (18,0) {};
\node (v19) at (19,0) {};
\node (v20) at (20,0) {};
\node (v21) at (21,0) {};
\node (v22) at (22,0) {};
\node (v23) at (23,0) {};
\node (v24) at (24,0) {};
\node (v25) at (25,0) {};
\node (v26) at (26,0) {};
\node (v27) at (27,0) {};
\node (v28) at (28,0) {};
\node (v29) at (29,0) {};
\node (v30) at (30,0) {};
\node (v31) at (31,0) {};
\node (v32) at (32,0) {};
\node[blue, ultra thick] (v33) at (33,0) {};

\node (u2) at (2.5,1) {};
\node (u4) at (4.5,1) {};
\node (u6) at (6.5,1) {};
\node (u8) at (8.5,1) {};
\node (u10) at (10.5,1) {};
\node (u12) at (12.5,1) {};
\node (u14) at (14.5,1) {};
\node (u16) at (16.5,1) {};
\node (u18) at (18.5,1) {};
\node (u20) at (20.5,1) {};
\node (u22) at (22.5,1) {};
\node (u24) at (24.5,1) {};
\node (u26) at (26.5,1) {};
\node (u28) at (28.5,1) {};
\node (u30) at (30.5,1) {};
\node (u32) at (32.5,1) {};

\draw[dotted,thick] (u2) to [bend right=20] (v1);
\draw[dotted,thick] (u4) to [bend right=20] (v3);
\draw[dotted,thick] (u6) to [bend right=20] (v5);
\draw[dotted,thick] (u8) to [bend right=20] (v7);
\draw[dotted,thick] (u10) to [bend right=20] (v9);
\draw[dotted,thick] (u12) to [bend right=20] (v11);
\draw[dotted,thick] (u14) to [bend right=20] (v13);
\draw[dotted,thick] (u16) to [bend right=20] (v15);
\draw[dotted,thick] (u18) to [bend right=20] (v17);
\draw[dotted,thick] (u20) to [bend right=20] (v19);
\draw[dotted,thick] (u22) to [bend right=20] (v21);
\draw[dotted,thick] (u24) to [bend right=20] (v23);
\draw[dotted,thick] (u26) to [bend right=20] (v25);
\draw[dotted,thick] (u28) to [bend right=20] (v27);
\draw[dotted,thick] (u30) to [bend right=20] (v29);
\draw[dotted,thick] (u32) to [bend right=20] (v31);

\draw[dotted,thick] (u2) to [bend left=20] (v4);
\draw[dotted,thick] (u4) to [bend left=20] (v6);
\draw[dotted,thick] (u6) to [bend left=20] (v8);
\draw[dotted,thick] (u8) to [bend left=20] (v10);
\draw[dotted,thick] (u10) to [bend left=20] (v12);
\draw[dotted,thick] (u12) to [bend left=20] (v14);
\draw[dotted,thick] (u14) to [bend left=20] (v16);
\draw[dotted,thick] (u16) to [bend left=20] (v18);
\draw[dotted,thick] (u18) to [bend left=20] (v20);
\draw[dotted,thick] (u20) to [bend left=20] (v22);
\draw[dotted,thick] (u22) to [bend left=20] (v24);
\draw[dotted,thick] (u24) to [bend left=20] (v26);
\draw[dotted,thick] (u26) to [bend left=20] (v28);
\draw[dotted,thick] (u28) to [bend left=20] (v30);
\draw[dotted,thick] (u30) to [bend left=20] (v32);
\draw[dotted,thick] (u32) to [bend left=20] (v34);

\node (t3) at (3.5,2) {};
\node (t7) at (7.5,2) {};
\node (t11) at (11.5,2) {};
\node (t15) at (15.5,2) {};
\node (t19) at (19.5,2) {};
\node (t23) at (23.5,2) {};
\node (t27) at (27.5,2) {};
\node (t31) at (31.5,2) {};

\draw (u2) -- (t3)--(u4);
\draw (u6) -- (t7)--(u8);
\draw (u10) -- (t11)--(u12);
\draw (u14) -- (t15)--(u16);
\draw (u18) -- (t19)--(u20);
\draw (u22) -- (t23)--(u24);
\draw (u26) -- (t27)--(u28);
\draw (u30) -- (t31)--(u32);

\node (s5) at (5.5,3) {};
\node (s13) at (13.5,3) {};
\node (s21) at (21.5,3) {};
\node (s29) at (29.5,3) {};

\draw (t3) -- (s5)--(t7);
\draw (t11) -- (s13)--(t15);
\draw (t19) -- (s21)--(t23);
\draw (t27) -- (s29)--(t31);

\node (r9) at (9.5,4) {};
\node (r25) at (25.5,4) {};

\draw (s5) -- (r9)--(s13);
\draw (s21) -- (r25)--(s29);

\node (q17) at (17.5,5) {};
\draw (r9) -- (q17)--(r25);

\tikzstyle{every node}=[]
\draw[below] (q17) node []           {$\RR(G)$};
\draw[blue,below left] (v1) node []           {\small{$\SO(G)$}};
\draw[blue,below] (v2) node []           {\small{$\SI(G)$}};
\draw[blue,below] (v33) node []           {\small{$\TI(G)$}};
\draw[blue,below right] (v34) node []           {\small{$\TO(G)$}};

\end{tikzpicture}
}
    \vspace{-2em}
    \caption{Depicted is the graph $G_{5,d}$. The dotted curves represent paths of length $d+1$.} \label{fig:NSSGraph}
\end{figure}
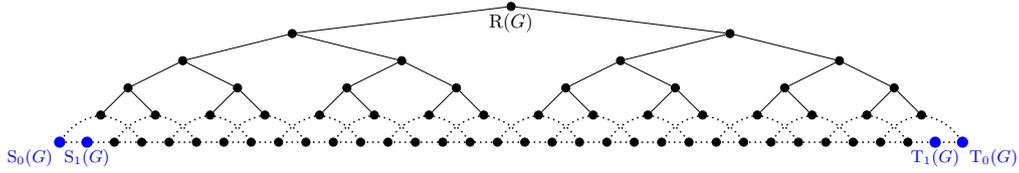

The \defi{NSS graph} of \defi{height} $k$ and \defi{division} $d$, denoted by \defi{$G_{k,d}$}, is defined inductively. For $k = 1$ and $d \in \N$, $G:=G_{1,d}$ is the graph obtained as follows:

\begin{enumerate}
    \item start with a path on four vertices, labelled $S_0,S_1,T_1,T_0$ in that order, 
    \item add a new vertex $R$ adjacent to $S_0$ and $T_0$, and finally
    \item subdivide each edge into a path of length $d$.
\end{enumerate}

Define $\defim{\SO(G)}:=S_0$, $\defim{\SI(G)}:=S_1$, $\defim{\TO(G)}:=T_0$, $\defim{TI(G)}:=S_1$, $\defim{\RR(G)}:=R$. We call $\RR(G)$ the \defi{root} of $G$. We will maintain the property that for each $k\geq 1$, if $G:=G_{k,d}$, then $\SO(G)$, $\SI(G)$, $\TO(G)$, $\SI(G)$, $\RR(G)$ are defined and they are distinct vertices of~$G$.

We now describe the induction step of the construction. Let $k \in \N$, and let $\Delta_0$ and $\Delta_1$ be copies of $G_{k-1,d}$. 
Then $G := G_{k,d}$ is the graph obtained as follows:

\begin{enumerate}
    \item[(1)] take the disjoint union of $\Delta_0$ and $\Delta_1$,
    \item[(2)] identify $\TI(\Delta_0)$ with $\SO(\Delta_1)$ and identify $\TO(\Delta_0)$ with $\SI(\Delta_1)$, 
    \item[(3)] delete one of the two paths of length $d$ between the two identified vertices, and finally,
    \item[(4)] add a new vertex $R$ adjacent to $\RR(\Delta_0)$ and $\RR(\Delta_1)$.
\end{enumerate}

Set $\SO(G):=\SO(\Delta_0)$, $\SI(G):=\SI(\Delta_0)$, $\TO(G):=\TO(\Delta_1)$, $\TI(G):=\TI(\Delta_1)$, $\RR(G):=R$ (see Figure~\ref{fig:NSSGraph}). 
Moreover, let $\SSS(G) := \{\SO(G), \SI(G)\}$ and $\TTT(G) := \{\TO(G), \TI(G)\}$ and $\RTT(G) := \TTT(G) \cup \{\RR(G)\}$.
We call the set 
\[
\defim{\Adh(G)} := \{\SO(G),\SI(G),\TO(G),\TI(G),\RR(G)\} = \SSS(G) \cup \RTT(G)
\]
the \defi{adhesion} of $G_{k,d}$ (the name will make sense later). 

We also define $\defim{\SP(G)} :=\Delta_0$ and $\defim{\TP(G)} :=\Delta_1$. 
We remark that $\SSS(\SP(G)) = \SSS(G)$ and $\TTT(\SP(G)) = \SSS(\TP(G))$ and $\TTT(\TP(G)) = \TTT(G)$ (see Figure~\ref{fig:NSSGraph:2}). 
\smallskip 

\begin{figure}[ht]
    \centering
    \begin{tikzpicture}[scale=0.3,auto=left]

\node[inner sep=2, blue,fill=blue,circle,draw] (v1) at (2,0) {};
\node[inner sep=2, blue,fill=blue,circle,draw] (v2) at (4,0) {};
\node[inner sep=2, blue,fill=blue,circle,draw] (v9) at (18,0) {};
\node[inner sep=2, blue,fill=blue,circle,draw] (v10) at (20,0) {};

\tikzstyle{every node}=[inner sep=1.5pt, fill=black,circle,draw]
\node[blue,fill=blue] (v3) at (6,0) {};
\node[blue,fill=blue] (v4) at (8,0) {};
\node[blue,fill=blue] (v5) at (10,0) {};
\node[blue,fill=blue] (v6) at (12,0) {};
\node[blue,fill=blue] (v7) at (14,0) {};
\node[blue,fill=blue] (v8) at (16,0) {};
\node (v11) at (22,0) {};
\node (v12) at (24,0) {};
\node (v13) at (26,0) {};
\node (v14) at (28,0) {};
\node (v15) at (30,0) {};
\node (v16) at (32,0) {};
\node (v17) at (34,0) {};
\node (v18) at (36,0) {};

\draw[blue,dotted, thick] (v1)--(v10);
\draw[dotted, thick] (v10)--(v18);

\node[blue,fill=blue] (u2) at (5,1.5) {};
\node[blue,fill=blue] (u4) at (9,1.5) {};
\node[blue,fill=blue] (u6) at (13,1.5) {};
\node[blue,fill=blue] (u8) at (17,1.5) {};
\node (u10) at (21,1.5) {};
\node (u12) at (25,1.5) {};
\node (u14) at (29,1.5) {};
\node (u16) at (33,1.5) {};

\draw[blue,dotted,thick] (u2) to [bend right=20] (v1);
\draw[blue,dotted,thick] (u4) to [bend right=20] (v3);
\draw[blue,dotted,thick] (u6) to [bend right=20] (v5);
\draw[blue,dotted,thick] (u8) to [bend right=20] (v7);
\draw[dotted,thick] (u10) to [bend right=20] (v9);
\draw[dotted,thick] (u12) to [bend right=20] (v11);
\draw[dotted,thick] (u14) to [bend right=20] (v13);
\draw[dotted,thick] (u16) to [bend right=20] (v15);

\draw[blue,dotted,thick] (u2) to [bend left=20] (v4);
\draw[blue,dotted,thick] (u4) to [bend left=20] (v6);
\draw[blue,dotted,thick] (u6) to [bend left=20] (v8);
\draw[blue,dotted,thick] (u8) to [bend left=20] (v10);
\draw[dotted,thick] (u10) to [bend left=20] (v12);
\draw[dotted,thick] (u12) to [bend left=20] (v14);
\draw[dotted,thick] (u14) to [bend left=20] (v16);
\draw[dotted,thick] (u16) to [bend left=20] (v18);

\node[blue,fill=blue] (t3) at (7,2.5) {};
\node[blue,fill=blue] (t7) at (15,2.5) {};
\node (t11) at (23,2.5) {};
\node (t15) at (31,2.5) {};

\draw[blue] (u2) -- (t3)--(u4);
\draw[blue] (u6) -- (t7)--(u8);
\draw (u10) -- (t11)--(u12);
\draw (u14) -- (t15)--(u16);

\node[blue,fill=blue] (s5) at (11,3.5) {};
\node (s13) at (27,3.5) {};

\draw[blue] (t3) -- (s5)--(t7);
\draw (t11) -- (s13)--(t15);

\node (r9) at (19,5) {};

\draw (s5) -- (r9)--(s13);

\tikzstyle{every node}=[]
\draw[below] (r9) node []           {\footnotesize{$\RR(G)$}};
\draw[blue,below] (v1) node []           {\footnotesize{$\SSS(\SP(G))$}};

\draw[blue,above left] (s5) node []           {\footnotesize{\footnotesize{$\RR(\SP(G))$}}};
\draw[above right] (s13) node []           {\footnotesize{\footnotesize{$\RR(\TP(G))$}}};
\draw[below] (v10) node []           {\footnotesize{$\blue{\TTT(\SP(G))}=\SSS(\TP(G))$}};

\end{tikzpicture}
    \vspace{-2em}
    \caption{The blue subgraph of $G := G_{4,d}$ is the trigon $\SP(G)$.} \label{fig:NSSGraph:2}
\end{figure}
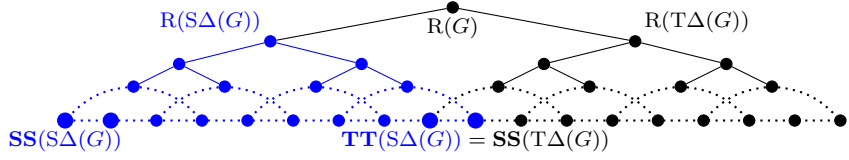

If $G=G_{1,d}$, then the \defi{bottom path} of $G$ is the (unique) $\SO(G)$--$\TO(G)$ path that avoids $\RR(G)$.
If $G=G_{k,d}$ for some $k \geq 2$, then the \defi{bottom path} of $G$ is the (unique) $\SO(G)$--$\TO(G)$ path that only uses vertices of $G$ that are only incident with subdivided edges. 
We let \defi{$B(G)$} be the (subdivided) binary tree contained in $G$, i.e.\ $B(G)$ is the subgraph that remains after removing the subdivision vertices of the bottom path from $G$. 
For every $x,y \in V(G)$ that are not subdivision vertices there is a unique $x$--$y$ path in $G$ that is internally disjoint from the bottom path of $G$, and we call it a \defi{tree path}.

For every vertex $x$ of the binary tree $B(G)$ that is neither a leaf nor a subdivision vertex, there is a (unique) copy of some NSS graph in $G$ (of smaller height than $G$ if $x \neq \RR(G)$) whose root is $x$. For example, if $x$ is the `left' successor of $\RR(G)$, then this graph is $\SP(G)$. We call these graphs \defi{trigons} of~$G$. 

Since every trigon is isomorphic to a (unique) NSS graph, all notions defined for NSS graphs, such as e.g.\ $\SO(.)$ and $\Adh(.)$, are also defined for trigons. 
Observe that given a (proper) trigon $\Delta$ of $G$, its adhesion $\Adh(\Delta)$ separates $\Delta$ and $G-V(\Delta)$. 
We call the subgraph $G - (V(\Delta)\setminus \Adh(\Delta))$ of $G$ the \defi{exterior} \defi{$\Ext(\Delta,G)$} of $\Delta$ in $G$. If the underlying graph $G$ is clear from the context, then we abbreviate $\Ext(\Delta) := \Ext(\Delta, G)$. 

For later use, let us remark that given any trigon $\Delta$ of $G$ with $\SSS(\Delta) = \SSS(G)$, its root $\RR(\Delta)$ together with $\TTT(\Delta)$ separate $\Delta$ from $G-V(\Delta)$.
\medskip

In the remainder of this section we collect some properties of the NSS graphs that we need later.

\begin{lemma}[\rm{\cite[2.1]{NgScSeCou}}] \label{lem:NSSGraph:NoSmallSep}
    Let $\ell \geq 1$, and let $G:=G_{k,d}$ for some $k > 2\ell + 2$ and $d > 2\ell$. If $X \subseteq V(G)$ with $|X|\leq 2$, then there is a path $P$ in $G$ between $\SSS(G)$ and $\TTT(G)$ such that $d_G(X,P) > \ell$.
\end{lemma}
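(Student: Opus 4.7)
The plan is to induct on $k$, exploiting the recursive decomposition of $G = G_{k,d}$ into the two subtrigons $\Delta_0 := \SP(G)$ and $\Delta_1 := \TP(G)$ (each isomorphic to $G_{k-1,d}$), which meet in the two interface vertices $I := \TTT(\Delta_0) = \SSS(\Delta_1)$, together with the extra root $\RR(G)$ joined by single edges to $\RR(\Delta_0)$ and $\RR(\Delta_1)$.

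For the base case at the smallest admissible $k = 2\ell+3$, I would verify the claim directly by case analysis on the at most two positions of $X$ in the explicitly describable Sierpinski-like graph; the condition $d > 2\ell$ ensures that the subdivided edges at the bottom are long enough that a suitable dodge can always be found.

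For the inductive step with $k > 2\ell+3$, set $X_i := X \cap V(\Delta_i)$, so $|X_i| \le 2$. Applying the induction hypothesis to each $\Delta_i \cong G_{k-1,d}$ (valid since $k-1 > 2\ell+2$ and $d > 2\ell$) with $X_i$ yields an $\SSS(\Delta_i)$--$\TTT(\Delta_i)$ path $P_i \subseteq \Delta_i$ whose distance from $X_i$ in $\Delta_i$ exceeds $\ell$. I would concatenate $P_0$ and $P_1$ into an $\SSS(G)$--$\TTT(G)$ path by matching their endpoints in $I$; this is possible because each $P_i$ may be chosen to terminate at either of the two vertices of $I$ (by invoking the IH for the appropriate endpoint pair), so with $|X|\le 2$ at least one interface vertex is usable. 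In the extreme subcase $X \subseteq I$, I would re-route via $\RR(G)$: the IH in each $\Delta_i$ with $X_i$ produces an $\SSS(\Delta_i)$--$\RR(\Delta_i)$ path far from $X_i$, and these chain together through the edges $\RR(\Delta_0)\RR(G)$ and $\RR(G)\RR(\Delta_1)$.

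The main obstacle will be upgrading ``distance $> \ell$ in $\Delta_i$'' to ``distance $> \ell$ in $G$'', since shortcuts through the other trigon could in principle shorten distances. However, any such shortcut from $X$ to the interior of $P_i$ must cross the $2$-vertex interface $I$ or traverse the $R$-connection, and the hypothesis $d > 2\ell$ guarantees that both $I$ and $B_G(\RR(G),1)$ lie at distance $> \ell$ from the portion of $P_i$ in the interior of $\Delta_i$; so any detour through the other trigon is too long to create a genuine shortcut. Assembling these pieces delivers the required $\SSS(G)$--$\TTT(G)$ path in $G$ at distance $> \ell$ from $X$.
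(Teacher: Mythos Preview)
The paper does not give its own proof of this lemma; it is quoted directly from \cite[2.1]{NgScSeCou}, so there is nothing in the present paper to compare against. Evaluating your sketch on its own merits, the inductive step has a genuine gap in the subcase $X = I = \TTT(\Delta_0) = \SSS(\Delta_1)$. Applying the induction hypothesis to $\Delta_0$ with $X_0 = I$ is vacuous: any $\SSS(\Delta_0)$--$\TTT(\Delta_0)$ path terminates in $\TTT(\Delta_0) = X_0$ and so has distance $0$ from $X_0$. You try to escape this by re-routing through $\RR(G)$, claiming the induction hypothesis produces an $\SSS(\Delta_i)$--$\RR(\Delta_i)$ path far from $X_i$; but the hypothesis only promises $\SSS$--$\TTT$ paths and says nothing about reaching $\RR(\Delta_i)$. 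Making this case work would require a strictly stronger inductive statement, which you have not formulated and whose base case and inductive step would both need to be redone.

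Even in the generic case there are loose ends. The hypothesis does not let you choose at which vertex of $I$ the path $P_i$ terminates, so ``invoking the IH for the appropriate endpoint pair'' is not justified; when $|X_i|\le 1$ one can force the endpoint by adding the unwanted interface vertex to $X_i$, but once $|X_i|=2$ there is no room. Your distance-upgrade claim that $d>2\ell$ places $I$ at distance $>\ell$ from $P_i$ is false as written, since $P_i$ has an endpoint in $I$; controlling a shortcut from $x\in \Delta_{1-i}$ to that endpoint really goes via the fact that this endpoint also lies on $P_{1-i}$, together with the claim that distances in $G$ between vertices of $\Delta_j$ are realised within $\Delta_j$, which you assert but do not verify. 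Finally, the base case at $k=2\ell+3$ is not a small graph (it has order on the scale of $2^{2\ell}\cdot d$), so ``direct case analysis'' is not a proof either.
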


\subsection{\texorpdfstring{$\SSS(G)$}{SS(G)}--\texorpdfstring{$\TTT(G)$}{TT(G)} Linkages} \label{sec link}

In this subsection we introduce some terms and observations that will be used in the proof of \Tr{thm css}, only.

Let $G:=G_{k,d}$ for some $k,d \in\N$. A \defi{linkage} of $G$ is a pair of disjoint paths $P,Q$ from $\SSS(G)$ to $\TTT(G)$. If one of $P,Q$ joins $\TI(G)$ with $\SO(G)$ (and thus the other one joins $\TO(G)$ with $\SI(G)$), we call $P,Q$ a \defi{crossing linkage}, otherwise we call it a \defi{nested linkage} (see Figure~\ref{fig:Linkages}).

\begin{figure}[ht]
    \centering
    \begin{subfigure}[b]{0.4\linewidth} 
        \centering
        \scalebox{0.85}{%
\begin{tikzpicture}[scale=1/2,auto=left]

\tikzstyle{every node}=[inner sep=1.5pt, fill=black,circle,draw]
\node[blue] (v1) at (1,0) {};
\node[red] (v2) at (2,0) {};
\node[red] (v3) at (3,0) {};
\node[red] (v4) at (4,0) {};
\node[red] (v5) at (5,0) {};
\node[red] (v6) at (6,0) {};
\node[red] (v7) at (7,0) {};
\node[red] (v8) at (8,0) {};
\node[red] (v9) at (9,0) {};
\node[blue] (v10) at (10,0) {};

\draw[dotted, thick] (v1)--(v2);
\draw[red,thick] (v2)--(v9);
\draw[dotted,thick] (v9)--(v10);

\node[blue] (u2) at (2.5,1) {};
\node (u4) at (4.5,1) {};
\node (u6) at (6.5,1) {};
\node[blue] (u8) at (8.5,1) {};

\draw[blue,thick] (u2) to [bend right=20] (v1);
\draw[dotted,thick] (u4) to [bend right=20] (v3);
\draw[dotted,thick] (u6) to [bend right=20] (v5);
\draw[dotted,thick] (u8) to [bend right=20] (v7);

\draw[dotted,thick] (u2) to [bend left=20] (v4);
\draw[dotted,thick] (u4) to [bend left=20] (v6);
\draw[dotted,thick] (u6) to [bend left=20] (v8);
\draw[blue,thick] (u8) to [bend left=20] (v10);

\node[blue] (t3) at (3.5,2) {};
\node[blue] (t7) at (7.5,2) {};

\draw (u2) -- (t3)--(u4);
\draw (u6) -- (t7)--(u8);

\node[blue] (s5) at (5.5,3) {};

\draw (t3) -- (s5)--(t7);

\draw[blue,thick] (u2)--(t3)--(s5)--(t7)--(u8);

\end{tikzpicture}
}
    \end{subfigure}
    \hspace{1em}
    \begin{subfigure}[b]{0.55\linewidth}
        \centering
        \scalebox{0.75}{%
\begin{tikzpicture}[scale=1/2,auto=left]

\tikzstyle{every node}=[inner sep=1.5pt, fill=black,circle,draw]
\node[blue] (v1) at (1,0) {};
\node[red] (v2) at (2,0) {};
\node[red] (v3) at (3,0) {};
\node[red] (v4) at (4,0) {};
\node[red] (v5) at (5,0) {};
\node[red] (v6) at (6,0) {};
\node[red] (v7) at (7,0) {};
\node[red] (v8) at (8,0) {};
\node[red] (v9) at (9,0) {};
\node[blue] (v10) at (10,0) {};
\node[blue] (v11) at (11,0) {};
\node[blue] (v12) at (12,0) {};
\node[blue] (v13) at (13,0) {};
\node[blue] (v14) at (14,0) {};
\node[blue] (v15) at (15,0) {};
\node[blue] (v16) at (16,0) {};
\node[blue] (v17) at (17,0) {};
\node[red] (v18) at (18,0) {};

\draw[dotted, thick] (v1)--(v2);
\draw[red,thick] (v2)--(v9);
\draw[dotted,thick] (v9)--(v10);
\draw[blue,thick] (v10)--(v17);
\draw[dotted,thick] (v17)--(v18);

\node[blue] (u2) at (2.5,1) {};
\node (u4) at (4.5,1) {};
\node (u6) at (6.5,1) {};
\node[blue] (u8) at (8.5,1) {};
\node[red] (u10) at (10.5,1) {};
\node (u12) at (12.5,1) {};
\node (u14) at (14.5,1) {};
\node[red] (u16) at (16.5,1) {};

\draw[blue,thick] (u2) to [bend right=20] (v1);
\draw[dotted,thick] (u4) to [bend right=20] (v3);
\draw[dotted,thick] (u6) to [bend right=20] (v5);
\draw[dotted,thick] (u8) to [bend right=20] (v7);
\draw[red,thick] (u10) to [bend right=20] (v9);
\draw[dotted,thick] (u12) to [bend right=20] (v11);
\draw[dotted,thick] (u14) to [bend right=20] (v13);
\draw[dotted,thick] (u16) to [bend right=20] (v15);

\draw[dotted,thick] (u2) to [bend left=20] (v4);
\draw[dotted,thick] (u4) to [bend left=20] (v6);
\draw[dotted,thick] (u6) to [bend left=20] (v8);
\draw[blue,thick] (u8) to [bend left=20] (v10);
\draw[dotted,thick] (u10) to [bend left=20] (v12);
\draw[dotted,thick] (u12) to [bend left=20] (v14);
\draw[dotted,thick] (u14) to [bend left=20] (v16);
\draw[red,thick] (u16) to [bend left=20] (v18);

\node[blue] (t3) at (3.5,2) {};
\node[blue] (t7) at (7.5,2) {};
\node[red] (t11) at (11.5,2) {};
\node[red] (t15) at (15.5,2) {};

\draw (u2) -- (t3)--(u4);
\draw (u6) -- (t7)--(u8);
\draw (u10) -- (t11)--(u12);
\draw (u14) -- (t15)--(u16);

\node[blue] (s5) at (5.5,3) {};
\node[red] (s13) at (13.5,3) {};

\draw (t3) -- (s5)--(t7);
\draw (t11) -- (s13)--(t15);

\node (r9) at (9.5,4) {};

\draw (s5) -- (r9)--(s13);

\draw[blue,thick] (u2)--(t3)--(s5)--(t7)--(u8);
\draw[red,thick] (u10)--(t11)--(s13)--(t15)--(u16);

\end{tikzpicture}
}
    \end{subfigure}
    \vspace{-2em}
    \caption{A nested linkage (left) and a crossing linkage (right).}
    \label{fig:Linkages}
\end{figure}
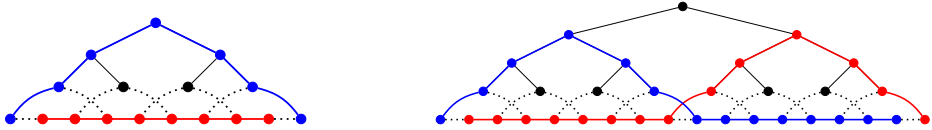

\begin{observation} \label{obs NL}
    For every $k,d \in \N$, the NSS graph $G_{k,d}$ has a nested linkage $P,Q$ with $d_G(P,Q)=d$. \qed
\end{observation}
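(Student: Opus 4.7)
The plan is to construct the nested linkage $\{P,Q\}$ explicitly, then bound the distance between them. In $G=G_{k,d}$, let $\Delta_L$ (resp.\ $\Delta_R$) be the unique bottom $G_{1,d}$-trigon of $G$ containing $\SO(G)$ (resp.\ $\TO(G)$). Define $P$ to be the $\SO(G)$--$\TO(G)$ path formed by concatenating the length-$d$ subdivided edge from $\SO(G)$ to $\RR(\Delta_L)$, the unique $\RR(\Delta_L)$--$\RR(\Delta_R)$ path in the binary tree $B(G)$ (which passes through $\RR(G)$), and the length-$d$ subdivided edge from $\RR(\Delta_R)$ to $\TO(G)$. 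Define $Q$ to be the subpath of the bottom path of $G$ from $\SI(G)$ to $\TI(G)$. For $k=1$ these collapse to $P=\SO(G)\RR(G)\TO(G)$ and $Q=\SI(G)\TI(G)$.

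Since $V(P)\subseteq V(B(G))$ while the internal vertices of $Q$ are exactly the bottom-path subdivision vertices that are omitted in the definition of $B(G)$, the paths $P$ and $Q$ are vertex-disjoint; by construction they form a nested linkage from $\SSS(G)$ to $\TTT(G)$. The upper bound $d_G(P,Q)\leq d$ is immediate from the subdivided edge $\SO(G)\SI(G)$ of length $d$, whose endpoints are $\SO(G)\in V(P)$ and $\SI(G)\in V(Q)$. For the lower bound $d_G(P,Q)\geq d$, I would set $f(v):=d_G(v,V(P))$ and show $f(w)\geq d$ for every $w\in V(Q)$, by classifying every graph-edge of $G$ as (a) internal to $P$, (b) internal to $Q$, or (c) lying on a length-$d$ ``bottleneck'' subdivided edge whose interior is outside $V(P)\cup V(Q)$. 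The bottlenecks are: the two end-bottom edges $\SO(G)\SI(G)$ and $\TI(G)\TO(G)$; and (for $k\geq 2$) the leaf subdivided edges $\RR(\Delta)\SO(\Delta)$ and $\RR(\Delta)\TO(\Delta)$ for every bottom trigon $\Delta$ of $G$, with the exception of the two of these already internal to $P$ (namely $\RR(\Delta_L)\SO(\Delta_L)$ and $\RR(\Delta_R)\TO(\Delta_R)$). Any $V(P)$--$V(Q)$ walk must therefore either traverse a full length-$d$ bottleneck directly, or detour through an off-$P$ subtree of $B(G)$; the latter uses at least one tree edge plus a full length-$d$ leaf subdivided edge, giving length $\geq d+1$.

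The main subtlety is the off-$P$ part of $B(G)$: for $k\geq 3$, not every leaf-root of $B(G)$ lies on $P$, so there are ``off-$P$'' leaf-roots $\RR(\Delta)$ (for bottom trigons $\Delta\notin\{\Delta_L,\Delta_R\}$) whose leaf subdivided edges reach $V(Q)$ directly. One must verify that any detour from $V(P)$ into such an off-$P$ subtree requires at least one tree edge before reaching an off-$P$ leaf-root, from which a full length-$d$ leaf edge is still needed to reach $V(Q)$, giving total length $\geq d+1$, so no such detour shortens the $V(P)$--$V(Q)$ distance below $d$. This follows by a routine induction on the depth of the off-$P$ subtree in $B(G)$.
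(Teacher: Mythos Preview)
Your construction is exactly the linkage the paper exhibits in Figure~\ref{fig:Linkages} (the blue tree-path through $\RR(G)$ and the red bottom subpath), and your distance argument is correct; the paper simply points to the figure and leaves the distance claim as evident. One small wording slip: the internal vertices of $Q$ are not \emph{exactly} the bottom-path subdivision vertices (they also include the non-subdivision vertices $\SSS(\Delta)\cup\TTT(\Delta)$ of interior bottom trigons), but this does not affect disjointness or the rest of your argument.
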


\noindent Indeed, such a linkage is shown in Figure~\ref{fig:Linkages}.

\begin{lemma} \label{Lem cross}
Let $\Delta$ be a trigon of an NSS graph~$G$, and let $\Delta_0,\Delta_1$ be its children trigons. Let $P,Q$ be a linkage of $\Delta$ that avoids $\RR(\Delta)$. 
Then 
\begin{enumerate}[label=\rm{(\roman*)}]
    \item \label{li} for each $i\in \{0,1\}$ the paths $P^i := P \cap \Delta_i$ and $Q^i := Q \cap \Delta_i$ form a linkage of~$\Delta_i$; 
    \item \label{lii} at least one
of the three  linkages $P,Q$ and $P^0,Q^0$ and $P^1,Q^1$ is crossing.
\end{enumerate}
\end{lemma}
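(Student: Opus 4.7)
The plan is to handle (i) and (ii) separately, with (i) being essentially a separator argument and (ii) a short case analysis built on one structural observation about the gluing.

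For (i), set $a := \TI(\Delta_0) = \SO(\Delta_1)$ and $b := \TO(\Delta_0) = \SI(\Delta_1)$, the two vertices identified when $\Delta_1$ is glued to $\Delta_0$. The only connections between $V(\Delta_0) \setminus \{a,b\}$ and $V(\Delta_1) \setminus \{a,b\}$ inside $\Delta$ pass through $\{a,b,\RR(\Delta)\}$, by construction. Since both $P$ and $Q$ avoid $\RR(\Delta)$, each starts in $\SSS(\Delta) \subseteq V(\Delta_0) \setminus \{a,b\}$ and ends in $\TTT(\Delta) \subseteq V(\Delta_1) \setminus \{a,b\}$, so each traverses the separator $\{a,b\}$. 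As $P,Q$ are simple and pairwise disjoint, each uses exactly one of $a,b$ and they use distinct ones. Hence $P^0$ and $Q^0$ are internally disjoint paths in $\Delta_0$ from $\SSS(\Delta_0)=\SSS(\Delta)$ to $\TTT(\Delta_0)=\{a,b\}$, i.e.\ a linkage of $\Delta_0$; symmetrically for $\Delta_1$.

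The key idea for (ii) is that the identification in the construction of $\Delta$ \emph{swaps the inner/outer labels}: the vertex $a$ is the \emph{inner} endpoint of $\Delta_0$ but the \emph{outer} start of $\Delta_1$, and dually $b$ is outer for $\Delta_0$ but inner for $\Delta_1$. Assume for contradiction that all three of the listed linkages are nested. Since $P,Q$ is nested, I may relabel so that $P$ joins $\SO(\Delta)=\SO(\Delta_0)$ with $\TO(\Delta)=\TO(\Delta_1)$ and $Q$ joins $\SI(\Delta)=\SI(\Delta_0)$ with $\TI(\Delta)=\TI(\Delta_1)$, and then split on which of $a,b$ each path uses.

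If $P$ passes through $a$ and $Q$ through $b$, then $P^0$ goes from $\SO(\Delta_0)$ to $\TI(\Delta_0)=a$ and $Q^0$ from $\SI(\Delta_0)$ to $\TO(\Delta_0)=b$, so $P^0,Q^0$ is a crossing linkage of $\Delta_0$, contradicting the assumption. If instead $P$ passes through $b$ and $Q$ through $a$, then $P^0,Q^0$ is nested in $\Delta_0$, but invoking the label swap: $P^1$ goes from $\SI(\Delta_1)=b$ to $\TO(\Delta_1)$ and $Q^1$ from $\SO(\Delta_1)=a$ to $\TI(\Delta_1)$, so $P^1,Q^1$ is a crossing linkage of $\Delta_1$, again a contradiction.

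The only conceptually non-routine step is noticing the inner/outer swap baked into the identification $\TI(\Delta_0)=\SO(\Delta_1)$, $\TO(\Delta_0)=\SI(\Delta_1)$; once that is pointed out, (ii) reduces to the four-line case analysis above and no further machinery is needed. I expect the write-up to be short, with the main care being bookkeeping of the five adhesion labels $\SO,\SI,\TO,\TI,\RR$ across $\Delta$, $\Delta_0$ and $\Delta_1$.
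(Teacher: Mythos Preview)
Your proof is correct and follows essentially the same approach as the paper. Part (i) is the same separator argument (the paper phrases it as $\{\RR(\Delta)\}\cup\TTT(\Delta_0)$ separating $\Delta_0$ from $\Delta_1$), and part (ii) is the contrapositive of the paper's version: they assume both child linkages are nested and show the parent linkage must then be crossing, while you assume all three are nested and derive a contradiction via the same inner/outer label swap $\TO(\Delta_0)=\SI(\Delta_1)$, $\TI(\Delta_0)=\SO(\Delta_1)$.
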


\begin{proof}
    \noindent \ref{li}: Let $W^i := W \cap \Delta_i$ for $W \in \{P,Q\}$ and $i \in \{0,1\}$. Since $\{\RR(\Delta)\} \cup \TTT(\Delta_0)$ separates $\Delta_0$ and $\Delta_1$ in $\Delta$, and because avoid $\RR(\Delta)$, the subpaths $P^0, Q^0$ and $P^1, Q^1$ are linkages of $\Delta_0, \Delta_1$, respectively. 
    \medskip

    \noindent \ref{lii}: Without loss of generality let $P$ start in $\SO(\Delta)$. If both $P^0,Q^0$ and $P^1,Q^1$ are nested, then $P^0$ ends in $\TO(\Delta_0)$ by the definition of nested. Since $\TO(\Delta_0) = \SI(\Delta_1)$, it follows that $P^1$ ends in $\TI(\Delta_1)$. Hence, $P$ starts in $\SO(\Delta)$ and ends in $\TI(\Delta)$, so $P,Q$ is crossing.
\end{proof}

\subsection{More about paths in the NSS graph}

In this subsection we make further observations about NSS graphs that will be used in \Sr{sec O fat}, only.

Recall that Nguyen, Scott and Seymour~\cite{NgScSeCou} constructed the NSS graphs as a counterexample to the `coarse Menger conjecture' \cite{AHJKW,GeoPapMin}. For this, one of the two main properties of $G:=G_{k,d}$ was that whenever $P,Q$ are paths between $\SSS(G)$  and $\TTT(G)$, then either $P,Q$ have distance at most $2$ from each other, or one of them contains the root $\RR(G)$. More precisely

\begin{lemma}{\rm{\cite[2.2]{NgScSeCou}}} \label{lem:TwoPathsInNSSGraph}
For every $k \geq 1$ and $d \in \N$, if $P,Q$ are paths in $G := G_{k,d}$ between $\SSS(G)$ and $\TTT(G)$, then either
\begin{enumerate}[label=\rm{(\roman*)}]
    \item $P,Q$ have distance at most $2$ from each other, or
    \item one of $P,Q$ is the (unique) path of $B(G)$ between $\SO(G)$ and $\TO(G)$.
\end{enumerate}
\end{lemma}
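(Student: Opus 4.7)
The plan is induction on $k$, exploiting that for $k \geq 2$ the two children roots $R_i := \RR(\Delta_i)$ lie at distance exactly $2$ from each other via $R := \RR(G)$; throughout I write $\beta(G)$ for the unique $\SO(G)$-$\TO(G)$ path in $B(G)$, so the lemma's conclusion reads ``$d_G(P, Q) \leq 2$ or one of $P, Q$ equals $\beta(G)$.'' For the base case $k = 1$, $G_{1,d}$ is topologically a $5$-cycle with main vertices $S_0, S_1, T_1, T_0, R$ cyclically arranged and every arc subdivided to length $d$; two disjoint $\SSS$-$\TTT$ paths in such a cycle must partition it into arcs, and an endpoint check shows the only valid pairing sends $\SO$ to $\TO$ along the arc through $R$ and $\SI$ to $\TI$ along the opposite direct arc, with the former equal to $\beta(G_{1,d})$.

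For the inductive step, let $G := G_{k,d}$ with $k \geq 2$, children $\Delta_0, \Delta_1$, and suppose $P, Q$ are $\SSS(G)$-$\TTT(G)$ paths with $d_G(P, Q) > 2$, so in particular $P, Q$ are disjoint and $R$ lies on at most one of them. If neither uses $R$, then both lie in $\Delta_0 \cup \Delta_1$ and are split by the separator $\TTT(\Delta_0) = \SSS(\Delta_1)$ into $\SSS(\Delta_i)$-$\TTT(\Delta_i)$ pieces in each $\Delta_i$. Since $d_{\Delta_i} \leq 2$ would give $d_G \leq 2$, the inductive hypothesis forces one piece in each $\Delta_i$ to equal $\beta(\Delta_i)$. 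The identifications $\TO(\Delta_0) = \SI(\Delta_1)$ and $\TI(\Delta_0) = \SO(\Delta_1)$ swap the $S/T$-labels at the shared boundary, so both $\beta$-pieces cannot lie on the same path (the matching endpoints would not align): WLOG $P \cap \Delta_0 = \beta(\Delta_0)$ and $Q \cap \Delta_1 = \beta(\Delta_1)$. Then $R_0 \in P$ and $R_1 \in Q$, giving $d_G(P, Q) \leq d_G(R_0, R_1) = 2$, contradicting our assumption.

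If exactly one of $P, Q$, say $P$, uses $R$, then $P$ traverses the two direct edges $R_0 R$ and $R R_1$, splitting into an $\SSS(\Delta_0)$-$R_0$ piece $P_0 \subseteq \Delta_0$ and an $R_1$-$\TTT(\Delta_1)$ piece $P_1 \subseteq \Delta_1$, while $Q$ again splits into pieces $Q_0, Q_1$. To apply the inductive hypothesis inside $\Delta_i$, I would extend $P_0$ to an $\SSS(\Delta_0)$-$\TTT(\Delta_0)$ path $\tilde P_0$ by appending a suitable $R_0$-to-$\TTT(\Delta_0)$ tail in $B(\Delta_0)$, and analogously prepend an $\SSS(\Delta_1)$-to-$R_1$ head to $P_1$ to obtain $\tilde P_1$. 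Applying the inductive hypothesis to $(\tilde P_i, Q_i)$ in each $\Delta_i$ yields three resolving cases: if $Q_i = \beta(\Delta_i)$ for some $i$, then $R_i \in Q$ is adjacent to $R \in P$, contradicting $d_G(P, Q) > 2$; if $\tilde P_i = \beta(\Delta_i)$ for both $i$, then each $P_i$ coincides with the appropriate half of $\beta(\Delta_i)$ and concatenation yields $P = \beta(G)$, the desired second alternative; a distance-$\leq 2$ witness in $\Delta_i$ either lies on $P_i$ (transferring directly to $d_G(P, Q) \leq 2$) or lies on the appended tail adjacent to $R_i$, and in the latter case the edge $R_i R \in E(G)$ again forces $d_G(P, Q) \leq 2$, contradiction.

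The main obstacle will be making the extension step rigorous: the appended tail must be chosen disjoint from $P_i$, which is delicate because $P_i$ could in principle traverse vertices of $B(\Delta_i)$ on both sides of $R_i$ (entering via one neighbour of $R_i$ after taking a detour through the bottom path). A cleaner alternative is to strengthen the induction hypothesis to allow paths with one endpoint at $\RR(G)$, with a correspondingly adjusted conclusion, so that $P_i$ is handled directly without any extension trick; either way, the constant ``$2$'' in the conclusion traces back precisely to the distance-$2$ gap $R_0$-$R$-$R_1$ in the recursive tree structure of $G$.
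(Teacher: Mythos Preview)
The paper does not give its own proof of this lemma; it is quoted verbatim from \cite{NgScSeCou} (their Lemma~2.2), so there is no in-paper argument to compare against. Your inductive strategy and the identification of the key mechanism---the distance-$2$ bridge $R_0\text{--}R\text{--}R_1$---are correct, and your treatment of the base case and of the case $R\notin P\cup Q$ is clean and complete.

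The genuine gap is in the case $R\in P$. Your extension $\tilde P_0$ of the $\SSS(\Delta_0)$--$R_0$ piece $P_0$ need not exist as an $\SSS(\Delta_0)$--$\TTT(\Delta_0)$ path: $R_0$ has exactly two neighbours inside $\Delta_0$, namely $\RR(\SP(\Delta_0))$ and $\RR(\TP(\Delta_0))$, and if $P_0$ arrives at $R_0$ via the \emph{right} child $\RR(\TP(\Delta_0))$, then the only tail in $B(\Delta_0)$ disjoint from the last step of $P_0$ descends the \emph{left} spine to $\SO(\Delta_0)\in\SSS(\Delta_0)$, not to $\TTT(\Delta_0)$. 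So in this sub-case you cannot form $\tilde P_0$ at all, and the inductive hypothesis is inapplicable. Your final ``distance-$\leq 2$ witness on the tail'' clause is also unjustified: a witness $a$ on the tail with $d_{\Delta_0}(a,Q_0)\leq 2$ can sit arbitrarily far from $R_0$, so the edge $R_0R$ gives no bound on $d_G(P,Q)$.

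You correctly diagnose that the clean fix is to strengthen the induction hypothesis to allow one endpoint at $\RR$; concretely, one proves a joint statement covering both $\SSS$--$\TTT$ and $\SSS$--$\{\RR\}$ paths (essentially the content of the paper's Corollary~3.4, which the paper \emph{derives} from this lemma rather than the other way round). Without actually carrying out that strengthened induction, however, the present argument is incomplete: the right-arrival sub-case is not a corner case one can wave away, and your case split ``$Q_i=\beta(\Delta_i)$ / $\tilde P_i=\beta(\Delta_i)$ for both $i$ / distance $\leq 2$'' is not exhaustive once mixed outcomes across $i=0,1$ are taken into account.
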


We also need the following variant of Lemma~\ref{lem:TwoPathsInNSSGraph}:

\begin{corollary}
\label{cor:TwoPAthsInNSSGraph:TopAndRight}
    For every $k \geq 2$ and $d \in \N$, if $P,Q$ are paths in $G := G_{k,d}$ between $\SSS(G)$ and $\RTT(G)$, then either
\begin{enumerate}[label=\rm{(\roman*)}]
    \item \label{itm:SStoRTT:i} $P,Q$ have distance at most $2$ from each other, or
    \item \label{itm:SStoRTT:ii} one of $P,Q$ is the (unique) path in $B(G)$ between $\SO(G)$ and $\RR(G)$ or between $\SO(G)$ and $\TO(G)$. In particular, it contains $\RR(G)$.
\end{enumerate}
\end{corollary}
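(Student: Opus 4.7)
The plan is to reduce the statement to Lemma~\ref{lem:TwoPathsInNSSGraph} by case analysis on whether the endpoints of $P$ and $Q$ lie in $\TTT(G)$ or are $\RR(G)$.

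First I would dispose of the easy cases. If neither $P$ nor $Q$ ends at $\RR(G)$, both are honest $\SSS(G)$--$\TTT(G)$ paths, so Lemma~\ref{lem:TwoPathsInNSSGraph} applies verbatim; its alternative~(ii) produces the tree path $\SO(G)$--$\TO(G)$ in $B(G)$, which is one of the two tree paths allowed by \ref{itm:SStoRTT:ii}. If both paths end at $\RR(G)$, they share this endpoint, giving $d_G(P,Q)=0$. Finally, if exactly one path, say $Q$, ends at $\RR(G)$ while the other, $P$, ends in $\TTT(G)$ but happens to pass through $\RR(G)$ as an interior vertex, again $\RR(G)\in P\cap Q$, so $d_G(P,Q)=0$.

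The substantive case is that $Q$ ends at $\RR(G)$ while $P$ avoids $\RR(G)$ altogether. Here I would extend $Q$ past $\RR(G)$ to a $\SSS(G)$--$\TTT(G)$ path $\tilde Q$. Since $\RR(G)$ has degree two in $G$, with neighbours $\RR(\SP(G))$ and $\RR(\TP(G))$, and $Q$ uses exactly one of the two incident edges, I would append the unused edge and then the unique tree path in $B(G)$ descending through the chosen child into $\TP(G)$ down to a vertex of $\TTT(G)$: concretely, via $\RR(\TP(G))$ and $B(\TP(G))$ to $\TO(G)$ when $Q$ last used $\RR(\SP(G))$, and handled symmetrically (by swapping the roles of $\SP$ and $\TP$, or by choosing $\TI(G)$ instead of $\TO(G)$ as target) in the other configuration. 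One must check in each subcase that the extension is internally disjoint from $Q$, which follows from the degree constraints at $\RR(G)$ and, if needed, from choosing the target in $\TTT(G)$ to be the one not met by $Q$.

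I then apply Lemma~\ref{lem:TwoPathsInNSSGraph} to $P$ and $\tilde Q$. If its alternative~(ii) holds for $P$, we are already in \ref{itm:SStoRTT:ii}. If it holds for $\tilde Q$, then because the tree path $\SO(G)$--$\TO(G)$ in $B(G)$ necessarily passes through $\RR(G)$, the initial segment of $\tilde Q$ ending at $\RR(G)$ is the tree path $\SO(G)$--$\RR(G)$ in $B(G)$, and this initial segment is exactly $Q$, yielding \ref{itm:SStoRTT:ii} again. In the remaining subcase $d_G(P,\tilde Q)\leq 2$, and the task is to upgrade this to $d_G(P,Q)\leq 2$ by arguing that any witness pair $(p,q)\in P\times \tilde Q$ with $q$ in the appended tail $\tilde Q\setminus Q$ forces $P$ to be within distance $2$ of $\RR(G)\in Q$. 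This last transfer is the main obstacle: I would rule out spurious close approaches of $P$ to the tail using the structural control on NSS graphs from Section~\ref{sec:constr NSS}, notably Lemma~\ref{lem:NSSGraph:NoSmallSep} applied inside the subtrigon of $\TP(G)$ whose tree path the tail traces, together with the fact that $P$ avoids $\RR(G)$ and must cross from $\SSS(G)\subseteq \SP(G)$ to $\TTT(G)=\TTT(\TP(G))$ through the narrow adhesion $\SSS(\TP(G))$.
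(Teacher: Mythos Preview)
Your reduction to Lemma~\ref{lem:TwoPathsInNSSGraph} via an extension $\tilde Q$ is natural, but the final transfer step---deducing $d_G(P,Q)\le 2$ from $d_G(P,\tilde Q)\le 2$---is a genuine gap that your sketch does not close. The witness $q$ can lie deep in the appended tail inside $\TP(G)$: since $P\cap\TP(G)$ is an $\SSS(\TP(G))$--$\TTT(\TP(G))$ path, nothing prevents it from coming within distance~$2$ of the right spine $\RR(\TP(G))\text{--}\TO(G)$ far below $\RR(G)$, while $Q$ (which reaches $\RR(G)$ from the $\SP$-side) need not be anywhere nearby. Your proposed tool, Lemma~\ref{lem:NSSGraph:NoSmallSep}, is irrelevant here: it \emph{constructs} an $\SSS$--$\TTT$ path avoiding a prescribed small ball, and says nothing about where a \emph{given} path $P$ must run. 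The observation that $P$ crosses the adhesion $\SSS(\TP(G))$ constrains $P$ only at that single crossing, not thereafter. There is also a secondary issue you wave past: since an $\SSS(G)$--$\RTT(G)$ path is only forbidden from meeting $\RTT(G)$ internally, $Q$ may enter $\TP(G)$ through $\SSS(\TP(G))$ and already occupy portions of both candidate tails, so $\tilde Q$ need not be a simple path.

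The paper does not extend $Q$ at all. It inducts on the height $k$: assuming $d_G(P,Q)\ge 3$ and that $P$ (starting at $\SO(G)$) is not the tree path to $\RR(G)$, it takes the first edge $v_0v_1$ of that tree path missing from $P$, which singles out nested trigons $\Delta_0=\SP(\Delta_1)$ with $\RR(\Delta_0)=v_0\in P$. Lemma~\ref{lem:TwoPathsInNSSGraph} applied inside $\Delta_0$ forces $\TO(\Delta_0)\in P$ and $\TI(\Delta_0)\in Q$; then the induction hypothesis applied to the sibling trigon $\Delta_0':=\TP(\Delta_1)$ forces $Q$ (which now enters $\Delta_0'$ at $\SO(\Delta_0')$) to contain $\RR(\Delta_0')$, and $d_G(\RR(\Delta_0),\RR(\Delta_0'))=2$ gives the contradiction. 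This localises the analysis to smaller trigons rather than trying to control a global extension.
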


\begin{proof}
    We proceed by induction on the height $k$. For $k = 1$ the assertion is easy to check because $G_{1, d}$ is just a subdivided $K_3$. 
    So we may assume that $k > 1$ and that the assertion holds for all $k' < k$. 
    Set $G := G_{k,d}$, let $P,Q$ be given such that (without loss of generality) $P$ starts in $\SO(G)$ (and hence $Q$ starts in $\SI(G)$), and assume that $d_G(P, Q) \geq 3$, i.e.\ \ref{itm:SStoRTT:i} does not hold. 

    If both $P,Q$ end in $\TTT(G)$, then \ref{itm:SStoRTT:ii} holds by Lemma~\ref{lem:TwoPathsInNSSGraph}. Hence, we may assume that one of $P,Q$ ends in $\RR(G)$. In particular, $P,Q$ are $\SSS(G)$--$\RTT(G)$ paths.
    Now suppose for a contradiction that \ref{itm:SStoRTT:ii} does not hold either, i.e.\ $P$ is not the (unique) path in $B(G)$ between $\SO(G)$ and $\RR(G)$. 
    Let $v_0v_1$ be the first edge on the (unique) path in $B(G)$ from $\SO(G)$ to $\RR(G)$ which is not contained in $P$. 
    Let $\Delta_0, \Delta_1$ be the unique trigons of $G$ with $\RR(\Delta_0) = v_0$ and $\RR(\Delta_1) = v_1$. In particular, $\RR(\Delta_0) = v_0 \in V(P)$ and $\Delta_0 = \SP(\Delta_1)$. Since $\TTT(\Delta_0)$ and the edge $\RR(\Delta_0)\RR(\Delta_1)$ separate $\SSS(G)$ from $\TTT(G)$, both $P,Q$ meet $\TTT(\Delta_0)$. Since $d_G(P,Q) \geq 3$ and $\SO(G) \in V(P)$, it follows by Lemma~\ref{lem:TwoPathsInNSSGraph} that $\TO(\Delta_0), \RR(\Delta_0) \in V(P)$ and $\TI(\Delta_0) \in V(Q)$. 
    
    Now consider $\Delta'_0 := \TP(\Delta_1)$. As $\RTT(\Delta'_0)$ separates $\TO(\Delta_0) = \SO(\Delta'_0)$ and $\TI(\Delta_0) = \SI(\Delta'_0)$ from $\RTT$ in $\Ext(\Delta_0)$, 
    both $P,Q$ meet $\RTT(\Delta'_0)$. Applying the induction hypothesis to $\Delta'_0$ yields that $Q$, which starts in $\SO(\Delta'_0)$, contains $\RR(\Delta'_0)$. But then $d_G(P,Q) \leq d_G(\RR(\Delta_0), \RR(\Delta'_0)) = 2$, a contradiction. 
\end{proof}

A similar result as in Corollary~\ref{cor:TwoPAthsInNSSGraph:TopAndRight} is also true if $\Delta$ is a trigon of $G$ and the paths $P,Q$ are contained in $\Ext(\Delta)$ (see Figure~\ref{fig:PathsInNssGraph}):

\begin{lemma} \label{lem:TwoPathsInNSSGraph:ExteriorOfSubpyramid}
    For every $k \geq 2$ and $d \in \N$ and every trigon $\Delta$ of $G := G_{k,d}$, if $P,Q$ are paths in $\Ext(\Delta)$ between $\SSS(\Delta)$ and $\RTT(\Delta)$, then either
    \begin{enumerate}[label=\rm{(\roman*)}]
        \item $P,Q$ have distance at most $2$ from each other, or
        \item one of $P,Q$ is the (unique) path in $B(G)$ between either $\SI(\Delta)$ and $\TI(\Delta)$ or between $\SI(\Delta)$ and $\RR(\Delta)$. In particular, it has distance at most~$1$ from $\RR(\Delta)$.
    \end{enumerate}
\end{lemma}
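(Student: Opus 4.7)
My plan is to adapt the inductive proof of Corollary~\ref{cor:TwoPAthsInNSSGraph:TopAndRight} to the setting of $\Ext(\Delta)$, by induction on the height $k$ of $G$. The base case $k=2$ can be verified directly, since the only proper subtrigons are the two copies of $G_{1,d}$, and the resulting $\Ext(\Delta)$ is simple enough to make the statement a finite check.

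For the inductive step, consider paths $P, Q$ in $\Ext(\Delta)$ from $\SSS(\Delta)$ to $\RTT(\Delta)$ with $d_G(P,Q) \geq 3$; I must show that one of them is the specified unique $B(G)$-path. Vertex-disjointness and the requirement that each starts in $\SSS(\Delta) = \{\SO(\Delta), \SI(\Delta)\}$ force (WLOG) $P$ to start at $\SO(\Delta)$ and $Q$ at $\SI(\Delta)$. (If one or both of these vertices is isolated in $\Ext(\Delta)$---which happens precisely when $\Delta$ is a pure $\SP$-chain descendant of $G$---then there is at most one such path and the statement is vacuous.) If both $P, Q$ end in $\TTT(\Delta)$, the relevant unique path to consider is the one in $B(G)$ between $\SI(\Delta)$ and $\TI(\Delta)$; otherwise, WLOG one of them ends at $\RR(\Delta)$, in which case the relevant unique path is between $\SI(\Delta)$ and $\RR(\Delta)$.

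Now suppose conclusion (ii) fails, so $Q$ is not the unique $B(G)$-path $\pi$ starting at $\SI(\Delta)$. Let $v_0 v_1$ be the first edge on $\pi$, starting from $\SI(\Delta)$, that is not contained in $Q$; let $\Delta', \Delta''$ be the unique trigons with $\RR(\Delta') = v_0$ and $\RR(\Delta'') = v_1$, so $\Delta' = \SP(\Delta'')$. Since $\Adh(\Delta')$ separates $\Delta'$ from $G - V(\Delta')$, and since $\Delta'$ is disjoint from $\Int(\Delta)$ (as $\Delta'$ lies on the tree path from $\Delta$ upwards in $B(G)$, and so can share at most adhesion with $\Delta$), this separation persists in $\Ext(\Delta)$. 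Hence both $P, Q$ must meet $\RTT(\Delta')$. Applying Lemma~\ref{lem:TwoPathsInNSSGraph} to the subpaths of $P, Q$ within $\Delta'$ (treating $\Delta'$ as an NSS graph in its own right), and then applying the induction hypothesis to a sibling trigon such as $\TP(\Delta'')$, one concludes that $P$ must also pass within distance~$2$ of $\RR(\Delta')$, yielding $d_G(P,Q) \leq 2$, a contradiction.

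The main obstacle will be correctly identifying the trigons $\Delta', \Delta''$ relative to $\Delta$ along the tree path $\pi$: because the ancestor chain of $\Delta$ may alternate between $\SP$ and $\TP$ branches, the unique $B(G)$-path from $\SI(\Delta)$ to $\RR(\Delta)$ may travel up through several ancestor roots before descending, and one must verify at each step that the chosen trigon lies outside $\Int(\Delta)$ so that its adhesion genuinely separates in $\Ext(\Delta)$, and that the subpath-based invocations of Lemma~\ref{lem:TwoPathsInNSSGraph} and of the induction hypothesis remain valid in the restricted graph---all in direct analogy with how the proof of Corollary~\ref{cor:TwoPAthsInNSSGraph:TopAndRight} handles its case analysis.
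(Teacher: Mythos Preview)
Your sketch has a genuine structural gap that the paper's argument avoids. In your inductive step you pick the first edge $v_0v_1$ of the tree-path $\pi$ (from $\SI(\Delta)$) not in $Q$, and then want to apply Lemma~\ref{lem:TwoPathsInNSSGraph} to the subpaths of $P,Q$ inside the trigon $\Delta'$ rooted at $v_0$. But unlike in Corollary~\ref{cor:TwoPAthsInNSSGraph:TopAndRight}, the path $\pi$ here runs through \emph{ancestors} of $\RR(\Delta)$ in $B(G)$: once $v_0$ is such an ancestor, $\Delta'$ \emph{contains} $\Delta$, so $P,Q$ restricted to $\Delta'$ are not paths in $\Delta'$ but in $\Delta'\cap\Ext(\Delta)$, and Lemma~\ref{lem:TwoPathsInNSSGraph} does not apply. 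Your claim that ``$\Delta'$ is disjoint from $\Int(\Delta)$'' is simply false at those nodes. (Also, $\Delta'=\SP(\Delta'')$ is asserted but in general $\Delta'$ can just as well be $\TP(\Delta'')$.) You recognise this obstacle in your final paragraph, but you do not resolve it, and it is not a detail: it is precisely where the analogy with Corollary~\ref{cor:TwoPAthsInNSSGraph:TopAndRight} breaks.

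The paper's proof proceeds differently. It inducts on the codepth $n=k-k'$ (height of $G$ minus height of $\Delta$), and at each step looks only at the immediate parent $\Delta_1\supseteq\Delta$ and grandparent $\Delta_2$. It splits into two cases, $\Delta=\SP(\Delta_1)$ versus $\Delta=\TP(\Delta_1)$, and in each case tracks how $P,Q$ must exit $\Delta_1$ through the separator $\RTT(\Delta_1)$ (or the flipped version). This lets it invoke the induction hypothesis on $\Delta_1$ (one level shallower, so $n$ drops by $1$), and Corollary~\ref{cor:TwoPAthsInNSSGraph:TopAndRight} on the \emph{sibling} trigon $\Delta':=\TP(\Delta_1)$ or $\SP(\Delta_1)$, which genuinely is disjoint from $\Int(\Delta)$. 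Each case then has two or three sub-cases depending on which adhesion vertex $P$ first hits, and the argument is a careful chase showing that either $P$ is forced to be the specified $B(G)$-path, or $Q$ passes within distance~$2$ of a root already on $P$. The key point is that by peeling off exactly one parent layer per inductive step, the paper never needs to reason about trigons of unknown relationship to $\Delta$.
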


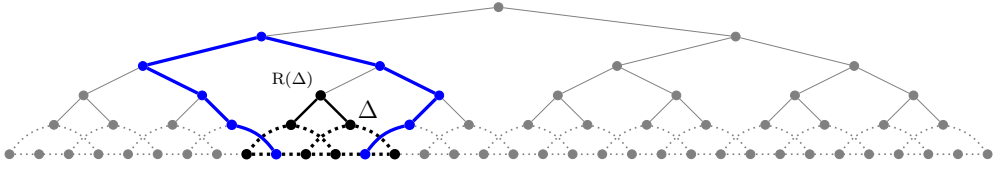
\begin{figure}[ht]
    \centering
    \scalebox{0.78}{%
\begin{tikzpicture}[scale=1/2,auto=left]

\tikzstyle{every node}=[gray,inner sep=1.5pt, fill=gray,circle,draw]
\tikzset{every path/.style={draw=gray}}
\node (v1) at (1,0) {};
\node (v34) at (34,0) {};
\node[black,thick] (v9) at (9,0) {};
\node[black,thick] (v14) at (14,0) {};
\draw[dotted, black, ultra thick] (v9)--(v14);
\draw[dotted, thick] (v1)--(v9);
\draw[dotted, thick] (v14)--(v34);
\node (v2) at (2,0) {};
\node (v3) at (3,0) {};
\node (v4) at (4,0) {};
\node (v5) at (5,0) {};
\node (v6) at (6,0) {};
\node (v7) at (7,0) {};
\node (v8) at (8,0) {};
\node[blue,thick] (v10) at (10,0) {};
\node[black,thick] (v11) at (11,0) {};
\node[black,thick] (v12) at (12,0) {};
\node[blue,thick] (v13) at (13,0) {};
\node (v15) at (15,0) {};
\node (v16) at (16,0) {};
\node (v17) at (17,0) {};
\node (v18) at (18,0) {};
\node (v19) at (19,0) {};
\node (v20) at (20,0) {};
\node (v21) at (21,0) {};
\node (v22) at (22,0) {};
\node (v23) at (23,0) {};
\node (v24) at (24,0) {};
\node (v25) at (25,0) {};
\node (v26) at (26,0) {};
\node (v27) at (27,0) {};
\node (v28) at (28,0) {};
\node (v29) at (29,0) {};
\node (v30) at (30,0) {};
\node (v31) at (31,0) {};
\node (v32) at (32,0) {};
\node (v33) at (33,0) {};

\node (u2) at (2.5,1) {};
\node (u4) at (4.5,1) {};
\node (u6) at (6.5,1) {};
\node[blue,thick] (u8) at (8.5,1) {};
\node[black,thick] (u10) at (10.5,1) {};
\node[black,thick] (u12) at (12.5,1) {};
\node[blue,thick] (u14) at (14.5,1) {};
\node (u16) at (16.5,1) {};
\node (u18) at (18.5,1) {};
\node (u20) at (20.5,1) {};
\node (u22) at (22.5,1) {};
\node (u24) at (24.5,1) {};
\node (u26) at (26.5,1) {};
\node (u28) at (28.5,1) {};
\node (u30) at (30.5,1) {};
\node (u32) at (32.5,1) {};

\draw[dotted,thick] (u2) to [bend right=20] (v1);
\draw[dotted,thick] (u4) to [bend right=20] (v3);
\draw[dotted,thick] (u6) to [bend right=20] (v5);
\draw[dotted,thick] (u8) to [bend right=20] (v7);
\draw[dotted,black,ultra thick] (u10) to [bend right=20] (v9);
\draw[dotted,black,ultra thick] (u12) to [bend right=20] (v11);
\draw[blue,ultra thick] (u14) to [bend right=20] (v13);
\draw[dotted,thick] (u16) to [bend right=20] (v15);
\draw[dotted,thick] (u18) to [bend right=20] (v17);
\draw[dotted,thick] (u20) to [bend right=20] (v19);
\draw[dotted,thick] (u22) to [bend right=20] (v21);
\draw[dotted,thick] (u24) to [bend right=20] (v23);
\draw[dotted,thick] (u26) to [bend right=20] (v25);
\draw[dotted,thick] (u28) to [bend right=20] (v27);
\draw[dotted,thick] (u30) to [bend right=20] (v29);
\draw[dotted,thick] (u32) to [bend right=20] (v31);

\draw[dotted,thick] (u2) to [bend left=20] (v4);
\draw[dotted,thick] (u4) to [bend left=20] (v6);
\draw[dotted,thick] (u6) to [bend left=20] (v8);
\draw[blue,ultra thick] (u8) to [bend left=20] (v10);
\draw[dotted,black,ultra thick] (u10) to [bend left=20] (v12);
\draw[dotted,black,ultra thick] (u12) to [bend left=20] (v14);
\draw[dotted,thick] (u14) to [bend left=20] (v16);
\draw[dotted,thick] (u16) to [bend left=20] (v18);
\draw[dotted,thick] (u18) to [bend left=20] (v20);
\draw[dotted,thick] (u20) to [bend left=20] (v22);
\draw[dotted,thick] (u22) to [bend left=20] (v24);
\draw[dotted,thick] (u24) to [bend left=20] (v26);
\draw[dotted,thick] (u26) to [bend left=20] (v28);
\draw[dotted,thick] (u28) to [bend left=20] (v30);
\draw[dotted,thick] (u30) to [bend left=20] (v32);
\draw[dotted,thick] (u32) to [bend left=20] (v34);

\node (t3) at (3.5,2) {};
\node[blue] (t7) at (7.5,2) {};
\node[black,thick] (t11) at (11.5,2) {};
\node[blue] (t15) at (15.5,2) {};
\node (t19) at (19.5,2) {};
\node (t23) at (23.5,2) {};
\node (t27) at (27.5,2) {};
\node (t31) at (31.5,2) {};

\draw (u2) -- (t3)--(u4);
\draw (u6) -- (t7)--(u8);
\draw[black,very thick] (u10) -- (t11)--(u12);
\draw (u14) -- (t15)--(u16);
\draw (u18) -- (t19)--(u20);
\draw (u22) -- (t23)--(u24);
\draw (u26) -- (t27)--(u28);
\draw (u30) -- (t31)--(u32);

\node[blue] (s5) at (5.5,3) {};
\node[blue] (s13) at (13.5,3) {};
\node (s21) at (21.5,3) {};
\node (s29) at (29.5,3) {};

\draw (t3) -- (s5)--(t7);
\draw (t11) -- (s13)--(t15);
\draw (t19) -- (s21)--(t23);
\draw (t27) -- (s29)--(t31);

\node[blue] (r9) at (9.5,4) {};
\node (r25) at (25.5,4) {};

\draw (s5) -- (r9)--(s13);
\draw (s21) -- (r25)--(s29);

\node (q17) at (17.5,5) {};
\draw (r9) -- (q17)--(r25);

\draw[blue,ultra thick] (u8)--(t7)--(s5)--(r9)--(s13)--(t15)--(u14);

\tikzstyle{every node}=[]
\draw[above left] (t11) node {\footnotesize $\RR(\Delta)$};
\draw[above right] (u12) node {\large $\Delta$};

\end{tikzpicture}
}
    \vspace{-2em}
    \caption{Indicated in blue is the unique path in $B(G)$ between $\SI(\Delta)$ and $\TI(\Delta)$, where $\Delta$ is indicated in black.}
    \label{fig:PathsInNssGraph}
\end{figure}

\begin{proof}[Proof of Lemma~\ref{lem:TwoPathsInNSSGraph:ExteriorOfSubpyramid}]
    Let $k'$ be the height of $\Delta$. We proceed by induction on $n := k - k'$. For $n = 0$, we have $\Delta = G$, and hence there is no path in $G[\Adh(G)] = \Ext(G)$ between $\SSS(\Delta)$ and $\RTT(\Delta)$, so the assertion holds trivially. For $n = 1$ there are no two disjoint paths in $\Ext(\Delta)$ between $\SSS(\Delta)$ and $\RTT(\Delta)$ because either $\SO(\Delta)$ and $\SI(\Delta)$ or $\TO(\Delta)$ and $\TI(\Delta)$ are isolated vertices in $\Ext(\Delta)$.
    Hence, we may assume $n \geq 2$. Let $\Delta_1, \Delta_2$ be the (unique) trigons of~$G$ of height $k'+1$ and $k'+2$, respectively, that contain~$\Delta$. 
    We assume that \ref{itm:SStoRTT:i} does not hold, and show that then \ref{itm:SStoRTT:ii} must hold.
    \smallskip
    
    Let us first assume that $\Delta = \SP(\Delta_1)$. Set also $\Delta' := \TP(\Delta_1)$.
    Without loss of generality let $P$ start in $\SI(\Delta)$. Since $\RTT(\Delta_1)$ separates $\RTT(\Delta)$ in $\Ext(\Delta)$ from $\SSS(\Delta)$, both $P,Q$ have to (re-)enter $\Delta_1$ through $\RTT(\Delta_1)$. 
    Let $P', Q'$ be the (unique) subpaths of $P, Q$, respectively, which start in $\SSS(\Delta)$, end in $\RTT(\Delta_1)$, and are internally disjoint from $\Delta_1$.
    Since the height of $\Delta_1$ is one larger than the height~$k'$ of $\Delta$, we may apply the induction hypothesis to $\Delta_1$, which yields that $P'$ is the (unique) path in $B(G)$ between $\SI(\Delta_1)$ and $\RR(\Delta_1)$ or $\TI(\Delta_1)$, and that $Q'$ ends in $\TTT(\Delta_1)$. In particular $\RR(\Delta_2) \in V(P)$, which implies that $Q$ does not end in $\RR(\Delta)$ as $d_G(\RR(\Delta), \RR(\Delta_2)) = 2$.
    
    If $P'$ ends in $\RR(\Delta_1)$ and the next edge on $P$ is $\RR(\Delta_1)\RR(\Delta)$, then $P$ is the (unique) path in $B(G)$ between $\SI(\Delta)$ and $\RR(\Delta)$ as desired. Thus, we may assume that if $P'$ ends in $\RR(\Delta_1)$, then $P$ contains the edge $\RR(\Delta_1)\RR(\Delta')$, and the next vertex on $P$ is $\RR(\Delta')$.
    This implies that in either case the first vertex of $P$ contained in $\Delta'$ is $\RR(\Delta')$ or $\TI(\Delta')$. 

    Let us first assume that $P$ enters $\Delta'$ through $\RR(\Delta')$ (and recall that $Q$ enters $\Delta'$ through $\TTT(\Delta')=\TTT(\Delta_1)$ as shown earlier).
    Since $\RTT(\Delta')$ has size $3$ and each of its vertices has only one neighbour in $\Ext(\Delta')$, the subgraphs $P \cap \Delta'$ and $Q \cap \Delta'$ of $P,Q$ are paths (which have one endvertex in $\RTT(\Delta')$ and the other in $\TTT(\Delta) = \SSS(\Delta')$).
    By applying Corollary~\ref{cor:TwoPAthsInNSSGraph:TopAndRight} to $\Delta'$, it follows that $P\cap \Delta'$ is the (unique) path in $B(G)$ between $\SO(\Delta') = \TI(\Delta)$ and $\RR(\Delta')$, and hence $P$ is the (unique) path in $B(G)$ between $\SI(\Delta)$ and $\TI(\Delta)$, so \ref{itm:SStoRTT:ii} holds.

    Thus, we may assume that $P$ enters $\Delta'$ through $\TI(\Delta)$ (and thus $Q$ enters $\Delta'$ through $\TO(\Delta')$). 
    We now again analyse $P \cap \Delta'$ and $Q \cap \Delta'$, which are again paths for the same reasons as above. First, observe that their other endvertices (the ones not in $\TTT(\Delta')$) have to be contained in $\SSS(\Delta') \cup \RR(\Delta')$. 
    By the symmetry of the NSS graph, we may apply Corollary~\ref{cor:TwoPAthsInNSSGraph:TopAndRight} to a flipped version of $\Delta'$, which yields that $Q \cap \Delta'$ is the (unique) path in $B(G)$ between $\TO(\Delta')$ and either $\SO(\Delta')$ or $\RR(\Delta')$. In both cases, $Q$ contains $\RR(\Delta')$, and hence $d_G(P,Q) \leq d_G(\RR(\Delta_2), \RR(\Delta')) = 2$, a contradiction. 
    \smallskip

    Let us now assume that $\Delta = \TP(\Delta_1)$. (This case uses similar ideas to the previous one.) 
    Set $\Delta' := \SP(\Delta_1)$. 
    Without loss of generality let $P$ start in $\SI(\Delta)$. Since $\SSS(\Delta') \cup \{\RR(\Delta')\}$ separates $\SSS(\Delta)$ in $\Ext(\Delta)$ from $\RTT(\Delta)$, both $P,Q$ have to exit $\Delta'$ through $\SSS(\Delta') \cup \{\RR(\Delta')\}$.
    In particular, since $\SSS(\Delta') \cup \{\RR(\Delta')\}$ has size $3$ and each of its vertices has only one neighbour in $\Ext(\Delta')$, both $P \cap \Delta'$ and $Q \cap \Delta'$ are paths (between $\SSS(\Delta) = \TTT(\Delta')$ and $\SSS(\Delta') \cup \RR(\Delta')$). 
    By the symmetry of the NSS graph, we can apply Corollary~\ref{cor:TwoPAthsInNSSGraph:TopAndRight} to a flipped version of $\Delta'$, which implies that $P \cap \Delta'$ is the unique path in $B(G)$ between $\TO(\Delta') = \SI(\Delta)$ and $\RR(\Delta')$ or $\SO(\Delta')$ (so $P \cap \Delta'$ is a path in $\Delta'$ from `right' to `left' or `top'); in particular, $\RR(\Delta') \in V(P)$. 
    
    Let us first assume that $P \cap \Delta'$ ends in $\SO(\Delta')$. Then $Q\cap \Delta'$ ends in $\SI(\Delta')$. Hence, $P\cap \Ext(\Delta_1)$ and $Q \cap \Ext(\Delta_1)$ are paths, and they go from $\SSS(\Delta_1) = \SSS(\Delta')$ to $\RTT(\Delta_1)$. Since the height of $\Delta_1$ is one larger than the height~$k'$ of $\Delta$, we may apply the induction hypothesis. But then $Q\cap \Ext(\Delta_1)$, which starts in $\SI(\Delta_1)$, contains $\RR(\Delta_2)$, and hence has distance at most~$2$ from $\RR(\Delta') \in P$, a contradiction.
    
    Thus, $P \cap \Delta'$ ends in $\RR(\Delta')$, and it is the unique path in $B(G)$ between $\TO(\Delta') = \SI(\Delta)$ and $\RR(\Delta')$. Then the next vertex on $P$ after $\RR(\Delta')$ has to be $\RR(\Delta_1)$. If $P$ also contains the edge $\RR(\Delta_1)\RR(\Delta)$ and thus ends in $\RR(\Delta)$, then $P$ is the unique $\SI(\Delta)$--$\RR(\Delta)$ path in $B(G)$ as desired. 
    So we may assume that $P$ does not contain this edge; in particular, it ends in $\TTT(\Delta)$ (because it can no longer reach $\RR(\Delta)$ by only using vertices in $\Ext(\Delta)$). Similarly, $Q$ must end in $\TTT(\Delta)$ (because $\RR(\Delta_1) \in P$, and since $P$ and $Q$ are disjoint, so $Q$ cannot reach $\RR(\Delta)$ by only using vertices in $\Ext(\Delta)$). 
    Hence, $P \cap \Ext(\Delta_1)$ and $Q \cap \Ext(\Delta_1)$ are paths from $\SSS(\Delta_1) \cup \{\RR(\Delta_1)\}$ to $\TTT(\Delta_1)$. By the symmetry of the NSS graph and because the height of $\Delta_1$ is one larger than the height~$k'$ of~$\Delta$, we can apply the induction hypothesis to a flipped version of $G$, which implies that $P \cap \Ext(\Delta_1)$ is the (unique) path in $B(G)$ between $\RR(\Delta_1)$ and $\TI(\Delta_1)=\TI(\Delta)$. Hence, $P$ is the unique path in $B(G)$ between $\SI(\Delta)$ and $\TI(\Delta)$, so \ref{itm:SStoRTT:ii} holds. 
\end{proof}

\section{Counterexamples to Conjecture~\ref{conj fat min}} \label{sec:Counterex:Proof}

In this section we prove Theorems~\ref{thm:Counterexample:Octahedron} and~\ref{thm:FatK4t} assuming Theorems~\ref{thm:css:finite} and~\ref{thm:NSSGraph:NoFatO:Intro} (which we will prove in the next two sections). 
We start by finding 2-fat models of $K_7$ and~$K_{4,t}$.

\begin{theorem} \label{thm:NSSGraph:K7}
    $K_7$ is a $2$-fat minor of $G_{k,d}$ for every $k \geq 13$ and $d \geq 2$.
\end{theorem}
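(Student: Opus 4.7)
The plan is to construct an explicit $2$-fat $K_7$-model in $G_{k,d}$ by induction on the size of the clique, exploiting the recursive structure of the NSS graphs. Since $G_{k,d}$ is built from two copies of $G_{k-1,d}$ glued along two shared adhesion vertices plus a new root $R$ adjacent to $\RR(\SP(G_{k,d}))$ and $\RR(\TP(G_{k,d}))$, each inductive step can add one new branch set by using the newly created upper part of the graph.

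The induction hypothesis I would aim for is that, for $k$ large enough (linearly in~$m$) and $d\geq 2$, the graph $G_{k,d}$ contains a $2$-fat $K_m$-model all of whose branch sets and branch paths lie deep inside a nested subtrigon $\SP^{j}(G_{k,d})$ for some small $j$, with a buffer of distance $\geq 2$ between the model and $\Adh(\SP^{j}(G_{k,d}))$. The base case is a $2$-fat $K_3$-model in the subdivided $5$-cycle~$G_{1,d}$, which is straightforward. For the inductive step from $m$ to $m+1$, two additional levels of the recursion are used: the existing $K_m$-model sits inside the left subtrigon $\Delta := \SP(G_{k,d})$ (and deeply within it, by the induction hypothesis), and a new branch set $U_{m+1}$ is built inside the right subtrigon $\Delta' := \TP(G_{k,d})$, together with the new root~$R$. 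For each existing branch set $U_i$, the new branch path $E_{i,m+1}$ enters~$\Delta$ through one of the shared adhesion vertices $\TI(\Delta)=\SO(\Delta')$, $\TO(\Delta)=\SI(\Delta')$, or through the edge $R-\RR(\Delta)$, and is then routed within $\Delta$ towards~$U_i$ using the binary tree $B(\Delta)$ and the bottom-path structure of~$\Delta$.

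The main obstacle is maintaining $2$-fatness. The condition $d\geq 2$ makes every pre-subdivision edge into a path of length $\geq 3$, so branch paths using disjoint pre-subdivision edges automatically have distance $\geq 2$ in $G_{k,d}$; this reduces most of the work to an edge-disjoint routing problem on the unsubdivided graph. The delicate issue is that the $m$ new branch paths entering $\Delta$ must stay at distance $\geq 2$ both from each other and from the old $K_m$-model inside~$\Delta$; this is resolved via the ``deep placement'' part of the induction hypothesis, which guarantees a distance-$2$ corridor around $\Adh(\Delta)$ through which the new paths can pass without disturbing the old model, and the structural fact \Lr{lem:NSSGraph:NoSmallSep} (with $\ell=1$) that inside each trigon of sufficient height and division there is enough room to dodge any small obstacle. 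Each inductive step costs a constant number of height levels (two should suffice, giving a bound roughly $k \geq 2m-1$), so $k \geq 13$ is exactly what is needed to reach $m = 7$. The hardest part of the argument will be this combinatorial routing of the $m$ new branch paths through the limited number of access points in $\Adh(\Delta)$ while keeping them pairwise well-separated and disjoint from the deep old model.
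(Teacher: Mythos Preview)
Your inductive scheme has a genuine bottleneck at exactly the step you flag as ``the hardest part'', and it cannot be repaired. If the old $K_m$-model sits inside a leftmost subtrigon $\Delta=\SP^{j}(G_{k,d})$, then $\SSS(\Delta)=\SSS(G_{k,d})$ has no neighbours outside $\Delta$, so the only access from $\Ext(\Delta)$ into $\Delta$ is through the three vertices of $\RTT(\Delta)=\{\TO(\Delta),\TI(\Delta),\RR(\Delta)\}$ (cf.\ the remark after the definition of $\Ext$ in \Sr{sec:constr NSS}). Every branch path from the new $U_{m+1}\subseteq \Ext(\Delta)$ to an old branch set must pass through one of these three vertices, and two paths sharing an access vertex are at distance~$0$; hence at most three of the $m$ required branch paths can be made pairwise $2$-fat. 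The induction therefore collapses as soon as $m\geq 4$. Placing the old model in an interior trigon instead buys you only $|\Adh(\Delta)|=5$ access vertices, still short of the six paths needed in the step $K_6\to K_7$. Your appeal to \Lr{lem:NSSGraph:NoSmallSep} does not help: that lemma produces a \emph{single} $\SSS$--$\TTT$ path dodging a small set, not many mutually distant paths through a fixed separator of size three.

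The paper avoids this obstruction by abandoning the nested picture entirely. Its seven branch sets are spread across all of $G_{k,d}$: branch set $7$ occupies the upper part of the binary tree, the pairs $(1,2)$ and $(5,6)$ are \emph{interleaved} along long stretches of the bottom path and the lowest tree levels (so each touches the other at many places), and $3,4$ sit in the middle. This interleaving replaces long branch paths through a tiny cut by short local hops between neighbouring segments. Concretely, the paper first draws an explicit ordinary $K_7$ model in $G_{9,d}$ from a picture, and then upgrades it to a $2$-fat one either abstractly via the coarse self-similarity \Tr{thm:css:finite} (so $G_{9,d}$ is itself a $2$-fat minor of a larger NSS graph), or directly by adding a few extra height levels to create distance-$2$ slack, which is what lands at $k=13$.
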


\begin{proof}
    Figure~\ref{fig:K7} shows a ($0$-fat) model of $K_7-\{16,26\}$ in $G_{7,d}$. 
    To obtain a model of $K_7$ in $G_{9,d}$, one  repeats the idea of the bottom picture, except that this time it is the sixth branch set instead of the fifth that uses the root to connect to the first and second branch set. 

    We can now deduce the existence of a 2-fat model of $K_7$ abstractly, by combining the $0$-fat model that we just found in $G_{9,d}$ with the fact that $G_{9,d}$ is a 2-fat minor of some $G_{k,d'}$ by Theorem~\ref{thm:css:finite}. But we can also adapt the above construction to explicitly construct a $2$-fat $K_7$ minor in $G_{13,d}$: by increasing the height of the NSS graph in the top picture by $2$, we can make the model of $K_7-\{15,25,16,26\}$ $2$-fat, and if we increase the height of the NSS graph in the bottom picture by $1$ (with respect to the gray areas), then we can also make the model of $K_7-\{16,26\}$ $2$-fat). (See also Figure~\ref{fig:K4t:2Fat}.)
\end{proof}

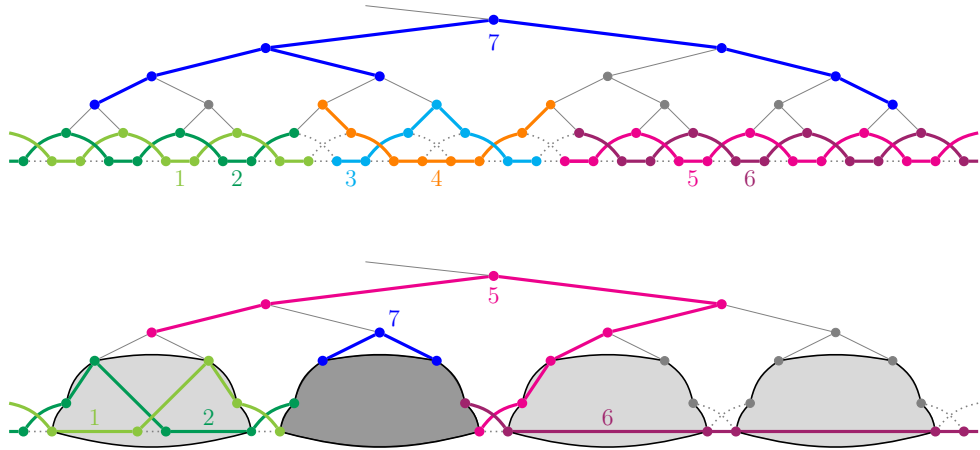
\begin{figure}[ht]
    \centering
    \begin{subfigure}{1\linewidth}
        \centering
        \scalebox{0.75}{%
\begin{tikzpicture}[scale=1/2,auto=left]

\tikzstyle{every node}=[inner sep=1.5pt, fill=gray,circle,draw]
\tikzset{every path/.style={draw=gray}}
\node[ForestGreen,thick] (v1) at (1,0) {};
\node[RedViolet,thick] (v34) at (34,0) {};
\draw[dotted, thick] (v1)--(v34);
\node[LimeGreen,thick] (v2) at (2,0) {};
\node[LimeGreen,thick] (v3) at (3,0) {};
\node[ForestGreen,thick] (v4) at (4,0) {};
\node[ForestGreen,thick] (v5) at (5,0) {};
\node[LimeGreen,thick] (v6) at (6,0) {};
\node[LimeGreen,thick] (v7) at (7,0) {};
\node[ForestGreen,thick] (v8) at (8,0) {};
\node[ForestGreen,thick] (v9) at (9,0) {};
\node[LimeGreen,thick] (v10) at (10,0) {};
\node[LimeGreen,thick] (v11) at (11,0) {};
\node[cyan,thick] (v12) at (12,0) {};
\node[cyan,thick] (v13) at (13,0) {};
\node[orange,thick] (v14) at (14,0) {};
\node[orange,thick] (v15) at (15,0) {};
\node[orange,thick] (v16) at (16,0) {};
\node[orange,thick] (v17) at (17,0) {};
\node[cyan,thick] (v18) at (18,0) {};
\node[cyan,thick] (v19) at (19,0) {};
\node[magenta,thick] (v20) at (20,0) {};
\node[magenta,thick] (v21) at (21,0) {};
\node[RedViolet,thick] (v22) at (22,0) {};
\node[RedViolet,thick] (v23) at (23,0) {};
\node[magenta,thick] (v24) at (24,0) {};
\node[magenta,thick] (v25) at (25,0) {};
\node[RedViolet,thick] (v26) at (26,0) {};
\node[RedViolet,thick] (v27) at (27,0) {};
\node[magenta,thick] (v28) at (28,0) {};
\node[magenta,thick] (v29) at (29,0) {};
\node[RedViolet,thick] (v30) at (30,0) {};
\node[RedViolet,thick] (v31) at (31,0) {};
\node[magenta,thick] (v32) at (32,0) {};
\node[magenta,thick] (v33) at (33,0) {};

\node[ForestGreen,thick] (u2) at (2.5,1) {};
\node[LimeGreen,thick] (u4) at (4.5,1) {};
\node[ForestGreen,thick] (u6) at (6.5,1) {};
\node[LimeGreen,thick] (u8) at (8.5,1) {};
\node[ForestGreen,thick] (u10) at (10.5,1) {};
\node[orange,thick] (u12) at (12.5,1) {};
\node[cyan,thick] (u14) at (14.5,1) {};
\node[cyan,thick] (u16) at (16.5,1) {};
\node[orange,thick] (u18) at (18.5,1) {};
\node[RedViolet,thick] (u20) at (20.5,1) {};
\node[magenta,thick] (u22) at (22.5,1) {};
\node[RedViolet,thick] (u24) at (24.5,1) {};
\node[magenta,thick] (u26) at (26.5,1) {};
\node[RedViolet,thick] (u28) at (28.5,1) {};
\node[magenta,thick] (u30) at (30.5,1) {};
\node[RedViolet,thick] (u32) at (32.5,1) {};

\draw[ForestGreen,ultra thick] (u2) to [bend right=20] (v1);
\draw[LimeGreen,ultra thick] (u4) to [bend right=20] (v3);
\draw[ForestGreen,ultra thick] (u6) to [bend right=20] (v5);
\draw[LimeGreen,ultra thick] (u8) to [bend right=20] (v7);
\draw[ForestGreen,ultra thick] (u10) to [bend right=20] (v9);
\draw[dotted,thick] (u12) to [bend right=20] (v11);
\draw[cyan,ultra thick] (u14) to [bend right=20] (v13);
\draw[dotted,thick] (u16) to [bend right=20] (v15);
\draw[orange,ultra thick] (u18) to [bend right=20] (v17);
\draw[dotted,thick] (u20) to [bend right=20] (v19);
\draw[magenta,ultra thick] (u22) to [bend right=20] (v21);
\draw[RedViolet,ultra thick] (u24) to [bend right=20] (v23);
\draw[magenta,ultra thick] (u26) to [bend right=20] (v25);
\draw[RedViolet,ultra thick] (u28) to [bend right=20] (v27);
\draw[magenta,ultra thick] (u30) to [bend right=20] (v29);
\draw[RedViolet,ultra thick] (u32) to [bend right=20] (v31);

\draw[ForestGreen,ultra thick] (u2) to [bend left=20] (v4);
\draw[LimeGreen,ultra thick] (u4) to [bend left=20] (v6);
\draw[ForestGreen,ultra thick] (u6) to [bend left=20] (v8);
\draw[LimeGreen,ultra thick] (u8) to [bend left=20] (v10);
\draw[dotted,thick] (u10) to [bend left=20] (v12);
\draw[orange,ultra thick] (u12) to [bend left=20] (v14);
\draw[dotted,thick] (u14) to [bend left=20] (v16);
\draw[cyan,ultra thick] (u16) to [bend left=20] (v18);
\draw[dotted,thick] (u18) to [bend left=20] (v20);
\draw[RedViolet,ultra thick] (u20) to [bend left=20] (v22);
\draw[magenta,ultra thick] (u22) to [bend left=20] (v24);
\draw[RedViolet,ultra thick] (u24) to [bend left=20] (v26);
\draw[magenta,ultra thick] (u26) to [bend left=20] (v28);
\draw[RedViolet,ultra thick] (u28) to [bend left=20] (v30);
\draw[magenta,ultra thick] (u30) to [bend left=20] (v32);
\draw[RedViolet,ultra thick] (u32) to [bend left=20] (v34);

\node[blue,thick] (t3) at (3.5,2) {};
\node (t7) at (7.5,2) {};
\node[orange,thick] (t11) at (11.5,2) {};
\node[cyan,thick] (t15) at (15.5,2) {};
\node[orange] (t19) at (19.5,2) {};
\node (t23) at (23.5,2) {};
\node (t27) at (27.5,2) {};
\node[blue,thick] (t31) at (31.5,2) {};

\draw (u2) -- (t3)--(u4);
\draw (u6) -- (t7)--(u8);
\draw (u10) -- (t11)--(u12);
\draw[cyan,ultra thick] (u14) -- (t15)--(u16);
\draw (u18) -- (t19)--(u20);
\draw (u22) -- (t23)--(u24);
\draw (u26) -- (t27)--(u28);
\draw (u30) -- (t31)--(u32);

\node[blue,thick] (s5) at (5.5,3) {};
\node[blue,thick] (s13) at (13.5,3) {};
\node (s21) at (21.5,3) {};
\node[blue,thick] (s29) at (29.5,3) {};

\draw (t3) -- (s5)--(t7);
\draw (t11) -- (s13)--(t15);
\draw (t19) -- (s21)--(t23);
\draw (t27) -- (s29)--(t31);

\node[blue,thick] (r9) at (9.5,4) {};
\node[blue,thick] (r25) at (25.5,4) {};

\draw[blue,ultra thick] (s5) -- (r9)--(s13);
\draw (s21) -- (r25)--(s29);

\node[blue,thick] (q17) at (17.5,5) {};
\draw[blue,ultra thick] (r9) -- (q17)--(r25);

\draw[blue, ultra thick] (t3)--(s5);
\draw[blue,ultra thick] (r25)--(s29)--(t31);
\draw[orange,ultra thick] (t11)--(u12);
\draw[orange,ultra thick] (t19)--(u18);

\draw (q17)--(13,5.5);
\draw[LimeGreen,ultra thick] (0.5,1) to [bend left=20] (v2);
\draw[magenta,ultra thick] (34.5,1) to [bend right=20] (v33);

\draw[ForestGreen,ultra thick] (0.5,0)--(v1);
\draw[LimeGreen,ultra thick] (v2)--(v3);
\draw[ForestGreen,ultra thick] (v4)--(v5);
\draw[LimeGreen,ultra thick] (v6)--(v7);
\draw[ForestGreen,ultra thick] (v8)--(v9);
\draw[LimeGreen,ultra thick] (v10)--(v11);
\draw[cyan,ultra thick] (v12)--(v13);
\draw[orange,ultra thick] (v14)--(v17);
\draw[cyan,ultra thick] (v18)--(v19);
\draw[magenta,ultra thick] (v20)--(v21);

\draw[magenta,ultra thick] (v20)--(v21);
\draw[RedViolet,ultra thick] (v22)--(v23);
\draw[magenta,ultra thick] (v24)--(v25);
\draw[RedViolet,ultra thick] (v26)--(v27);
\draw[magenta,ultra thick] (v28)--(v29);
\draw[RedViolet,ultra thick] (v30)--(v31);
\draw[magenta,ultra thick] (v32)--(v33);
\draw[RedViolet,ultra thick] (v34)--(34.5,0);

\tikzstyle{every node}=[]
\draw[LimeGreen] (6.5,-0.6) node [] {\large $1$};
\draw[ForestGreen] (8.5,-0.6) node [] {\large $2$};
\draw[cyan] (12.5,-0.6) node [] {\large $3$};
\draw[orange] (15.5,-0.6) node [] {\large $4$};
\draw[magenta] (24.5,-0.6) node [] {\large $5$};
\draw[RedViolet] (26.5,-0.6) node [] {\large $6$};
\draw[blue] (17.5,4.2) node [] {\large $7$};

\end{tikzpicture}
}
    \end{subfigure}
    \begin{subfigure}{1\linewidth}
        \centering
        \scalebox{0.75}{%
\begin{tikzpicture}[scale=1/2,auto=left]

\tikzstyle{every node}=[gray,inner sep=1.5pt, fill=gray,circle,draw]
\tikzset{every path/.style={draw=gray}}

\path [fill=gray,opacity=0.3,draw=gray]
(2,0) to [bend left=15] (2.5,1) to [bend left=20]
 (3.5,2.5) to [bend left=10] (7.5,2.5) to [bend left=20] (8.5,1) to [bend left=20] (9,0) to [bend left=15] (2,0);

\path [fill=gray,opacity=0.8,draw=black]
(10,0) to [bend left=15] (10.5,1) to [bend left=20]
 (11.5,2.5) to [bend left=10] (15.5,2.5) to [bend left=20] (16.5,1) to [bend left=20] (17,0) to [bend left=15] (10,0);

 \path [fill=gray,opacity=0.3,draw=gray]
(18,0) to [bend left=15] (18.5,1) to [bend left=20]
 (19.5,2.5) to [bend left=10] (23.5,2.5) to [bend left=20] (24.5,1) to [bend left=20] (25,0) to [bend left=15] (18,0);

\path [fill=gray,opacity=0.3,draw=black]
(26,0) to [bend left=15] (26.5,1) to [bend left=20]
 (27.5,2.5) to [bend left=10] (31.5,2.5) to [bend left=20] (32.5,1) to [bend left=20] (33,0) to [bend left=15] (26,0);

\tikzstyle{every node}=[inner sep=1.5pt, fill=gray,circle,draw]
\node[ForestGreen,thick] (v1) at (1,0) {};
\node[RedViolet,thick] (v34) at (34,0) {};
\node[LimeGreen,thick] (v10) at (10,0) {};
\draw[dotted, thick] (v1)--(v10);
\node[LimeGreen,thick] (v2) at (2,0) {};
\node[LimeGreen,thick] (v5) at (5,0) {};
\node[ForestGreen,thick] (v6) at (6,0) {};
\node[ForestGreen] (v9) at (9,0) {};
\node[magenta,thick] (v17) at (17,0) {};
\draw[dotted,thick] (v17)--(v26);
\node[RedViolet,thick] (v18) at (18,0) {};
\node[RedViolet,thick] (v25) at (25,0) {};
\node[RedViolet,thick] (v26) at (26,0) {};
\node[RedViolet,thick] (v33) at (33,0) {};

\draw[ForestGreen,ultra thick] (0.5,0)--(v1);
\draw[LimeGreen,ultra thick] (v2)--(v5);
\draw[ForestGreen,ultra thick] (v6)--(v9);
\draw[RedViolet,ultra thick] (v18)--(34.5,0);

\node[ForestGreen,thick] (u2) at (2.5,1) {};
\node[LimeGreen,thick] (u8) at (8.5,1) {};
\node[ForestGreen,thick] (u10) at (10.5,1) {};
\node[RedViolet,thick] (u16) at (16.5,1) {};
\node[magenta,thick] (u18) at (18.5,1) {};
\node (u24) at (24.5,1) {};
\node (u26) at (26.5,1) {};
\node (u32) at (32.5,1) {};

\draw[ForestGreen,ultra thick] (u2) to [bend right=20] (v1);
\draw[ForestGreen,ultra thick] (u10) to [bend right=20] (v9);
\draw[magenta,ultra thick] (u18) to [bend right=20] (v17);
\draw[dotted,thick] (u26) to [bend right=20] (v25);

\draw[LimeGreen,ultra thick] (u8) to [bend left=20] (v10);
\draw[RedViolet,ultra thick] (u16) to [bend left=20] (v18);
\draw[dotted,thick] (u24) to [bend left=20] (v26);
\draw[dotted,thick] (u32) to [bend left=20] (v34);

\node[ForestGreen,thick] (t3) at (3.5,2.5) {};
\node[LimeGreen,thick] (t7) at (7.5,2.5) {};
\node[blue,thick] (t11) at (11.5,2.5) {};
\node[blue,thick] (t15) at (15.5,2.5) {};
\node[magenta,thick] (t19) at (19.5,2.5) {};
\node (t23) at (23.5,2.5) {};
\node (t27) at (27.5,2.5) {};
\node (t31) at (31.5,2.5) {};


\node[magenta,thick] (s5) at (5.5,3.5) {};
\node[blue,thick] (s13) at (13.5,3.5) {};
\node[magenta,thick] (s21) at (21.5,3.5) {};
\node (s29) at (29.5,3.5) {};

\draw (t3) -- (s5)--(t7);
\draw[blue,ultra thick] (t11) -- (s13)--(t15);
\draw (t19) -- (s21)--(t23);
\draw (t27) -- (s29)--(t31);

\node[magenta,thick] (r9) at (9.5,4.5) {};
\node[magenta,thick] (r25) at (25.5,4.5) {};

\draw (s5) -- (r9)--(s13);
\draw (s21) -- (r25)--(s29); 

\node[magenta,thick] (q17) at (17.5,5.5) {};
\draw[magenta,ultra thick] (r9) -- (q17)--(r25);

\draw[ForestGreen,ultra thick] (u2)--(t3)--(v6);
\draw[LimeGreen,ultra thick] (v5)--(t7)--(u8);
\draw[magenta,ultra thick] (u18)--(t19)--(s21)--(r25);
\draw[magenta,ultra thick] (s5)--(r9);

\draw[black,thick] (t3) to [bend left=10] (t7);
\draw[black,thick] (v2) to [bend right=15] (v9);
\draw[black,thick] (v2) to [bend left=15] (u2);
\draw[black,thick] (u2) to [bend left=20] (t3);
\draw[black,thick] (v9) to [bend right=15] (u8);
\draw[black,thick] (u8) to [bend right=20] (t7);

\draw[black,thick] (t11) to [bend left=10] (t15);
\draw[black,thick] (v10) to [bend right=15] (v17);
\draw[black,thick] (v10) to [bend left=15] (u10);
\draw[black,thick] (u10) to [bend left=20] (t11);
\draw[black,thick] (v17) to [bend right=15] (u16);
\draw[black,thick] (u16) to [bend right=20] (t15);

\draw[black,thick] (t19) to [bend left=10] (t23);
\draw[black,thick] (v18) to [bend right=15] (v25);
\draw[black,thick] (v18) to [bend left=15] (u18);
\draw[black,thick] (u18) to [bend left=20] (t19);
\draw[black,thick] (v25) to [bend right=15] (u24);
\draw[black,thick] (u24) to [bend right=20] (t23);

\draw[black,thick] (t27) to [bend left=10] (t31);
\draw[black,thick] (v26) to [bend right=15] (v33);
\draw[black,thick] (v26) to [bend left=15] (u26);
\draw[black,thick] (u26) to [bend left=20] (t27);
\draw[black,thick] (v33) to [bend right=15] (u32);
\draw[black,thick] (u32) to [bend right=20] (t31);

\draw (q17)--(13,6);
\draw[LimeGreen,ultra thick] (0.5,1) to [bend left=20] (v2);
\draw[dotted,thick] (34.5,1) to [bend right=20] (v33);

\tikzstyle{every node}=[]
\draw[LimeGreen] (3.5,0.5) node [] {\large $1$};
\draw[ForestGreen] (7.5,0.5) node [] {\large $2$};
\draw[blue] (14,4) node [] {\large $7$};
\draw[magenta] (17.5,4.8) node [] {\large $5$};
\draw[RedViolet] (21.5,0.5) node [] {\large $6$};

\end{tikzpicture}
}
    \end{subfigure}
    \vspace{-2em}
    \caption{Depicted are the branch sets of a model of a $K_7-\{16,26\}$ in $G_{7,d}$. The dark gray area in the bottom picture represents the graph from the top picture.}
    \label{fig:K7}
\end{figure}

\begin{theorem} \label{thm:NSSGraph:K4t}
    $K_{4,t}$ is a $2$-fat minor of $G_{k,d}$ for every $t \in \N$ and $k \geq 3+2t$, $d \geq 2$.
\end{theorem}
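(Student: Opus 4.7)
Proof plan.

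The plan is to exhibit an explicit $2$-fat $K_{4,t}$-model in $G := G_{k,d}$ for $k = 3+2t$; the general case $k \geq 3+2t$ follows since $G_{k,d}$ contains a subtrigon isomorphic to $G_{3+2t,d}$, and a $2$-fat minor of the subtrigon is a $2$-fat minor of $G$. The construction extends the two-step strategy of Theorem~\ref{thm:NSSGraph:K7}: a ``$0$-fat backbone'' that is then ``inflated'' using extra binary-tree levels in order to reach $2$-fatness.

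Fix a descending spine $\sigma$ in $B(G)$ and let $G = \Delta_0 \supset \Delta_1 \supset \cdots \supset \Delta_t$ be the nested chain of trigons along $\sigma$ with $\Delta_j$ of height $k-2j$; this chain fits inside $G$ precisely because $k = 3+2t$, and each consecutive pair of trigons differs in height by $2$ so that each ``annulus'' $\Delta_{j-1} \setminus \Delta_j$ is two binary-tree levels wide. Take the four left branch sets $A_1,A_2,A_3,A_4$ to be the four height-$1$ grandchildren-trigons of $\RR(\Delta_t)$, lying in the four ``quadrants'' of $\Delta_t$; this immediately yields $d_G(A_i,A_{i'}) \geq d \geq 2$ via the subdivided bottom path of $\Delta_t$. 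For each $j \in \{1,\ldots,t\}$, take the right branch set $B_j$ to be a ``spider'' with centre $\RR(\Delta_{j-1})$ living entirely inside the annulus $\Delta_{j-1} \setminus \Delta_j$, with four arms descending through the binary tree of $\Delta_{j-1}$ to end at the four bottom-facing adhesion vertices $\SO(\Delta_j), \SI(\Delta_j), \TO(\Delta_j), \TI(\Delta_j)$. The branch path $E_{A_i B_j}$ then extends from the appropriate adhesion vertex of $\Delta_j$ through $\Delta_j$ down to the corresponding $A_i \subseteq \Delta_t$, passing through each inner trigon $\Delta_{j+1},\ldots,\Delta_t$ only via distinct entry points in its adhesion, so that no two branch paths collide inside any inner trigon.

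The bulk of the proof would be the verification of $2$-fatness. Distinct $B_j, B_{j'}$ lie in disjoint annuli separated by a full intermediate subtrigon, hence their graph-distance is at least two tree-edges; distinct $A_i, A_{i'}$ sit in different quadrants of $\Delta_t$ and are separated by at least $d \geq 2$ on its bottom path; branch paths from different $B_j$'s descend through disjoint annuli and further along disjoint routes inside the common inner subtrigons, so pairwise distances are again $\geq d \geq 2$. The main obstacle will be the case of two branch paths $E_{A_i B_j}, E_{A_{i'} B_j}$ sharing the same~$j$ but with different~$i$: both start from~$B_j$ and descend into the same inner trigon~$\Delta_j$, and one must verify that they remain pairwise at distance $\geq 2$ all the way down to the four~$A_i$'s. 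This is handled by exploiting (i)~that the four bottom-facing adhesion vertices $\SO,\SI,\TO,\TI$ of any NSS-subtrigon are pairwise at distance $\geq d$ on its bottom path, so the hypothesis $d \geq 2$ provides the required buffer at each descent-level; and (ii)~that each inner trigon $\Delta_{j+\ell}$ admits four internally disjoint descending paths from its adhesion into its four quadrant-subtrigons, so the $4t$ branch paths can be routed through each level without collision, exactly analogous to the four-arm structure of $B_j$ itself.
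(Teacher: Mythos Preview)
Your nested-annulus construction has a fatal bottleneck. All $4t$ branch paths $E_{A_iB_j}$ must enter the innermost trigon $\Delta_t$ to reach the $A_i$'s, and the only way into $\Delta_t$ is through its adhesion set $\Adh(\Delta_t)$, which has exactly five vertices. Hence at most five of these paths can be pairwise vertex-disjoint at that separator, let alone pairwise at distance $\geq 2$; for $t\geq 2$ you already need $8$ disjoint crossings. More generally, for every $j'>j$ the four paths from $B_j$ and the four spider arms of $B_{j'}$ all converge on the same four bottom adhesion vertices $\SO(\Delta_{j'}),\SI(\Delta_{j'}),\TO(\Delta_{j'}),\TI(\Delta_{j'})$, so they cannot be kept $2$-apart. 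A secondary problem is that adjacent height-$1$ grandchildren trigons of $\Delta_t$ share their $\TTT/\SSS$ vertices by the NSS gluing rule, so your $A_i$'s overlap rather than sit at distance $\geq d$.

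The paper's construction avoids exactly this bottleneck by doing the opposite of nesting: the four ``$4$-side'' branch sets are long interleaved zigzag paths running along the \emph{entire} bottom of $G_{k,d}$ (each alternating between bottom-path vertices and leaf-level tree vertices), so every point of the binary tree sits above all four of them simultaneously. Each ``$t$-side'' branch set is then a subtree of $B(G)$ at a distinct height, and its four branch paths are just short vertical drops to four nearby leaf-level vertices, one in each zigzag. Adding one more $t$-side branch set costs two extra tree levels (hence $k = 1+2t$ for a plain model, and a couple more levels to push to $2$-fatness), and crucially no branch path ever has to thread through a size-$5$ separator shared with other branch paths. Your scheme cannot be repaired locally; the four $A_i$'s must be spread out horizontally rather than confined inside a fixed small subtrigon.
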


\begin{proof}
    Figure~\ref{fig:K4t} shows a model of $K_{4,2}$ in $G_{5,d}$ where the branch sets of the vertices on the side of $K_{4,t}$ of size $4$ have distance at least $2$ from each other. It is straightforward to extend the picture to find a model of $K_{4,t}$ in $G_{1+2t,d}$ for every $t \geq 3$ with the same property: we increase the height by 2 for each new branch set we want to introduce, we prolong the blue  and green branch sets to the left and right respectively, and form a new branch set using the root similar to the pink one. 

    As in Theorem~\ref{thm:NSSGraph:K7}, we can now obtain a 2-fat $K_{4,t}$ model via Theorem~\ref{thm:css:finite}. Alternatively, if we use $G_{3+3t,d}$ instead of $G_{1+2t,d}$, then it is easy to adapt the above construction so that all branch sets of the model of $K_{4,t}$ are pairwise at least $2$ far apart. Indeed, by modifying this model of $K_{2,t}$ in $G_{3+3t,d}$ as indicated in Figure~\ref{fig:K4t:2Fat}, we obtain a model of $K_{2,t}$ that is $2$-fat. 
\end{proof}

\begin{figure}[ht]
    \centering
    \scalebox{0.79}{%
\begin{tikzpicture}[scale=1/2,auto=left]

\tikzstyle{every node}=[gray,inner sep=1.5pt, fill=gray,circle,draw]
\tikzset{every path/.style={draw=gray}}
\node[blue,thick] (v1) at (1,0) {};
\node[ForestGreen,thick] (v34) at (34,0) {};
\draw[dotted, thick] (v1)--(v34);
\node[cyan,thick] (v2) at (2,0) {};
\node[cyan,thick] (v3) at (3,0) {};
\node[blue,thick] (v4) at (4,0) {};
\node[blue,thick] (v5) at (5,0) {};
\node[cyan,thick] (v6) at (6,0) {};
\node[cyan,thick] (v7) at (7,0) {};
\node[blue,thick] (v8) at (8,0) {};
\node[blue,thick] (v9) at (9,0) {};
\node[cyan,thick] (v10) at (10,0) {};
\node[cyan,thick] (v11) at (11,0) {};
\node[blue,thick] (v12) at (12,0) {};
\node[blue,thick] (v13) at (13,0) {};
\node[ForestGreen,thick] (v14) at (14,0) {};
\node[ForestGreen,thick] (v15) at (15,0) {};
\node[LimeGreen,thick] (v16) at (16,0) {};
\node[LimeGreen,thick] (v17) at (17,0) {};
\node[ForestGreen,thick] (v18) at (18,0) {};
\node[ForestGreen,thick] (v19) at (19,0) {};
\node[LimeGreen,thick] (v20) at (20,0) {};
\node[LimeGreen,thick] (v21) at (21,0) {};
\node[ForestGreen,thick] (v22) at (22,0) {};
\node[ForestGreen,thick] (v23) at (23,0) {};
\node[LimeGreen,thick] (v24) at (24,0) {};
\node[LimeGreen,thick] (v25) at (25,0) {};
\node[ForestGreen,thick] (v26) at (26,0) {};
\node[ForestGreen,thick] (v27) at (27,0) {};
\node[LimeGreen,thick] (v28) at (28,0) {};
\node[LimeGreen,thick] (v29) at (29,0) {};
\node[ForestGreen,thick] (v30) at (30,0) {};
\node[ForestGreen,thick] (v31) at (31,0) {};
\node[LimeGreen,thick] (v32) at (32,0) {};
\node[LimeGreen,thick] (v33) at (33,0) {};

\node[blue,thick] (u2) at (2.5,1) {};
\node[cyan,thick] (u4) at (4.5,1) {};
\node[blue,thick] (u6) at (6.5,1) {};
\node[cyan,thick] (u8) at (8.5,1) {};
\node[blue,thick] (u10) at (10.5,1) {};
\node[cyan,thick] (u12) at (12.5,1) {};
\node[LimeGreen,thick] (u14) at (14.5,1) {};
\node[ForestGreen,thick] (u16) at (16.5,1) {};
\node[LimeGreen,thick] (u18) at (18.5,1) {};
\node[ForestGreen,thick] (u20) at (20.5,1) {};
\node[LimeGreen,thick] (u22) at (22.5,1) {};
\node[ForestGreen,thick] (u24) at (24.5,1) {};
\node[LimeGreen,thick] (u26) at (26.5,1) {};
\node[ForestGreen,thick] (u28) at (28.5,1) {};
\node[LimeGreen,thick] (u30) at (30.5,1) {};
\node[ForestGreen,thick] (u32) at (32.5,1) {};

\draw[blue,ultra thick] (u2) to [bend right=20] (v1);
\draw[cyan,ultra thick] (u4) to [bend right=20] (v3);
\draw[blue,ultra thick] (u6) to [bend right=20] (v5);
\draw[cyan,ultra thick] (u8) to [bend right=20] (v7);
\draw[blue,ultra thick] (u10) to [bend right=20] (v9);
\draw[cyan,ultra thick] (u12) to [bend right=20] (v11);
\draw[dotted,thick] (u14) to [bend right=20] (v13);
\draw[ForestGreen,ultra thick] (u16) to [bend right=20] (v15);
\draw[LimeGreen,ultra thick] (u18) to [bend right=20] (v17);
\draw[ForestGreen,ultra thick] (u20) to [bend right=20] (v19);
\draw[LimeGreen,ultra thick] (u22) to [bend right=20] (v21);
\draw[ForestGreen,ultra thick] (u24) to [bend right=20] (v23);
\draw[LimeGreen,ultra thick] (u26) to [bend right=20] (v25);
\draw[ForestGreen,ultra thick] (u28) to [bend right=20] (v27);
\draw[LimeGreen,ultra thick] (u30) to [bend right=20] (v29);
\draw[ForestGreen,ultra thick] (u32) to [bend right=20] (v31);

\draw[blue,ultra thick] (u2) to [bend left=20] (v4);
\draw[cyan,ultra thick] (u4) to [bend left=20] (v6);
\draw[blue,ultra thick] (u6) to [bend left=20] (v8);
\draw[cyan,ultra thick] (u8) to [bend left=20] (v10);
\draw[blue,ultra thick] (u10) to [bend left=20] (v12);
\draw[dotted,thick] (u12) to [bend left=20] (v14);
\draw[LimeGreen,ultra thick] (u14) to [bend left=20] (v16);
\draw[ForestGreen,ultra thick] (u16) to [bend left=20] (v18);
\draw[LimeGreen,ultra thick] (u18) to [bend left=20] (v20);
\draw[ForestGreen,ultra thick] (u20) to [bend left=20] (v22);
\draw[LimeGreen,ultra thick] (u22) to [bend left=20] (v24);
\draw[ForestGreen,ultra thick] (u24) to [bend left=20] (v26);
\draw[LimeGreen,ultra thick] (u26) to [bend left=20] (v28);
\draw[ForestGreen,ultra thick] (u28) to [bend left=20] (v30);
\draw[LimeGreen,ultra thick] (u30) to [bend left=20] (v32);
\draw[ForestGreen,ultra thick] (u32) to [bend left=20] (v34);

\draw[cyan,ultra thick] (v2)--(v3);
\draw[blue,ultra thick] (v4)--(v5);
\draw[cyan,ultra thick] (v6)--(v7);
\draw[blue,ultra thick] (v8)--(v9);
\draw[cyan,ultra thick] (v10)--(v11);
\draw[blue,ultra thick] (v12)--(v13);
\draw[ForestGreen,ultra thick] (v14)--(v15);
\draw[LimeGreen,ultra thick] (v16)--(v17);
\draw[ForestGreen,ultra thick] (v18)--(v19);
\draw[LimeGreen,ultra thick] (v20)--(v21);
\draw[ForestGreen,ultra thick] (v22)--(v23);
\draw[LimeGreen,ultra thick] (v24)--(v25);
\draw[ForestGreen,ultra thick] (v26)--(v27);
\draw[LimeGreen,ultra thick] (v28)--(v29);
\draw[ForestGreen,ultra thick] (v30)--(v31);
\draw[LimeGreen,ultra thick] (v32)--(v33);

\node[magenta,thick] (t3) at (3.5,2) {};
\node[magenta,thick] (t7) at (7.5,2) {};
\node[orange,thick] (t11) at (11.5,2) {};
\node[orange,thick] (t15) at (15.5,2) {};
\node (t19) at (19.5,2) {};
\node (t23) at (23.5,2) {};
\node[magenta,thick] (t27) at (27.5,2) {};
\node[magenta,thick] (t31) at (31.5,2) {};

\draw (u2) -- (t3)--(u4);
\draw (u6) -- (t7)--(u8);
\draw (u10) -- (t11)--(u12);
\draw (u14) -- (t15)--(u16);
\draw (u18) -- (t19)--(u20);
\draw (u22) -- (t23)--(u24);
\draw (u26) -- (t27)--(u28);
\draw (u30) -- (t31)--(u32);

\node[magenta,thick] (s5) at (5.5,3) {};
\node[orange,thick] (s13) at (13.5,3) {};
\node (s21) at (21.5,3) {};
\node[magenta,thick] (s29) at (29.5,3) {};

\draw[magenta,ultra thick] (t3) -- (s5)--(t7);
\draw[orange,ultra thick] (t11) -- (s13)--(t15);
\draw (t19) -- (s21)--(t23);
\draw[magenta,ultra thick] (t27) -- (s29)--(t31);

\node[magenta,thick] (r9) at (9.5,4) {};
\node[magenta,thick] (r25) at (25.5,4) {};

\draw (s5) -- (r9)--(s13);
\draw (s21) -- (r25)--(s29);

\draw[magenta,ultra thick] (r9)--(s5);
\draw[magenta,ultra thick] (r25)--(s29);

\node[magenta,thick] (q17) at (17.5,5) {};
\draw[magenta,ultra thick] (r9) -- (q17)--(r25);

\tikzstyle{every node}=[]

\end{tikzpicture}
}
    \vspace{-2em}
    \caption{Depicted are the branch sets of a model of $K_{4,2}$ in $G_{5,d}$. The blue and green sets are the branch sets corresponding to the four vertices on `$4$ side' of $K_{4,2}$, and the orange and pink sets are the branch sets corresponding to the two vertices on the `$2$~side'.}
    \label{fig:K4t}
\end{figure}
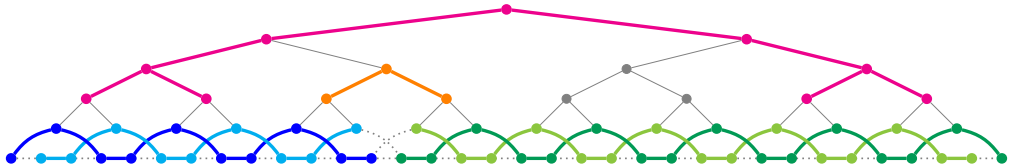

\begin{figure}[ht]
    \centering
    \scalebox{0.79}{%
\begin{tikzpicture}[scale=1/2,auto=left]

\tikzstyle{every node}=[gray,inner sep=1.5pt, fill=gray,circle,draw]
\tikzset{every path/.style={draw=gray}}
\node[LimeGreen,thick] (v1) at (1,0) {};
\node[ForestGreen,thick] (v34) at (34,0) {};
\draw[dotted, thick] (v1)--(v34);
\node[ForestGreen,thick] (v2) at (2,0) {};
\node[ForestGreen,thick] (v3) at (3,0) {};
\node[ForestGreen,thick] (v4) at (4,0) {};
\node[ForestGreen,thick] (v5) at (5,0) {};
\node[LimeGreen,thick] (v6) at (6,0) {};
\node[LimeGreen,thick] (v7) at (7,0) {};
\node[ForestGreen,thick] (v8) at (8,0) {};
\node[ForestGreen,thick] (v9) at (9,0) {};
\node[LimeGreen,thick] (v10) at (10,0) {};
\node[LimeGreen,thick] (v11) at (11,0) {};
\node[ForestGreen,thick] (v12) at (12,0) {};
\node[ForestGreen,thick] (v13) at (13,0) {};
\node[LimeGreen,thick] (v14) at (14,0) {};
\node[LimeGreen,thick] (v15) at (15,0) {};
\node[ForestGreen,thick] (v16) at (16,0) {};
\node[ForestGreen,thick] (v17) at (17,0) {};
\node[LimeGreen,thick] (v18) at (18,0) {};
\node[LimeGreen,thick] (v19) at (19,0) {};
\node[LimeGreen,thick] (v20) at (20,0) {};
\node[LimeGreen,thick] (v21) at (21,0) {};
\node[ForestGreen,thick] (v22) at (22,0) {};
\node[ForestGreen,thick] (v23) at (23,0) {};
\node[LimeGreen,thick] (v24) at (24,0) {};
\node[LimeGreen,thick] (v25) at (25,0) {};
\node[ForestGreen,thick] (v26) at (26,0) {};
\node[ForestGreen,thick] (v27) at (27,0) {};
\node[LimeGreen,thick] (v28) at (28,0) {};
\node[LimeGreen,thick] (v29) at (29,0) {};
\node[ForestGreen,thick] (v30) at (30,0) {};
\node[ForestGreen,thick] (v31) at (31,0) {};
\node[LimeGreen,thick] (v32) at (32,0) {};
\node[LimeGreen,thick] (v33) at (33,0) {};

\node[LimeGreen,thick] (u2) at (2.5,1) {};
\node[LimeGreen,thick] (u4) at (4.5,1) {};
\node[ForestGreen,thick] (u6) at (6.5,1) {};
\node[LimeGreen,thick] (u8) at (8.5,1) {};
\node[ForestGreen,thick] (u10) at (10.5,1) {};
\node[LimeGreen,thick] (u12) at (12.5,1) {};

\node[ForestGreen,thick] (u14) at (14.5,1) {};
\node[LimeGreen,thick] (u16) at (16.5,1) {};
\node[ForestGreen,thick] (u18) at (18.5,1) {};
\node[ForestGreen,thick] (u20) at (20.5,1) {};
\node[LimeGreen,thick] (u22) at (22.5,1) {};
\node[ForestGreen,thick] (u24) at (24.5,1) {};
\node[LimeGreen,thick] (u26) at (26.5,1) {};
\node[ForestGreen,thick] (u28) at (28.5,1) {};
\node[LimeGreen,thick] (u30) at (30.5,1) {};
\node[ForestGreen,thick] (u32) at (32.5,1) {};

\draw[LimeGreen,ultra thick] (u2) to [bend right=20] (v1);
\draw[dotted,thick] (u4) to [bend right=20] (v3);
\draw[ForestGreen,ultra thick] (u6) to [bend right=20] (v5);
\draw[LimeGreen,ultra thick] (u8) to [bend right=20] (v7);
\draw[ForestGreen,ultra thick] (u10) to [bend right=20] (v9);
\draw[LimeGreen,ultra thick] (u12) to [bend right=20] (v11);
\draw[ForestGreen,ultra thick] (u14) to [bend right=20] (v13);
\draw[LimeGreen,ultra thick] (u16) to [bend right=20] (v15);
\draw[ForestGreen,ultra thick] (u18) to [bend right=20] (v17);
\draw[dotted,thick] (u20) to [bend right=20] (v19);
\draw[LimeGreen,ultra thick] (u22) to [bend right=20] (v21);
\draw[ForestGreen,ultra thick] (u24) to [bend right=20] (v23);
\draw[LimeGreen,ultra thick] (u26) to [bend right=20] (v25);
\draw[ForestGreen,ultra thick] (u28) to [bend right=20] (v27);
\draw[LimeGreen,ultra thick] (u30) to [bend right=20] (v29);
\draw[ForestGreen,ultra thick] (u32) to [bend right=20] (v31);

\draw[dotted,thick] (u2) to [bend left=20] (v4);
\draw[LimeGreen,ultra thick] (u4) to [bend left=20] (v6);
\draw[ForestGreen,ultra thick] (u6) to [bend left=20] (v8);
\draw[LimeGreen,ultra thick] (u8) to [bend left=20] (v10);
\draw[ForestGreen,ultra thick] (u10) to [bend left=20] (v12);
\draw[LimeGreen,ultra thick] (u12) to [bend left=20] (v14);
\draw[ForestGreen,ultra thick] (u14) to [bend left=20] (v16);
\draw[LimeGreen,ultra thick] (u16) to [bend left=20] (v18);
\draw[dotted,thick] (u18) to [bend left=20] (v20);
\draw[ForestGreen,ultra thick] (u20) to [bend left=20] (v22);
\draw[LimeGreen,ultra thick] (u22) to [bend left=20] (v24);
\draw[ForestGreen,ultra thick] (u24) to [bend left=20] (v26);
\draw[LimeGreen,ultra thick] (u26) to [bend left=20] (v28);
\draw[ForestGreen,ultra thick] (u28) to [bend left=20] (v30);
\draw[LimeGreen,ultra thick] (u30) to [bend left=20] (v32);
\draw[ForestGreen,ultra thick] (u32) to [bend left=20] (v34);

\draw[ForestGreen,ultra thick] (v2)--(v5);
\draw[LimeGreen,ultra thick] (v6)--(v7);
\draw[ForestGreen,ultra thick] (v8)--(v9);
\draw[LimeGreen,ultra thick] (v10)--(v11);
\draw[ForestGreen,ultra thick] (v12)--(v13);
\draw[LimeGreen,ultra thick] (v14)--(v15);
\draw[ForestGreen,ultra thick] (v16)--(v17);
\draw[LimeGreen,ultra thick] (v18)--(v21);
\draw[ForestGreen,ultra thick] (v22)--(v23);
\draw[LimeGreen,ultra thick] (v24)--(v25);
\draw[ForestGreen,ultra thick] (v26)--(v27);
\draw[LimeGreen,ultra thick] (v28)--(v29);
\draw[ForestGreen,ultra thick] (v30)--(v31);
\draw[LimeGreen,ultra thick] (v32)--(v33);

\node[LimeGreen,thick] (t3) at (3.5,2) {};
\node (t7) at (7.5,2) {};
\node (t11) at (11.5,2) {};
\node (t15) at (15.5,2) {};
\node[ForestGreen,thick] (t19) at (19.5,2) {};
\node (t23) at (23.5,2) {};
\node (t27) at (27.5,2) {};
\node (t31) at (31.5,2) {};

\draw[LimeGreen,ultra thick] (u2) -- (t3)--(u4);
\draw (u6) -- (t7)--(u8);
\draw (u10) -- (t11)--(u12);
\draw (u14) -- (t15)--(u16);
\draw[ForestGreen,ultra thick] (u18) -- (t19)--(u20);
\draw (u22) -- (t23)--(u24);
\draw (u26) -- (t27)--(u28);
\draw (u30) -- (t31)--(u32);

\node[black,thick] (s5) at (5.5,3) {};
\node (s13) at (13.5,3) {};
\node[black,thick] (s21) at (21.5,3) {};
\node (s29) at (29.5,3) {};

\draw (t3) -- (s5)--(t7);
\draw (t11) -- (s13)--(t15);
\draw (t19) -- (s21)--(t23);
\draw (t27) -- (s29)--(t31);

\node[magenta,thick] (r9) at (9.5,4) {};
\node[magenta,thick] (r25) at (25.5,4) {};

\draw (s5) -- (r9)--(s13);
\draw (s21) -- (r25)--(s29);

\draw[black,ultra thick] (t3) -- (s5) -- (r9);
\draw[black,ultra thick] (t19) -- (s21) -- (r25);

\node[magenta,thick] (q17) at (17.5,5) {};
\draw[magenta,ultra thick] (r9) -- (q17)--(r25);

\end{tikzpicture}
}
    \vspace{-2em}
    \caption{A part of a $2$-fat model of $K_{4,t}$. Indicated in black are the branch paths between a (pink) branch set of the `$t$ side' of $K_{4,t}$ and two (green) branch sets of the `$4$~side' of $K_{4,t}$.}
    \label{fig:K4t:2Fat}
\end{figure}
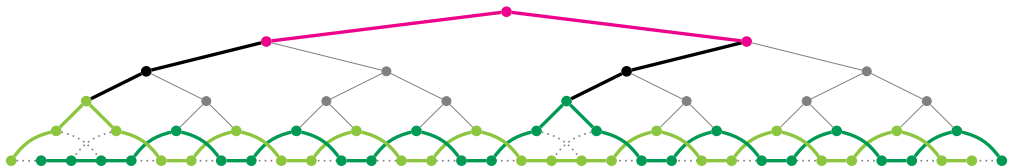

Given a graph $G$ and $K \in \N$, the \defi{$K$-th power $G^K$} is obtained from~$G$ by adding an edge between every pair of distinct vertices at distance at most $K$. 
We will use the following well-known observation about graph powers \cite[Proof of Theorem~3]{DHIM}:

\begin{theorem} \label{thm:FromKFatTo3Fat}
    Let $K \in \N$, let $J$ be a graph, and let $G$ be a graph that does not contain~$J$ as $K$-fat minor. Then $G^K$ does not contain $J$ as a $3$-fat minor. 
    
    Moreover, $G$ is $(K,0)$-quasi-isometric to $G^K$.
\end{theorem}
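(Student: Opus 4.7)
The plan is to handle the two statements in turn, with the bulk of the work going into the fat-minor assertion.

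For the quasi-isometry claim I would take the identity map $\iota\colon V(G) \to V(G^K)$. Since $G$ is a spanning subgraph of $G^K$ we have $d_{G^K}(u,v) \leq d_G(u,v)$, and since every edge of $G^K$ is realised by a $G$-path of length at most $K$, concatenation of such paths yields $d_G(u,v) \leq K \cdot d_{G^K}(u,v)$. Combined, these give
\[
K^{-1} d_G(u,v) \;\leq\; d_{G^K}(\iota(u), \iota(v)) \;\leq\; K \cdot d_G(u,v),
\]
which is \ref{quasiisom:1} with $(M,A) = (K,0)$, while \ref{quasiisom:2} is trivial as $\iota$ is a bijection.

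For the fat-minor claim I would proceed by contrapositive: assuming $G^K$ admits a $3$-fat model $(\cu,\ce)$ of $J$, I construct a $K$-fat model $(\cu',\ce')$ of $J$ in $G$ by replacing every edge $uv$ of $G^K$ used by the model with a shortest $u$--$v$ path $Q_{uv}$ in $G$ (of length at most $K$). Concretely, fix a spanning tree $T_x$ of $G^K[U_x]$ for each $U_x \in \cu$, and set
\[
U'_x \;:=\; U_x \cup \bigcup_{uv \in E(T_x)} V(Q_{uv}),
\]
which is connected in $G$. For each $E_e \in \ce$ with $e = xy$, let $W_e$ be the walk obtained by concatenating the $Q_{uv}$ along $E_e$, and take $E'_e$ to be a minimum-length $U'_x$--$U'_y$ path inside $G[V(U'_x) \cup V(W_e) \cup V(U'_y)]$; minimality ensures $E'_e$ is internally disjoint from $U'_x \cup U'_y$.

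The verification of $K$-fatness rests on one observation: every $y' \in Y'$ (for $Y' \in \cu' \cup \ce'$) satisfies $d_G(y', V(Y)) \leq \lfloor K/2 \rfloor$, because each added path $Q_{uv}$ has length at most $K$ with both endpoints lying in $V(Y)$. Suppose for contradiction that distinct $Y', Z' \in \cu' \cup \ce'$ outside the allowed exception (where one is $E'_e$ and the other is $U'_x$ with $x$ incident to $e$) satisfy $d_G(Y', Z') \leq K-1$, and pick $y' \in Y'$, $z' \in Z'$ realising this distance together with corresponding $y \in Y$, $z \in Z$ as above. Then
\[
d_G(y,z) \;\leq\; \lfloor K/2 \rfloor + (K-1) + \lfloor K/2 \rfloor \;\leq\; 2K - 1,
\]
and any such $G$-path splits into two segments of length at most $K$, each becoming a single edge in $G^K$, so $d_{G^K}(y,z) \leq 2$. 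This contradicts $d_{G^K}(Y,Z) \geq 3$ from the $3$-fatness of $(\cu,\ce)$.

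The main obstacle I expect is verifying that the above construction yields a valid model, i.e.\ that each $E'_e$ really is a $U'_x$--$U'_y$ path internally disjoint from all other branch sets and from all other branch paths. Internal disjointness from $U'_x \cup U'_y$ comes for free from the minimum-length choice, while disjointness (in fact, $K$-distance) from every other member of $\cu' \cup \ce'$ is immediate from the displayed distance estimate, since every such pair falls outside the unique exception clause of the $3$-fatness of the original model.
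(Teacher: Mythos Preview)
The paper does not give its own proof here; it simply cites \cite[Proof of Theorem~3]{DHIM}. Your approach is the standard one and is correct in outline, but there is a small technical slip in your ``one observation''.

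You assert that every $y' \in Y'$ satisfies $d_G(y', V(Y)) \leq \lfloor K/2 \rfloor$, justified by ``each added path $Q_{uv}$ has length at most $K$ with both endpoints lying in $V(Y)$''. For $Y' = U'_x$ this is fine. For $Y' = E'_e$ it holds for the \emph{internal} vertices of $E'_e$, since those lie in $V(W_e)$ by your minimality argument. But the two endpoints of $E'_e$ lie in $U'_x$ and $U'_y$, and the paths $Q_{uv}$ that build $U'_x$ have both endpoints in $U_x$, not in $V(E_e)$; so for the endpoint $a \in U'_x$ you only know $d_G(a, U_x) \leq \lfloor K/2 \rfloor$, not $d_G(a, V(E_e)) \leq \lfloor K/2 \rfloor$. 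This bites exactly when $e = xy$ and $f = xz$ share the vertex~$x$: your distance estimate then produces $y,z$ with $d_{G^K}(y,z) \leq 2$ where both $y$ and $z$ may lie in $U_x$, which is no contradiction to $3$-fatness. Chasing the bounds in this case gives only $d_G(E'_e, E'_f) \geq K-1$ (for even $K$), one short of what you need.

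The fix is a one-line change to the construction: define $E'_e$ as a $U'_x$--$U'_y$ path inside $G[V(W_e)]$ rather than inside $G[U'_x \cup V(W_e) \cup U'_y]$. This is possible because $W_e$ already starts in $U_x \subseteq U'_x$ and ends in $U_y \subseteq U'_y$; take any path in $G[V(W_e)]$ between these and pass to its $U'_x$--$U'_y$ subpath. Now every vertex of $E'_e$, endpoints included, lies in $V(W_e)$ and hence within $\lfloor K/2 \rfloor$ of $V(E_e)$, and your displayed estimate $d_G(y,z) \leq 2K-1$ goes through for all pairs, including branch paths with a common incident branch set.
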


\begin{proof}[Proof of Theorems~\ref{thm:Counterexample:Octahedron} and ~\ref{thm:FatK4t}]

Let $G_\infty:=\dot{\bigcup}_{\nin} G_{n,n}$ denote the infinite NSS graph. We will show that $G_\infty^9$ has the desired properties. 

Recall that $G_\infty$ does not contain $O$ as a $9$-fat minor by Theorem~\ref{thm:NSSGraph:NoFatO},  and so $G_\infty^9$ does not contain $O$ as a $3$-fat minor by \Tr{thm:FromKFatTo3Fat}. Since $O$ is a minor of $K_{4,4}$, we deduce that $G_\infty^9$ does not contain $K_{4,4}$ as a $3$-fat minor.

Now let $H$ be a graph such $G_\infty^9$ is $(M,A)$-quasi-isometric to $H$ for some $M,A \in \N$. Then $G$ is $(9M, A)$-quasi-isometric to $H$ by Theorem~\ref{thm:FromKFatTo3Fat}. By Theorem~\ref{thm:css:finite}, $H$ contains $G_\infty$ as a $2$-fat minor. Since $K_7$ is a minor of $G_\infty$ by Theorem~\ref{thm:NSSGraph:K7}, we deduce that $K_7$ is a $2$-fat minor of $H$ as claimed by Theorem~\ref{thm:Counterexample:Octahedron}. Similarly, $K_{4,t}$ is a minor of $G_\infty$ by Theorem~\ref{thm:NSSGraph:K4t}, and so it is a $2$-fat minor of $H$ as claimed by Theorem~\ref{thm:FatK4t}.
\end{proof}

Our proof in fact yields the following more general statement that could be useful in finding additional counterexamples to \Cnr{conj fat min}. Let again $G_\infty :=\dot{\bigcup}_{\nin} G_{n,n}$ denote the infinite NSS graph, and $G_\infty^9$ its $9$-th power.

\begin{corollary} \label{cor:Counterexample:MainThm}
     $G_\infty^9$ does not contain $O$ as a $3$-fat minor, but every graph $H$ that is quasi-isometric to $G_\infty^9$ contains $G_\infty$, and hence any minor of $G_\infty$, as a $2$-fat minor. 
\end{corollary}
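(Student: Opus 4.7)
The plan is to mimic the proof of Theorems~\ref{thm:Counterexample:Octahedron} and~\ref{thm:FatK4t} almost verbatim, stating the conclusion at the level of $G_\infty$ itself rather than specialising it to $K_7$ or $K_{4,t}$. The key ingredients are \Tr{thm:NSSGraph:NoFatO:Intro} (the NSS graph has no $9$-fat $O$~minor), \Tr{thm:FromKFatTo3Fat} (the power trick plus the quasi-isometry $G \sim G^K$) and \Tr{thm:css:finite} (coarse self-similarity for NSS graphs).

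For the first assertion, I would argue that $G_\infty^9$ has no $3$-fat $O$~minor as follows. Since $O = K_{2,2,2}$ is connected, any $9$-fat model of $O$ in the disjoint union $G_\infty = \dot\bigcup_n G_{n,n}$ is contained in a single component $G_{n,n}$, contradicting \Tr{thm:NSSGraph:NoFatO:Intro}. Hence $G_\infty$ itself contains no $9$-fat $O$~minor, and then \Tr{thm:FromKFatTo3Fat} immediately upgrades this to: $G_\infty^9$ has no $3$-fat $O$~minor.

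For the second assertion, let $H$ be a graph $(M,A)$-quasi-isometric to $G_\infty^9$. The ``moreover'' clause of \Tr{thm:FromKFatTo3Fat} furnishes a $(9,0)$-quasi-isometry from $G_\infty$ to $G_\infty^9$, which composed with the given one yields a $(9M,A')$-quasi-isometry from $G_\infty$ to $H$ for some $A'$ (adjusting by \Lr{lem:inversequasiisom} if needed). Now I would apply \Tr{thm:css:finite} component-wise: each component $G_{m,m}$ of $G_\infty$ maps into some component $H_m$ of $H$ and is $(9M,A')$-quasi-isometric to it (distances between distinct components are infinite, so the quasi-isometry respects components). For every prescribed $n$, choose $m = m(n)$ so large that the $(K,D)$-threshold of \Tr{thm:css:finite} is met; then $H_m$ contains $G_{n,n}$ as a $2$-fat minor. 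Assembling these per-component models over all $n$ gives a $2$-fat model of $G_\infty$ in $H$. Finally, if $J$ is any minor of $G_\infty$ realised by a (plain) model $(\mathcal U',\mathcal E')$ in $G_\infty$, I would pull this model back through the $2$-fat model $(\mathcal U,\mathcal E)$ of $G_\infty$ in $H$: each branch set (respectively, branch path) of $J$ becomes the union of those members of $\mathcal U \cup \mathcal E$ that implement its constituents in $G_\infty$. Distances between these amalgamated sets in $H$ inherit the $2$-fatness from $(\mathcal U,\mathcal E)$, except for the required incidence pairs, so $J \preceq_2 H$.

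The only real obstacle is the bookkeeping that arises because $G_\infty$ is a disjoint union, whereas \Tr{thm:css:finite} is stated for a single NSS graph $G_{k',d'}$. Once one observes that a quasi-isometry automatically splits into a quasi-isometry on each pair of corresponding components (distances across components are infinite), the argument reduces to invoking \Tr{thm:css:finite} separately on each large enough component and taking a disjoint union of the resulting $2$-fat models; the rest is a formal consequence of the cited results.
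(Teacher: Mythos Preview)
Your proposal is correct and follows essentially the same approach as the paper: the corollary is stated there without a separate proof, as a direct byproduct of the proof of Theorems~\ref{thm:Counterexample:Octahedron} and~\ref{thm:FatK4t}, which invokes \Tr{thm:NSSGraph:NoFatO:Intro}, \Tr{thm:FromKFatTo3Fat}, and \Tr{thm:css:finite} in exactly the way you describe. Your component-wise bookkeeping (splitting the quasi-isometry over the disjoint components of $G_\infty$ and choosing $m(n)$ large enough for each $n$) makes explicit what the paper leaves implicit when it writes ``By Theorem~\ref{thm:css:finite}, $H$ contains $G_\infty$ as a $2$-fat minor.''
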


This implies, for example, that $G_{k,d}$ is itself \bad\ \fe\ $k$ \leth\ $O \preceq G_{k,d}$. Likewise, it implies that $G_\infty$ is \bad.

\section{Coarse self-similarity of the NSS graph} \label{sec css}

We say that a graph (family) $X$ is \defi{\css}, if $X \preceq Y$ holds for every graph (family) $Y$ quasi-isometric to $X$.
As above, we define $G_\infty:=  \dot{\bigcup}_{\nin} G_{n,n}$. 
The aim of this section is to prove that $G_\infty$ is \css,  
\comment{
\begin{theorem} \label{thm css}
    The NSS graph $G_\infty$ is \css. 
\end{theorem}
}
and in fact strengthen this fact by requiring 2-fatness as in Theorem~\ref{thm:css:finite}, which we restate here for convenience: 

\begin{theorem} \label{thm:css:finite:maintext}
    For every $M \geq 1$, $A \geq 0$ and $k, d \in \N$, there exist $K, D \in \N$ such that the following holds for all $k'\geq K$ and $d'\geq D$: If $G_{k',d'}$ is $(M,A)$-quasi-isometric to a graph $H$, then $H$ contains $G_{k,d}$ as a $2$-fat minor.
\end{theorem}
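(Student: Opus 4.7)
\medskip
\noindent\textbf{Proof plan for Theorem~\ref{thm:css:finite:maintext}.}
The plan is to construct a $2$-fat model of $G_{k,d}$ directly inside $H$ by descending through the self-similar trigon structure of $G:=G_{k',d'}$ and, at each level, using coarse $2$-Menger (\Tr{C Menger}) in $H$ to materialize the two long paths that encode one level of the NSS recursion. I will work top-down, constructing simultaneously a nested sequence of trigons of $G$ and the corresponding branch sets / branch paths in $H$, mirroring the inductive definition of $G_{k,d}$.

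\smallskip
\noindent\emph{Choice of parameters.} Given $M,A,k,d$, first fix $R := r(M,A)$ from \Lr{lem:QIPreservesSeps}. Set $K_0 := C(M+A+R)$ for a universal constant $C$ large enough that a $G$-subset of radius $\le K_0/M+A$ cannot $2$-separate $\SSS$ from $\TTT$ inside a sufficiently tall trigon of~$G$. Choose $D$ large enough (of order $K_0\cdot d/\mu$, where $\mu$ is the constant from \Tr{C Menger}) that any two paths obtained from coarse Menger at scale~$K_0$ give enough slack to be thickened to the required model; and choose $K:=2k$ so that $G$ contains a nested chain of $k$ trigons each of which is still tall enough for the argument below.

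\smallskip
\noindent\emph{Main recursive step.} Let $\Delta$ be the trigon of $G$ currently being handled (initially $\Delta=G$). Apply \Tr{C Menger} in~$H$ with $S:=\varphi(\SSS(\Delta))$, $T:=\varphi(\TTT(\Delta))$ and parameter $K_0$. I claim the separator alternative~(ii) is impossible: a set $U\subseteq V(H)$ of radius $\le K_0$ separating $\varphi(\SSS(\Delta))$ from $\varphi(\TTT(\Delta))$ would, via the inverse quasi-isometry (Lemma~\ref{lem:inversequasiisom}) and \Lr{lem:QIPreservesSeps}, pull back to a set of radius $O(K_0)$ in $\Delta$ separating $\SSS(\Delta)$ from $\TTT(\Delta)$, contradicting \Lr{lem:NSSGraph:NoSmallSep} (applied inside $\Delta$ with $\ell\approx K_0$). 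So alternative~(i) holds and we obtain two paths $P_H,Q_H$ in $H$ between $\varphi(\SSS(\Delta))$ and $\varphi(\TTT(\Delta))$ with $d_H(P_H,Q_H)\ge \mu K_0$. Pulling $P_H,Q_H$ back coarsely to $G$ yields two walks between $\SSS(\Delta)$ and $\TTT(\Delta)$ that are pairwise far apart in~$G$; by \Lr{lem:TwoPathsInNSSGraph} one of them is forced to lie near the bottom path of $\Delta$ (the ``bottom'' branch path) and the other one meets a small neighbourhood of $\RR(\Delta)$ (the ``tree'' branch path). In particular this pins the image of the root inside~$H$ and provides the two far-apart arcs needed at the current level of the $G_{k,d}$-model.

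\smallskip
\noindent\emph{Descent to the children.} Once this level is handled, pass to the two child trigons $\SP(\Delta),\TP(\Delta)$ and iterate. At each new level the restriction of $\varphi$ to the child trigon is still an $(M,A)$-quasi-isometry onto its image (using that $\Adh$ separates the trigon from the rest of $G$, so internal distances in the child agree with $G$-distances up to an additive constant), so the recursive step applies. The root-neighbourhoods $B_H(\varphi(\RR(\Delta^{(i)})),M+A)$ at each of the $k$ levels, together with path-segments cut out of the coarse-Menger paths, are then assembled into the branch sets and branch paths of a model of~$G_{k,d}$ in~$H$. The $2$-fatness is read off from the $\mu K_0$ separation delivered at each call to \Tr{C Menger}, after accounting for the $(M+A)$-thickening: having set $K_0$ sufficiently large, every pair of non-incident model pieces is at $H$-distance at least~$2$. \Lr{Lem cross} and Lemmas~\ref{cor:TwoPAthsInNSSGraph:TopAndRight}, \ref{lem:TwoPathsInNSSGraph:ExteriorOfSubpyramid} ensure that the linkages we get at successive levels are geometrically compatible with the recursive NSS structure and do not degenerate (e.g.\ by collapsing both paths into the same child).

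\smallskip
\noindent\emph{Main obstacle.} The crux is ruling out the separator alternative of \Tr{C Menger} uniformly along the whole recursion. At every level of descent we need a quantitative version of ``$\Delta$ has no small-radius $\SSS(\Delta)$--$\TTT(\Delta)$-separator'', strong enough that the radius of the putative separator, after being blown up by the quasi-isometry constants $M,A,R$, is still smaller than the bound tolerated by \Lr{lem:NSSGraph:NoSmallSep}. This forces a careful bookkeeping of how $K_0$, $D$ and $K$ interact under the descent, and is the reason for the specific parameter hierarchy $D\gg K_0\gg M+A+R$ chosen at the start. Once this bookkeeping is in place, the construction is driven entirely by coarse $2$-Menger applied to the $3$-path coarse-Menger counterexample — which is precisely the surprising twist advertised in the introduction.
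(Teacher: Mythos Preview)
Your high-level plan---apply coarse $2$-Menger in $H$ and rule out the separator alternative via the inverse quasi-isometry and \Lr{lem:NSSGraph:NoSmallSep}---matches the paper. But you have misidentified the crux: the separator bookkeeping is routine, and the genuine obstacle is the \emph{endpoint-matching problem}. Coarse Menger hands you two far-apart paths between $\varphi(\SSS)$ and $\varphi(\TTT)$, but gives no control over \emph{which} of $\SO,\SI$ is linked to \emph{which} of $\TO,\TI$. For a model of $G_{k,d}$ you need the specific pairing dictated by the NSS recursion (where $\TI(\Delta_0)=\SO(\Delta_1)$ and $\TO(\Delta_0)=\SI(\Delta_1)$). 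The paper's proof is a bottom-up induction: it applies the inductive hypothesis to two extreme subtrigons $\Delta_0,\Delta_1$ of height $h-\lceil M(2r+A+3)\rceil$, then uses coarse Menger in $\nabla-B$ (crucially \emph{avoiding} a large ball $B$ around the root) to get two $\TTT(\nabla_0)$--$\SSS(\nabla_1)$ paths. If these pair the endpoints wrongly, it locates an auxiliary trigon $\Delta'$ between $\Delta_0$ and $\Delta_1$, uses \Lr{Lem cross} to find a child of $\Delta'$ on which the induced linkage is crossing, and then \emph{replaces} that crossing linkage by a nested one pushed forward from \Or{obs NL} via $\varphi$. This local swap flips the endpoints while keeping distance $\ge 2$. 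Your final paragraph name-drops \Lr{Lem cross} but does not recognise this problem or describe the rerouting; without it the construction does not yield a model of $G_{k,d}$.

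There is also a gap in the step ``pull $P_H,Q_H$ back to $G$ and apply \Lr{lem:TwoPathsInNSSGraph}''. That lemma is about genuine paths in the NSS graph; the coarse preimage of a path in $H$ is only an $(M+A)$-chain in $G$, and the lemma says nothing about such objects (indeed the paper never invokes \Lr{lem:TwoPathsInNSSGraph} in this proof). The paper works entirely in the forward direction, pushing connected sets from $G$ to $H$ via \Lr{lem:QIPreservesConn}: the path $P$ through $\RR(\nabla)$ is obtained not from Menger but as the $\varphi$-image of the tree path $\RR(\Delta_0)$--$\RR(\Delta_1)$. Finally, your top-down descent into $\SP(\Delta),\TP(\Delta)$ drops the height by $1$ per step, whereas the paper drops it by $\lceil M(2r+A+3)\rceil$ to ensure the two submodels are at distance $\ge 2$ in $H$; consequently $K=2k$ is far too small.
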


\begin{proof}
    Let $M \geq 1$ and $A \geq 0$ be arbitrary but fixed throughout the proof. Let $r := r(M,A)+1$ be given by Lemma~\ref{lem:QIPreservesSeps} for $M,A$. 
    We start with some definitions. 
    
    Let $\Delta$ be some (arbitrary) trigon of some $G=G_{k',d'}$, and let $\varphi$ be an $(M,A)$-quasi-isometry from~$G$ to a graph $H$. Set $\nabla := H[B_H(\varphi(\Delta), r)]$. We call $\nabla$ the \defi{trigon of $H$ corresponding to $\Delta$}.
    It is straightforward to extend the notations $\SO(\Delta), \SI(\Delta)$, $\TO(\Delta),\TI(\Delta)$, $\RR(\Delta)$ to $\nabla$, except that now they denote balls in $H$ of radius $r$ instead of single vertices; for example, $\SO(\nabla):= B_H(\varphi(\SO(\Delta)), r)$. 
    We call these sets the \defi{adhesion balls} of $\nabla$.
    We also let $\SSS(\nabla) := \SO(\nabla) \cup \SI(\nabla)$ and $\TTT(\nabla) := \TO(\nabla) \cup \TI(\nabla)$ and $\Adh(\nabla) := \SSS(\nabla) \cup \TTT(\nabla) \cup \{\RR(\nabla)\}$.
    We remark that whenever we use these notations later in the proof, the map $\varphi$ and the graph $H$ will be clear from the context.

    We say that a model $(\cu,\ce)$ of $G_{k,d}$ in $\nabla$ is \defi{natural} (for $\nabla$) if each adhesion ball of~$\nabla$ is contained in the branch set of $(\cu,\ce)$ corresponding to the `right' adhesion vertex of~$G_{k,d}$; for example $\SO(\nabla) \subseteq U_{\SO(G_{k,d})}$.

    We remark for later use that if $(\cu, \ce)$ is natural, then all its branch paths have distance at least $2$ from the neighbourhood of $H-\nabla$. Indeed, since every branch path is internally contained in $\nabla-\Adh(\nabla)$, this follows from Lemma~\ref{lem:QIPreservesSeps} as $r$ is one larger than $r(M,A)$. 
    \medskip

    We proceed with the proof of the statement. We will provide concrete values for the constants $K,D$ that satisfy our requirements, but the reader can choose to ignore these values; what matters is that we can choose $K,D$ large in comparison to $k,d,M$ and~$A$, more concretely, large enough compared to values that come out of applications of Lemmas~\ref{lem:QIPreservesSeps}, ~\ref{lem:NSSGraph:NoSmallSep} and Theorem~\ref{C Menger}. The values that we obtain are
    \begin{align} 
        K &= K_k := \max\{2\ell+3, \hspace*{.25cm} K_{k-1}+\lceil M(2r+A+3)\rceil\}, \text{ and} \label{eq:K}\\ 
        D & =D_d:= \max\{2\ell+1, \hspace*{.25cm} \lceil M(4d+2r+A)\rceil\} \label{eq:D}
    \end{align}
    with $\ell := \lceil \max\{M\lceil M\rceil(4r+2A+5)+2r+A, \hspace*{.25cm} 544\cdot (2r+1)\} + r(M,3AM)\rceil$ where $r(M,3AM)$ is provided by Lemma~\ref{lem:QIPreservesSeps}.
    We will later apply Lemma~\ref{lem:NSSGraph:NoSmallSep} with parameter~$\ell$, and hence our choice of $K,D$ ensures that $\SSS(G_{K,D})$ and $\TTT(G_{K,D})$ cannot be separated by two balls of radius at most $\ell$.
    \smallskip
    
    We claim that the following holds  for every $k' \geq K$ and $d' \geq D$:
    \labtequtag{indhyp}{For every trigon $\Delta$ of $G_{k',d'}$ of height  $h\geq K$ and for every $(M,A)$-quasi-isometry $\varphi$ from $G$ to a graph $H$, the trigon $\nabla$ of $H$ corresponding to $\Delta$ contains a $2$-fat, natural, model of $G_{k,d}$.}{$\ast$}
    This immediately implies, and strengthens, our statement. 
    \smallskip
    
    We prove \eqref{indhyp} by induction on $k$. For $k=1$, we obtain the desired model of $G_{1,d}$ in~$H$ as follows. 
    For every $x \in \SSS(G_{1,d}) \cup \TTT(G_{1,d})$ we define its branch set as $U_x := B_H(\varphi(x'), r)$ where $x'$ is the adhesion vertex of $\Delta$ `in the same position' as~$x$; e.g.\ $x'=\SO(\Delta)$ if $x=\SO(G_{1,d})$. Moreover, for $x = \RR(G_{1,d})$, we let $U_x := B_H(\varphi(B), r)$ where $B$ is the component of $\Delta$ containing $\RR(\Delta)$ after removing the interiors of all dotted paths (i.e.\ all subdivision vertices) from $\Delta$.
    By Lemma~\ref{lem:QIPreservesConn}, every $U_x$ is connected. 

    Moreover, by Lemma~\ref{lem:QIPreservesConn}, for each of the five subdivided edges $Q$ of $G_{1,d}$, there exists a (unique) component of $\nabla - \bigcup_{x \in \Adh(G_{1,d})} U_x$ that has neighbours in both branch sets $U_x$ and $U_y$ of the endvertices $x,y$ of $Q$,
    and by Lemma~\ref{lem:QIPreservesSeps} (applied to $G_{k',d'}$ and~$H$) these five components are distinct. 
    Therefore, we may choose in each component a shortest path $W_{xy}$ from $U_x$ to $U_y$, where $x,y$ are the two endvertices of $Q$ in $G_{1,d}$. Since $d' \geq M(4d+2r+A)$ by \eqref{eq:D}, the adhesion balls of $\nabla$ have distance at least $4d$ from each other, and therefore the paths $W_{xy}$ are at least $4d$ long. As the $W_{xy}$ are shortest paths and thus induced, it is straightforward to divide them into $d-1$ branch sets and $d$ branch paths so that any two of them have distance at least~$2$ from each other and from $U_x, U_y$, unless they represent and incident vertex/edge pair.
    This yields the desired $2$-fat, natural, model of $G_{1,d}$ in~$\nabla$.
    \smallskip

    \begin{figure}[ht]
        \centering
        \scalebox{0.78}{%
\begin{tikzpicture}[scale=1/2,auto=left]

\tikzstyle{every node}=[gray,inner sep=1.5pt, fill=gray,circle,draw]
\node[black, very thick] (v1) at (1,0) {};
\node[black,very thick] (v9) at (9,0) {};
\node[black,very thick] (v29) at (29,0) {};
\node[black,very thick] (v34) at (34,0) {};
\node[black,very thick] (v2) at (2,0) {};
\node[black,very thick] (v3) at (3,0) {};
\node[black,very thick] (v4) at (4,0) {};
\node[black,very thick] (v5) at (5,0) {};
\node[black,very thick] (v6) at (6,0) {};
\node[black,very thick] (v7) at (7,0) {};
\node[black,very thick] (v8) at (8,0) {};
\node[black,very thick] (v10) at (10,0) {};
\node (v11) at (11,0) {};
\draw[black,dotted,ultra thick] (v1)--(v10);
\node[black,very thick] (v25) at (25,0) {};
\draw[dotted,thick] (v10)--(v25);
\node (v12) at (12,0) {};
\node (v13) at (13,0) {};
\node (v14) at (14,0) {};
\node (v15) at (15,0) {};
\node (v16) at (16,0) {};
\node (v17) at (17,0) {};
\node (v18) at (18,0) {};
\node (v19) at (19,0) {};
\node (v20) at (20,0) {};
\node (v21) at (21,0) {};
\node (v22) at (22,0) {};
\node (v23) at (23,0) {};
\node (v24) at (24,0) {};
\node[black,very thick] (v26) at (26,0) {};
\node[black,very thick] (v27) at (27,0) {};
\node[black,very thick] (v28) at (28,0) {};
\node[black,very thick] (v30) at (30,0) {};
\node[black,very thick] (v31) at (31,0) {};
\node[black,very thick] (v32) at (32,0) {};
\node[black,very thick] (v33) at (33,0) {};
\draw[black,dotted,ultra thick] (v25)--(v34);

\node[black,very thick] (u2) at (2.5,1.5) {};
\node[black,very thick] (u4) at (4.5,1.5) {};
\node[black,very thick] (u6) at (6.5,1.5) {};
\node[black,very thick] (u8) at (8.5,1.5) {};
\node (u10) at (10.5,1.5) {};
\node (u12) at (12.5,1.5) {};
\node (u14) at (14.5,1.5) {};
\node (u16) at (16.5,1.5) {};
\node (u18) at (18.5,1.5) {};
\node (u20) at (20.5,1.5) {};
\node (u22) at (22.5,1.5) {};
\node (u24) at (24.5,1.5) {};
\node[black,very thick] (u26) at (26.5,1.5) {};
\node[black,very thick] (u28) at (28.5,1.5) {};
\node[black,very thick] (u30) at (30.5,1.5) {};
\node[black,very thick] (u32) at (32.5,1.5) {};

\draw[black,dotted,ultra thick] (u2) to [bend right=20] (v1);
\draw[black,dotted,ultra thick] (u4) to [bend right=20] (v3);
\draw[black,dotted,ultra thick] (u6) to [bend right=20] (v5);
\draw[black,dotted,ultra thick] (u8) to [bend right=20] (v7);
\draw[dotted,thick] (u10) to [bend right=20] (v9);
\draw[dotted,thick] (u12) to [bend right=20] (v11);
\draw[dotted,thick] (u14) to [bend right=20] (v13);
\draw[dotted,thick] (u16) to [bend right=20] (v15);
\draw[dotted,thick] (u18) to [bend right=20] (v17);
\draw[dotted,thick] (u20) to [bend right=20] (v19);
\draw[dotted,thick] (u22) to [bend right=20] (v21);
\draw[dotted,thick] (u24) to [bend right=20] (v23);
\draw[black,dotted,ultra thick] (u26) to [bend right=20] (v25);
\draw[black,dotted,ultra thick] (u28) to [bend right=20] (v27);
\draw[black,dotted,ultra thick] (u30) to [bend right=20] (v29);
\draw[black,dotted,ultra thick] (u32) to [bend right=20] (v31);

\draw[black,dotted,ultra thick] (u2) to [bend left=20] (v4);
\draw[black,dotted,ultra thick] (u4) to [bend left=20] (v6);
\draw[black,dotted,ultra thick] (u6) to [bend left=20] (v8);
\draw[black,dotted,ultra thick] (u8) to [bend left=20] (v10);
\draw[dotted,thick] (u10) to [bend left=20] (v12);
\draw[dotted,thick] (u12) to [bend left=20] (v14);
\draw[dotted,thick] (u14) to [bend left=20] (v16);
\draw[dotted,thick] (u16) to [bend left=20] (v18);
\draw[dotted,thick] (u18) to [bend left=20] (v20);
\draw[dotted,thick] (u20) to [bend left=20] (v22);
\draw[dotted,thick] (u22) to [bend left=20] (v24);
\draw[dotted,thick] (u24) to [bend left=20] (v26);
\draw[black,dotted,ultra thick] (u26) to [bend left=20] (v28);
\draw[black,dotted,ultra thick] (u28) to [bend left=20] (v30);
\draw[black,dotted,ultra thick] (u30) to [bend left=20] (v32);
\draw[black,dotted,ultra thick] (u32) to [bend left=20] (v34);

\node[black,very thick] (t3) at (3.5,3) {};
\node[black,very thick] (t7) at (7.5,3) {};
\node (t11) at (11.5,3) {};
\node (t15) at (15.5,3) {};
\node (t19) at (19.5,3) {};
\node (t23) at (23.5,3) {};
\node[black,very thick] (t27) at (27.5,3) {};
\node[black,very thick] (t31) at (31.5,3) {};

\draw[black,very thick] (u2) -- (t3)--(u4);
\draw[black,very thick] (u6) -- (t7)--(u8);
\draw (u10) -- (t11)--(u12);
\draw (u14) -- (t15)--(u16);
\draw (u18) -- (t19)--(u20);
\draw (u22) -- (t23)--(u24);
\draw[black,very thick] (u26) -- (t27)--(u28);
\draw[black,very thick] (u30) -- (t31)--(u32);

\node[black,very thick] (s5) at (5.5,4.5) {};
\node (s13) at (13.5,4.5) {};
\node (s21) at (21.5,4.5) {};
\node[black,very thick] (s29) at (29.5,4.5) {};

\draw[black,very thick] (t3) -- (s5)--(t7);
\draw (t11) -- (s13)--(t15);
\draw (t19) -- (s21)--(t23);
\draw[black,very thick] (t27) -- (s29)--(t31);

\node (r9) at (9.5,6) {};
\node (r25) at (25.5,6) {};

\draw (s5) -- (r9)--(s13);
\draw (s21) -- (r25)--(s29);

\node (q17) at (17.5,7.5) {};
\draw (r9) -- (q17)--(r25);

\tikzstyle{every node}=[]

\draw[above left] (t3) node []           {\Large $\Delta_0$};
\draw[above right] (t31) node []           {\Large $\Delta_1$};
\draw (17.5,5.5) node [] {\LARGE $\Delta$};

\end{tikzpicture}
}
        \vspace{-2em}
        \caption{The trigons $\Delta_0$ and $\Delta_1$ inside $\Delta$ in the proof of Theorem~\ref{thm:css:finite}.}
        \label{fig:CSS:Proof:1}
    \end{figure}
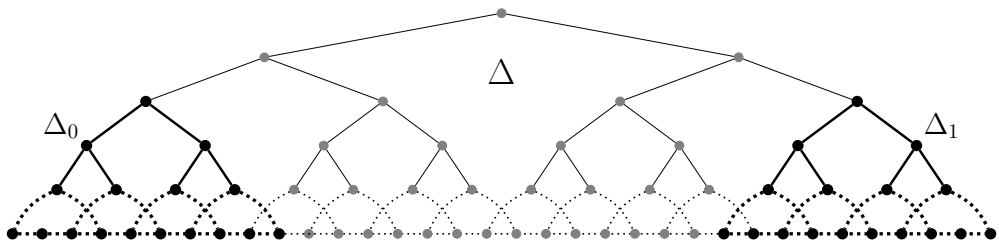
    For $k > 1$, we construct our model of $G_{k,d}$ in $H$, roughly speaking, as follows. We start by applying the induction hypothesis to a pair of subtrigons $\Delta_0,\Delta_1$ of $\Delta$ lying in the leftmost and rightmost extreme of $\Delta$ (see Figure~\ref{fig:CSS:Proof:1}) to obtain $2$-fat, natural models $M_0 := (\cu^0, \ce^0)$ and $M_1 := (\cu^1, \ce^1)$ of $G_{k-1,d}$. Recall that we inductively constructed $G_{k,d}$ from two copies of $G_{k-1,d}$ by identifying two pairs of vertices and adding a new vertex adjacent to their root vertices. We will use $M_0,M_1$ as our two copies of $G_{k-1,d}$, and extend them into a $2$-fat, natural $G_{k,d}$-model within $\nabla$ as follows (see Figure~\ref{fig:CSS:Proof:2}). 
    \begin{figure}[ht]
        \centering
        \scalebox{0.78}{%
\begin{tikzpicture}[scale=1/2,auto=left]

\draw[gray,very thick] (9.5,6) to [bend right=20] (1,0);
\draw [gray,very thick] plot [smooth, tension=0.4] coordinates {(9.5,6) (17.5,7.5) (25.5,6)};
\draw[gray,very thick] (25.5,6) to [bend left=20] (34,0);
\draw[gray,very thick] (1,0)--(34,0);

\draw[black,very thick] (29.2,4.5) to [bend right=8] (24.7,0.2);
\draw[black,very thick] (29.2,4.5) to [bend left=8] (33.7,0.2);
\draw[black,very thick] (24.7,0.2)--(33.7,0.2);

\draw[black,very thick] (s5)+(0.3,0) to [bend right=8] (1.3,0.2);
\draw[black,very thick] (s5)+(0.3,0) to [bend left=8] (10.3,0.2);
\draw[black,very thick] (1.3,0.2)--(10.3,0.2);

\draw[blue,very thick] (2.4,0.7) circle (0.4);
\draw[blue,very thick] (3.6,0.7) circle (0.4);

\draw[blue,very thick] (9.2,0.7) circle (0.4);
\draw[blue,very thick] (8,0.7) circle (0.4);

\draw[blue,very thick] (5.8,3.85) circle (0.4);

\draw[blue,very thick] (25.8,0.7) circle (0.4);
\draw[blue,very thick] (27,0.7) circle (0.4);

\draw[blue,very thick] (32.6,0.7) circle (0.4);
\draw[blue,very thick] (31.4,0.7) circle (0.4);

\draw[blue,very thick] (29.2,3.85) circle (0.4);

\draw[black, thick] (17.5,7) to [bend right=10] (5.8,4);
\draw[black, thick] (17.5,7) to [bend left=10] (29.2,4);

\draw [red,thick] plot [smooth, tension=0.4] coordinates {(8,0.9) (9,1.2) (13.8,2.3) (19,0.5) (25.6,0.7)};

\draw [Green,thick] plot [smooth, tension=0.4] coordinates {(27,0.9) (26,1.2) (22,2.3) (16,0.5) (9.4,0.7)};

\draw[dashed,thick] (1,3.7) to [bend right=6] (34,3.8);

\draw[very thick] (17.5,6.5) circle (0.8);

\tikzstyle{every node}=[]
\draw (6,1.8) node {\Large $\nabla_0$};
\draw (29.4,1.8) node {\Large $\nabla_1$};
\draw (23.1,5.5) node {\large $P$};
\draw[red] (10.8,2.3) node {\large $P_0$};
\draw[Green] (24.5,2.3) node {\large $P_1$};

\draw[blue] (10,-0.45) node {\footnotesize $\TO(\nabla_0)$};
\draw[blue] (7.4,-0.45) node {\footnotesize $\TI(\nabla_0)$};

\draw[blue] (25.2,-0.45) node {\footnotesize $\SO(\nabla_1)$};
\draw[blue] (27.8,-0.45) node {\footnotesize $\SI(\nabla_1)$};

\draw[blue] (4.6,4.6) node {\footnotesize $\RR(\nabla_0)$};
\draw[blue] (30.4,4.6) node {\footnotesize $\RR(\nabla_1)$};

\draw (33.5,4.3) node {\large $B$};

\draw (17.5,4.9) node {\large $\RR(\nabla)$};

\end{tikzpicture}
}
        \vspace{-2em}
        \caption{A sketch of the model of $G_{k,d}$ in $\nabla$ in the proof of Theorem~\ref{thm:css:finite}. Indicated in blue are the adhesion balls of $\nabla_0, \nabla_1$. The paths $P_0,P_1$ are disjoint.}
        \label{fig:CSS:Proof:2}
    \end{figure}
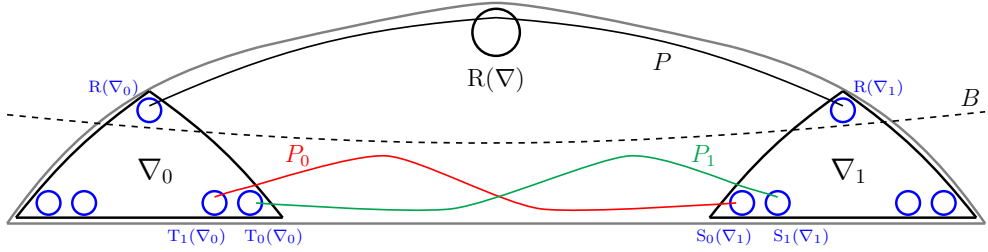
 
    Firstly, we connect the root branch sets of $M_0,M_1$ by a path $P$ in $\nabla$ through $R(\nabla)$. Then, we find a pair of disjoint paths $P_0$ from $\TI(\nabla_0)$ to $\SO(\nabla_1)$ and $P_1$ from $\TO(\nabla_0)$ to $\SI(\nabla_1)$ in $\nabla$, which have distance at least $2$ from each other and also from $P$ and the rest of $\nabla_0, \nabla_1$. (This step is the most challenging part of our proof.) Next, we contract each $P_i$, thereby joining the branch set of $M_0$ corresponding to $\TO(G_{k,d})$ (respectively, $\TI(G_{k,d})$) to the branch set of $M_1$ corresponding to $\SI(G_{k,d})$ (respectively $\SO(G_{k,d})$). (For this note that $M_0,M_1$ are natural, and therefore for example $\TO(\nabla_0) \subseteq U^0_{\TO(G_{k-1,d})}$.) Finally, we form a new branch set containing $R(\nabla)$, and use edges of $P$ to connect it to the root branch sets of $M_0,M_1$. These new branch sets and paths, together with the branch sets and paths of $M_0,M_1$ that we did not modify, form the branch sets and paths of the desired model of $G_{k,d}$ in $\nabla$. 
    \medskip
    
    Let us make this construction precise. 
    We start the construction of our model by applying the induction hypothesis \eqref{indhyp} to the two subtrigons $\Delta_0, \Delta_1$ of $\Delta$ of height $h':= h-\lceil M(2r+A+3)\rceil$ lying in the leftmost and rightmost extreme of $\Delta$, as indicated in Figure~\ref{fig:CSS:Proof:1}. Note that $h' \geq  K_{k-1}$ by \eqref{eq:K}.
    In this way, we obtain $2$-fat, natural models $M_0 := (\cu^0, \ce^0)$ and $M_1 := (\cu^1, \ce^1)$ of $G_{k-1,d}$ in $\nabla_0$ and $\nabla_1$, respectively.
    Note that our choice of $d' > M(r+A+1)$ and $h > h'+M(2r+A+2)$ by \eqref{eq:K} and \eqref{eq:D} implies that $\nabla_0,\nabla_1$ have distance at least $2$ from each other. 
 
    Let $P^-$ be the unique $\RR(\Delta_0)$--$\RR(\Delta_1)$~path in the binary tree $B(\Delta)$. Note that $B_H(\varphi(P^-), M+A)$ is connected by Lemma~\ref{lem:QIPreservesConn}, 
    and therefore $B_H(\varphi(P^-),M+A)$ contains an $\RR(\nabla_0)$--$\RR(\nabla_1)$~path $P$; we choose a shortest such path, which implies that $P$ is induced, and that at most its first two vertices send edges to $\nabla_0$ and at most its last two vertices send edges to $\nabla_1$. Note that $P$ must intersect $\RR(\nabla)$ by Lemma~\ref{lem:QIPreservesSeps} (applied to $G_{k',d'}, H$ and $X := \Adh(\Delta) \cup \TTT(\SP(\Delta))$ where we used that $P^-$ has distance at least~$h'$ from $X \setminus \RR(\Delta)$, and hence $P \subseteq B_H(\varphi(P^-), M+A)$ is disjoint to $B_H(\varphi(X), r) \setminus \RR(\nabla)$ as $h' \geq M(M+A+r+1)+A$).
    Moreover, since $d_G(\RR(\Delta_0), \RR(\Delta_1)) \geq \min\{2d', 2(h-h')\} \geq 2M(r+A+3)$ by \eqref{eq:K} and \eqref{eq:D}, we deduce that $P$ has length at least $6$.
    
    Let $q:=\lceil M\lceil M\rceil(4r+2A+5)+2r+A\rceil$ and $B:=B_H(\RR(\nabla), q)$. Roughly speaking, $q$ is large with respect to $M, A$, but $K,D$ are chosen after $q$, and they are even larger than~$q$ (more precisely, $K,D \geq 2\ell \geq 2q)$). 
    We claim that there are two $\TTT(\nabla_0)$--$\SSS(\nabla_1)$~paths $P_0,P_1$ in $\nabla - B$ with $d_H(P_0,P_1) \geq 2$, such that $P_0$ starts at $\TI(\nabla_0)$ and ends at $\SO(\nabla_1)$ and $P_1$  starts at $\TO(\nabla_0)$ and ends at $\SI(\nabla_1)$ (see Figure~\ref{fig:CSS:Proof:2}).
    We postpone the proof of this claim for later, and first show how to conclude the proof given $P_0, P_1$.

    For this, we follow the above sketch to combine $M_0, M_1$ into a model of $G_{k,d}$: each~$P_i$ joins a branch set of $M_0$ to the `right' branch set of $M_1$ (as indicated in Figure~\ref{fig:CSS:Proof:2}), and is otherwise disjoint from $\nabla_0, \nabla_1$ by Lemma~\ref{lem:QIPreservesSeps} and because $\RR(\nabla_0), \RR(\nabla_1) \subseteq B$ by our choice of~$q$. For each $i\in \{1,2\}$, we form a new branch set comprising the union of these two branch sets and $P_i$, i.e.\ the branch set corresponding to $\TI(\SP(G_{k,d}))$ is defined as $U^0_{\TI(G_{k-1,d})} \cup V(P_0) \cup U^1_{\SO(G_{k-1,d})}$ and analogously for the branch set corresponding to $\TO(\SP(G_{k,d}))$. 
    
    We form a new branch set corresponding to the root $\RR(G_{k,d})$ consisting of $\RR(\nabla)$ together with the vertices of $P$ except for its first and last two vertices, and let the branch paths corresponding to the two edges of $G_{k,d}$ joining $\RR(G_{k,d})$ to its children in $B(G_{k,d})$ consist of the subpath of~$P$ of length~$2$ containing its first or last two vertices, respectively.
    These branch sets and branch paths, together with the branch sets of $M_0,M_1$ that we did not modify, form the branch sets and paths of a model of $G_{k,d}$ in $\nabla$. By construction, this model is natural in $\nabla$. Moreover, it is $2$-fat since $d_H(\nabla_0, \nabla_1) \geq 2$ and by the choice of $P, P_0, P_1$, where we used that every branch path of $M_0,M_1$ has distance at least $2$ from $P$ by our remark after the definition of natural minor-models. 
    \medskip

    Therefore, it suffices to find paths $P_0, P_1$ as above.     
    We first show that there are two $\SSS(\nabla)$--$\TTT(\nabla)$ paths $Q_0,Q_1$ in $\nabla-B$ that have distance at least $2r+1$ from each other (in $\nabla-B$). If not, then by the coarse version of Menger's Theorem for two paths, Theorem~\ref{C Menger}, there is a vertex $w \in \nabla$ such that $B_\nabla(w, 544\cdot(2r+1))$ intersects all $\SSS(\nabla)$--$\TTT(\nabla)$ paths in $\nabla-B$. Let $v \in V(\Delta)$ such that $d_H(w, f(v)) \leq A$. By Lemmas~\ref{lem:inversequasiisom} and~\ref{lem:QIPreservesSeps}, $B_G(v, M\cdot 544\cdot(2r+1)+ 3AM + R)$ separates $\SSS(\Delta)$ and $\TTT(\Delta)$ in $\Delta-B_G(\RR(\Delta), Mq+3AM+R)$ where $R := r(M,3AM)$ is provided by Lemma~\ref{lem:QIPreservesSeps}. 
    Since $h \geq 2\ell+3$ and $d \geq 2\ell+1$ by \eqref{eq:K} and \eqref{eq:D}, and because $\ell = M\cdot\max\{q,544\cdot(2r+1)\}+3AM+R$, this contradicts Lemma~\ref{lem:NSSGraph:NoSmallSep}. Therefore, there are two $\SSS(\nabla)$--$\TTT(\nabla)$ paths $Q_0, Q_1$ in $\nabla-B$ that have distance at least $2r+1$ from each other (in $\nabla-B$). Their distance in $\nabla$ is much smaller by \cite{NgScSeCou}, but we only claim that this distance is at least $2$, which follows as $\nabla-B$ is an induced subgraph of~$\nabla$.
    
    By Lemma~\ref{lem:QIPreservesSeps} and because $\RR(\nabla_0), \RR(\nabla_1) \subseteq B$ by our choice of $q$, the paths $Q_0, Q_1$ both meet $\TTT(\nabla_0)$ and $\SSS(\nabla_1)$. As the adhesion balls of $\nabla_0, \nabla_1$ are balls of radius~$r$ in $\nabla$, and hence have diameter at most $2r$ in $\nabla-B$ as they avoid $B$, and $Q_0, Q_1$ have distance at least $2r+1$ from each other in $\nabla-B$, precisely one of $Q_0, Q_1$ meets $\TI(\nabla_0)$ and the other meets $\TO(\nabla_0)$, and analogously for $\SO(\nabla_1), \SI(\nabla_1)$. 
    Let $P_0, P_1$ be $\TTT(\nabla_0)$--$\SSS(\nabla_1)$ subpaths of $Q_0, Q_1$, respectively. Note that by Lemma~\ref{lem:QIPreservesSeps}, $P_0, P_1$ meet $\nabla_0, \nabla_1$ only at their endvertices.  
    \smallskip

    Suppose \obda\ that $P_0$ starts at $\TI(\nabla_0)$ and $P_1$ starts at $\TO(\nabla_0)$. 
    If $P_0$ ends at $\SO(\nabla_1)$ (and so $P_1$ ends at $\SI(\nabla_1)$), then $P_0, P_1$ are as desired. 
    In the other case where the endvertices of $P_0,P_1$ are the wrong way round, we will reroute them locally inside a suitable subtrigon of $\nabla$ to exchange their endvertices (see Figure~\ref{fig:CSS:Proof:3}).
    
    Let $\Delta'$ be a subtrigon of $\Delta$ next to $\Delta_1$, i.e.\ such that $\TTT(\Delta') = \SSS(\Delta_1)$,
    chosen so that the distance between $\RR(\Delta')$ and $\RR(\Delta_1)$ is precisely $\lceil M(2r+A+1)\rceil$. This is possible because in $\Delta$ there is a trigon next to $\Delta_1$ with any integer distance between their roots ranging from $2$ to $h+1$.  
    Then $d_G(\RR(\Delta), \RR(\Delta'')) = (h-h')+\lceil M(2r+A+1)\rceil -2 \leq M^{-1}(q-2r-A)$ by our choice of~$q$, and hence $d_H(\RR(\nabla), \RR(\nabla')) \leq q-2r$, which implies that $\RR(\nabla') \subseteq B$. Because of this and 
    by Lemma~\ref{lem:QIPreservesSeps}, $P_0, P_1$ both meet $\SSS(\nabla')$. Since the adhesion balls of $\nabla'$ have diameter at most $2r$ in $\nabla-B$ and $P_0, P_1$ have distance at least $2r+1$ from each other in $\nabla-B$,  
    precisely one of $P_0, P_1$ meets $\SO(\nabla')$ and the other one meets $\SI(\nabla')$.
    We may assume \obda\ that if some $P_i$ intersects $\SO(\nabla')$ or $\SI(\nabla')$, then their intersection is a path, for otherwise we could shortcut $P_i$ inside that ball. Indeed, $P_1, P_2$ will still have distance at least $2r+1$ from each other in $\Delta-B$ as $d_H(\SO(\nabla'), \SI(\nabla')) \geq M^{-1}\cdot d_G(\SO(\Delta), \SI(\Delta))-A-2r$ and $d_G(\SO(\Delta), \SI(\Delta)) \geq d' \geq M(4r+A+1)$. 
    For the same reason, we may also assume that if one of $P_0$ or $P_1$ intersects $\TTT(\SP(\nabla'))$, then this intersection is a path as well. 
    In particular, the intersection of $P_0$ (and $P_1$) with $\nabla'$ (and with $\SP(\nabla')$ and $\TP(\nabla')$) is a path.

    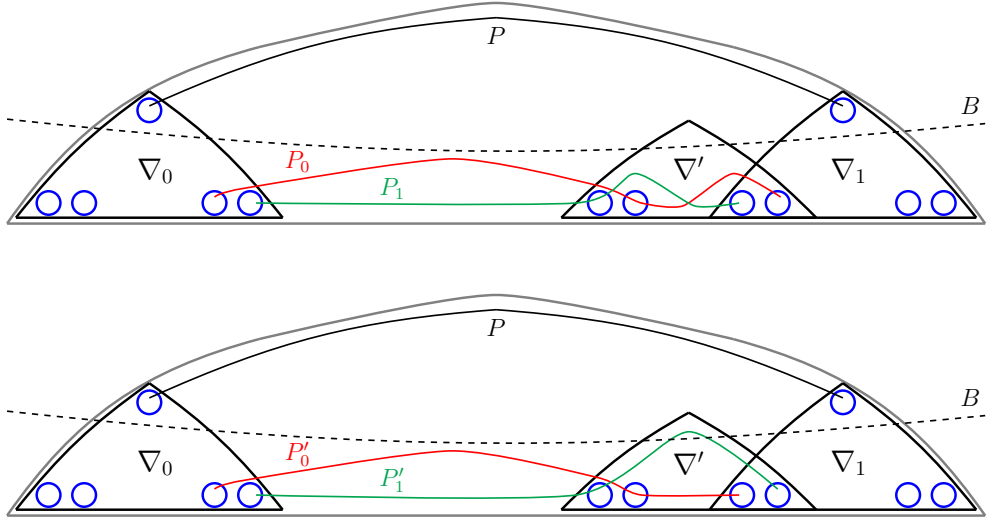
\begin{figure}[ht]
        \centering
        \begin{subfigure}[b]{1\linewidth}
            \centering
            \scalebox{0.78}{%
\begin{tikzpicture}[scale=1/2,auto=left]

\draw[gray,very thick] (9.5,6) to [bend right=20] (1,0);
\draw [gray,very thick] plot [smooth, tension=0.4] coordinates {(9.5,6) (17.5,7.5) (25.5,6)};
\draw[gray,very thick] (25.5,6) to [bend left=20] (34,0);
\draw[gray,very thick] (1,0)--(34,0);

\draw[black,very thick] (29.2,4.5) to [bend right=8] (24.7,0.2);
\draw[black,very thick] (29.2,4.5) to [bend left=8] (33.7,0.2);
\draw[black,very thick] (24.7,0.2)--(33.7,0.2);

\draw[black,very thick] (s5)+(0.3,0) to [bend right=8] (1.3,0.2);
\draw[black,very thick] (s5)+(0.3,0) to [bend left=8] (10.3,0.2);
\draw[black,very thick] (1.3,0.2)--(10.3,0.2);

\draw[black,very thick] (24,3.5) to [bend left=8] (28.3,0.2);
\draw[black,very thick] (24,3.5) to [bend right=8] (19.7,0.2);
\draw[black,very thick] (19.7,0.2)--(28.3,0.2);

\draw[blue,very thick] (2.4,0.7) circle (0.4);
\draw[blue,very thick] (3.6,0.7) circle (0.4);

\draw[blue,very thick] (9.2,0.7) circle (0.4);
\draw[blue,very thick] (8,0.7) circle (0.4);

\draw[blue,very thick] (5.8,3.85) circle (0.4);

\draw[blue,very thick] (25.8,0.7) circle (0.4);
\draw[blue,very thick] (27,0.7) circle (0.4);

\draw[blue,very thick] (32.6,0.7) circle (0.4);
\draw[blue,very thick] (31.4,0.7) circle (0.4);

\draw[blue,very thick] (29.2,3.85) circle (0.4);

\draw[blue,very thick] (21,0.7) circle (0.4);
\draw[blue,very thick] (22.2,0.7) circle (0.4);

\draw[black, thick] (17.5,7) to [bend right=10] (5.8,4);
\draw[black, thick] (17.5,7) to [bend left=10] (29.2,4);

\draw [red,thick] plot [smooth, tension=0.4] coordinates {(8,0.9) (9,1.2) (16,2.2) (20.9,1.3) (22.4,0.7) (23.8,0.6) (25.5,1.7) (27.1,0.9)};

\draw [Green,thick] plot [smooth, tension=0.4] coordinates {(9.4,0.7) (20.2,0.7) (22.2,1.7) (24.2,0.6) (25.7,0.7)};

\draw[dashed,thick] (1,3.55) to [bend right=6] (34,3.4);

\tikzstyle{every node}=[]
\draw (6,1.8) node {\Large $\nabla_0$};
\draw (29.4,1.8) node {\Large $\nabla_1$};
\draw (17.5,6.4) node {\large $P$};
\draw[red] (10.8,2.1) node {\large $P_0$};
\draw[Green] (14,1.2) node {\large $P_1$};
\draw (24,1.9) node {\Large $\nabla'$};
\draw (33.5,4) node {\large $B$};

\end{tikzpicture}
}
        \end{subfigure}
        \begin{subfigure}[b]{1\linewidth}
            \centering
            \scalebox{0.78}{%
\begin{tikzpicture}[scale=1/2,auto=left]

\draw[gray,very thick] (9.5,6) to [bend right=20] (1,0);
\draw [gray,very thick] plot [smooth, tension=0.4] coordinates {(9.5,6) (17.5,7.5) (25.5,6)};
\draw[gray,very thick] (25.5,6) to [bend left=20] (34,0);
\draw[gray,very thick] (1,0)--(34,0);

\draw[black,very thick] (29.2,4.5) to [bend right=8] (24.7,0.2);
\draw[black,very thick] (29.2,4.5) to [bend left=8] (33.7,0.2);
\draw[black,very thick] (24.7,0.2)--(33.7,0.2);

\draw[black,very thick] (s5)+(0.3,0) to [bend right=8] (1.3,0.2);
\draw[black,very thick] (s5)+(0.3,0) to [bend left=8] (10.3,0.2);
\draw[black,very thick] (1.3,0.2)--(10.3,0.2);

\draw[black,very thick] (24,3.5) to [bend left=8] (28.3,0.2);
\draw[black,very thick] (24,3.5) to [bend right=8] (19.7,0.2);
\draw[black,very thick] (19.7,0.2)--(28.3,0.2);

\draw[blue,very thick] (2.4,0.7) circle (0.4);
\draw[blue,very thick] (3.6,0.7) circle (0.4);

\draw[blue,very thick] (9.2,0.7) circle (0.4);
\draw[blue,very thick] (8,0.7) circle (0.4);

\draw[blue,very thick] (5.8,3.85) circle (0.4);

\draw[blue,very thick] (25.8,0.7) circle (0.4);
\draw[blue,very thick] (27,0.7) circle (0.4);

\draw[blue,very thick] (32.6,0.7) circle (0.4);
\draw[blue,very thick] (31.4,0.7) circle (0.4);

\draw[blue,very thick] (29.2,3.85) circle (0.4);

\draw[blue,very thick] (21,0.7) circle (0.4);
\draw[blue,very thick] (22.2,0.7) circle (0.4);

\draw[black, thick] (17.5,7) to [bend right=10] (5.8,4);
\draw[black, thick] (17.5,7) to [bend left=10] (29.2,4);

\draw [red,thick] plot [smooth, tension=0.4] coordinates {(8,0.9) (9,1.2) (16,2.2) (20.9,1.3) (22.4,0.7) (25.7,0.7)};

\draw [Green,thick] plot [smooth, tension=0.4] coordinates {(9.4,0.7) (20.2,0.7) (24,2.85) (27,0.9)};

\draw[dashed,thick] (1,3.55) to [bend right=6] (34,3.4);

\tikzstyle{every node}=[]
\draw (6,1.8) node {\Large $\nabla_0$};
\draw (29.4,1.8) node {\Large $\nabla_1$};
\draw (17.5,6.4) node {\large $P$};
\draw[red] (10.8,2.1) node {\large $P'_0$};
\draw[Green] (14,1.2) node {\large $P'_1$};
\draw (24,1.8) node {\Large $\nabla'$};
\draw (33.5,4) node {\large $B$};

\end{tikzpicture}
}
        \end{subfigure}
        \vspace{-2em}
        \caption{A sketch of the paths $P_0, P_1$ (top) and the rerouted paths $P'_0, P'_1$ (bottom). In this case we have $\nabla'' = \nabla'$ and $\cp'' = \cp'$.}
        \label{fig:CSS:Proof:3}
    \end{figure}

    We now reroute $P_0, P_1$ inside $\nabla'$ to exchange their endvertices in $\SSS(\nabla_1)$ (see Figure~\ref{fig:CSS:Proof:3}). 
    For this, note that it is straightforward to extend the definitions of (nested and crossing) linkages of \Sr{sec link} to trigons in $H$ (since the definitions only refer to how the paths intersect $\SSS(G') \cup \TTT(G')$ for some NSS graph $G'$). 
    In particular, as the intersections of $P_0,P_1$ with $\nabla', \SP(\nabla')$ and $\TP(\nabla')$ are paths, they induce a linkage $\cp'$ on $\nabla'$ and two linkages $\cp_0, \cp_1$ on its two child-trigons $\SP(\nabla')$ and $\TP(\nabla')$ by \Lr{Lem cross}~\ref{li} (again, Lemma~\ref{Lem cross} and its proof transfer directly to $H$ because we modified the paths $P_0, P_1$ so that their intersections with $\TTT(\SP(\nabla'))$ are paths). 
    At least one of them is crossing by \Lr{Lem cross}~\ref{lii}. 
    We can substitute one such crossing linkage by a nested one within the same (sub)trigon using \Or{obs NL} (see Figure~\ref{fig:CSS:Proof:3}). More precisely, let $\cp'' \in \{\cp', \cp_0, \cp_1\}$ be crossing, and let $\nabla'' \in \{\nabla', \SP(\nabla'), \TP(\nabla')\}$ be the trigon containing $\cp''$. We apply \Or{obs NL} to the `preimage' $\Delta''$ of $\nabla''$ in $G$ (i.e.\ the trigon $\Delta''$ of $G$ such that $\nabla'' = H[B_H(\varphi(\Delta''), r)]$) to obtain a nested linkage $Q''_0, Q''_1$ in~$\Delta''$. By Lemma~\ref{lem:QIPreservesConn}, $B_H(\varphi(Q''_0), M+A)$ and $B_H(\varphi(Q''_1), M+A)$ are connected, so we may choose $\SSS(\nabla'')$--$\TTT(\nabla'')$ paths $Z_0, Z_1$ inside these sets. Since $d_G(Q''_0, Q''_1) \geq d'$ and $d' \geq M(2M+3A+2)$ by \eqref{eq:D}, it follows that $Z_0, Z_1$ have distance at least $2$ from each other. 

    Recall that the intersections of $P_0$ and $P_1$ with $\SSS(\nabla'') \cup \TTT(\nabla'')$ are paths, and that $P_0, P_1$ avoid $\RR(\nabla') \subseteq B$. 
    Thus, by replacing $\cp''$ by $Z_0,Z_1$, and appropriately adapting the paths within each adhesion ball in $\SSS(\nabla'') \cup \TTT(\nabla'')$, we have modified $P_0,P_1$ into a pair of paths $P'_0,P'_1$, so that $P'_0$ ends at $\SO(\nabla_1) \subseteq U^1_{\SO(G_{k,d})}$ and $P'_1$ ends at $\SI(\nabla_1) \subseteq U^1_{\SI(G_{k,d})}$. Moreover, $P'_0$ and $P'_1$ have distance at least $2$ (in $H$) from each other.
    Indeed, $P_0$ and $P_1$ have large distance in $H-B$ from each other by their choice, and hence distance at least~$2$ in $H$. Moreover, $Z_0$ and $Z_1$ have distance at least~$2$ (in $H$) from each other by the previous paragraph. Finally, $P'_i - Z_i$ and $Z_{1-i}$, for $i \in \{0,1\}$, have distance at least~$2$ from each other (in $H$) since $Z_{1-i}$ is contained in $B_H(\varphi(\Delta''), M+A)$ and $P'_i - Z_i$ avoids $\nabla''$, and hence in particular $B_H(\varphi(\Delta''), M+A+1) \subseteq \nabla''$, except for its adhesion balls $\SSS(\nabla'') \cup \TTT(\nabla'')$ (this implicitly uses that $P_0, P_1$ avoid $\RR(\nabla')$). 

    This completes the (formal) construction of our model of $G_{k,d}$.
    Notice that $P'_0$, $P'_1$ meet $\nabla_0 \cup \nabla_1$ only in their endvertices, and $P$ not at all, because of our choice of $\nabla'$. Moreover, note that $Z_0,Z_1$, and hence $P'_0, P'_1$, might intersect~$B$ (in fact, one of them will intersect $B$ in $\RR(\nabla'')$) but they do not intersect $\RR(\nabla)$ or $B_H(P,2)$ (cf.\ Figure~\ref{fig:CSS:Proof:3}).
    In particular, our model of $G_{k,d}$ in $H$ is $2$-fat.
\end{proof}

\section{Non-fat minors of the NSS graph} \label{sec O fat}

Observe that $K_{2,2,2}$ is isomorphic to the $1$-skeleton of the octahedron. Therefore, we will denote this graph from now on by $O$ to shorten notation. Equivalently, $O$ is the graph obtained from $K_6$ by removing a perfect matching.
The latter definition easily implies

\begin{proposition} \label{prop:O:TwoEdgesBetweenTwoPairsOfVertices}
    For every four distinct vertices $v_0, v_1, u_0, u_1$ of $O$ there is a perfect matching between $\{v_0, v_1\}$ and $\{u_0, u_1\}$ in $O$. \qed
\end{proposition}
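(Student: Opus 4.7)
The plan is to exploit the description of $O$ as $K_6$ minus a perfect matching, which will make the statement almost immediate by a short case distinction. Call this missing matching $M$, so the non-edges of $O$ are precisely the three pairs in $M$, and every vertex of $O$ has a unique \emph{antipode}, namely its partner in $M$.

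Given four distinct vertices $v_0, v_1, u_0, u_1$, let $B$ denote the bipartite subgraph of $O$ with parts $\{v_0, v_1\}$ and $\{u_0, u_1\}$. Viewed inside $K_6$, the corresponding bipartite graph is a complete $K_{2,2}$, so $B$ is obtained from $K_{2,2}$ by deleting those edges of $M$ that cross the bipartition. I would argue that at most two edges of $M$ can be deleted this way, and that a perfect matching of $B$ always survives, by the following case analysis on the number $m\in\{0,1,2\}$ of edges of $M$ contained in $B$ (note that $m\le 2$ since $M$ is a matching and the bipartition has only two vertices on each side). If $m=0$, then $B=K_{2,2}$ and the claim is trivial. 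If $m=1$, then $B$ is $K_{2,2}$ minus one edge, which still contains a perfect matching (the other ``diagonal''). If $m=2$, then the two deleted edges form a perfect matching of the $K_{2,2}$ themselves (again because $M$ is a matching), so the complementary two edges of $K_{2,2}$ remain in $B$ and form the desired perfect matching.

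There is no real obstacle: the only thing to double-check is the bound $m\le 2$, which follows purely from the fact that $M$ is a matching and the restriction of $M$ to any four vertices has at most two edges. Hence in every case $B$ contains a perfect matching between $\{v_0,v_1\}$ and $\{u_0,u_1\}$, which is what the proposition asserts.
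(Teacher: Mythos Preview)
Your argument is correct and follows exactly the approach the paper intends: the paper states the proposition with a bare \qed, treating it as immediate from the description of $O$ as $K_6$ minus a perfect matching, and your case analysis on the number of missing-matching edges crossing the bipartition is precisely the obvious verification of that fact.
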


\noindent Note that~$O$ is planar and vertex-transitive. 
\medskip

\begin{figure}[ht]
    \centering
    \begin{tikzpicture}[scale=0.6]
        \draw (-4,2)--(0,-2)--(4,2)--(-4,2);
        \draw (-4,2)--(0,1)--(-1,0)--(-4,2);
        \draw (0,-2)--(-1,0)--(1,0)--(0,-2);
        \draw (4,2)--(0,1)--(1,0)--(4,2);
        \draw[fill,black] (0,1) circle (.1);
        \draw[fill,black] (-1,0) circle (.1);
        \draw[fill,black] (1,0) circle (.1);
        \draw[fill,black] (0,-2) circle (.1);
        \draw[fill,black] (4,2) circle (.1);
        \draw[fill,black] (-4,2) circle (.1);
        \draw[black] (0,0.5) node {\footnotesize $1$};
        \draw[black] (0.5,-2) node {\footnotesize $6$};
        \draw[black] (-4.5,2) node {\footnotesize $2$};
        \draw[black] (4.5,2) node {\footnotesize $4$};
        \draw[black] (-1.5,0) node {\footnotesize $3$};
        \draw[black] (1.5,0) node {\footnotesize $5$};
    \end{tikzpicture}
    \caption{The graph $O$.}
    \label{fig:O}
\end{figure}
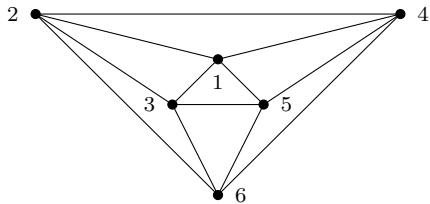

In this section we prove Theorem~\ref{thm:NSSGraph:NoFatO:Intro} which we restate here for convenience: 

\begin{theorem} \label{thm:NSSGraph:NoFatO}
    $O$ is not a $9$-fat minor of $G_{k,d}$ for any $k,d \in \N$. 
\end{theorem}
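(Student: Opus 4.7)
The plan is to argue by contradiction and by induction on $k$. Assume $(\cu,\ce)$ is a $9$-fat $O$-model in $G:=G_{k,d}$. For the base case $k=1$, $G_{1,d}$ is a cycle of length $5d$; since cycles have no $K_4$-minor while $O$ contains $K_4$ as a minor (for instance, with branch sets $\{1,3\}$, $\{2\}$, $\{4\}$, $\{5\}$), no $O$-model exists in $G_{1,d}$ at all.

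For the inductive step ($k\geq2$), the engine will be a \emph{bounded crossing principle}: for every trigon $\Delta$ of $G$, each branch path joining a branch set in $V(\Delta)\setminus\Adh(\Delta)$ to a branch set in $V(G)\setminus V(\Delta)$ must meet the $5$-vertex set $\Adh(\Delta)$, and by $9$-fatness distinct branch paths are pairwise vertex-disjoint. Hence the number of such ``crossing'' branch paths is at most $5$. On the other hand, $O=K_{2,2,2}$ is $4$-edge-connected, and a short case check on subsets of $V(O)$ shows that every $S\subseteq V(O)$ with $|S|\notin\{0,1,5,6\}$ satisfies $|\partial_O S|\geq 6$. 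Writing $\cs_{in}(\Delta)\subseteq V(O)$ for the set of labels $x$ with $U_x\subseteq V(\Delta)\setminus\Adh(\Delta)$, this forces $|\cs_{in}(\Delta)|\in\{0,1,5,6\}$ for every trigon $\Delta$, up to careful bookkeeping for branch sets that straddle $\Adh(\Delta)$ and thereby consume part of the $5$-vertex adhesion budget.

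Starting at $\Delta=G$ with $|\cs_{in}(G)|=6$ and descending through the binary tree of trigons, this constraint limits the admissible splits of $|\cs_{in}|$ between a trigon's two children to $(6,0)$ or $(5,1)$ at a ``$6$''-level and to $(5,0)$ at a ``$5$''-level (up to swapping and boundary effects). Recursing into the child retaining the larger inside block, after at most $k-1$ steps we arrive at a base trigon $\Delta^*$ (a cycle of length $5d$) with $|\cs_{in}(\Delta^*)|\in\{5,6\}$.

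The final step, which I expect to be the main obstacle, is to derive the contradiction at $\Delta^*$. When $n$ pairwise disjoint connected branch sets sit on a cycle, at most $n$ pairwise internally disjoint branch paths among them can be realised inside the cycle, one per arc between consecutive branch sets. Consequently, when $|\cs_{in}(\Delta^*)|=6$, at least $12-6=6$ of the $O$-model's branch paths must leave $\Delta^*$ and re-enter through $\Adh(\Delta^*)$; and when $|\cs_{in}(\Delta^*)|=5$, the $8$ branch paths among the inside branch sets realise a $K_{2,2,1}$-model, of which at least $8-5=3$ must similarly leave $\Delta^*$. By $9$-fatness these leaving branch paths are pairwise vertex-disjoint and in particular use pairwise disjoint subsets of $\Adh(\Delta^*)$; since each leaving branch path uses at least one (in fact at least two) vertices of $\Adh(\Delta^*)$ and $|\Adh(\Delta^*)|=5$, the required contradiction follows. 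The most delicate technical work is the treatment of branch sets that straddle adhesions of intermediate trigons during the descent, where the refined coarse-path lemmas (Corollary~\ref{cor:TwoPAthsInNSSGraph:TopAndRight} and Lemma~\ref{lem:TwoPathsInNSSGraph:ExteriorOfSubpyramid}) should be invoked to recover equivalent counting bounds in the presence of such straddling.
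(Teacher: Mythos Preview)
Your overall plan---edge-connectivity of $O$ plus a ``crossing budget'' of $5$ at each adhesion, followed by a descent to a base cycle---is quite different from the paper's route, which fixes the tree-decomposition of Construction~\ref{constr:TreeDecompOfNSSGraph}, isolates one explicit Configuration~\ref{setting}, and rules it out by a detailed case analysis driven by Corollary~\ref{cor:TwoPAthsInNSSGraph:TopAndRight} and Lemma~\ref{lem:TwoPathsInNSSGraph:ExteriorOfSubpyramid}. Your framework is appealing, and the base case and the cycle-counting at the bottom trigon are fine. The problem is in the middle, and it is not just a matter of bookkeeping.

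The claim that $|\cs_{in}(\Delta)|\in\{0,1,5,6\}$ does not follow from the crossing bound once branch sets are allowed to meet $\Adh(\Delta)$. Your bound gives $e(S_{in},S_{out})+|S_{adh}|\le 5$, since $S_{in}$--$S_{out}$ branch paths meet $\Adh(\Delta)$ in internal vertices, disjoint from the adhesion vertices used by straddling branch sets. But $|\partial_O S_{in}| = e(S_{in},S_{out}) + e(S_{in},S_{adh})$, and the second term is not charged to separate adhesion vertices: all four branch paths from $S_{in}$ to a single straddling $U_z$ can terminate inside $\Delta$ without touching $\Adh(\Delta)$ at all. So, for instance, $|S_{adh}|=4$, $|S_{in}|=2$, $|S_{out}|=0$ passes your count with room to spare. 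Showing that such straddling patterns cannot occur is precisely the hard content, and it is what the paper's Lemmas~\ref{lem:SettingCannotOccur}--\ref{lem:SomeAdhesionIntersectsONicely} do; in particular, Corollary~\ref{cor:NoFatO:NoAdhesionMeetsFiveBranchSets} (no adhesion meets five branch sets) already requires the full Configuration~\ref{setting} analysis.

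Relatedly, the descent step is not justified. Even granting $|\cs_{in}(\Delta)|=6$, up to three of those branch sets can touch the three ``new'' adhesion vertices $\TTT(\SP(\Delta))\cup\{\RR(\SP(\Delta))\}$ of a child, so the split need not be $(6,0)$ or $(5,1)$; you could get $|\cs_{in}(\Delta_0)|=3$ with three branch sets newly straddling. To rule this out you again need the structural fact for the child trigon, which is the very thing you are trying to establish. Your closing remark that the coarse-path lemmas ``should be invoked to recover equivalent counting bounds'' is where the real proof lives; as written, the proposal identifies the obstacle but does not clear it.
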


\noindent Together with our result that the NSS graphs are coarsely self-similar (Theorem~\ref{thm css}) and the fact that the NSS graphs contain $O$ as a ($2$-fat) minor, this implies that $O$ is a counterexample to Conjecture~\ref{conj fat min} (see Section~\ref{sec:Counterex:Proof} for details).

We believe that the NSS graphs in fact exclude $O$ even as a $3$-fat minor, but the proof was more difficult. Therefore, we decided to optimise for simplicity of the proof rather than for the fatness. 
\medskip

To prove that $O$ is not a $9$-fat minor of any NSS graph, we will employ the following tree-decomposition:

\begin{construction} \label{constr:TreeDecompOfNSSGraph}
    Let $G := G_{k,d}$ for some $k,d \in \N$.
    Let $B'(G) \subseteq B(G)$ be the component of $G$ that contains $r$ after removing the interiors of all `dotted' paths (i.e.\ those paths in $G$ which arose by subdividing an edge in the construction of $G_{k,d}$).
    For every node $x$ of $B'(G)$ we set
    \[
    V_x := \Adh(\Delta_x) \cup \TTT(\SP(\Delta_x)) \cup \RRR(\Delta_x)
    \]
    where $\Delta_x$ is the (unique) trigon of $G$ with root $x$, except that if $x$ is a leaf of $B'(G)$ (in which case $\SP(\Delta_x)$ does not exists), then we let $V_x := \Adh(\Delta_x)$. See Figure~\ref{fig:TreeDecomp}. 

    Let $T$ be the tree obtained from $B'(G)$ by adding for every dotted path $P$ in $G$ a vertex $v_P$ to $B'(G)$ and joining it to a node $x$ of $B'(G)$ whose bag $V_x$ contains the endvertices of $P$. (Note that such a node exists for every such $P$.) We set $V_{v_P} := V(P)$.

    It is easy to check that $(T, \cv)$ is a tree-decomposition of $G$. Every adhesion set $V_e$ of an edge $e=xy \in B'(G)$ where $y$ is the successor of $x$ in $B'(G)$ is of the form
    \[
    V_e = \{y\} \cup \SSS(\Delta_y) \cup \TTT(\Delta_y) = \Adh(\Delta_y),
    \]
    and the separation of $G$ it induces is $(V(\Delta_y), V(\Ext(\Delta_y))$. (For this, recall that by \cite[Lemma~12.3.1]{Bibel} every edge $xy$ of $T$ in any tree-decomposition $(T, \cv)$ of any graph $G$ \defi{induces} a separation \defi{$\{A_e^x, A_e^y\}$} of $G$ where $A_e^z := \bigcup_{t \in V(T_z)} V_t$, for $z \in \{x,y\}$, is the union over all bags $V_t$ of nodes $t$ of the unique component $T_z$ of $T-xy$ containing $z$.) 
\end{construction}

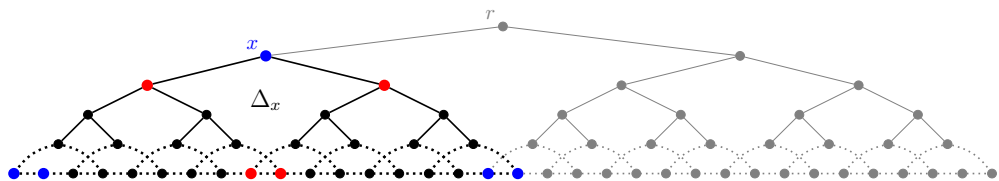
\begin{figure}[h]
    \centering
    \scalebox{0.78}{%
\begin{tikzpicture}[scale=1/2,auto=left]

\tikzstyle{every node}=[gray,inner sep=1.5pt, fill=gray,circle,draw]
\tikzset{every path/.style={draw=gray}}
\node[blue,very thick] (v1) at (1,0) {};
\node[blue,very thick] (v18) at (18,0) {};
\draw[dotted,black,very thick] (v1)--(v18);
\node (v34) at (34,0) {};
\draw[dotted,thick] (v18)--(v34);
\node[blue,very thick] (v2) at (2,0) {};
\node[black] (v3) at (3,0) {};
\node[black] (v4) at (4,0) {};
\node[black] (v5) at (5,0) {};
\node[black] (v6) at (6,0) {};
\node[black] (v7) at (7,0) {};
\node[black] (v8) at (8,0) {};
\node[red,very thick] (v9) at (9,0) {};
\node[red,very thick] (v10) at (10,0) {};
\node[black] (v11) at (11,0) {};
\node[black] (v12) at (12,0) {};
\node[black] (v13) at (13,0) {};
\node[black] (v14) at (14,0) {};
\node[black] (v15) at (15,0) {};
\node[black] (v16) at (16,0) {};
\node[blue,very thick] (v17) at (17,0) {};
\node (v19) at (19,0) {};
\node (v20) at (20,0) {};
\node (v21) at (21,0) {};
\node (v22) at (22,0) {};
\node (v23) at (23,0) {};
\node (v24) at (24,0) {};
\node (v25) at (25,0) {};
\node (v26) at (26,0) {};
\node (v27) at (27,0) {};
\node (v28) at (28,0) {};
\node (v29) at (29,0) {};
\node (v30) at (30,0) {};
\node (v31) at (31,0) {};
\node (v32) at (32,0) {};
\node (v33) at (33,0) {};

\node[black] (u2) at (2.5,1) {};
\node[black] (u4) at (4.5,1) {};
\node[black] (u6) at (6.5,1) {};
\node[black] (u8) at (8.5,1) {};
\node[black] (u10) at (10.5,1) {};
\node[black] (u12) at (12.5,1) {};
\node[black] (u14) at (14.5,1) {};
\node[black] (u16) at (16.5,1) {};
\node (u18) at (18.5,1) {};
\node (u20) at (20.5,1) {};
\node (u22) at (22.5,1) {};
\node (u24) at (24.5,1) {};
\node (u26) at (26.5,1) {};
\node (u28) at (28.5,1) {};
\node (u30) at (30.5,1) {};
\node (u32) at (32.5,1) {};

\draw[dotted,black,very thick] (u2) to [bend right=20] (v1);
\draw[dotted,black,very thick] (u4) to [bend right=20] (v3);
\draw[dotted,black,very thick] (u6) to [bend right=20] (v5);
\draw[dotted,black,very thick] (u8) to [bend right=20] (v7);
\draw[dotted,black,very thick] (u10) to [bend right=20] (v9);
\draw[dotted,black,very thick] (u12) to [bend right=20] (v11);
\draw[dotted,black,very thick] (u14) to [bend right=20] (v13);
\draw[dotted,black,very thick] (u16) to [bend right=20] (v15);
\draw[dotted,thick] (u18) to [bend right=20] (v17);
\draw[dotted,thick] (u20) to [bend right=20] (v19);
\draw[dotted,thick] (u22) to [bend right=20] (v21);
\draw[dotted,thick] (u24) to [bend right=20] (v23);
\draw[dotted,thick] (u26) to [bend right=20] (v25);
\draw[dotted,thick] (u28) to [bend right=20] (v27);
\draw[dotted,thick] (u30) to [bend right=20] (v29);
\draw[dotted,thick] (u32) to [bend right=20] (v31);

\draw[dotted,black,very thick] (u2) to [bend left=20] (v4);
\draw[dotted,black,very thick] (u4) to [bend left=20] (v6);
\draw[dotted,black,very thick] (u6) to [bend left=20] (v8);
\draw[dotted,black,very thick] (u8) to [bend left=20] (v10);
\draw[dotted,black,very thick] (u10) to [bend left=20] (v12);
\draw[dotted,black,very thick] (u12) to [bend left=20] (v14);
\draw[dotted,black,very thick] (u14) to [bend left=20] (v16);
\draw[dotted,black,very thick] (u16) to [bend left=20] (v18);
\draw[dotted,thick] (u18) to [bend left=20] (v20);
\draw[dotted,thick] (u20) to [bend left=20] (v22);
\draw[dotted,thick] (u22) to [bend left=20] (v24);
\draw[dotted,thick] (u24) to [bend left=20] (v26);
\draw[dotted,thick] (u26) to [bend left=20] (v28);
\draw[dotted,thick] (u28) to [bend left=20] (v30);
\draw[dotted,thick] (u30) to [bend left=20] (v32);
\draw[dotted,thick] (u32) to [bend left=20] (v34);

\node[black] (t3) at (3.5,2) {};
\node[black] (t7) at (7.5,2) {};
\node[black] (t11) at (11.5,2) {};
\node[black] (t15) at (15.5,2) {};
\node (t19) at (19.5,2) {};
\node (t23) at (23.5,2) {};
\node (t27) at (27.5,2) {};
\node (t31) at (31.5,2) {};

\draw[black,thick] (u2) -- (t3)--(u4);
\draw[black,thick] (u6) -- (t7)--(u8);
\draw[black,thick] (u10) -- (t11)--(u12);
\draw[black,thick] (u14) -- (t15)--(u16);
\draw (u18) -- (t19)--(u20);
\draw (u22) -- (t23)--(u24);
\draw (u26) -- (t27)--(u28);
\draw (u30) -- (t31)--(u32);

\node[red,very thick] (s5) at (5.5,3) {};
\node[red,very thick] (s13) at (13.5,3) {};
\node (s21) at (21.5,3) {};
\node (s29) at (29.5,3) {};

\draw[thick, black] (t3) -- (s5)--(t7);
\draw[thick, black] (t11) -- (s13)--(t15);
\draw (t19) -- (s21)--(t23);
\draw (t27) -- (s29)--(t31);

\node[blue,very thick] (r9) at (9.5,4) {};
\node (r25) at (25.5,4) {};

\draw[black,thick] (s5) -- (r9)--(s13);
\draw (s21) -- (r25)--(s29);

\node (q17) at (17.5,5) {};
\draw (r9) -- (q17)--(r25);

\tikzstyle{every node}=[]
\draw[gray,above left] (q17) node []           {$r$};
\draw[blue,above left] (r9) node []           {$x$};
\draw (9.5,2.5) node [] {\large $\Delta_x$};

\end{tikzpicture}
}
    \vspace{-2em}
    \caption{The adhesion set $V_e$ of the edge $e=rx$ is indicated in blue. The red vertices are those contained in the bag $V_x$ but not in $V_r$. The black trigon is $\Delta_x$.}
    \label{fig:TreeDecomp}
\end{figure}

Before we start with the proof of Theorem~\ref{thm:NSSGraph:NoFatO}, let us first, as a toy example, show that $K_8$ is not a $3$-fat minor of any NSS graph, to demonstrate the usefulness of the tree-decomposition from Construction~\ref{constr:TreeDecompOfNSSGraph}:

\begin{proposition} \label{prop:NSSGraph:NoFatK7}
    $K_7$ is not a $3$-fat minor of $G_{k,d}$ for any $k,d \in \N$.
\end{proposition}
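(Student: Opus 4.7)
The approach would be to exploit the tree-decomposition $(T,\cv)$ of $G=G_{k,d}$ from Construction~\ref{constr:TreeDecompOfNSSGraph}, whose defining feature is that every adhesion set $V_e$ has size at most $5$. Given a supposed $3$-fat model $(\cu,\ce)$ of $K_7$ in $G$ with branch sets $U_1,\ldots,U_7$, I would associate to each branch set the subtree $T_i:=\{t\in V(T):V_t\cap U_i\neq\emptyset\}$ of $T$ and aim for a contradiction in two stages.

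The first stage is to show that any two subtrees $T_i,T_j$ intersect. If not, there is an edge $e=xy$ of $T$ whose induced separation $\{A_e^x,A_e^y\}$ places $U_i$ strictly in $A_e^x\setminus V_e$ and $U_j$ strictly in $A_e^y\setminus V_e$. Writing $k,m,c$ for the numbers of branch sets lying strictly in $A_e^x$, strictly in $A_e^y$, and meeting $V_e$ respectively, we have $k+m+c=7$ and $c\le|V_e|\le 5$. Each of the $km\ge 1$ branch paths joining the two sides must cross $V_e$ via an internal vertex; internal disjointness of the branch paths from non-incident branch sets forces this vertex into $V_e\setminus\bigcup_\ell U_\ell$, a set of size at most $|V_e|-c$, and internal disjointness among the branch paths themselves forces these internal vertices to be distinct. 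Hence
\[
    km\;\le\;|V_e|-c\;\le\;5-c\;=\;(k+m)-2,
\]
so $(k-1)(m-1)\le-1$, contradicting $k,m\ge 1$. By the Helly property for subtrees of a tree the $T_i$'s therefore share a common node $t^*$, and the bag $V_{t^*}$ must contain seven vertices pairwise at distance $\ge 3$ in $G$, one from each branch set.

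The second stage is a case analysis on $t^*$. If $t^*$ is a leaf of $B'(G)$, then $|V_{t^*}|=|\Adh(\Delta_{t^*})|=5<7$, an immediate contradiction. If $t^*=v_P$ is a subdivided-edge leaf, then $V_{t^*}=V(P)$ induces a path in $G$ and each intersection $U_i\cap V(P)$ is a connected sub-path (or, allowing for the two endpoints of $P$, a union of at most two such). A direct count of the $21$ branch paths then yields a contradiction: at most $6$ of them can remain inside $P$ (one per gap between consecutive branch sets, by internal disjointness), while every other branch path must ``escape'' $P$ through one of the two endpoints $p_0,p_{d+1}$, and internal disjointness plus the $3$-fat condition allow only a small bounded number of such escapes, far fewer than the remaining $15$ branch paths.

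The main obstacle lies in the remaining case, in which $t^*$ is an internal node of $B'(G)$ and $V_{t^*}$ has at most $9$ vertices. My first observation would be that the three ``top'' vertices $\RR(\Delta_{t^*}),\RR(\SP(\Delta_{t^*})),\RR(\TP(\Delta_{t^*}))$ form a cluster of pairwise distance $\le 2$ in $G$, so at most one branch set can meet this cluster and the remaining six must each occupy a distinct one of the six ``outer'' vertices of $V_{t^*}$. To close the argument, I would reapply the Stage~1 adhesion bound to each of the edges of $T$ incident to $t^*$ (whose adhesions have size $\le 5$) in order to restrict how these six corner-rooted branch sets can extend beyond $V_{t^*}$, and then verify by a more detailed routing analysis that the $21$ internally disjoint, pairwise $3$-far branch paths cannot be realised simultaneously. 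Throughout, the underlying mechanism is the same: the small adhesion of the NSS tree-decomposition leaves too little ``room'' for the dense connectivity of $K_7$ to be realised in a $3$-fat manner.
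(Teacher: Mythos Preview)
Your overall strategy---locate a bag of the tree-decomposition of Construction~\ref{constr:TreeDecompOfNSSGraph} meeting all seven branch sets, then derive a contradiction from the structure of that bag---is exactly the paper's, and Stage~1 together with the leaf cases is correct. (Your inequality $(k-1)(m-1)\le-1$ is a pleasant alternative to the paper's one-line appeal to the $6$-connectedness of $K_7$, and your handling of the $v_P$ case is more explicit than the paper's terse dismissal.)

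The gap is in the main case. Once you have pinned one branch set $U_1$ to $\RRR(\Delta_{t^*})$ and the remaining six to the outer vertices $\SSS(\Delta)\cup\TTT(\SP(\Delta))\cup\TTT(\Delta)$, you propose to finish by ``reapplying the Stage~1 adhesion bound'' and an unspecified routing analysis. But Stage~1 gives nothing useful here: on the edge of $T$ into $\SP(\Delta)$ the four branch sets at $\SSS(\Delta)\cup\TTT(\SP(\Delta))$ all sit in the adhesion, so they contribute to $c$ rather than to $k$ or $m$, and your inequality places no bound on how many of the branch paths among them may run through $\SP(\Delta)$. The paper does \emph{not} finish by counting; it invokes Lemma~\ref{lem:TwoPathsInNSSGraph}, the defining structural property of NSS graphs. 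Concretely, a matching between the two branch sets at $\SSS(\Delta)$ and the two at $\TTT(\SP(\Delta))$ gives two $\SSS(\SP(\Delta))$--$\TTT(\SP(\Delta))$ paths at mutual distance $\ge 3$ and at distance $\ge 3$ from $U_1$; $3$-fatness forces them to avoid $\RTT(\Delta)$ (and the remaining outer vertices), hence to lie inside $\SP(\Delta)$, and Lemma~\ref{lem:TwoPathsInNSSGraph} then forces one of them through $\RR(\SP(\Delta))\in\RRR(\Delta)$, contradicting $d_G(\,\cdot\,,U_1)\ge 3$. Your closing diagnosis (``small adhesion leaves too little room'') misidentifies the mechanism: the obstruction is not adhesion size but the NSS path structure captured by Lemma~\ref{lem:TwoPathsInNSSGraph}.
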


\begin{proof}
    Suppose towards a contradiction that $G := G_{k,d}$ contains a $3$-fat model $(\cu, \ce)$ of $K_7$. Let $(T, \cv)$ be the tree-decomposition of $G$ as in Construction~\ref{constr:TreeDecompOfNSSGraph}. Since $(T, \cv)$ has adhesion~$5$ but~$K_7$ is $6$-connected, there exists for every edge $e=xy \in E(T)$ a unique side of $\{A^x_e,A^y_e\}$ which meets all branch sets in~$\cu$. Hence, orienting every edge of $T$ towards the side meeting all branch sets defines an orientation of $E(T)$. Since $T$ is finite, there is a sink $x \in V(T)$, i.e.\ a node $x$ of $T$ all whose incident edges are oriented towards $x$. Then the bag~$V_x$ intersects every branch set in~$\cu$.

    Clearly, $x$ cannot be a node of $T - B'(G)$ (of the form $v_P$), because their bags are only a path, and the adhesion set corresponding to their unique incident edge has size $2$. Hence, $V_x = \Adh(\Delta_x) \cup \TTT(\SP(\Delta_x)) \cup \RRR(\Delta_x)$. 
    Since the vertices in $\RRR(\Delta_x)$ have pairwise distance at most $2$ but $(\cu, \ce)$ is $3$-fat, at most one, and hence precisely one, branch set $U_i$, with $i=1$ say, meets $\RRR(\Delta_x)$. 
    Moreover, the six other vertices of $V_x-\RRR(\Delta_x)$ are contained in distinct branch sets $U_i$ with $i \neq 1$. 
    Since $K_7$ is complete and the model $(\cu, \ce)$ of $K_7$ is $3$-fat, it follows that there are two $\SSS(\Delta_x)$--$\TTT(\SP(\Delta_x))$ paths $Q_1,Q_2$ in~$G$ that avoid $\RTT(\Delta_x)$ and that are at least $3$ apart. But then $Q_1, Q_2$ must be contained in $\SP(\Delta_x)$ as they avoid $\RTT(\Delta_x)$. By Lemma~\ref{lem:TwoPathsInNSSGraph}, one of them has to meet $\RR(\SP(\Delta_x)) \in \RTT(\Delta_x)$, a contradiction.
\end{proof}

We remark that Proposition~\ref{prop:NSSGraph:NoFatK7}, together with Theorems~\ref{thm:NSSGraph:K7} and \ref{thm:css:finite:maintext}, implies that $K_7$ is \bad.

We would like to imitate the proof of Proposition~\ref{prop:NSSGraph:NoFatK7} to prove Theorem~\ref{thm:NSSGraph:NoFatO}, but we can no longer guarantee that there is a bag of our tree-decomposition that intersects every branch set of our hypothetical model of $O$. 
But we can still use the same strategy to find either a bag that meets every branch set of our hypothetical model of $O$, or find an adhesion that meets four branch sets so that the other two lie on distinct sides of the corresponding separation.
Either way, there will be essentially only one configuration which will be hard to deal with, and we define this configuration in Configuration~\ref{setting} below.
\medskip

In the following sections we will often make use of the fact that if $W$ is a path in~$O$, and we are given a model $(\cu, \ce)$ of $O$ in a graph $G$, then we can find an $A$--$B$ path in $G$ in $X :=  \bigcup_{w\in V(W)} G[U_w] \cup \bigcup_{e \in E(W)} E_w$ for all $A,B \subseteq V(X)$. Hence, we will say that $W$ \defi{induces} an $A$--$B$ path. For this, note that if we assume that the branch sets in $\cu$ are trees, then, in what follows, the $A$--$B$ path induced by $W$ will `usually' be unique. (In fact, below, it will always be unique, but this is not important for our arguments.)

If $(\cu, \ce)$ is $3$-fat in $G$, and $P, P'$ are paths in $G$ induced by disjoint paths $W, W'$ in~$O$, respectively, then $P, P'$ have distance at least~$3$ from each other.

\subsection{No fat model of \texorpdfstring{$O$}{O} intersects an adhesion \texorpdfstring{$\Adh(P)$}{Adh(P)} nicely}

For convenience, we number the vertices of $O$ as follows.
We let $V(O) = [6]$ and $E(O) := \{ij \in [6]^2 : i \neq j \text{ and } i+j \neq 7\}$. Thus, $O$ contains all edges in $[6]^2$ except for $16, 25$ and $34$.
\smallskip

\begin{configuration} \label{setting}
    Let $\Delta$ be a trigon of $G:=G_{k,d}$ for some $k,d \in \N$, and suppose that $G$ contains a $3$-fat model $(\cu, \ce)$ of $O$. Suppose further that there are distinct $i, j, h \in \{3,4,5\}$ such that (\fig{fig:setting})
    \begin{enumerate}[label=\rm{(\roman*)}]
        \item \label{itm:setting:LeftAndTop} $\RR(\Delta) \in U_2$, $\SO(\Delta) \in U_i$, $\SI(\Delta) \in U_j$, 
        \item \label{itm:setting:Right} $\TTT(\Delta) \cap U_h \neq \emptyset$,
        \item \label{itm:setting:U1U6} $U_1 \cap V(\Delta) \neq \emptyset$ and $U_6 \cap V(\Ext(\Delta)) \neq \emptyset$, and
        \item \label{itm:setting:BranchPaths} each branch path of $(\cu, \ce)$ is contained in either $\Delta$ or $\Ext(\Delta)$.
    \end{enumerate}
\end{configuration}

\noindent We remark that even though Configuration~\ref{setting} is quite specific, it is, up to the vertex-transitivity of~$O$ and the symmetry of the NSS graph, in fact the only difficult case which we have to consider.
\medskip

\begin{figure}[ht]
    \centering
    \scalebox{0.78}{%
\begin{tikzpicture}[scale=1/2,auto=left]

\tikzset{every path/.style={draw=gray}}
\tikzstyle{every node}=[gray,inner sep=1.5pt, fill=gray,circle,draw]
\node (v1) at (1,0) {};
\node (v9) at (9,0) {};
\draw[dotted,thick] (v1)--(v9);
\node (v34) at (34,0) {};
\node (v2) at (2,0) {};
\node (v3) at (3,0) {};
\node (v4) at (4,0) {};
\node (v5) at (5,0) {};
\node (v6) at (6,0) {};
\node (v7) at (7,0) {};
\node (v8) at (8,0) {};
\node (v10) at (10,0) {};
\node[black,thick] (v11) at (11,0) {};
\node[black,thick] (v12) at (12,0) {};
\node[black,thick] (v13) at (13,0) {};
\node[black,thick] (v14) at (14,0) {};
\node[black,thick] (v15) at (15,0) {};
\node[black,thick] (v16) at (16,0) {};
\node[black] (v17) at (17,0) {};
\node[black] (v18) at (18,0) {};
\draw[dotted,thick] (v18)--(v34);
\node (v19) at (19,0) {};
\node (v20) at (20,0) {};
\node (v21) at (21,0) {};
\node (v22) at (22,0) {};
\node (v23) at (23,0) {};
\node (v24) at (24,0) {};
\node (v25) at (25,0) {};
\node (v26) at (26,0) {};
\node (v27) at (27,0) {};
\node (v28) at (28,0) {};
\node (v29) at (29,0) {};
\node (v30) at (30,0) {};
\node (v31) at (31,0) {};
\node (v32) at (32,0) {};
\node (v33) at (33,0) {};

\draw[black,dotted, ultra thick] (v9)--(v18);

\node (u2) at (2.5,1.5) {};
\node (u4) at (4.5,1.5) {};
\node (u6) at (6.5,1.5) {};
\node (u8) at (8.5,1.5) {};
\node[black,thick] (u10) at (10.5,1.5) {};
\node[black,thick] (u12) at (12.5,1.5) {};
\node[black,thick] (u14) at (14.5,1.5) {};
\node[black,thick] (u16) at (16.5,1.5) {};
\node (u18) at (18.5,1.5) {};
\node (u20) at (20.5,1.5) {};
\node (u22) at (22.5,1.5) {};
\node (u24) at (24.5,1.5) {};
\node (u26) at (26.5,1.5) {};
\node (u28) at (28.5,1.5) {};
\node (u30) at (30.5,1.5) {};
\node (u32) at (32.5,1.5) {};

\draw[dotted,thick] (u2) to [bend right=20] (v1);
\draw[dotted,thick] (u4) to [bend right=20] (v3);
\draw[dotted,thick] (u6) to [bend right=20] (v5);
\draw[dotted,thick] (u8) to [bend right=20] (v7);
\draw[black,dotted,ultra thick] (u10) to [bend right=20] (v9);
\draw[black,dotted,ultra thick] (u12) to [bend right=20] (v11);
\draw[black,dotted,ultra thick] (u14) to [bend right=20] (v13);
\draw[black,dotted,ultra thick] (u16) to [bend right=20] (v15);
\draw[dotted,thick] (u18) to [bend right=20] (v17);
\draw[dotted,thick] (u20) to [bend right=20] (v19);
\draw[dotted,thick] (u22) to [bend right=20] (v21);
\draw[dotted,thick] (u24) to [bend right=20] (v23);
\draw[dotted,thick] (u26) to [bend right=20] (v25);
\draw[dotted,thick] (u28) to [bend right=20] (v27);
\draw[dotted,thick] (u30) to [bend right=20] (v29);
\draw[dotted,thick] (u32) to [bend right=20] (v31);

\draw[dotted,thick] (u2) to [bend left=20] (v4);
\draw[dotted,thick] (u4) to [bend left=20] (v6);
\draw[dotted,thick] (u6) to [bend left=20] (v8);
\draw[dotted,thick] (u8) to [bend left=20] (v10);
\draw[black,dotted,ultra thick] (u10) to [bend left=20] (v12);
\draw[black,dotted,ultra thick] (u12) to [bend left=20] (v14);
\draw[black,dotted,ultra thick] (u14) to [bend left=20] (v16);
\draw[black,dotted,ultra thick] (u16) to [bend left=20] (v18);
\draw[dotted,thick] (u18) to [bend left=20] (v20);
\draw[dotted,thick] (u20) to [bend left=20] (v22);
\draw[dotted,thick] (u22) to [bend left=20] (v24);
\draw[dotted,thick] (u24) to [bend left=20] (v26);
\draw[dotted,thick] (u26) to [bend left=20] (v28);
\draw[dotted,thick] (u28) to [bend left=20] (v30);
\draw[dotted,thick] (u30) to [bend left=20] (v32);
\draw[dotted,thick] (u32) to [bend left=20] (v34);

\node (t3) at (3.5,3) {};
\node (t7) at (7.5,3) {};
\node[black,thick] (t11) at (11.5,3) {};
\node[black,thick] (t15) at (15.5,3) {};
\node (t19) at (19.5,3) {};
\node (t23) at (23.5,3) {};
\node (t27) at (27.5,3) {};
\node (t31) at (31.5,3) {};

\draw (u2) -- (t3)--(u4);
\draw (u6) -- (t7)--(u8);
\draw[black,very thick] (u10) -- (t11)--(u12);
\draw[black,very thick] (u14) -- (t15)--(u16);
\draw (u18) -- (t19)--(u20);
\draw (u22) -- (t23)--(u24);
\draw (u26) -- (t27)--(u28);
\draw (u30) -- (t31)--(u32);

\node (s5) at (5.5,4.5) {};
\node[black,thick] (s13) at (13.5,4.5) {};
\node (s21) at (21.5,4.5) {};
\node (s29) at (29.5,4.5) {};

\draw (t3) -- (s5)--(t7);
\draw[black,very thick] (t11) -- (s13)--(t15);
\draw (t19) -- (s21)--(t23);
\draw (t27) -- (s29)--(t31);

\node (r9) at (9.5,6) {};
\node (r25) at (25.5,6) {};

\draw (s5) -- (r9)--(s13);
\draw (s21) -- (r25)--(s29);

\node (q17) at (17.5,7.5) {};
\draw (r9) -- (q17)--(r25);

\draw[cyan,fill] (v9) circle (0.25);
\draw[orange,fill] (v10) circle (0.25);
\draw[magenta,very thick] (v17) circle (0.3);
\draw[magenta,very thick] (v18) circle (0.3);
\draw[fill,LimeGreen] (s13) circle (.25);
\draw[blue,very thick] (13.5,2.6) circle (1);
\draw[blue,very thick] (18,5) circle (1.5);

\tikzstyle{every node}=[]
\draw[LimeGreen,above right] (13.8,4.4) node [] {\large $2$};
\draw[cyan,below] (9,-0.3) node []     {\large $i$};
\draw[orange,below] (10,-0.3) node []     {\large $j$};
\draw[magenta,below right] (17,-0.3) node []   {\large $h$};
\draw[black] (10.4,3.5) node []   {\LARGE $\Delta$};
\draw[blue] (13.5,2.6) node []   {\large $1$};
\draw[blue] (18,5) node []   {\large $6$};

\end{tikzpicture}
}
    \vspace{-2em}
    \caption{A sketch of Configuration~\ref{setting}. The branch set $U_2$ contains $R(\Delta)$, the branch sets $U_i, U_j$ contain $\SO(\Delta)$ and $\SI(\Delta)$, respectively, and $U_h$ contains and least one of $\TO(\Delta)$ and $\TI(\Delta)$. Moreover, $U_1$ meets $V(\Delta)$ and $U_6$ meets $\Ext(\Delta)$. Note that one of $U_1, U_6$ can meet both $V(\Delta)$ and $\Ext(\Delta)$.}
    \label{fig:setting}
\end{figure}

The main effort in this section goes into the following:

\begin{lemma} \label{lem:SettingCannotOccur}
    Configuration~\ref{setting} cannot occur.
\end{lemma}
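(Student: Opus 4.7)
The plan is to derive a contradiction by exhibiting, in either $\Delta$ or $\Ext(\Delta)$, two vertex-disjoint paths $P_A, P_B$ from $\SSS(\Delta)$ to $\RTT(\Delta)$ at pairwise distance at least $3$, neither of which is one of the distinguished paths of~\Cr{cor:TwoPAthsInNSSGraph:TopAndRight} or~\Lr{lem:TwoPathsInNSSGraph:ExteriorOfSubpyramid} (applied to $\Delta$ or $\Ext(\Delta)$ respectively), thereby contradicting the respective result. The unifying observation is that each distinguished path passes through $\RR(\Delta)$, which lies in $U_2$; by $3$-fatness any model path that avoids $U_2$ lies at distance at least $3$ from $\RR(\Delta)$ and is therefore non-distinguished, while a path forced to use $U_2$ can be arranged to start at $\SI$ instead of $\SO$ (or vice versa, depending on whether we work inside $\Delta$ or inside $\Ext(\Delta)$), or to end at $\TI$ rather than $\RR$, so as to miss the distinguished endpoint pattern.

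First I would establish two preliminaries. Every vertex of $\Adh(\Delta)$ lies in one of $U_2, U_i, U_j, U_h, U_m$ (where $m \in \{2,i,j,h\}$ is the index with $\TI(\Delta) \in U_m$), so $\Adh(\Delta)$ is disjoint from $U_1$ and $U_6$; combined with the connectedness of $U_1, U_6$ and the fact that $\Adh(\Delta)$ separates the two open sides of $\Delta$, this forces $U_1 \subseteq V(\Delta) \setminus \Adh(\Delta)$ and $U_6 \subseteq V(\Ext(\Delta)) \setminus \Adh(\Delta)$, whence every branch path $E_{1k}$ is contained in $\Delta$ and every $E_{6k}$ in $\Ext(\Delta)$. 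When $\Delta$ has height~$1$, $V(\Delta) \setminus \Adh(\Delta)$ is a disjoint union of five paths, so $U_1$ sits inside a single one whose only two exits into $\Adh(\Delta)$ cannot accommodate the four internally disjoint branch paths $E_{12}, E_{1i}, E_{1j}, E_{1h}$, giving an immediate contradiction; so we may assume $\Delta$ has height at least~$2$. Using the $\TO \leftrightarrow \TI$ symmetry of $G_{k,d}$ we may also assume $\TO(\Delta) \in U_h$.

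The bulk of the proof then splits into cases on $m$. When $m \in \{i,j\}$, say $m = j$, the branch set $U_j$ contains both $\SI$ and $\TI$ and, being connected, furnishes a $\SI$--$\TI$ path $P_B$ entirely inside $U_j$ (lying wholly in $\Delta$ or wholly in $\Ext(\Delta)$, because the only adhesion vertices of $\Delta$ meeting $U_j$ are $\SI$ and $\TI$ themselves); pairing it with the $\SO$--$\TO$ path $P_A$ going through $U_i, E_{1i}, U_1, E_{1h}, U_h$ (or its $\Ext(\Delta)$-analogue through $U_6$) delivers the contradiction, since neither path uses $U_2$ and so neither can be distinguished. When $m = h$ one exploits that exactly one of the edges $ij, ih, jh$ of $O$ is missing, so at least one of the branch paths $E_{ih}, E_{jh}$ exists and furnishes a direct $\SSS$--$\TO$ path $P_A$ (in $\Delta$ or in $\Ext(\Delta)$ depending on its location), while $P_B$ is the $\SI$--$\RR$ (respectively $\SO$--$\RR$) path through $U_1$ (respectively $U_6$) and $U_2$, non-distinguished because it starts at the `wrong' end. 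When $m = 2$ with $\{i,h\}$ still an edge of $O$, the previous argument adapts. The remaining sub-sub-case $m = 2$, $\{i, h\} = \{3, 4\}$, $j = 5$ is the most delicate: both non-edges $34$ and $25$ of~$O$ are in play, and one must do a bookkeeping on where the four `free' branch paths $E_{2i}, E_{2h}, E_{5i}, E_{5h}$ sit. For each configuration one finds, on one of the two sides of $\Delta$, two disjoint paths of the forms $\SO$--$\TI$ and $\SI$--$\TO$: neither is distinguished, since $P_A$ ends at $\TI$ rather than $\RR$ and $P_B$ starts at $\SI$ rather than $\SO$, so the same contradiction with $3$-fatness is reached via~\Cr{cor:TwoPAthsInNSSGraph:TopAndRight} or~\Lr{lem:TwoPathsInNSSGraph:ExteriorOfSubpyramid}.

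The main obstacle will be this last sub-sub-case: the simultaneous presence of the two non-edges $34$ and $25$ of $O$ means that no `natural' $\SO$--$\TO$ and $\SI$--$\RR$ pair can be made vertex-disjoint without going through the bottleneck $U_1$ (inside $\Delta$) or $U_6$ (inside $\Ext(\Delta)$); the clean way around this is to re-route $P_A$ to end at $\TI$ instead of $\RR$, and then to use the distribution of $E_{2i}, E_{2h}, E_{5i}, E_{5h}$ across $\Delta$ and $\Ext(\Delta)$ to pick out the side on which the disjoint pair exists.
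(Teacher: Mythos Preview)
Your first preliminary is incorrect, and this is a genuine gap. Configuration~\ref{setting} pins down only four of the five vertices of $\Adh(\Delta)$: we have $\RR(\Delta)\in U_2$, $\SO(\Delta)\in U_i$, $\SI(\Delta)\in U_j$, and \emph{one} vertex of $\TTT(\Delta)$ in $U_h$. The remaining vertex of $\TTT(\Delta)$ is completely unconstrained --- it may lie in $U_1$, in $U_6$, or be an interior vertex of a branch path. Condition~\ref{itm:setting:U1U6} only says that $U_1$ meets $V(\Delta)$ and $U_6$ meets $V(\Ext(\Delta))$; since $\Adh(\Delta)\subseteq V(\Delta)\cap V(\Ext(\Delta))$, this does not prevent $U_1$ or $U_6$ from containing that fifth adhesion vertex. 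Your conclusion that $U_1\subseteq V(\Delta)\setminus\Adh(\Delta)$ and $U_6\subseteq V(\Ext(\Delta))\setminus\Adh(\Delta)$ is therefore unjustified, and your case split on $m\in\{2,i,j,h\}$ never even reaches the possibilities $\TI(\Delta)\in U_1$, $\TI(\Delta)\in U_6$, or $\TI(\Delta)$ lying in no branch set. In the paper these are handled as separate Cases~2 and~3, each requiring its own argument; they are not trivial and do not reduce to your Case~1 reasoning.

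A secondary problem: the ``$\TO\leftrightarrow\TI$ symmetry of $G_{k,d}$'' you invoke does not exist. The only nontrivial automorphism of $G$ acting on a trigon is the left--right flip, which swaps $\SSS(\Delta)$ with $\TTT(\Delta)$; there is no automorphism exchanging $\TO(\Delta)$ and $\TI(\Delta)$ while fixing $\SO(\Delta),\SI(\Delta),\RR(\Delta)$. This asymmetry is visible in the structural lemmas themselves: the distinguished path of Corollary~\ref{cor:TwoPAthsInNSSGraph:TopAndRight} starts at $\SO$, whereas that of Lemma~\ref{lem:TwoPathsInNSSGraph:ExteriorOfSubpyramid} starts at $\SI$. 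So even within the cases you do treat, the reduction ``we may assume $\TO(\Delta)\in U_h$'' needs justification, and your hardest subcase ($m=2$, $\{i,h\}=\{3,4\}$) is more delicate than the sketch suggests.
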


We prepare its proof with additional auxiliary lemmas forbidding certain refinements of our setting.

\begin{lemma} \label{lem:NoFatO:LeftDoesNotTouchRight}
    Let $G, \Delta$ and $(\cu, \ce)$ be as in Configuration~\ref{setting}. Then none of $U_i, U_j$ meets $\TTT(\Delta)$.
\end{lemma}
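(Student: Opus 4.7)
The strategy is to assume for contradiction that $U_i$ meets $\TTT(\Delta)$; the case where $U_j$ does is handled by a symmetric argument obtained by interchanging the roles of $\SO$ and $\SI$ (and of $i$ and $j$). Pick $t \in \TTT(\Delta) \cap U_i$ and let $t'$ denote the other element of $\TTT(\Delta)$. Condition~(ii) forces $t' \in U_h$, because $U_h \cap \TTT(\Delta) \neq \emptyset$ and $U_h \neq U_i$, so the adhesion of~$\Delta$ is distributed as $\{\SO,t\}\subseteq U_i$, $\SI \in U_j$, $t' \in U_h$, $\RR \in U_2$.

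First, I determine the side of each branch set. Since all five adhesion vertices lie in $U_i \cup U_j \cup U_h \cup U_2$, both $U_1$ and $U_6$ are disjoint from $\Adh(\Delta)$; as $\Adh(\Delta)$ separates $V(\Delta) \setminus \Adh(\Delta)$ from $V(\Ext(\Delta)) \setminus \Adh(\Delta)$ and $U_1,U_6$ are connected, (iii) forces $U_1 \subseteq V(\Delta) \setminus \Adh(\Delta)$ and $U_6 \subseteq V(\Ext(\Delta)) \setminus \Adh(\Delta)$. By (iv), this places each branch path $E_{1x}$ with $x \in \{2,i,j,h\}$ inside~$\Delta$ and each $E_{x6}$ inside $\Ext(\Delta)$. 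Consequently every $U_x$ with $x \in \{2,i,j,h\}$ has vertices on both sides and hence meets $\Adh(\Delta)$; by counting, $U_2, U_j, U_h$ each contain exactly one adhesion vertex. A short connectivity argument then yields that $U_x \cap V(\Delta)$ and $U_x \cap V(\Ext(\Delta))$ are each connected for every $x \in \{2,j,h\}$: if one of them were disconnected, the component not containing the unique adhesion vertex of $U_x$ could not be reached from the rest of $U_x$.

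The heart of the argument is a dichotomy on a fixed $\SO$--$t$ path $P_1$ inside $U_i$: since $\Adh(\Delta) \cap U_i = \{\SO, t\}$ and a path cannot revisit vertices, $P_1$ lies entirely in $V(\Delta)$ or entirely in $V(\Ext(\Delta))$. If $P_1 \subseteq V(\Delta)$, I concatenate a path in $U_j \cap V(\Delta)$ from~$\SI$ to the $U_j$-endpoint of $E_{j1}$, the branch path $E_{j1}$, the set $U_1$, the branch path $E_{1h}$, and a path in $U_h \cap V(\Delta)$ ending at~$t'$, obtaining a $\SI$--$t'$ path $P_2$ inside~$\Delta$. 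Both $P_1, P_2$ are $\SSS(\Delta)$--$\TTT(\Delta)$ paths in $\Delta$, and neither contains $\RR(\Delta) \in U_2$; hence neither can be the exceptional $B(\Delta)$-path from $\SO(\Delta)$ to $\TO(\Delta)$, and \Lr{lem:TwoPathsInNSSGraph} applied to~$\Delta$ gives $d_G(P_1,P_2) \leq 2$. But $P_1 \subseteq U_i$ and $P_2 \subseteq U_j \cup E_{j1} \cup U_1 \cup E_{1h} \cup U_h$ lie in members of $\cu \cup \ce$ pairwise non-incident to~$U_i$, and $3$-fatness gives $d_G(P_1,P_2) \geq 3$, a contradiction. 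If instead $P_1 \subseteq V(\Ext(\Delta))$, I build the analogous path $P_2$ in $\Ext(\Delta)$ using $U_6, E_{j6}, E_{6h}$ in place of $U_1, E_{j1}, E_{1h}$; as both exceptional $B(G)$-paths in \Lr{lem:TwoPathsInNSSGraph:ExteriorOfSubpyramid} pass through $\RR(\Delta)$, and neither $P_1$ nor $P_2$ meets $\RR(\Delta)$, that lemma again gives $d_G(P_1,P_2) \leq 2$, again contradicting $3$-fatness.

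The main technical obstacle is guaranteeing that the constructed paths $P_1, P_2$ really lie on the appropriate single side of~$\Delta$; this is exactly what is arranged by the preliminary step, where I show that the restrictions $U_x \cap V(\Delta)$ and $U_x \cap V(\Ext(\Delta))$ are connected for $x \in \{2,j,h\}$ (this fails in general for $U_i$, which is the source of the two-case dichotomy) and that the branch paths $E_{1x}, E_{x6}$ lie on the correct sides. A pleasing feature of routing $P_2$ through~$U_1$ (respectively~$U_6$) is that the same argument works uniformly regardless of whether $jh$ is an edge of~$O$, covering in particular the delicate case $i = 5$ in which $\{j,h\} = \{3,4\}$ is a non-edge of~$O$ and no direct branch path $E_{jh}$ exists.
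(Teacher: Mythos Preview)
Your argument follows the same approach as the paper's proof: locate both $U_1$ and $U_6$ strictly on opposite sides of $\Adh(\Delta)$, take an $\SSS(\Delta)$--$\TTT(\Delta)$ path $P_1$ inside $U_i$ (which must lie on one side $H\in\{\Delta,\Ext(\Delta)\}$ since it can only touch the adhesion at its endpoints), route a second path $P_2$ through $U_1$ or $U_6$ on that same side, and invoke \Lr{lem:TwoPathsInNSSGraph} or \Lr{lem:TwoPathsInNSSGraph:ExteriorOfSubpyramid}. Your extra care in showing that $U_x\cap V(\Delta)$ and $U_x\cap V(\Ext(\Delta))$ are connected for $x\in\{j,h\}$ is a nice addition that the paper leaves implicit.

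There is, however, a small slip in the $\Ext(\Delta)$ case. You write that ``both exceptional $B(G)$-paths in \Lr{lem:TwoPathsInNSSGraph:ExteriorOfSubpyramid} pass through $\RR(\Delta)$''. This is not true: the $\SI(\Delta)$--$\TI(\Delta)$ path in $B(G)$ goes up through the \emph{parent} of $\RR(\Delta)$ and back down, so it has distance exactly~$1$ from $\RR(\Delta)$ without containing it (cf.\ the ``in particular'' clause of the lemma and \fig{fig:PathsInNssGraph}). Hence the fact that $P_1,P_2$ avoid $\RR(\Delta)$ does not by itself rule out alternative~(ii). The fix is immediate and is precisely what the paper does: since $P_1\subseteq U_i$ and $P_2\subseteq U_j\cup E_{j6}\cup U_6\cup E_{6h}\cup U_h$, both paths have distance at least~$3$ from $U_2\ni\RR(\Delta)$ by $3$-fatness, so neither can be within distance~$1$ of $\RR(\Delta)$. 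With this correction your proof is complete (and the same remark makes the ``symmetric'' case $n=j$ go through without relying on any $\SO\!\leftrightarrow\!\SI$ symmetry of the NSS graph, which does not exist).
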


\begin{proof}
    Suppose for a contradiction that for some $n \in \{i,j\}$ the set $U_n$ meets $\TTT(\Delta)$, and let $m$ be the other element of $\{i,j\}$. 
    Then none of $U_1, U_6$ meets $\Adh(\Delta)$, which by \ref{itm:setting:U1U6} implies that $U_1 \subseteq V(\Delta)\setminus \Adh(\Delta)$ and $U_6 \subseteq V(\Ext(\Delta))\setminus \Adh(\Delta)$.

    Since $U_n$ intersects both $\SSS(\Delta)$ and $\TTT(\Delta)$ but avoids $\RR(\Delta) \in U_2$, there is an $\SSS(\Delta)$--$\TTT(\Delta)$ path $P$ in $G[U_n]$ that is contained in either $\Delta$ or $\Ext(\Delta)$. Let $H \in \{\Delta, \Ext(\Delta)\}$ be such that $P \subseteq H$, and let $\ell \in \{1,6\}$ such that $U_\ell \subseteq V(H)\setminus \Adh(\Delta)$. Then the $\SSS(\Delta)$--$\TTT(\Delta)$ path $Q$ induced by $m\ell k$ avoids $\RR(\Delta) \in U_2$ (as $2 \notin \{m,l,k\}$, and thus $Q$ is also contained in $H$ because $U_\ell \subseteq V(H)\setminus \Adh(\Delta)$. Since $(\cu, \ce)$ is $3$-fat, $P,Q$ have distance at least~$3$ from each other and from $\RR(\Delta) \in U_2$. But this contradicts Lemma~\ref{lem:TwoPathsInNSSGraph} or~\ref{lem:TwoPathsInNSSGraph:ExteriorOfSubpyramid}.
\end{proof}

\begin{lemma} \label{lem:NoFatO:PathsInsidePyramid}
    Let $G, \Delta$ and $(\cu, \ce)$ be as in Configuration~\ref{setting}, and suppose that $U_1 \subseteq V(\Delta) \setminus \Adh(\Delta)$. 
    Then any of $E_{ih}$, $E_{2j}$, $E_{i6}, E_{j6}$ that exist are contained in $\Ext(\Delta)$, and if both $E_{2i}, E_{jh}$ exist, then at most one of them is contained in $\Delta$. 
\end{lemma}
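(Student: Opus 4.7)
The strategy is to argue by contradiction: for each edge in the first list we suppose it lies in $\Delta$ and derive a contradiction, and for the pair $\{E_{2i},E_{jh}\}$ we suppose both lie in $\Delta$. The common tool is to produce two $\SSS(\Delta)$--$\TTT(\Delta)$ paths in $\Delta$ that both avoid $\RR(\Delta)$ and lie at distance at least $3$ from each other in $\Delta$, which contradicts \Lr{lem:TwoPathsInNSSGraph} applied to the trigon $\Delta$.

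First I would collect the structural consequences of $U_1\subseteq V(\Delta)\setminus\Adh(\Delta)$. Since the $U_1$-endpoint of every branch path $E_{1k}$ then lies in $V(\Delta)\setminus\Adh(\Delta)$, Configuration~\ref{setting}\ref{itm:setting:BranchPaths} forces $E_{1k}\subseteq\Delta$ for every $k\in\{2,3,4,5\}$, so that $U_1$ together with $E_{12}\cup E_{13}\cup E_{14}\cup E_{15}$ already connects $U_1$ to each of $U_2,U_3,U_4,U_5$ inside $\Delta$. Next, by $3$-fatness of $(\cu,\ce)$ combined with \Lr{lem:NoFatO:LeftDoesNotTouchRight}, for each $k\in\{i,j,h\}$ the set $U_k\cap\Adh(\Delta)$ consists of the single adhesion vertex prescribed in Configuration~\ref{setting}; writing $T_2$ for the second vertex of $\TTT(\Delta)$ and $T_2\in U_m$, we have $m\in\{2,h,6\}$, from which one reads off analogous intersections of $U_2,U_h,U_6$ with $\Adh(\Delta)$. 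Since $\Adh(\Delta)$ separates $\Delta$ from $\Ext(\Delta)$ and each $U_k$ is connected, any excursion of $U_k$ into $\Ext(\Delta)$ must enter and return through the unique adhesion vertex of $U_k$; consequently $U_k\cap\Delta$ is connected for $k\in\{i,j,h\}$. Moreover, when $m\neq 6$ we have $U_6\cap\Adh(\Delta)=\emptyset$, which forces $U_6\subseteq V(\Ext(\Delta))\setminus\Adh(\Delta)$ and hence $E_{i6},E_{j6}\subseteq\Ext(\Delta)$ automatically; so the claims about $E_{i6}$ and $E_{j6}$ only need work in the sub-case $m=6$.

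In each remaining case I would choose two walks $W_1,W_2$ in $O$ from the $\SSS$-side to the $\TTT$-side, with pairwise disjoint vertex- and edge-sets, such that every branch set and branch path of $(\cu,\ce)$ supporting $W_1$ and $W_2$ already lies in $\Delta$. A typical choice is $W_1$ a short walk using the assumed in-$\Delta$ edge and $W_2$ a detour through vertex $1\in V(O)$ exploiting $U_1$ and the $E_{1k}$: for $E_{ih}\subseteq\Delta$ take $W_1=i\text{-}h$ and $W_2=j\text{-}1\text{-}k_m$ with $k_m\in\{2,h,6\}$ chosen according to $m$; for $E_{2j}\subseteq\Delta$ take $W_1$ an extension of $j\text{-}2$ into $\TTT(\Delta)$ through $U_2$ (entering $U_2$ at the $E_{12}$-portal to avoid $\RR(\Delta)$) and $W_2=i\text{-}1\text{-}h$; for $E_{i6},E_{j6}\subseteq\Delta$ in the sub-case $m=6$ take the corresponding direct walk and detour $j\text{-}1\text{-}h$ or $i\text{-}1\text{-}h$; and if both $E_{2i},E_{jh}$ lie in $\Delta$ take $W_1$ an extension of $i\text{-}2$ and $W_2=j\text{-}h$. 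The connectedness of the $U_k\cap\Delta$'s then promotes these walks to genuine paths $P_1,P_2$ in $\Delta$; each $P_\ell$ avoids $\RR(\Delta)$ by either not using $U_2$ at all or by entering $U_2$ at the $E_{12}$-portal and reaching $T_2\in U_2\cap\Adh(\Delta)$ within $U_2\cap\Delta$ without passing through $\RR(\Delta)$; and $3$-fatness applied to the pairwise distinct branch sets and branch paths used by $W_1,W_2$ yields $d_\Delta(P_1,P_2)\geq 3$. Together with $\RR(\Delta)\notin V(P_1)\cup V(P_2)$, this contradicts \Lr{lem:TwoPathsInNSSGraph}.

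The principal obstacle is the sub-case $m=h$, in which both $T_1$ and $T_2$ live in $U_h$, so any $\SSS(\Delta)$--$\TTT(\Delta)$ path must enter $U_h$ and the branch-structure-disjointness of $W_1$ and $W_2$ cannot be arranged at the $U_h$-end. I expect to handle this by descending to a subtrigon $\Delta'$ of $\Delta$ whose right adhesion is $\{T_1,T_2\}$ and applying \Lr{lem:TwoPathsInNSSGraph:ExteriorOfSubpyramid} (or \Lr{lem:TwoPathsInNSSGraph} applied directly to $\Delta'$ after suitably truncating $P_1$ and $P_2$): either one of the truncated paths is forced through $\RR(\Delta')$, contradicting $3$-fatness with whichever branch set contains $\RR(\Delta')$, or the two paths are forced to be close at a point away from $U_h$, again contradicting $3$-fatness. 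A secondary obstacle is keeping $P_2$-routes through $U_2\cap\Delta$ away from $\RR(\Delta)$; this is handled by always entering $U_2$ at the $E_{12}$-endpoint and using that $U_2\cap\Adh(\Delta)\subseteq\{\RR(\Delta),T_2\}$, so that any forced detour through $\RR(\Delta)$ inside $U_2\cap\Delta$ can be ruled out by standard $3$-fatness bookkeeping.
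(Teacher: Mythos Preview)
Your overall shape—argue by contradiction and manufacture two far-apart paths inside $\Delta$—is right, but the tool you reach for is wrong, and several steps do not go through.

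The paper does not use \Lr{lem:TwoPathsInNSSGraph} here; it uses \Cr{cor:TwoPAthsInNSSGraph:TopAndRight}, which concerns $\SSS(\Delta)$--$\RTT(\Delta)$ paths (recall $\RTT(\Delta)=\TTT(\Delta)\cup\{\RR(\Delta)\}$). This is the key point you are missing. In the cases $E_{2j}\subseteq\Delta$ and $E_{2i},E_{jh}\subseteq\Delta$, any path that uses $U_2$ ends naturally at $\RR(\Delta)\in U_2$, and there is no reason for it to reach $\TTT(\Delta)$ at all. Your proposed ``extension of $j\text{-}2$ into $\TTT(\Delta)$ through $U_2$'' presupposes that $U_2$ meets $\TTT(\Delta)$, which is nowhere guaranteed by Configuration~\ref{setting}. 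With \Cr{cor:TwoPAthsInNSSGraph:TopAndRight} there is no need for any such extension: the walk $j12$ (resp.\ $i2$) already yields an $\SSS(\Delta)$--$\RTT(\Delta)$ path ending at $\RR(\Delta)$, while the companion walk $i1h$, $ih$, $i6$, $j6$ or $jh$ lands in $\TTT(\Delta)$; the corollary then forces a contradiction directly. Relatedly, your case-split ``$T_2\in U_m$ with $m\in\{2,h,6\}$'' is unfounded: the second vertex of $\TTT(\Delta)$ need not lie in any branch set, so this trichotomy is not exhaustive.

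For the pair $E_{2i},E_{jh}$ there is a second missing idea. Even with \Cr{cor:TwoPAthsInNSSGraph:TopAndRight}, applying it once to the paths induced by $i2$ and $jh$ only tells you that the $i2$-path equals the unique $\SO(\Delta)$--$\RR(\Delta)$ tree path; this is not yet a contradiction. The paper then applies the corollary a second time to the path induced by $i12$ against the same $jh$-path, concluding that it, too, is that unique tree path—impossible because the $i12$-path meets $U_1$ while the $i2$-path does not. Your sketch does not contain this uniqueness argument. Finally, the obstacle you flag in the sub-case ``$m=h$'' (both paths forced through $U_h$) simply does not arise once you use $\RTT(\Delta)$ as the target: one of the two paths ends at $\RR(\Delta)$ and never touches $U_h$, so the walks in $O$ can always be chosen vertex-disjoint.
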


\begin{proof}
    \noindent \textbf{$\mathbf{E_{ih}, E_{i6}}$:} Suppose for a contradiction that $E_{i6}$ (or $E_{ih}$, if it exists) is not contained in $\Ext(\Delta)$, which by \ref{itm:setting:BranchPaths} implies that it is contained in $\Delta$. 
    In particular, if $E_{i6} \subseteq \Delta$, this implies by \ref{itm:setting:U1U6} that $U_6$ meets $\TTT(\Delta)$. By \ref{itm:setting:BranchPaths} and because $U_1 \subseteq V(\Delta)\setminus \Adh(\Delta)$ by assumption, we also have $E_{12} \cup E_{1j} \subseteq \Delta$. This implies that the $\SI(\Delta)$--$\RTT(\Delta)$ path $P$ induced by $j12$ is contained in $\Delta$. Similarly, by Lemma~\ref{lem:NoFatO:LeftDoesNotTouchRight} and because $E_{i6}$ (or $E_{ih}$) is contained in $\Delta$ and $2 \notin \{i,h,6\}$, the $\SO(\Delta)$--$\TTT(\Delta)$ path $Q$ induced by $i6$ (or by $ih$) is contained in $\Delta$. 
    Since $(\cu, \ce)$ is $3$-fat and $\RR(\Delta) \in U_2$, the path $Q$ has distance at least $3$ from $P$ and from $\RR(\Delta) \in U_2$. But this contradicts Corollary~\ref{cor:TwoPAthsInNSSGraph:TopAndRight}.
    \medskip

    \noindent \textbf{$\mathbf{E_{2j}, E_{j6}}$:} Suppose for a contradiction that $E_{j6} \subseteq \Delta$ (or $E_{2j} \subseteq \Delta$). This case is similar to the previous one, except that we replace $E_{i6}$, $E_{ih}$, $E_{12}$ by $E_{j6}$, $E_{2j}$, $E_{16}$, respectively, and $j12$ by $j6$ (or by $j2$) and $i6$ (or $ih$) by $i1h$. 
    \smallskip

    \noindent \textbf{$\mathbf{E_{2i}, E_{jh}}$:} 
    Suppose for a contradiction that $E_{2i}, E_{jh} \subseteq \Delta$. Then by Lemma~\ref{lem:NoFatO:LeftDoesNotTouchRight}, the $\SSS(\Delta)$--$\RTT(\Delta)$ paths $P,Q$ induced by $i2$ and $jh$ are contained in $\Delta$. Since $(\cu, \ce)$ is $3$-fat, $P,Q$ are at least $3$ apart.
    By Corollary~\ref{cor:TwoPAthsInNSSGraph:TopAndRight}, it follows that $P$ is the (unique) $\SO(\Delta)$--$\RR(\Delta)$ path in $B(G)$. 
    Since $U_1 \subseteq V(\Delta)\setminus \Adh(\Delta)$ also the $\SO(\Delta)$--$\RTT(\Delta)$ path $P'$ induced by $i12$ is contained in $\Delta$.
    Moreover, $P'$ has distance at least~$3$ from $Q$ because $(\cu, \ce)$ is $3$-fat. By applying Corollary~\ref{cor:TwoPAthsInNSSGraph:TopAndRight} again, to $P', Q$, we obtain that $P'$ is also the (unique) $\SO(\Delta)$--$\RR(\Delta)$ path in $B(G)$. But then $P = P'$, which is a contradiction because $P'$ intersects $U_1$ but $P$ does not, and so $P\neq P'$.
\end{proof}

The next lemma is similar to the previous one, the difference being that instead of $U_1 \subseteq V(\Delta)\setminus \Adh(\Delta)$ we are assuming $U_6 \subseteq V(G-\Delta)$:

\begin{lemma} \label{lem:NoFatO:PathsOutsidePyramid}
    Let $G, \Delta$ and $(\cu, \ce)$ be as in Configuration~\ref{setting}, and suppose that $U_6 \subseteq V(G-\Delta)$. Then any of $E_{2i}$, $E_{jh}$, $E_{1i}$, $E_{1j}$ that exist are contained in $\Delta$, and if both $E_{2j}, E_{ih}$ exist, then at most one of them is contained in $\Ext(\Delta)$.
\end{lemma}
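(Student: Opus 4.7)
The plan is to mirror the proof of Lemma~\ref{lem:NoFatO:PathsInsidePyramid}, swapping the roles of $\Delta$ and $\Ext(\Delta)$, replacing $U_1$ by $U_6$, and using Lemma~\ref{lem:TwoPathsInNSSGraph:ExteriorOfSubpyramid} in place of Corollary~\ref{cor:TwoPAthsInNSSGraph:TopAndRight}. The initial observation is that $U_6 \subseteq V(G-\Delta) \subseteq V(\Ext(\Delta)) \setminus \Adh(\Delta)$, so \ref{itm:setting:BranchPaths} forces each of the branch paths $E_{62}, E_{6i}, E_{6j}, E_{6h}$ to lie entirely in $\Ext(\Delta)$; these will serve as the ``detours'' needed below.

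For each of the four forced claims I will assume for contradiction that the branch path in question lies in $\Ext(\Delta)$ and exhibit two vertex- and edge-disjoint paths in $O$ whose induced realisations in $G$ are $\SSS(\Delta)$--$\RTT(\Delta)$ paths inside $\Ext(\Delta)$. Concretely: for $E_{1i}$, take $P$ induced by $i1$ and $Q$ by $j6h$; for $E_{1j}$, take $P$ from $j1$ and $Q$ from $i6h$; for $E_{2i}$, take $P$ from $i2$ and $Q$ from $j6h$; and for $E_{jh}$, take $P$ from $jh$ and $Q$ from $i62$. In the two cases involving $U_1$, I first combine~\ref{itm:setting:U1U6} with Lemma~\ref{lem:NoFatO:LeftDoesNotTouchRight} to deduce that $U_1$ must meet $\TTT(\Delta)$ in the unique vertex $t'$ of $\TTT(\Delta) \setminus U_h$, so that the path built from $U_1$ actually terminates in $\RTT(\Delta)$. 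Since the underlying $O$-paths are vertex- and edge-disjoint, by $3$-fatness the realisations have distance at least $3$, so Lemma~\ref{lem:TwoPathsInNSSGraph:ExteriorOfSubpyramid} forces one of $P, Q$ to be the (unique) path in $B(G)$ from $\SI(\Delta)$ to either $\TI(\Delta)$ or $\RR(\Delta)$. In each case only one of $P, Q$ starts at $\SI(\Delta)$, and a short inspection of its endpoint rules out one of the two options; in every surviving sub-case the candidate for the special path is disjoint from $U_2$ as a union of branch sets and branch paths, so by $3$-fatness it has distance at least~$3$ from $\RR(\Delta) \in U_2$, contradicting the conclusion that it has distance at most~$1$ from $\RR(\Delta)$.

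For the final claim, I will suppose $E_{2j}, E_{ih} \subseteq \Ext(\Delta)$ and apply the same template with $P$ induced by $ih$ and $Q$ by $j2$; Lemma~\ref{lem:TwoPathsInNSSGraph:ExteriorOfSubpyramid} then forces $Q$ (the only one starting at $\SI(\Delta)$) to be a special path of $B(G)$. To close, I introduce a second candidate $Q'$ induced by $j62$: it also starts at $\SI(\Delta)$, lies in $\Ext(\Delta)$, and is vertex- and edge-disjoint in $O$ from $P$, so the same lemma applied to $P, Q'$ forces $Q'$ to be a special path as well. But any such path lives in the subtree of $B(G)$ spanned by $\Delta$, and in particular in $V(\Delta)$, whereas $Q'$ traverses $U_6 \subseteq V(G-\Delta)$, yielding the contradiction.

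The main technical obstacle I foresee is ensuring that a path whose last branch set is $U_2$ really realises as an $\SSS(\Delta)$--$\RTT(\Delta)$ path inside $\Ext(\Delta)$, since $U_2 \cap \Adh(\Delta)$ may contain, besides $\RR(\Delta)$, a second vertex of $\TTT(\Delta)$ (namely $t'$, when $t' \in U_2$). This will be handled by the same short truncation argument implicit in the proof of Lemma~\ref{lem:NoFatO:PathsInsidePyramid}: the endpoint in $U_2 \cap V(\Ext(\Delta))$ of the relevant branch path can be extended through $G[U_2]$ until the walk first meets $\Adh(\Delta) \cap U_2 \subseteq \RTT(\Delta)$, giving a path in $\Ext(\Delta)[U_2]$ ending in $\RTT(\Delta) \cap U_2$.
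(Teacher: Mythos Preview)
Your argument for the four ``forced'' branch paths $E_{2i}, E_{jh}, E_{1i}, E_{1j}$ is correct and is essentially the paper's mirrored argument (with minor variations in the choice of auxiliary $O$-paths). The gap is in your treatment of the final claim about $E_{2j}, E_{ih}$.

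You assert that the special paths produced by Lemma~\ref{lem:TwoPathsInNSSGraph:ExteriorOfSubpyramid} ``live in the subtree of $B(G)$ spanned by $\Delta$, and in particular in $V(\Delta)$''. This is false: the lemma concerns paths in $\Ext(\Delta)$, and the unique $\SI(\Delta)$--$\TI(\Delta)$ and $\SI(\Delta)$--$\RR(\Delta)$ paths in $B(G)$ go through the \emph{parent} of $\RR(\Delta)$ in the binary tree and through vertices outside $\Delta$ (cf.\ Figure~\ref{fig:PathsInNssGraph}). So the fact that $Q'$ (induced by $j62$) meets $U_6 \subseteq V(G-\Delta)$ is perfectly compatible with $Q'$ being one of the two special tree-paths, and your contradiction does not materialise.

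The paper closes this case differently. Having forced both $Q$ (induced by $j2$) and $Q'$ (induced by $j62$) to be special $\SI(\Delta)$--$\RTT(\Delta)$ paths in the tree $B(G)$, one observes that since $Q$ avoids $U_6$ while $Q'$ meets it, the shared initial segment (which exists because $B(G)$ is a tree and both paths start at $\SI(\Delta)$) cannot contain any vertex of $U_6$; hence $Q$ is the shorter path, ending at $\RR(\Delta)$, and $Q'$ contains all of $Q$ except this last vertex. The predecessor of $\RR(\Delta)$ on $Q$ is therefore also on $Q'$, so it lies in $(U_j \cup E_{j2} \cup U_2) \cap (U_j \cup E_{j6} \cup U_6 \cup E_{62} \cup U_2) = U_j \cup U_2$; a short argument rules out $U_2$, so this predecessor lies in $U_j$, giving $d_G(U_j, U_2) \leq 1$, which contradicts $3$-fatness. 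Your proof can be repaired by substituting this argument for your final paragraph.
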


\begin{proof}
    The proof is analogous to the proof of Lemma~\ref{lem:NoFatO:PathsInsidePyramid} except that we interchange $1$ and $6$ as well as $i$ and $j$ and also $\Delta$ and $\Ext(\Delta)$. In the first two cases, we then obtain two paths $P,Q \subseteq \Ext(\Delta)$ between $\SSS(\Delta)$ and $\RTT(\Delta)$ such that $Q$ starts in $\SI(\Delta)$ (instead of $\SO(\Delta)$) and has distance at least $3$ from $P$ and from $\RR(\Delta)$, which contradicts Lemma~\ref{lem:TwoPathsInNSSGraph:ExteriorOfSubpyramid} (instead of Corollary~\ref{cor:TwoPAthsInNSSGraph:TopAndRight}). 
    In the third case, we obtain two $\SI(\Delta)$--$\RTT(\Delta)$ paths $P,P'$ in $B(G)$ such that $P$ is induced by $j2$ and $P'$ is induced by $j62$. Since $P'$ meets $U_6$ but $P$ does not, but both of them are contained in $B(G)$, it follows that $P'$ contains all of $P$ except for its last vertex. This implies that the predecessor of $\RR(\Delta)$ in $B(G)$ is contained in $U_j$, and thus $U_j$ has distance $1$ from $\RR(\Delta) \in U_2$, a contradiction.
\end{proof}

We are now ready to prove that Configuration~\ref{setting} does not occur.

\begin{proof}[Proof of Lemma~\ref{lem:SettingCannotOccur}]
    By \ref{itm:setting:U1U6}, either $U_1 \subseteq V(\Delta) \setminus \Adh(\Delta)$ and $U_6 \subseteq V(G-\Delta)$, or $U_6 \cap \Adh(\Delta) \neq \emptyset$, or $U_1 \cap \Adh(\Delta) \neq \emptyset$. We consider these cases separately:
    \medskip
    
    \noindent \textbf{Case 1:} \textit{$U_1 \subseteq V(\Delta) \setminus A(\Delta)$ and $U_6 \subseteq V(G-\Delta)$.}  
    
    By Proposition~\ref{prop:O:TwoEdgesBetweenTwoPairsOfVertices}, there exists a perfect matching in $O$ between $\{2,h\}$ and $\{i,j\}$, i.e.\ either $2i, hj \in E(O)$ or $2j, hi \in E(O)$. 
    If $2i,hj \in E(O)$, then, by Lemma~\ref{lem:NoFatO:PathsOutsidePyramid}, $E_{2i}, E_{hj}$ are contained in $\Delta$, which contradicts Lemma~\ref{lem:NoFatO:PathsInsidePyramid}. If $2j, hi \in E(O)$, then, by Lemma~\ref{lem:NoFatO:PathsInsidePyramid}, $E_{2j}, E_{hi}$ are contained in $\Ext(\Delta)$, which constradicts Lemma~\ref{lem:NoFatO:PathsOutsidePyramid}.
    \medskip

    \noindent \textbf{Case 2:} \textit{$U_6 \cap \Adh(\Delta) \neq \emptyset$}
    
    By \ref{itm:setting:LeftAndTop}, we have $U_6 \cap \TTT(\Delta) \neq \emptyset$. 
    Moreover, $U_1 \subseteq V(\Delta) \setminus V(\Delta)$ by \ref{itm:setting:U1U6}, and every branch path is contained in either $\Delta$ or $\Ext(\Delta)$ by \ref{itm:setting:BranchPaths}. 

    We first show that $E_{ih}, E_{jh} \subseteq \Delta$ (if they exist). For this, suppose for a contradiction that $E_{nh}$, with $\{n,m\} = \{i,j\}$, exists and is contained in $\Ext(\Delta)$. Then the $\SSS(\Delta)$--$\TTT(\Delta)$ path $P$ induced by $nh$ is contained in $\Ext(\Delta)$ (as it avoids $\RR(\Delta) \in U_2$).
    Since $m \neq 1$ and $U_6$ meets $\TTT(\Delta)$, the path $m6$ in $O$ also induces an $\SSS(\Delta)$--$\TTT(\Delta)$ path $Q$, which is contained in $\Ext(\Delta)$ because $E_{m6} \subseteq \Ext(\Delta)$ by Lemma~\ref{lem:NoFatO:PathsInsidePyramid} (and because it avoids $\RR(\Delta) \in U_2$). As $(\cu, \ce)$ is $3$-fat, $P,Q$ have distance at least~$3$ from each other and from $\RR(\Delta) \in U_2$. But this contradicts Lemma~\ref{lem:TwoPathsInNSSGraph:ExteriorOfSubpyramid}. 
    
    By Lemma~\ref{lem:NoFatO:PathsInsidePyramid}, it follows that $ih \notin E(O)$. Hence, $\{i,h\} = \{3,4\}$; by symmetry of~$O$, we may assume that $i = 3$ and $h = 4$. In particular, $j = 5$. Since $jh = 45 \in E(O)$, it follows by the previous argument that $E_{jh} \subseteq \Delta$. But then $E_{2i}$, which exists as $2i = 23 \in E(O)$, is contained in $\Ext(\Delta)$ by Lemma~\ref{lem:NoFatO:PathsInsidePyramid}. Hence, the $\SO(\Delta)$--$\RR(\Delta)$ path $P$ induced by $i2$ is contained in $\Ext(\Delta)$. By Lemma~\ref{lem:NoFatO:PathsInsidePyramid}, also the $\SI(\Delta)$--$\TTT(\Delta)$ path $Q$ induced by $j6$ is contained in $\Ext(\Delta)$ (as it avoids $\SO(\Delta) \in U_i$ and $\RR(\Delta) \in U_2$). Since $P,Q$ are at least $3$ apart because $(\cu,\ce)$ is $3$-fat, this contradicts Lemma~\ref{lem:TwoPathsInNSSGraph:ExteriorOfSubpyramid}. 
    \medskip

    \noindent \textbf{Case 3:} \textit{$U_1 \cap \Adh(\Delta) \neq \emptyset$}
    
    The proof is analogous to the proof of Case 2 except that we interchange $1$ and $6$ as well as $\Delta$ and $\Ext(\Delta)$ and also Lemma~\ref{lem:TwoPathsInNSSGraph:ExteriorOfSubpyramid} and Corollary~\ref{cor:TwoPAthsInNSSGraph:TopAndRight}. 
    In particular, we obtain $E_{ih}, E_{jh} \subseteq \Ext(\Delta)$. It then follows by Lemma~\ref{lem:NoFatO:PathsOutsidePyramid} that $jh \notin E(O)$, and thus $\{j,h\} = \{3,4\}$; by symmetry, we may assume $j = 3$ and $h = 4$. In particular, $i = 5$.
    It follows that $ih = 45, 2j=23 \in E(O)$, which implies $E_{2j} \subseteq \Delta$ by Lemma~\ref{lem:NoFatO:PathsOutsidePyramid}. Hence, the $\SI(\Delta)$--$\RR(\Delta)$ path $P$ induced by $j2$ and the $\SO(\Delta)$--$\TTT(\Delta)$ path $Q$ induced by $i1$ are contained in~$\Delta$.
    Since $(\cu, \ce)$ is $3$-fat, and thus $P,Q$ have distance at least~$3$ from each other, this contradicts Corollary~\ref{cor:TwoPAthsInNSSGraph:TopAndRight}. 
\end{proof}

\subsection{\texorpdfstring{$O$}{O} minors and the tree-decomposition}

In this section we prove two corollaries which follow from the fact that Configuration~\ref{setting} cannot occur. Both corollaries describe how a fat model of $O$ can interact with the tree-decomposition $(T, \cv)$ from Construction~\ref{constr:TreeDecompOfNSSGraph}. For this, recall that the `relevant' adhesion sets of $(T, \cv)$ are of the form $\Adh(\Delta)$, and that the `relevant' bags are of the form $V_\Delta = \Adh(\Delta) \cup \TTT(\SP(\Delta)) \cup \RRR(\Delta)$ for some trigon~$\Delta$ of the NSS graph.

\begin{corollary} \label{cor:NoFatO:NoAdhesionMeetsFiveBranchSets}
    Let $\Delta$ be a trigon of some NSS graph $G$, and suppose that $G$ contains a $3$-fat model $(\cu, \ce)$ of~$O$. Then $\Adh(\Delta)$ intersects at most four branch sets in $\cu$.
\end{corollary}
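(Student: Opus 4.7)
I argue by contradiction. Assume that five branch sets, say $U_1,\dots,U_5$, each meet $\Adh(\Delta)$ in exactly one vertex, and the sixth branch set $W$ is disjoint from $\Adh(\Delta)$. Since $W$ is connected and $\Adh(\Delta)$ separates $V(\Delta)\setminus\Adh(\Delta)$ from $V(\Ext(\Delta))\setminus\Adh(\Delta)$ in $G$, the set $W$ lies entirely on one side. A preliminary observation, playing the role of condition~(iv) of Configuration~\ref{setting}, is that every branch path lies entirely in $\Delta$ or entirely in $\Ext(\Delta)$: its interior is disjoint from $\bigcup_k U_k$ and hence from $\Adh(\Delta)\subseteq\bigcup_kU_k$, so by connectedness it lies on a single side, and its endpoints (either in $\Adh(\Delta)$ itself or adjacent to the interior) are forced to lie on the same side.

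The main strategy is to relabel the branch sets using the order-$48$ automorphism group of $O$ together with the horizontal reflection of the NSS graph (which swaps $\SSS(\Delta)$ with $\TTT(\Delta)$) so as to land in the hypothesis of Configuration~\ref{setting}, and then invoke Lemma~\ref{lem:SettingCannotOccur}. Concretely, by vertex-transitivity of $\Aut(O)$, label $W$ as $U_6$ if $W\subseteq V(\Ext(\Delta))$ and as $U_1$ if $W\subseteq V(\Delta)$; in either case condition~(iii) of Configuration~\ref{setting} automatically holds, and the branch set of $O$ antipodal to $W$ contains a unique vertex $v^*\in\Adh(\Delta)$. If $v^*\in\TTT(\Delta)$, the residual $D_4$-action on $\{U_2,U_3,U_4,U_5\}$ lets us place $R$ in $U_2$; then $\SO,\SI\in\{U_3,U_4,U_5\}$ and the other vertex of $\TTT(\Delta)$ also lies in $\{U_3,U_4,U_5\}$, so conditions (i) and (ii) hold and Configuration~\ref{setting} applies. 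If $v^*\in\SSS(\Delta)$, apply the horizontal NSS-reflection to a mirror trigon $\Delta'$ in which the image of $v^*$ lies in $\TTT(\Delta')$, reducing to the previous case.

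The only remaining case is $v^*=R$, where the root of $\Delta$ lies in the branch set antipodal to $W$; no $O$-automorphism fixing $W$ can move $R$ out of this branch set, so Configuration~\ref{setting} is out of reach and I must argue directly. Take without loss of generality $W=U_6\subseteq V(\Ext(\Delta))$ and $R\in U_1$ (the sub-case $W=U_1\subseteq V(\Delta)$, $R\in U_6$ is symmetric with the roles of $\Delta$ and $\Ext(\Delta)$ interchanged, using Lemma~\ref{lem:TwoPathsInNSSGraph} in place of Lemma~\ref{lem:TwoPathsInNSSGraph:ExteriorOfSubpyramid} and vice versa). Then $\{\SO,\SI,\TO,\TI\}$ is in bijection with $\{U_2,U_3,U_4,U_5\}$. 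A short case-check over the six $\SSS$/$\TTT$-labelings of the $4$-cycle $O[\{2,3,4,5\}]=2$--$3$--$5$--$4$--$2$ shows that some opposite pair $\{e_1,e_2\}$ of edges consists of two $\SSS$-$\TTT$ edges. These induce two vertex-disjoint $\SSS(\Delta)$-$\TTT(\Delta)$ paths in $G$, both avoiding $U_1\ni R$ and at distance at least $3$ from each other by $3$-fatness. If both $E_{e_1},E_{e_2}$ lie in $\Delta$, Lemma~\ref{lem:TwoPathsInNSSGraph} applied to $\Delta$ gives a contradiction; if both lie in $\Ext(\Delta)$, Lemma~\ref{lem:TwoPathsInNSSGraph:ExteriorOfSubpyramid} does. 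In the mixed sub-case I reroute the $\Delta$-edge through $U_6$ via the branch paths $E_{i,6}$ and $E_{6,j}$ (both in $\Ext(\Delta)$ since $U_6\subseteq V(\Ext(\Delta))$), producing two vertex-disjoint $\SSS(\Delta)$-$\TTT(\Delta)$ paths now both in $\Ext(\Delta)$ and again contradicting Lemma~\ref{lem:TwoPathsInNSSGraph:ExteriorOfSubpyramid}.

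The main obstacle is the mixed sub-case under $v^*=R$: one must verify that the reroute through $U_6$ genuinely yields a path in $\Ext(\Delta)$ even though the branch sets $U_i,U_j$ may protrude into $V(\Delta)\setminus\Adh(\Delta)$. This is resolved by the connectedness of $\Ext(\Delta)$ together with the fact that the adhesion vertices $\pi(i),\pi(j)$ and all relevant endpoints of $E_{i,6},E_{6,j}$ in $U_i$ and $U_j$ automatically lie in $V(\Ext(\Delta))$, so a reroute inside $\Ext(\Delta)$ (passing briefly outside $U_i\cup U_j$ if necessary) exists. The remainder of the argument is routine symmetry bookkeeping.
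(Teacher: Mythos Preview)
Your overall strategy is exactly the paper's: split on whether $\RR(\Delta)$ lies in the branch set antipodal to the missing one (your $v^*=R$ case is the paper's Case~2, and $v^*\in\SSS\cup\TTT$ is its Case~1), then in the latter reduce to Configuration~\ref{setting}, and in the former find two $\SSS(\Delta)$--$\TTT(\Delta)$ paths and reroute one through the missing branch set. The symmetry reductions and the check that branch paths lie entirely in $\Delta$ or $\Ext(\Delta)$ are all fine.

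The gap is in your final paragraph. Saying that the rerouted path ``passes briefly outside $U_i\cup U_j$ if necessary'' destroys the argument: as soon as the path leaves $U_i\cup E_{i6}\cup U_6\cup E_{6j}\cup U_j$, the $3$-fatness of the model gives you no distance bound whatsoever between it and the other path $P_2$ (which lives in $U_c\cup E_{cd}\cup U_d$), so you cannot invoke Lemma~\ref{lem:TwoPathsInNSSGraph:ExteriorOfSubpyramid}. The same issue is silently present in your ``both in $\Delta$'' and ``both in $\Ext(\Delta)$'' sub-cases, where you need the \emph{induced} paths, not just the branch paths $E_{e_1},E_{e_2}$, to lie in the relevant side.

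The correct fix is short and does not require leaving the branch sets. Since each branch set $U_a$ with $a\in\{2,3,4,5\}$ meets $\Adh(\Delta)$ in a single vertex $s_a$, that vertex is a cut-vertex of $G[U_a]$ separating $U_a\cap(V(\Delta)\setminus\Adh(\Delta))$ from $U_a\cap(V(\Ext(\Delta))\setminus\Adh(\Delta))$. Hence any \emph{simple} path in $G[U_a]$ from $s_a$ to a vertex of $U_a\cap V(\Ext(\Delta))$ (such as the endpoint of $E_{a6}$) stays entirely in $\Ext(\Delta)$. This keeps the rerouted path inside $U_i\cup E_{i6}\cup U_6\cup E_{6j}\cup U_j$ and hence at distance $\geq 3$ from both $P_2$ and $\RR(\Delta)\in U_1$, and the contradiction goes through.
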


\begin{proof}
    Suppose for a contradiction that $\Adh(\Delta)$ intersects five branch sets in $\cu$. Since $|\Adh(\Delta)| = 5$ and $|O| = 6$, there is (precisely) one branch set in $\cu$ which does not meet $\Adh(\Delta)$, and all other branch sets intersect $\Adh(\Delta)$ exactly once. By the vertex-transitivity of $O$, we may assume that $U_1 \subseteq V(\Delta) \setminus \Adh(\Delta)$ or $U_6 \subseteq V(\Ext(\Delta))\setminus\Adh(\Delta)$. 
    We consider two cases:
    \medskip

    \noindent \textbf{Case 1:} $\RR(\Delta) \notin U_1 \cup U_6$.

    By the vertex-transitivity of $O$, we may assume that $\RR(\Delta) \in U_2$. Furthermore, by the symmetry of $G$, we may assume that $U_1$ and $U_6$ do not meet $\SSS(\Delta)$. Hence, \ref{itm:setting:LeftAndTop} and \ref{itm:setting:Right} hold. Moreover, \ref{itm:setting:U1U6} holds because one of $U_1,U_6$ meets $\Adh(\Delta)$ and the other is contained in $\Delta$ or $\Ext(\Delta)$, respectively. Finally, \ref{itm:setting:BranchPaths} holds because all of $\Adh(\Delta)$ is contained in branch sets, and therefore no branch path can traverse it. Thus we are in Configuration~\ref{setting}, contradicting Lemma~\ref{lem:SettingCannotOccur}.
    \medskip

    \noindent \textbf{Case 2:} $\RR(\Delta) \in U_1 \cup U_6$.

    Let $\{m,n\} = \{1,6\}$ such that $\RR(\Delta) \in U_n$.
    Let $i,j,k,\ell$ such that $\SSS(\Delta) \subseteq U_i \cup U_j$ and $\TTT(\Delta) \subseteq U_k \cup U_\ell$.
    By Proposition~\ref{prop:O:TwoEdgesBetweenTwoPairsOfVertices} and without loss of generality, $ik, j\ell \in E(O)$. Then $ik, j\ell$ induce two $\SSS(\Delta)$--$\TTT(\Delta)$ paths $Q_{ik}, Q_{j\ell}$, which have distance at least~$3$ from each other and from $\RR(\Delta) \in U_1 \cup U_6$ since $(\cu,\ce)$ is $3$-fat. By Lemmas~\ref{lem:TwoPathsInNSSGraph} and~\ref{lem:TwoPathsInNSSGraph:ExteriorOfSubpyramid}, one of them is contained in $\Delta$ and one is contained in $\Ext(\Delta)$. Without loss of generality let $Q_{ik}$ be contained in that subgraph $H \in \{\Delta,\Ext(\Delta)\}$ which also contains~$U_m$.
    Since $n \notin \{j,\ell\}$, also $jm\ell$ induces an $\SSS(\Delta)$--$\TTT(\Delta)$ path $Q'_{j\ell}$ in $H$, which has distance at least $3$ from $Q_{ik}$ and from $\RR(\Delta) \in U_n$. But this contradicts Lemma~\ref{lem:TwoPathsInNSSGraph} or~\ref{lem:TwoPathsInNSSGraph:ExteriorOfSubpyramid}.
\end{proof}

\begin{corollary} \label{lem:NoFatO:NoBagMeetsAllBranchSets}
    Let $(T, \cv)$ be the tree-decomposition from Construction~\ref{constr:TreeDecompOfNSSGraph} of some NSS graph $G$. Suppose that $G$ contains a $5$-fat model $(\cu, \ce)$ of $O$. Then no bag $V_x$ of $T$ intersects all six branch sets in $\cu$.
\end{corollary}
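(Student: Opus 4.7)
The plan is to assume for contradiction that some bag $V_x$ of $T$ meets all six branch sets of a hypothetical $5$-fat model $(\cu,\ce)$ of $O$ in $G$, and derive a contradiction by case analysis on the type of the node~$x$.

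First I dispose of the two easy node types. If $x = v_P$ for some dotted path~$P$, then $V_x = V(P)$ is a path whose interior vertices have degree~$2$ in~$G$. A branch set contained entirely in the interior of~$P$ is a subpath of that interior with only two edges of~$G$ leaving it (one at each end), so the internal disjointness of branch paths forces such a branch set to send out at most two branch paths --- contradicting that $O$ is $4$-regular. Hence every branch set meeting $V(P)$ must contain one of the two endpoints of~$P$, and since branch sets are pairwise disjoint at most two branch sets meet~$V_x$. If instead $x$ is a leaf of~$B'(G)$, then $|V_x| = |\Adh(\Delta_x)| = 5 < 6$, which is immediately impossible.

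The main case is when $x$ is a non-leaf node of~$B'(G)$, so
\[
V_x \;=\; \Adh(\Delta_x) \,\cup\, \TTT(\SP(\Delta_x)) \,\cup\, \{R,R_0,R_1\},
\]
where I write $R := \RR(\Delta_x)$, $R_0 := \RR(\SP(\Delta_x))$ and $R_1 := \RR(\TP(\Delta_x))$. These three vertices lie pairwise at distance at most~$2$ in~$G$, so $5$-fatness (in fact $3$-fatness) forces them all into a single branch set~$U_R$. By Corollary~\ref{cor:NoFatO:NoAdhesionMeetsFiveBranchSets} applied to~$\Delta_x$, the set $\Adh(\Delta_x)$ meets at most four branch sets, so the remaining two branch sets must meet $\TTT(\SP(\Delta_x)) = \{t_0,t_1\}$. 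This forces $t_0$ and $t_1$ to lie in two distinct branch sets $U_E, U_F$ both disjoint from $\Adh(\Delta_x)$; in particular $U_R \notin \{U_E, U_F\}$, so $U_R$ is one of the four branch sets meeting $\Adh(\Delta_x)$.

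The heart of the argument is a short case analysis on the distribution of the four vertices $S_0 := \SO(\Delta_x)$, $S_1 := \SI(\Delta_x)$, $T_0 := \TO(\Delta_x)$, $T_1 := \TI(\Delta_x)$ among these four branch sets. Since five vertices of $\Adh(\Delta_x)$ fall into four branch sets, exactly one duplication occurs. If one of $S_0, S_1, T_0, T_1$ lies in~$U_R$, then by the top-bottom symmetry of~$G$ I may assume it lies in $\SSS(\Delta_x)$; applying Corollary~\ref{cor:NoFatO:NoAdhesionMeetsFiveBranchSets} to $\TP(\Delta_x)$ then gives a contradiction, because $\Adh(\TP(\Delta_x)) = \{T_0,T_1,t_0,t_1,R_1\}$ lies in the five pairwise distinct branch sets $U_{T_0}, U_{T_1}, U_E, U_F, U_R$. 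Otherwise two of $\{S_0, S_1, T_0, T_1\}$ share a branch set $U_A \neq U_R$ while the other two lie in further distinct branch sets; I then apply the same Corollary to $\TP(\Delta_x)$ when the duplicate pair lies in $\SSS(\Delta_x)$, to $\SP(\Delta_x)$ when it lies in $\TTT(\Delta_x)$, and to either child trigon when it mixes $S$ and~$T$. In every sub-case the chosen adhesion hits five distinct branch sets (the two distinct branch sets of the non-coinciding pair, together with~$U_E$,~$U_F$ and~$U_R$), giving the contradiction. The main subtlety is verifying this distinctness in every sub-case, which boils down to the observation that $U_R, U_E, U_F$ are pairwise distinct and disjoint from every non-$R$ branch set of $\Adh(\Delta_x)$ except through the single duplication --- which we have arranged, by choosing between $\SP$ and $\TP$, not to affect the child adhesion we examine.
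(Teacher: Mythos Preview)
Your argument has a genuine gap at the step ``$5$-fatness (in fact $3$-fatness) forces them all into a single branch set~$U_R$.'' Fatness only tells you that \emph{at most one} branch set can intersect $\{R,R_0,R_1\}$; it does not force any of these three vertices to lie in a branch set at all. Two things go wrong as a result. First, even when some branch set~$U_R$ does meet $\{R,R_0,R_1\}$, you have not shown it contains all three --- and later you use $R\in U_R$ (to place $U_R$ among the four branch sets meeting $\Adh(\Delta_x)$) and $R_1\in U_R$ (to get five branch sets on $\Adh(\TP(\Delta_x))$). The paper repairs exactly this by replacing $U_R$ with $B_G(U_R,2)$, which swallows all of $\RRR(\Delta_x)$ at the cost of dropping from a $5$-fat to a $3$-fat model; this is the reason the hypothesis is $5$-fat rather than $3$-fat, and your parenthetical ``in fact $3$-fatness'' suggests you have not seen why the extra slack is needed.

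Second, and more seriously, you do not handle the case where \emph{no} branch set meets $\{R,R_0,R_1\}$. Then the six branch sets hit the six remaining bag vertices $S_0,S_1,t_0,t_1,T_0,T_1$ bijectively, and each of the three adhesions $\Adh(\Delta_x),\Adh(\SP(\Delta_x)),\Adh(\TP(\Delta_x))$ meets exactly four branch sets (its root vertex lies in none), so Corollary~\ref{cor:NoFatO:NoAdhesionMeetsFiveBranchSets} yields no contradiction and your case analysis never gets off the ground. The paper treats this as a separate Case~2: using the matching property of~$O$ (Proposition~\ref{prop:O:TwoEdgesBetweenTwoPairsOfVertices}) it produces two far-apart $\SSS$--$\TTT$ paths in each of $\SP(\Delta_x)$ and $\TP(\Delta_x)$, and then Lemma~\ref{lem:TwoPathsInNSSGraph} forces two different branch paths to pass within distance~$2$ of $\RRR(\Delta_x)$, contradicting $3$-fatness. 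Your write-up needs an argument of this kind.
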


\begin{proof}
    Suppose towards a contradiction that some bag $V_x$ meets all six branch sets. Clearly, $x$ cannot be a leaf of $T$ (of the form $v_y$), because their bags are only a path, and the adhesion set corresponding to their unique incident edge $yv_y$ has size $2$. Let $\Delta:=\Delta_x$. 
    We consider two cases according to whether $(\cu,\ce)$ intersects $\mathbf{RRR}(\Delta)$: 
    \medskip
    
    \noindent \textbf{Case 1:} \textit{$\RRR(\Delta) \cap U_i \neq \emptyset$ for some $i \in [6]$.}

    We replace $U_i$ in $\cu$ by $B_G(U_i, 2)$ and shorten each path $E_{ij}$ accordingly, i.e.\ we replace each branch path $E_{ij}$ in $\ce$ with some $B_G(U_i, 2)$--$U_j$ subpath of $E_{ij}$. Then $\RRR(\Delta)$ is a subset of the new branch set $U_i$, and since $(\cu, \ce)$ is $5$-fat, the new model of $O$ is $3$-fat. As every $U_j$ still intersects $V_x$, it follows that $V_x = \Adh(\Delta) \cup \TTT(\SP(\Delta))$ meets all branch sets of the new model $(\cu, \ce)$; in particular $V_x \setminus \RRR(\Delta)$ meets all branch sets except for $U_i$.  
    Recall that $V_x \setminus \RRR(\Delta) = \SSS(\Delta)  \cup \TTT(\SP(\Delta)) \cup \TTT(\Delta)$. Combining this with the previous argument yields that two of the three sets in this union are contained in $\bigcup_{j \in [6]\setminus \{i\}} U_i$.  
    Since any such combination is either of the form $\Adh(\Delta) \setminus \RR(\Delta)$ or $\Adh(\SP(\Delta)) \setminus \RR(\SP(\Delta))$ or $\Adh(\TP(\Delta)) \setminus \RR(\TP(\Delta))$, and because $\RRR(\Delta) \subseteq U_i$, some adhesion $\Adh(\Delta)$, $\Adh(\SP(\Delta))$ or $\Adh(\TP(\Delta))$ meets five branch sets of $(\cu, \ce)$,  contradicting Corollary~\ref{cor:NoFatO:NoAdhesionMeetsFiveBranchSets}. 
    \medskip

    \noindent \textbf{Case 2:} \textit{No branch set meets $\mathbf{RRR}(\Delta)$.}

    Then all six vertices in $V_\Delta -\RRR(\Delta)$ are contained in distinct branch sets in $\cu$; let $\SSS(\Delta) \subseteq U_{i_1} \cup U_{i_2}$, $\TTT(\SP(\Delta)) \subseteq U_{i_3} \cup U_{i_4}$ and $\TTT(\Delta) \subseteq U_{i_5} \cup U_{i_6}$.
    By Proposition~\ref{prop:O:TwoEdgesBetweenTwoPairsOfVertices} and without loss of generality, $i_1i_3, i_2i_4, i_3i_5, i_4i_6 \in E(O)$. 
    Let $P,Q$ be the $\SSS(\Delta)$--$\TTT(\SP(\Delta))$ paths induced by $i_1i_3, i_2i_4$, respectively. Since $(\cu,\ce)$ is $3$-fat, $P,Q$ have distance at least~$3$ from each other. Then either one of $P,Q$ meets $\RR(\Delta)$, or they are contained in $\SP(\Delta)$, in which case by Lemma~\ref{lem:TwoPathsInNSSGraph}, one of $P,Q$ contains $\RR(\SP(\Delta))$. 
    Thus, one of $P,Q$, say $P$, intersects $\RR(\Delta) \cup \RR(\SP(\Delta)) \subseteq \RRR(\Delta)$.
    By the same argument, one of the $\SSS(\TP(\Delta))$--$\TTT(\Delta)$ paths induced by $i_3i_5, i_4i_6$, call it $P'$, meets $\RR(\Delta) \cup \RR(\TP(\Delta)) \subseteq \RRR(\Delta)$. 
    Since $(\cu, \ce)$ is $3$-fat, the intersections of $\RRR(\Delta)$ with $P$ and $P'$ are either contained in the same branch path in $\ce$, or some branch set in $\cu$ meets $\RRR(\Delta)$ (in the intersection with $P$ or $P'$). The former case is not possible because $P, P'$ do not intersect the same branch path, and the latter case is a contradiction to the assumption of Case~2.
\end{proof}

\subsection{Balancing the model} 

We need one last lemma for our proof of Theorem~\ref{thm:NSSGraph:NoFatO}, which given a fat enough model of $O$ finds a less fat model that is more `well-behaved' around the adhesion $\Adh(\Delta)$ of some trigon $\Delta$, in the sense that it comes somewhat (but not yet all the way) close to the assumptions in Configuration~\ref{setting}. 

\begin{lemma} \label{lem:SomeAdhesionIntersectsONicely}
    Let $K \geq 3$, and suppose that for some $k, d \in \N$ the NSS graph $G := G_{k,d}$ contains a $3K$-fat model of~$O$. 
    Then there is a $K$-fat model $(\cu, \ce)$ of~$O$ in~$G$ and a trigon $\Delta$ of $G$ such that
    \begin{enumerate}[label=\rm{(\arabic*)}]
        \item \label{itm:NiceModelOfO:1}  $U_{1} \subseteq V(\Delta) \setminus \Adh(\Delta)$ and $U_{6} \subseteq V(G-\Delta)$,
        \item \label{itm:NiceModelOfO:2} $\Adh(\Delta) \cap U_{i} \neq \emptyset$ for all $i \in \{2,3,4,5\}$, and
        \item \label{itm:NiceModelOfO:3} each branch path of $(\cu, \ce)$ is contained in either $\Delta$ or $\Ext(\Delta)$.
    \end{enumerate}
\end{lemma}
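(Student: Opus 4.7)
My plan is to proceed in two stages: first, locate a trigon $\Delta$ of $G$ together with two distinguished branch sets (to be renamed $U_1$ and $U_6$) corresponding to a non-edge of $O$; and second, adjust the given model around $\Adh(\Delta)$ to satisfy conditions~(1)--(3) while retaining $K$-fatness.

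For the first stage I use the tree-decomposition $(T, \cv)$ of $G$ from Construction~\ref{constr:TreeDecompOfNSSGraph}. For each branch set $U_i^*$ of the given $3K$-fat model $(\cu^*, \ce^*)$ of $O$, I let $T_i^* \subseteq T$ be the subtree of nodes whose bag meets $U_i^*$. Since $3K \geq 9 \geq 5$, Corollary~\ref{lem:NoFatO:NoBagMeetsAllBranchSets} guarantees that no single bag meets all six branch sets, so by Helly's property for subtrees of trees at least one pair of subtrees is disjoint. I would then argue that, among the three antipodal pairs $\{1,6\},\{2,5\},\{3,4\}$ of $O$, at least one can be realised by a pair of branch sets with disjoint subtrees in $T$; for if every antipodal pair had intersecting subtrees, one can combine these intersections with Corollary~\ref{cor:NoFatO:NoAdhesionMeetsFiveBranchSets} to produce a single bag meeting all six branch sets, contradicting Corollary~\ref{lem:NoFatO:NoBagMeetsAllBranchSets}. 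Fixing such a pair $(i^*,j^*)$ and an edge $e = xy$ of the binary tree $B'(G)$ on the path in $T$ from $T_{i^*}^*$ to $T_{j^*}^*$ --- the edge $e$ cannot be a dotted-path leaf edge of $T$, because its adhesion would then have size only $2$ and could not meet the four branch sets required by condition~(2) --- I set $\Delta := \Delta_y$. Then Corollary~\ref{cor:NoFatO:NoAdhesionMeetsFiveBranchSets} ensures that at most four branch sets meet $\Adh(\Delta)$.

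If exactly one branch set is inside $\Delta$ and exactly one outside, then the remaining four all meet $\Adh(\Delta)$, and I set $U_1^* := U_{i^*}^*$, $U_6^* := U_{j^*}^*$ and relabel the other four via the high symmetry of $O$ (which acts transitively on the three antipodal pairs). Otherwise I descend in $B'(G)$: if $\geq 2$ branch sets are inside $\Delta$, I pass to the child trigon of $\Delta$ containing at least one of them, keeping $U_{j^*}^*$ outside, and iterate; a symmetric argument handles excess outside branch sets. Where the descent alone fails to reach $(a,b)=(1,1)$, I further absorb each superfluous inside branch set $U_{i_1}^*$ into the crossing category by extending $U_{i_1}^*$ along the initial segment of a branch path $E_{i_1 k}^*$ that crosses $\Adh(\Delta)$; such a branch path exists whenever $U_{i_1}^*$ has a non-inside neighbour in $O$, and the case where it does not --- $U_{i_1}^*$'s four $O$-neighbours are all inside, forcing a very restrictive configuration --- can be shown to admit a further useful descent.

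For the second stage I enforce condition~(3). For each branch path of the current model with vertices on both sides of $\Adh(\Delta)$, I truncate it at its first vertex in $\Adh(\Delta)$ from an appropriate endpoint and absorb the truncated initial portion into the incident branch set at that end; that branch set, by~(2), already meets $\Adh(\Delta)$ (or, for $U_1$ or $U_6$, lies entirely on one side, in which case the truncation directs the branch path to the correct side). Each absorbed portion is an internal subpath of a branch path in the original $3K$-fat model and therefore was at distance $\geq 3K$ from every non-incident branch set or path; so the relevant distances decrease by at most the absorbed length, and the factor-$3$ slack between $3K$-fat and $K$-fat easily accommodates the combined effect of at most twelve such absorptions. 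The main obstacle throughout is the first stage: verifying that the disjoint subtrees can be chosen to correspond to a non-edge of $O$, and that the subsequent descent-plus-absorption argument always terminates at $(a,b)=(1,1)$, requires careful case analysis; once this is done, the second stage reduces to routine bookkeeping of distances.
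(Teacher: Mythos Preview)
Your proposal follows the paper's large-scale plan --- locate a separating trigon via the tree-decomposition, then adjust the model --- but the first stage has a genuine gap that the paper handles by a different mechanism. The unproven step is your claim that some \emph{antipodal} pair of $O$ can be taken to have disjoint subtrees in $T$. Helly together with Corollary~\ref{lem:NoFatO:NoBagMeetsAllBranchSets} only gives you \emph{some} disjoint pair $T_a^*, T_b^*$; when $ab \in E(O)$ you assert that the three antipodal intersections ``combine with Corollary~\ref{cor:NoFatO:NoAdhesionMeetsFiveBranchSets} to produce a single bag meeting all six branch sets'', but this does not follow: knowing that $T_1 \cap T_6$, $T_2 \cap T_5$, $T_3 \cap T_4$ are each nonempty gives three nodes of $T$ with no reason to coincide, and Corollary~\ref{cor:NoFatO:NoAdhesionMeetsFiveBranchSets} bounds how many branch sets meet an adhesion, not how the $T_i^*$ relate to one another. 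The paper resolves exactly this case (its Case~2b) by replacing subtree-disjointness with the \emph{metric} condition that $U_k, U_\ell$ lie at distance $\geq K$ from opposite sides of $V_e$. When every such deep pair happens to be an edge of $O$, the paper inflates branch sets near $V_e$ by $K-1$ and extends each remaining deep $U_i$ along the branch path to its deep partner in $\{k,\ell\}$, forcing five branch sets to meet $V_e$ and contradicting Corollary~\ref{cor:NoFatO:NoAdhesionMeetsFiveBranchSets}. This genuinely uses the $3K$-to-$K$ slack and the edge structure of $O$; your Helly argument supplies no substitute.

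Your descent-plus-absorption step is also underspecified: passing to a child trigon need not reduce the number of branch sets strictly inside (a connected branch set can straddle both children), and even when it does, the surviving inside branch set need not be the antipode of $U_{j^*}^*$. Your fallback (``can be shown to admit a further useful descent'') is precisely the case requiring the most care. The paper avoids descent entirely: once the edge $e$ is fixed in Case~2a, it extends each of the other four branch sets directly along a branch path until it hits $V_e$, using that $U_1, U_6$ are at distance $\geq K$ from $V_e$ to keep the model $K$-fat. Your second stage is roughly right, though note that once condition~(2) holds only one vertex of $\Adh(\Delta)$ is unoccupied, so at most a single branch path can cross --- the paper exploits this uniqueness rather than budgeting for twelve absorptions.
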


\begin{proof}
    Suppose that $G$ contains a $3K$-fat model $(\cu, \ce)$ of~$O$. 
    Let $(T, \cv)$ be the tree-decomposition of $G$ provided by Construction~\ref{constr:TreeDecompOfNSSGraph}. Note that, for every edge~$e = xy$ of $B'(G)\subseteq T$, there is a trigon $\Delta$ of $G$ such that $V_e = \Adh(\Delta)$ and $(A^x_e, A^y_e) = (V(\Delta), V(\Ext(\Delta)))$. We distinguish two cases.
    \medskip

    \noindent \textbf{Case 1:} \textit{For every edge $e = xy$ of $T$, there is a side $A^z_e$ such that every $U_i$ intersects $B_G(A^z_e, K-1)$.}

    Since $|V_e| \leq 5$ but $|O| = 6$ and $d_G(U_i, U_k) > 2K-2$ for all $i \neq k \in [6]$, there is for every edge $e=xy$ of $T$ a unique side $A^z_e$ such that $B_G(A^z_e, K-1) \cap U_i \neq \emptyset$ for all $i \in [6]$.
    Hence, orienting each edge $e$ towards its end $z$ defines an orientation of $E(T)$. Since $T$ is finite, there is a sink $t \in V(T)$, i.e.\ a node $t$ of $T$ all whose incident edges are oriented towards $t$. Then $B_G(V_t, K-1) \cap U_i \neq \emptyset$ for all $i \in [6]$. We set $U'_i := B_G(U_i, K-1)$ for all $i \in [6]$ and shorten the branch paths $E_{ij}$ to $U'_i$--$U'_j$ paths $E'_{ij} \subseteq E_{ij}$. Then $(\cu', \ce')$ is a $(K+2)$-fat model of $O$ since $(\cu, \ce)$ was $3K$-fat, and all its branch sets meet $V_t$. But since $K\geq 3$, this contradicts Corollary~\ref{lem:NoFatO:NoBagMeetsAllBranchSets}.
    \medskip

     \noindent \textbf{Case 2:} \textit{There is an edge $e=xy$ of $T$ and two vertices $k,\ell \in V(O)$ such that $U_k \subseteq A^x_e \setminus B_G(A^y_e, K-1)$ and $U_\ell \subseteq A^y_e \setminus B_G(A^x_e, K-1)$.}
    \medskip

    \noindent \textbf{Case 2a:} $k\ell \notin E(O)$.

    By the vertex-transitivity of $O$, we may assume that $k = 1$ and $\ell = 6$, so \ref{itm:NiceModelOfO:1} holds. 
    There are four paths $Q_j \subseteq E_{1j} \cup G[U_j] \cup E_{j6}$ between $U_1$ and $U_6$, for $j \in \{2,3,4,5\}$. 
    For all $j \in \{2, 3, 4, 5\}$ we define $U'_j$ as follows. If $U_j \cap V_e \neq \emptyset$, then we set $U'_j := U_j$.
    If $U_j \subseteq A^y_e \setminus A^x_e$, then $E_{1j}$ meets $V_e$. Since $d_G(U_1, V_e) \geq K$, the (unique) subpath $Q'_j$ of $E_{1j}$ between $U_j$ and $V_e$ that is internally disjoint from $V_e$ has distance at least $K$ from $U_1$. We set $U'_j := U_j \cup V(Q'_j)$ and let $E'_{1j}$ be the remaining subpath of $E_{1j}$ that starts in $U_1$ and ends in the first vertex of $Q'_j$. 
    Otherwise, if $U_j \subseteq A^x_e \setminus A^y_e$, we define $U'_j$ and $E'_{j6}$ analogously.
    Then $(\cu', \ce')$ (with $U'_1 = U_1$, $U'_6 := U_6$ and $E'_{ij} := E_{ij}$ for all $E_{ij}$ not yet defined) is still $K$-fat and every branch set $U'_j$ with $j \in \{2,3,4,5\}$ meets $V_e$, so \ref{itm:NiceModelOfO:2} holds.
    \smallskip

    We now further modify our model $(\cu',\ce')$ to satisfy \ref{itm:NiceModelOfO:3} while maintaining \ref{itm:NiceModelOfO:1} and \ref{itm:NiceModelOfO:2}. 
    For this, assume that some $E'_{ij}$ is neither contained in $\Delta$ nor in $\Ext(\Delta)$, for otherwise we are done. In particular, $E'_{ij}$ meets $\Adh(\Delta)$ in an internal vertex $v$.
    Note that since $|\Adh(\Delta)| = 5$ and four branch sets in $\cu'$ meet $\Adh(\Delta)$ by \ref{itm:NiceModelOfO:2}, the branch path $E'_{ij}$ and the vertex $v$ are unique. 
    
    Since $16 \notin E(O)$, we may assume, without loss of generality, that $i \neq \{1,6\}$. 
    Let $Q$ be the (unique) subpath of $E'_{ij}$ between $U'_i$ and $v \in \Adh(\Delta)$. If $d_G(Q, U'_j) \geq K$, which in particular is the case if $j = \{1,6\}$, then we may replace $U'_i$ in the model $(\cu',\ce')$ by $U'_i \cup V(Q)$ and $E'_{ij}$ by the remaining subpath of $E'_{ij}$ that starts in $U'_j$ and ends in an endvertex of $Q$. 
    This is still a $K$-fat model of $O$ by the same argument as above, and it additionally satisfies \ref{itm:NiceModelOfO:3}.
    
    Otherwise, let $u$ be the first vertex on $Q$ that has distance $< K$ from $U'_j$. Let $Q'$ be the subpath of $Q$ between $U'_i$ and $u$,  and let $Q''$ be a $u$--$U'_j$ path of length $K-1$. We then update $E'_{ij}$ to be $E''_{ij} := Q' \cup Q''$. 
    Since $i,j \notin \{1,6\}$, we have $E'_{ij} = E_{ij}$, and thus $E'_{ij}$ has distance $\geq 3K-2$ from all branch sets and paths other than $U'_i, U'_j$. Hence, as $Q'$ has length at most $K-1$, the path $E''_{ij}$ has distance at least $2K-1$ from all other branch sets and paths. Thus, the new model of $O$ (using $E''_{ij}$) is also $K$-fat, and additionally satisfies \ref{itm:NiceModelOfO:3}. 
    \medskip

    \noindent \textbf{Case 2b:} \textit{For every pair $k', \ell' \in V(O)$ as in Case 2 we have $k'\ell' \in E(O)$.}
    
    In particular, $k\ell \in E(O)$, so by the vertex-transitivity of $O$, we may assume that $k=2$ and $\ell=3$. 
    We now modify $(\cu, \ce)$ as follows. We first replace every $U_i$ that intersects $B_G(V_e, K-1)$ with the ball $B_G(U_i, K-1)$ around it (while leaving all other $U_j$'s unchanged). Afterwards, we replace each branch path $E_{ij}$ with a suitable subpath. This yields a new model $(\cu', \ce')$ of $O$, which is $K$-fat since $(\cu, \ce)$ was $3$-fat.
    
    Second, for every pair $k'\ell'$ as in Case~2, we have by the assumption in Case~2b that $k'\ell' \in E(O)$, and hence $E'_{k'\ell'}$ intersects $V_e$. We now modify $E'_{k'\ell'}$ and $U'_{k'}, U'_{\ell'}$ similarly as in Case~2a. For this, let $\{i,j\} = k'\ell'$ be such that $i \neq 2,3$ unless $k'\ell'=k\ell$. Since $d_G(U_j, V_e) \geq K$ by the assumption on $k'\ell'$, the (unique) subpath $Q_i$ of $E'_{ij}$ between $U'_i$ and $V_e$ that is internally disjoint from $V_e$ has distance at least $K$ from $U'_j$. We set $U''_i := U'_i \cup V(Q_i)$ and let $E''_{ij}$ be the remaining subpath of $E'_{ij}$ that starts in the last vertex of $Q_i$ and ends $U'_j$. This yields a new model $(\cu'', \ce'')$ of $O$ (where we left all other branch sets/paths unchanged), which is still $K$-fat. 

    By construction, one of $U''_2$ and $U''_3$ meets $V_e$ (by the second step of the construction as $23$ is an edge as in Case~2). Moreover, as every other vertex of $O$ sends an edge to at least one of $2,3$, every branch set $U''_i$, for $i \in \{1,4,5,6\}$, also meets $V_e$. Indeed, if $U_i$ had distance at most $K-1$ from $V_e$, then $U''_i$ meets $V_e$ because of the first step in the construction. Otherwise, $U_i$ was contained in one side of $\{A_e^x, A_e^y\}$ without the $(K-1)$-ball around $V_e$, and then, by the assumption of Case~2b, $i$ must send an edge to that vertex $j$ in $\{k,\ell\}$ whose branch set $U_j$ is contained in the opposite side of $\{A_e^x, A_e^y\}$. Thus, $U''_i$ meets $V_e$ because of the second step of the construction. 
    Hence, five branch sets of $(\cu'', \ce'')$ intersect $V_e$. As $(\cu'', \ce'')$ is $K$-fat, this contradicts Corollary~\ref{cor:NoFatO:NoAdhesionMeetsFiveBranchSets}. 
\end{proof}

\subsection{Completing the proof of Theorem~\ref{thm:NSSGraph:NoFatO}} \label{sec O proof fin}

We have now gathered enough ingredients to complete our proof: 

\begin{proof}[Proof of Theorem~\ref{thm:NSSGraph:NoFatO}]
    Suppose for a contradiction $G := G_{k,d}$, for some $k,d \in \N$, contains a $9$-fat model $(\cu, \ce)$ of $O$. By Lemma~\ref{lem:SomeAdhesionIntersectsONicely}, $G$ contains a $3$-fat model $(\cu', \ce')$ of $O$ and a trigon $\Delta$ such that \ref{itm:NiceModelOfO:1} to \ref{itm:NiceModelOfO:3} hold. We distinguish two cases.
    \medskip

    \noindent \textbf{Case 1:} \textit{Some branch set in $\cu'$ contains $\RR(\Delta)$.}
    
    By the vertex-transitivity of $O$, we may assume $\RR(\Delta) \in U'_2$. Moreover, by the symmetry of~$G$, we may assume that two of $U'_3, U'_4, U'_5$ intersect $\SSS(\Delta)$. But this yields Configuration~\ref{setting}, which contradicts Lemma~\ref{lem:SettingCannotOccur}.
    \medskip

    \noindent \textbf{Case 2:} \textit{No branch set in $\cu'$ contains $\RR(\Delta)$.}
    
    Let $U_{i}, U_{j}, U_{k}, U_{\ell}$ with $\{i,j,k,\ell\} = \{2,3,4,5\}$ such that $\SO(\Delta), \SI(\Delta) \in U_i \cup U_j$ and $\TI(\Delta), \TO(\Delta) \in U_k \cup U_\ell$. By Proposition~\ref{prop:O:TwoEdgesBetweenTwoPairsOfVertices} and without loss of generality, $ik, j\ell \in E(O)$. Since $(\cu', \ce')$ is $3$-fat, the $\SSS(\Delta)$--$\TTT(\Delta)$ paths $Q_{ik},Q_{j\ell}$ induced by $ik,j\ell$ have distance at least~$3$ from each other. Moreover, $Q_{ik}, Q_{j\ell}$ are each contained in either $\Delta$ or $\Ext(\Delta)$ because the only way for them to cross the separation $\{\Delta, \Ext(\Delta)\}$ would be through $\RR(\Delta)$, but this is prevented by \ref{itm:NiceModelOfO:3} and the assumption of Case 2.

    Suppose first that $Q_{ik}, Q_{j\ell}$ are contained in the same subgraph $H \in \{\Delta, \Ext(\Delta)\}$. 
    Then by Lemma~\ref{lem:TwoPathsInNSSGraph} or~\ref{lem:TwoPathsInNSSGraph:ExteriorOfSubpyramid}, one of them, say $Q_{ik}$, is the unique $\SSS(\Delta)$--$\TTT(\Delta)$ path in $B(G) \cap H$. By \ref{itm:NiceModelOfO:1}, the $\SSS(\Delta)$--$\TTT(\Delta)$ path $Q'_{ik}$ induced by $i1k$ if $H=\Delta$ or by $i6k$ if $H=\Ext(\Delta)$ is also contained in $H$. Since $(\cu', \ce')$ is $3$-fat, also $Q'_{ik}$ has distance at least~$3$ from $Q_{j\ell}$. But then Lemma~\ref{lem:TwoPathsInNSSGraph} or~\ref{lem:TwoPathsInNSSGraph:ExteriorOfSubpyramid} yields that $Q_{ik} = Q'_{ik}$, a contradiction. 
    In particular, we may assume that $Q_{ik} \subseteq \Delta$, and $Q_{j\ell} \subseteq \Ext(\Delta)$. 
    By \ref{itm:NiceModelOfO:1}, the $\SSS(\Delta)$--$\TTT(\Delta)$ path $Q'_{j\ell}$ induced by $j1\ell$ is contained in~$\Delta$ and the $\SSS(\Delta)$--$\TTT(\Delta)$ path $Q'_{ik}$ induced by $i6k$ is contained in $\Ext(\Delta)$. By Lemmas~\ref{lem:TwoPathsInNSSGraph} and~\ref{lem:TwoPathsInNSSGraph:ExteriorOfSubpyramid} and because $(\cu', \ce')$ is $3$-fat, either both of $Q_{ik}, Q_{j\ell}$ (if $\SO(\Delta) \in U_i$) or both of $Q'_{ik}, Q'_{j\ell}$ (if $\SI(\Delta) \in U_i$) have distance at most~$1$ from $\RR(\Delta)$.
    But this contradicts that $(\cu', \ce')$ is $3$-fat.
\end{proof}

\section{Suspension preserves incompressibility} \label{sec susp}

Given a graph \G, we define its \defi{suspension $S(G)$} by adding a vertex $s_G$ and joining $s_G$ to each $v\in V(G)$ with an edge.

\begin{lemma} \label{lem:susp}
Let $G$ be a graph, and $v\in V(S(G))$. Then $G$ is a subgraph of $S(G)-v$. 
\end{lemma}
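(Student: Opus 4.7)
The plan is to prove this by a direct case split on whether the removed vertex~$v$ equals the apex vertex~$s_G$ or lies in~$V(G)$, producing in each case an explicit injective homomorphism from $G$ into $S(G)-v$.

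First I would dispose of the easy case $v=s_G$: then $S(G)-v = G$ by the definition of the suspension, so the identity map witnesses $G \subseteq S(G)-v$.

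In the remaining case $v \in V(G)$, the idea is that the apex $s_G$ can ``substitute'' for $v$, because $s_G$ is adjacent in $S(G)$ to every other vertex of $G$ (in particular to every vertex of $V(G)\setminus\{v\}$). Concretely, I would define $\varphi\colon V(G)\to V(S(G)-v)$ by $\varphi(v):=s_G$ and $\varphi(u):=u$ for all $u\in V(G)\setminus\{v\}$. This map is clearly injective. To check that $\varphi$ preserves edges, take any edge $xy\in E(G)$: if $v\notin\{x,y\}$, then $xy$ is still an edge of $S(G)-v$ because it is an edge of $G\subseteq S(G)$ not incident to~$v$; if, say, $x=v$ and $y\neq v$, then $\varphi(x)\varphi(y)=s_Gy$ is an edge of $S(G)-v$ by the definition of the suspension, since $s_G$ is joined to every vertex of~$G$. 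Hence $\varphi$ is an injective graph homomorphism, i.e.\ $G$ embeds as a subgraph of $S(G)-v$.

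There is no real obstacle here; the only thing to be careful about is the edge-case $v=s_G$, which must be handled separately because then the ``swap $v\mapsto s_G$'' argument is vacuous (and in fact the stronger conclusion $S(G)-v=G$ holds).
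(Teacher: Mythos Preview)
Your proof is correct and follows essentially the same approach as the paper: a case split on whether $v=s_G$ or $v\in V(G)$, with the identity map in the first case and the swap $v\mapsto s_G$ in the second. The paper's version is terser (it simply names the bijection and leaves the edge-preservation check implicit), but the argument is identical.
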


\begin{proof}
We specify a bijection $b: V(G) \to V(S(G)) \setminus \{v\}$ that preserves edges. If $v$ is the suspension vertex $s_G$, then we take $b$ to be the identity. If not, we set $b(v):= s_G$, and $b(u)=u$ \fe\ $u\neq v \in V(G)$. 
\end{proof}

\begin{theorem} \label{thm:susp}
Let $H$ be a (possibly infinite or disconnected) graph with minimum degree $\delta(H)\geq 3$. If $H$ is \bad, then $S(H)$ is \bad. 
\end{theorem}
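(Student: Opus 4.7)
The idea is to build, from a witness $G$ for $H$'s incompressibility, a witness $G'$ for $S(H)$'s incompressibility.  Given $M,A \in \mathbb N$, I would fix $G := G_{M',A'}$ with $M',A'$ sufficiently large (depending on $M,A$ and an arm-length parameter $L$ to be chosen), where $G$ has no $K$-fat $H$-minor but is not $(M',A')$-QI to any $H$-minor-free graph; such $G$ exists by assumption, with $K$ depending only on $H$.  As $G'$ I would propose the \emph{octopus}: $V(G') := V(G) \cup \{s\} \cup \bigcup_{v \in V(G)} V(P_v)$ where $P_v$ is an internally fresh path of length $L$ joining $s$ to $v$.  The role of $s$ together with the arms is to play the part of the branch set of $s_H$ in QI-variants of $G'$, while $L$ is chosen $\geq \operatorname{diam}(G)$ so that the inclusion $G \hookrightarrow G'$ is isometric on $V(G)$.

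Step 1: I would show $G'$ has no $K$-fat $S(H)$-minor.  Suppose $(\mathcal U,\mathcal E)$ is one; let $v^* \in V(S(H))$ denote the (unique) vertex whose branch set contains $s$, or take $v^* = s_H$ if no such vertex exists.  Removing $U_{v^*}$ (and the branch paths incident to $v^*$) yields a $K$-fat model of $S(H)-v^*$ in the subgraph $G' - U_{v^*}$, and by Lemma~\ref{lem:susp} this restricts to a $K$-fat $H$-model there.  By the choice of $v^*$, this $H$-model lives inside $G' - s$, where each arm becomes a pendant path of internal degree $2$.  Using $\delta(H) \geq 3$, no branch set can be contained in such a pendant; hence each branch set meets $V(G)$ in a nonempty connected set, and any branch path that enters a pendant can be shortened to avoid it (since a path cannot traverse a dead end).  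Since $G \subseteq G' - s$ is isometric on $V(G)$, the resulting $H$-model is $K$-fat in $G$, contradicting the choice of $G$.

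Step 2: I would show $G'$ is not $(M,A)$-QI to any $S(H)$-minor-free graph.  Suppose $Y$ is such a graph and $\varphi: V(G') \to V(Y)$ is an $(M,A)$-QI; let $\psi: V(Y) \to V(G')$ be the inverse QI given by Lemma~\ref{lem:inversequasiisom}.  Since the inclusion $G \hookrightarrow G'$ is $(1,L)$-QI, composition yields an $(M, A+3ML)$-QI from $Y$ (restricted appropriately) to $G$, so by the choice of $G$ (with $M',A'$ large enough) we obtain an $H$-model $(\mathcal U_Y, \mathcal E_Y)$ in the part of $Y$ mapped by $\psi$ into $V(G)$.  On the other hand, the set $R := \psi^{-1}(\{s\} \cup \bigcup V(P_v) \setminus V(G))$ is connected up to bounded balls (by Lemma~\ref{lem:QIPreservesConn}), and each branch set $U_v^Y$ is adjacent to $R$ (up to short branch paths) because in $G'$ every vertex of $V(G)$ has an incident arm.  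Thickening appropriately and using Lemma~\ref{lem:QIPreservesSeps} to control overlaps, we designate $U_{s_H}^Y$ inside $R$, disjoint from all $U_v^Y$ and with short branch paths $E_{s_H v}^Y$ to each of them.  This produces an $S(H)$-model in $Y$, contradicting $Y$ being $S(H)$-minor-free.

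The main obstacle is Step 1, specifically the case in which some branch set contains the hub $s$: such a set can ``telescope'' through distinct arms to collapse far-apart vertices of $V(G)$, and this threatens to create fat $H$-minors in $G'$ that do not exist in $G$.  Lemma~\ref{lem:susp} is the key device that resolves this: it allows us to delete the single branch set containing $s$ (whether it corresponds to $s_H$ or to a vertex of $H$) and still extract an $H$-model from the remainder, after which the hypothesis $\delta(H) \geq 3$ prevents the surviving branch sets from hiding in the (now pendant) arms.  A secondary technical point is choosing $L$ large enough so that the identification of the ``hub region'' in $Y$ under QI is robust; this is where the free parameters $M'$ and $A'$ in the choice of $G$ are calibrated.
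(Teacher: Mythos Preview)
Your construction (the ``octopus'' $G'$ with hub $s$ and arms of length $L$) and the use of Lemma~\ref{lem:susp} together with $\delta(H)\ge 3$ in Step~1 match the paper's approach. Two minor points first: the requirement $L\ge\operatorname{diam}(G)$ is circular, since $G$ is chosen only after $L$ (via $M',A'$); what you actually need, and what suffices, is that $G\hookrightarrow G'$ be a $2L$-\emph{local} isometry, which holds for every $L$ and lets $L$ depend only on $M,A$. Also, in Step~1 you must handle the case where $s$ lies on a branch \emph{path} $E_{uv}$ with $u,v\in V(H)$: choosing $v^*=s_H$ then does not remove $s$ from the model, so take $v^*\in\{u,v\}$ instead.

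The real gap is in Step~2. You first locate an $H$-model in the part of $Y$ that $\psi$ maps into $V(G)$ and then assert that each branch set $U_v^Y$ is ``adjacent to $R$ up to short branch paths, because in $G'$ every vertex of $V(G)$ has an incident arm''. This does not follow: the $H$-model you obtain is arbitrary, and even granting that every vertex of the relevant subgraph is within bounded distance of the hub region, the short connecting paths you would need may collide with one another or with foreign branch sets. The paper's resolution reverses the order of operations. It first proves (its claim \eqref{dvZ}) that every vertex of the image $X''$ of $G$ in $Y$ lies within bounded distance of the set $Z$ of vertices of $X''$ adjacent to the hub component $W$; this genuinely uses that each arm $P_u$ meets $V(G)$ only at $u$, so the image of $P_u$ first leaves $X''$ at a point still close to $\varphi(u)$. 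It then performs a Voronoi-type contraction of $X''$ rooted at $Z$, producing a minor $\kreis X$ with $V(\kreis X)=Z$ that is still quasi-isometric to $G$, and only \emph{then} finds the $H$-model --- in $\kreis X$. After uncontracting, each branch set automatically contains a vertex of $Z$ and hence sends an edge into $W$, which becomes the branch set for $s_H$. This contract-first, model-second manoeuvre is the missing ingredient in your sketch.
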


\begin{proof}
Let $X$ be a graph witnessing that $H$ is \bad, i.e.\ $H\nasm X$ but $H \preceq X'$ \fe\ graph $X'$ quasi-isometric to $X$.

For each $n\in \N$, we obtain a graph $X_n$ from $X$ by adding a new vertex $a_n$ and joining each vertex $x$ of $X$ to $a_n$ by a path $P_x$ of length \blue{$n$}, so that these paths only intersect at~$a_n$.  

Let $Y:= \bigcup_{\nin} X_n$. We claim that $Y$ witnesses that $S(H)$ is \bad, i.e.\ it satisfies the following two claims: 
\labtequ{SasminY}{$S(H) \nasm Y$.}
and 
\labtequ{SminY}{$S(H)\preceq Y'$ holds \fe\ graph $Y'$ quasi-isometric to $Y$}
To prove \eqref{SasminY}, suppose $(\cu, \ce)$ is a $K$-fat model of $S(H)$ in $Y$ for some $K \in \N$. Since $S(H)$ is connected, $(\cu,\ce)$ lives in one of the components $X_n$ of $Y$. Notice that no branch set $U_v \in \cu$ can be contained in the interior of one of the $X$--$a_n$ paths in the construction of $Y$ because $d(v)\geq 3$ by our assumption on $H$, and so every branch set in $\cu$ contains $a_n$ or a vertex of $X$. If some $U_v$ contains $a_n$, remove it, along with its incident branch paths, to obtain a model $(\cu',\ce')$ of $S(H)-v$ in $Y-a_n$. Similarly, if a branch path contains $a_n$, then we can let $v$ be one of the endvertices of the corresponding edge of $S(H)$, and again we obtain a model $(\cu',\ce')$ of $S(H)-v$ in $Y-a_n$. Note that we can now remove all vertices of $Y-X$ from the branch sets in $\cu'$ without violating their connectedness or the property of being a model of $S(H)-v$.  
This results into a model $(\cu'',\ce'')$ of $S(H)-v$ in $X\subseteq X_n$. Moreover, $(\cu'',\ce'')$ is still $K$-fat in $X$, because $X$ is a subgraph of $X_n$, and each branch set/path of $(\cu'',\ce'')$ is contained in one of $(\cu,\ce)$. We deduce that $S(H)-v$ is a $K$-fat minor of $X$. By \Lr{lem:susp}, $H$ is a $K$-fat minor of $X$ too. As $H\nasm X$ by assumption, we deduce that this cannot hold \fe\ $K\in \N$, proving \eqref{SasminY}.
\medskip

To prove \eqref{SminY}, let $\varphi: V(Y) \to V(Y')$ be an $(M,A)$-quasi-isometry for some $M \geq 1$ and $A \geq 0$. We pick $n\in \N$ large compared to $M,A$; how large will become clear below. 
Let $\iota$ denote the identity map from $X$ to $X_n$. Note that $\iota$ is an \defi{$n$-local isometry}, i.e.\ it satisfies $d_X(x,y)=d_{X_n}(x,y)$ \fe\ $x,y\in X$ with $d_X(x,y)\leq n$ (\fig{figXn} left). Let $\varphi':= \varphi \circ \iota$ and $X':=Y'[\varphi'(X)]$.

Note that \fe\ $xy\in E(X_n)$ we have $d_Y(\varphi(x),\varphi(y))\leq M+A=:C$. Therefore, we can add a $\varphi'(x)$--$\varphi'(y)$ path of length at most $C$ for each $xy\in E(X)$ to $X'$ to obtain a subgraph $X'' \supseteq X'$ of $Y'$ satisfying $d_{X''}(\varphi'(x),\varphi'(y))\leq C$ \fe\ $xy\in E(X)$. Note that below we can choose $n$ large enough so that $\varphi(a_n) \notin X''$, and, moreover, so that $X''$ is disconnected if $X$ is disconnected. 
We claim that 
\labtequ{phip}{$\varphi'$ is a quasi-isometry from $(X,d_X)$ to $(X'',d_{X''})$.} 
The quasi-surjectivity property \ref{quasiisom:2} is straightforward. To check \ref{quasiisom:1}, in one direction we have $$d_{X''}(\varphi'(x),\varphi'(y)) \leq C \cdot d_{X}(x,y)$$ for every $x,y\in V(X)$ by the construction of $X''$.  For the other direction, let $x':=\varphi'(x), y':=\varphi'(y)$, and let $P=(x'=)z_0 \ldots z_k(=y')$ be a \pth{x'}{y'}\ of length $d_{X''}(x',y')$ in $X''$. By the construction of $X''$, we can choose for each vertex $z_i$ of $P$ a vertex $z'_i$ in $X'$ \st\ $d_{Y'}(z_i,z'_i)\leq d_{X''}(z_i,z'_i) \leq C$, and we can thereby choose $z'_0=x'$ and $z'_k=y'$. In particular, since $z'_i \in V(X')$, there exists some $v_i \in V(X)$ such that $\varphi(v_i) = z'_i$ (where we set $v_0 := x$ and $v_k := y$.
Then $d_{Y'}(z'_i,z'_{i+1})\leq C':=2C+1$ \fe\ $i<k$ by the triangle inequality, from which we deduce $d_Y(v_i, v_{i+1}) \leq M(C'+A) =: C''$. 
Choosing $n>C''$, and recalling that $\iota$ is a $n$-local isometry, and applying the triangle inequality $k$ times, we obtain 
$$d_X(x,y)=d_X(v_0, v_k) \leq C'' k= C'' d_{X''}(x',y').$$ 
This completes the proof of \eqref{phip}.

Let $a:=\varphi(a_n)$, and let $W$ be the component of $Y'\sm X''$ containing $a$, which exists when $n$ is larger than $M(C+A)$, because $a_n$ is then too far from $X$ for $a$ to be contained in $X''$ (as each vertex in $X''$ is at distance at most $C$ ($\lfloor C/2\rfloor$) from the the $\varphi'$-image of a vertex of $X$). Let $Z$ be the set of vertices of $X''$ sending an edge to $W$. We claim that 
\labtequ{dvZ}{$d_{Y'}(v,Z) \leq M^2(2C + A) + A + 2C$ \fe\ $v\in V(X'')$.}
(It only matters that $d(v,Z)$ is bounded.)
Let us first check this for the case that $v=\varphi'(u)$ for some $u\in V(X)$. Let $P$ be the $u$--$a_n$~path of length at least $n$ in the construction of $X_n$. Then $B_{Y'}(\varphi(V(P)), C)$ is connected by Lemma~\ref{lem:QIPreservesConn}, and thus there exists a $v$--$a$~path $P'$ in $Y'$ within distance $C$ from $\varphi(V(P))$.

\begin{figure}[ht] 
\begin{center}
\begin{overpic}[width=1\linewidth]{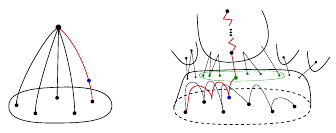} 
\put(60,31){$W$}
\put(67,37.5){$a$}
\put(16,33.5){$a_n$}
\put(82,14){\color{ForestGreen} $Z$}
\put(24,24){$P$}
\put(70,30){$P'$}
\put(71,16){$p$}
\put(67,24){$q$}
\put(28,15.5){$b$}
\put(67.1,7.8){$\varphi(b)$}
\put(27,7){$u$}
\put(56,5){\footnotesize $v = \varphi'(u)$}
\put(0,8){$X$}
\put(4,28){$X_n$}
\put(85,32){$Y'$}
\put(93,6){$X'$}
\put(93,13){$X''$}
\end{overpic}
\end{center}
\caption{The graph $X_n$ (left), and its image under $\varphi$ in the proof of \eqref{dvZ} (right). The green vertices depict $Z$, and the red curve depicts $P'$.} 
\label{figXn}
\end{figure}

Let $q$ be the first vertex of $P'$ in $W$, which exists since $P'$ ends in $a\in W$. 
As $P'$ starts in $X''$, the vertex $q$ is not the first vertex of $P'$. Let $p$ be the previous vertex of~$P'$, which thus lies in $X''$. In fact, $p$ lies in $Z$ as witnessed by the edge $pq$. 

Note that $d_{Y'}(p,\varphi(P))\leq C$ since $p\in V(P')$. Thus we can pick $b\in V(P)$ with 
\labtequ{dpb}{$d_{Y'}(p,\varphi(b))\leq C$.}
Moreover, $d_{Y'}(p,X')\leq C$ since $p \in X''$.
Combining these two inequalities we deduce $d_{Y'}(\varphi(b),X')\leq 2C$. 
Since $X'=\varphi'(X)$, and $\varphi|_{X_n}$ is an $(M,A)$-quasi-isometry, it follows that $d_{X_n}(b,X)\leq M(2C +A)$. 

In fact, it follows that $d_{X_n}(b,u) \leq M(2C+A)$: since $b \in P_u$, every $b$--$X$ path in $X_n$ that avoids $u$ has to contain $a_n$ and a complete path $P_x$ for some $x \in X_n$. As we chose $n$ larger than $M(2C+1)$, each such path has length at least $M(2C+1)$, and thus $d_{X_n}(b,u) = d_{X_n}(b,X)$.

Applying again the quasi-isometry bound in the reverse direction, we further obtain 
$d_{Y'}(\varphi(b),v=\varphi(u)) \leq M^2(2C +A)+A$, and combining this with \eqref{dpb} yields $d_{Y'}(p,v) \leq M^2(2C +A)+A+ C$.

For the general case $v\in V(X'')$, recall that, by the definition of $X''$, each $v\in V(X'')$ lies within distance $C$ from $X'$. Thus \eqref{dvZ} follows by adding $C$ to the right-hand side of the above bound. 
\smallskip

By \eqref{dvZ} we can now decompose $X''$ into a family of connected subgraphs of bounded diameters, with each subgraph containing an element of $Z$ using an idea similar to the Voronoi decomposition. For this, we construct a breadth-first spanning forest $F$ of  $X''$ using $Z$ as the set of roots of the trees in the forest. To make this precise, we construct $F$ recursively as follows. We start with $F_0$ being the graph with vertex set $Z$ and no edges. Having constructed $F_i$, we obtain $F_{i+1}$ by adding, for each $v\in V(X'')$ with $d_{X''}(v,F_i)=1$, an edge from $v$ to some vertex of $F_i$ chosen arbitrarily. Note that this process terminates after a bounded number of steps by \eqref{dvZ}, returning a spanning forest $F$ of  $X''$, each component of which contains exactly one vertex of $Z$, and has bounded radius in $X''$ (the bound is $M'B+A'+C$ where $B$ is the bound from \eqref{dvZ} and $M', A' \in \N$ with $d_{X''}(x,y) \leq M'\cdot d_{Y'}(x,y) + A'$ for all $x,y \in X'$ can be obtained by observing that $X''\subseteq B_{Y'}(X',C)=B_{Y'}(\varphi'(X),C)$ and $d_{X''}(\varphi'(x),\varphi'(y)) \leq C\cdot d_X(x, y)+C$ for all $x,y \in X$ since $\varphi$ is a $(C,C)$-quasi-isometry, and $d_X(x,y) \leq M\cdot d_{Y'}(\varphi(x), \varphi(y))+A$). 

Let $\kreis{X}$ be the minor of $X''$ obtained by contracting each component of $F$ into a point. Since each contracted set has bounded diameter, it follows that $\kreis{X}$ is quasi-isometric to $X''$, see e.g. \cite[{Lemma~2.2}]{ADGK2t}. 

Thus $\kreis{X}$ is quasi-isometric to $X$ too by \eqref{phip}, from which it follows that $H \preceq \kreis{X}$ since $X$ witnesses that $H$ is \bad. Pick a model \cb\ of  $H$ in $\kreis{X}$, and expand it into a model $\cb'$ of  $H$ in $X''$ by uncontracting each component of $F$ corresponding to a vertex of $\kreis{X}$ in \cb. 
We can further extend $\cb'$  into a model of  $S(H)$ in $Y'$ by using $W$ as the new branch set, recalling that each vertex in $Z$, and therefore each branch set in $\cb'$, sends an edge to $W$. This completes the proof of \eqref{SminY}.
\end{proof}

Combining this with Theorem~\ref{thm:Counterexample:Octahedron} easily implies the first half of Corollary~\ref{cor Kt}, i.e.\ that $K_t$ is \bad\ \fe\ $t\geq 6$. A small adaptation of the proof of Theorem~\ref{thm:susp} handles the second half: 

\begin{corollary}
$K_{s,t}$ is \bad\  \fe\ $s,t\geq 4$. 
\end{corollary}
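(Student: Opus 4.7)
The plan is to prove the bipartite half of the corollary by induction on $s \ge 4$; by symmetry we may assume $s \le t$. The inductive hypothesis $(\mathrm{P}_s)$ I would maintain asserts the existence of a single graph $X_s$ with $K_{s,s} \nasm X_s$ and with $K_{s,t} \preceq X'$ for every $t \ge s$ and every graph $X'$ quasi-isometric to $X_s$. Since $K_{s,s} \subseteq K_{s,t}$ for $t \ge s$, $(\mathrm{P}_s)$ immediately implies that $K_{s,t}$ is \bad\ for every $t \ge s$. The base case $s = 4$ is witnessed by $X_4 := G_\infty^9$: it has no $3$-fat $K_{4,4}$ minor by the proof of Theorem~\ref{thm:FatK4t}, and by Corollary~\ref{cor:Counterexample:MainThm} every graph quasi-isometric to $X_4$ contains $G_\infty$, and hence $K_{4,t}$ by Theorem~\ref{thm:NSSGraph:K4t}, as a $2$-fat minor for every $t$.

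For the inductive step $(\mathrm{P}_s) \Rightarrow (\mathrm{P}_{s+1})$ I would build $X_{s+1}$ from $X_s$ by the same suspension-style construction as in the proof of Theorem~\ref{thm:susp}: set $X_{s+1} := \dot\bigcup_{n \in \N} Z_n$, where $Z_n$ is obtained from $X_s$ by adding a new vertex $a_n$ and, for each $v \in V(X_s)$, a path of length $n$ from $v$ to $a_n$, so that these paths intersect only at $a_n$. Verbatim repetition of the \eqref{SminY} direction of the proof of Theorem~\ref{thm:susp}---applied with $H = K_{s,t}$, which is admissible since $\delta(K_{s,t}) = s \ge 4 \ge 3$---then yields that for every $t \ge s$, every graph quasi-isometric to $X_{s+1}$ contains $S(K_{s,t})$ as a fat minor. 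Since $K_{s+1,t}$ is a subgraph of $S(K_{s,t})$ (place the suspension vertex alongside the side of size $s$), every such graph also contains $K_{s+1,t}$ as a fat minor, which gives the ``minor in every quasi-isometry'' half of $(\mathrm{P}_{s+1})$.

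The delicate part---and the source of the ``additional difficulty'' alluded to---is establishing $K_{s+1,s+1} \nasm X_{s+1}$. Here I would adapt the \eqref{SasminY} argument: suppose for contradiction that $X_{s+1}$ admits a $K$-fat model $(\cu, \ce)$ of $K_{s+1,s+1}$. It lies in some component $Z_n$, and since $\delta(K_{s+1,s+1}) = s+1 \ge 5$, the same argument as in the proof of Theorem~\ref{thm:susp} shows that every branch set meets $V(X_s) \cup \{a_n\}$. If no branch set or branch path uses $a_n$, then after deleting path vertices from branch sets we obtain a $K$-fat model of $K_{s+1,s+1} \supseteq K_{s,s}$ in $X_s$, contradicting $(\mathrm{P}_s)$. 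Otherwise, mimicking the proof of Theorem~\ref{thm:susp}, we may delete the vertex $v$ of $K_{s+1,s+1}$ whose branch set or one of whose incident branch paths uses $a_n$, and---after pruning path vertices from the surviving branch sets as in that proof---extract a $K$-fat model of $K_{s+1,s+1} - v \cong K_{s,s+1}$ in $X_s$; but $K_{s,s+1} \supseteq K_{s,s}$, again contradicting $(\mathrm{P}_s)$. The key subtlety---and the reason the argument requires $(\mathrm{P}_s)$ rather than the bare statement ``$K_{s,s}$ is \bad''---is that unlike in the ordinary suspension, $K_{s+1,s+1} - v$ does not equal $K_{s,s}$ but the strictly larger $K_{s,s+1}$, and it is precisely the \emph{uniformity in $t$} carried through the induction which ensures that $X_s$ still excludes the latter as a fat minor.

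For $K_{s+1,t}$ with $t > s+1$ no separate work is needed, since $K_{s+1,s+1} \subseteq K_{s+1,t}$ and so any $K$-fat $K_{s+1,t}$ minor would immediately yield a $K$-fat $K_{s+1,s+1}$ minor. This establishes $(\mathrm{P}_{s+1})$, and iterating gives $(\mathrm{P}_s)$ for every $s \ge 4$. Combined with the symmetry $K_{s,t} = K_{t,s}$, this yields that $K_{s,t}$ is \bad\ for every $s, t \ge 4$, as required.
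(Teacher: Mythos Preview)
Your proposal is correct and follows essentially the same route as the paper: iterate the suspension-style construction of Theorem~\ref{thm:susp} starting from the NSS graph, and for the non-asymptotic-minor direction peel off the apex vertex (or vertices) to land in a graph already known to exclude a suitable $K_{4,4}$ (or $K_{s,s}$). The only organisational difference is that the paper does all $s'$ suspensions at once and then removes all $s'$ apex vertices simultaneously to land directly in $G_\infty$, whereas you package the same idea as an explicit induction on $s$.

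Two small quibbles, neither affecting correctness. First, the \eqref{SminY} argument only yields an ordinary minor of $S(K_{s,t})$ in $X'$, not a fat one; this is all you need, so the phrase ``as a fat minor'' in your second paragraph should be weakened. Second, your diagnosis of the ``additional difficulty'' is slightly misplaced: the fact that $X_s$ excludes $K_{s,s+1}$ as a $K$-fat minor already follows from the single statement $K_{s,s}\nasm X_s$ (since $K_{s,s}\subseteq K_{s,s+1}$), so the uniformity in $t$ is not what rescues the \eqref{SasminY} direction. Where the uniformity is genuinely needed is in the \eqref{SminY} direction: to obtain $K_{s+1,t}$ as a minor of every $X'$ quasi-isometric to $X_{s+1}$ you invoke $K_{s,t}\preceq X''$ for that same value of~$t$, and this is why the inductive hypothesis must carry all $t\ge s$ simultaneously.
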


\begin{proof}
    Let $X:=G_\infty (=\dot{\bigcup}_{n \in \N} G_{n,n})$ denote the infinite NSS graph as above, and recall that $K_{4,t}, t\geq 4$ is a minor, but not a $9$-fat minor, of $X$ by Theorems~\ref{thm:NSSGraph:K4t} and~\ref{thm:NSSGraph:NoFatO}, which settles the case $s=4$ by Theorem~\ref{thm:css:finite:maintext}. Thus, it remains to show that $K_{s'+4,t'+4}$ is \bad\ for $t'\geq s' \geq 1$. 

    We repeat the construction of $Y$ based on $X$ from the proof of Theorem~\ref{thm:susp} $n$ times. More precisely, let $Y_1$ be the graph $Y$ constructed as in the proof of Theorem~\ref{thm:susp} for $X=G_\infty$, and given $Y_i$, for $1 < i < s'$, let $Y_{i+1}$ be the graph $Y$ constructed as in the proof of Theorem~\ref{thm:susp} for $X=Y_i$. Since $K_{4,t'+4} \preceq X'$ for every graph $X'$ quasi-isometric to $X$, it follows by applying \eqref{SminY} $s'$ times (first to $Y_1$, then to $Y_2$, and so on) that $S^{s'}(K_{4,t'+4}) \preceq Y'$ holds for every graph $Y'$ quasi-isometric to $Y_{s'}$ where $S^{s'}(K_{4,t'+4})$ is the $s'$-fold suspension of $K_{4,t'+4}$, i.e.\ $S^{s'}(K_{4,t'+4}) = S(S(\ldots S(K_{4,t'+4})))$, nested $s'$-times. Since $K_{s'+4,t'+4} \preceq S^{s'}(K_{4,t'+4})$, this shows that $K_{s'+4,t'+4} \preceq Y'$ for every graph $Y_{s'}$ quasi-isometric to $Y_{s'}$.

    On the other hand, similarly to the proof of \eqref{SasminY}, if $K_{s'+4,t'+4} \asm Y_{s'}$ were true, then removing the branch sets or branch paths using the $s'$ `apex' vertices from a $9$-fat model of $K_{s'+4,t'+4}$ in $Y_{s'}$, we would obtain a $9$-fat model of (a supergraph of) $K_{4,t'+4-s'}$ in $X$, which is impossible since $t'+4-s'\geq 4$ and $K_{4,4}$ is not a $9$-fat minor of $X$. 
\end{proof}

\section{Asymptotic \texorpdfstring{$K_{3,t}$}{K3t}  and \texorpdfstring{$K_{5}$}{K5} minors} \label{sec K3t}

As mentioned in the introduction, we know that $K_{2,t}$ are compressible \cite{ADGK2t}, i.e.\ satisfy \Cnr{conj fat min}, but $K_{4,t}$, with $t\geq 4$, are incompressible (Theorem~\ref{thm:FatK4t}). The aim of this section is to show that if $K_5$ or $K_{3,t}$ are incompressible, i.e.\ fail to satisfy the conjecture, which is an interesting open problem, then this cannot be proved by using the NSS graphs, as we did for $K_{4,t}$.  
The following proposition makes Proposition~\ref{prop:K3t:Intro} precise. 

\begin{proposition} \label{prop:K3t}
    For every $K,t\in \N$ \ti\ $k \in \N$ \st\ $G_{k,K}$ contains $K_{3,t}$ as a $K$-fat minor.
\end{proposition}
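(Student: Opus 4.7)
The strategy is to prove the stronger statement that $K_{4,t}$ is a $K$-fat minor of $G_{3+3t,K}$, from which the claim follows immediately. Indeed, since $K_{3,t}$ is a subgraph of $K_{4,t}$, deleting the branch set and the three incident branch paths corresponding to any single vertex of the ``$4$-side'' of $K_{4,t}$ converts any $K$-fat model of $K_{4,t}$ into a $K$-fat model of $K_{3,t}$, establishing the proposition with $k = 3+3t$.

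To prove that $K_{4,t}$ is a $K$-fat minor of $G_{3+3t,K}$, I would revisit the construction from the proof of \Tr{thm:NSSGraph:K4t} (illustrated in Figures~\ref{fig:K4t} and~\ref{fig:K4t:2Fat}), which exhibits a $2$-fat model of $K_{4,t}$ in $G_{3+3t,d}$ for any $d \geq 2$. The branch sets and branch paths in that construction are specified purely in terms of the macro-structure of the NSS graph (tree vertices, bottom-path vertices, subdivided macro-edges), and the ``$2$'' in ``$2$-fat'' arises only because each critical pairwise gap in the model is spanned by at least one full subdivided macro-edge of length $d$. Performing the identical construction in $G_{3+3t,K}$ replaces each subdivided macro-edge by a subdivided path of length $K$, so every pairwise distance between distinct branch sets or branch paths is now at least $K$ rather than merely $2$, yielding the desired $K$-fat model.

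The main technical obstacle, which I expect to be essentially routine but requires careful bookkeeping, is verifying that every distance estimate used in the $2$-fat analysis indeed scales linearly with the subdivision length $d$. This amounts to inspecting the finitely many pairs of branch sets and branch paths depicted in \fig{fig:K4t:2Fat} and confirming that in each case the gap between them is realized by traversal of at least one full subdivided macro-edge (rather than, say, by a fixed number of unsubdivided edges that would not grow with $d$). Once this scaling claim is established, the proof is complete.
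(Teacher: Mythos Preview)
Your approach has a fatal gap: the intermediate claim that $K_{4,t}$ is a $K$-fat minor of $G_{3+3t,K}$ is false in general. The scaling argument fails because not every edge of $G_{k,d}$ is subdivided. Only the five macro-edges of the base graph $G_{1,d}$ (and their copies) become paths of length $d$; the tree edges added in step~(4) of the recursive construction, joining each new root to the roots of its two child trigons, have length~$1$ regardless of~$d$. In the $2$-fat model of \Tr{thm:NSSGraph:K4t} (cf.\ \fig{fig:K4t:2Fat}) the four ``$4$-side'' branch sets reach into the binary tree $B(G)$, and two of them contain leaves of $B(G)$ (for example $u_6$ and $u_8$) sharing a common tree-neighbour $t_7$. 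These two branch sets are non-adjacent in $K_{4,t}$ yet lie at distance exactly~$2$ in $G_{k,d}$ for every $d$, so that model is never even $3$-fat.

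More decisively, your intermediate claim contradicts \Tr{thm:NSSGraph:NoFatO}: since $O=K_{2,2,2}$ is a minor of $K_{4,4}$ and taking minors preserves $K$-fatness, a $9$-fat $K_{4,4}$ model in any $G_{k,d}$ would yield a $9$-fat $O$ model there, which is impossible. Hence for $t\ge 4$ and $K\ge 9$ there is no $K$-fat $K_{4,t}$ in any NSS graph at all. This is precisely why \Sr{sec K3t} exists: the paper constructs the $K$-fat $K_{3,t}$ model directly, by an induction on $t$ that routes the three ``$3$-side'' branch sets so that every critical gap between them is realised along subdivided macro-edges rather than along tree edges.
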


We remark that it follows easily from the proof that Proposition \ref{prop:K3t} is still true if we make the side of $K_{3,t}$ of size $3$ complete (this can be done already in the first step of the construction).

\begin{proof}
    Fixing $K\in \N$, we prove the statement by induction on $t$. Our inductive hypothesis strengthens the statement by imposing some conditions on the three branch sets $B_1,B_2,B_3$ corresponding to the partition side of $K_{3,t}$ of size 3, namely 
    \labtequ{indhypK3t}{$G_t:= G_{k(K,t),K}$ contains a $K$-fat model of $K_{3,t}$ \st\ (Figure~\ref{fig:K3t:1})}
    \begin{enumerate}
        \item \label{it i} $B_1$ contains $\SO(G_t)$;
        \item \label{it ii} $B_2$ contains $\TI(G_t)$; and
        \item \label{it iii} $B_3$ contains the (unique) path of the binary tree $B(G_t)$ between $\RR(G_t)$ and $\TO(G_t)$. 
    \end{enumerate}
    \begin{figure}[ht]
        \centering
        \scalebox{0.75}{%
\begin{tikzpicture}[scale=1/2,auto=left]

\tikzstyle{every node}=[inner sep=1.5pt, fill=black,circle,draw]
\node[orange,ultra thick] (v1) at (1,0) {};
\node[blue,ultra thick] (v18) at (18,0) {};
\draw[dotted, thick] (v1)--(v18);
\node (v2) at (2,0) {};
\node (v3) at (3,0) {};
\node (v4) at (4,0) {};
\node (v5) at (5,0) {};
\node (v6) at (6,0) {};
\node (v7) at (7,0) {};
\node (v8) at (8,0) {};
\node (v9) at (9,0) {};
\node (v10) at (10,0) {};
\node (v11) at (11,0) {};
\node (v12) at (12,0) {};
\node (v13) at (13,0) {};
\node (v14) at (14,0) {};
\node (v15) at (15,0) {};
\node (v16) at (16,0) {};
\node[Green,ultra thick] (v17) at (17,0) {};

\node (u2) at (2.5,1) {};
\node (u4) at (4.5,1) {};
\node (u6) at (6.5,1) {};
\node (u8) at (8.5,1) {};
\node (u10) at (10.5,1) {};
\node (u12) at (12.5,1) {};
\node (u14) at (14.5,1) {};
\node[blue] (u16) at (16.5,1) {};

\draw[dotted,thick] (u2) to [bend right=20] (v1);
\draw[dotted,thick] (u4) to [bend right=20] (v3);
\draw[dotted,thick] (u6) to [bend right=20] (v5);
\draw[dotted,thick] (u8) to [bend right=20] (v7);
\draw[dotted,thick] (u10) to [bend right=20] (v9);
\draw[dotted,thick] (u12) to [bend right=20] (v11);
\draw[dotted,thick] (u14) to [bend right=20] (v13);
\draw[dotted,thick] (u16) to [bend right=20] (v15);

\draw[dotted,thick] (u2) to [bend left=20] (v4);
\draw[dotted,thick] (u4) to [bend left=20] (v6);
\draw[dotted,thick] (u6) to [bend left=20] (v8);
\draw[dotted,thick] (u8) to [bend left=20] (v10);
\draw[dotted,thick] (u10) to [bend left=20] (v12);
\draw[dotted,thick] (u12) to [bend left=20] (v14);
\draw[dotted,thick] (u14) to [bend left=20] (v16);
\draw[blue,ultra thick] (u16) to [bend left=20] (v18);

\node (t3) at (3.5,2) {};
\node (t7) at (7.5,2) {};
\node (t11) at (11.5,2) {};
\node[blue] (t15) at (15.5,2) {};

\draw (u2) -- (t3)--(u4);
\draw (u6) -- (t7)--(u8);
\draw (u10) -- (t11)--(u12);
\draw (u14) -- (t15)--(u16);

\node (s5) at (5.5,3) {};
\node[blue] (s13) at (13.5,3) {};

\draw (t3) -- (s5)--(t7);
\draw (t11) -- (s13)--(t15);

\node[blue,ultra thick] (r9) at (9.5,4) {};

\draw (s5) -- (r9)--(s13);

\draw[blue,ultra thick] (u16) -- (t15) -- (s13) -- (r9);

\tikzstyle{every node}=[]
\draw[orange] (-1,0.5) node [] {$\SO(G_t) \in B_1$};
\draw[Green] (17,-0.7) node [] {$\TI(G_t) \in B_2$};
\draw[blue] (20,0.5) node [] {$\TO(G_t) \in B_3$};
\draw[blue] (9.5,3.1) node [] {$\RR(G_t) \in B_3$};

\end{tikzpicture}
}
        \vspace{-3em}
        \caption{$G_t$ and part of our $K$-fat model of $K_{3,t}$ in the inductive hypothesis of \Prr{prop:K3t}}
        \label{fig:K3t:1}
    \end{figure}
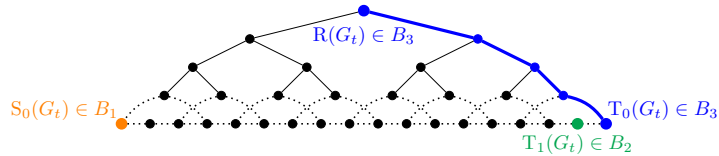
    We start the induction with $t=0$, in which case our model consists of the three branch sets $B_1,B_2,B_3$ only. We choose $k=k(K,0)=1$ (and $d=K$), and define the $B_i$ by replacing `contains' by `consists of' in \ref{it i}--\ref{it iii}.

    For the inductive step, assuming we have found $G_t$ as in \eqref{indhypK3t}, we proceed to construct $G_{t+1}$ as follows. 
    We increase the height $k$ by $4K+1$, i.e.\ we let $k(K,t+1):=k(K,t)+4K+1$. By our recursive construction, $G_{t+1}$ is obtained by combining two copies $\Delta_0 := \SP(G_{t+1})$ and $\Delta_1 := \TP(G_{t+1})$ of $G_{k(K,t+1)-1,K}$. We identify $G_t$ with the leftmost trigon $\Delta'_1$ of $\Delta_1$ of height $k(K,t)$, i.e.\ we identify $\SO(G_t)$ with $\SO(\Delta_1)$ (indicated in grey in Figure~\ref{fig:K3t:2}). We will construct our model of $K_{3,t+1}$ by expanding the model of $K_{3,t}$ inside $\Delta'_1$ obtained from identifying the latter with $G_t$, except that we `flip' the model of $K_{3,t}$ vertically inside $\Delta'_1$, i.e.\ we compose the model with the automorphism of $G_t$ that exchanges $\SO(G_t)$ with $\TO(G_t)$. After this flipping we have $\TO(\Delta'_1) \in B_1, \SI(\Delta'_1) \in B_2$ and $\SO(\Delta'_1) \in B_3$ (see Figure~\ref{fig:K3t:2}).
    
    To construct our model of $K_{3,t+1}$, we will keep the branch sets corresponding to the $t$ side intact, we will introduce one more branch set $B^{t+1}$ in that side, and we will expand the branch sets $B_1,B_2,B_3$ corresponding to the side of size three, and join each $B_i, i\leq 3$ to $B^{t+1}$ by a branch path. This is summarized in Figure~\ref{fig:K3t:2}. 

    \begin{figure}[ht]
        \centering
        \scalebox{0.73}{%
\begin{tikzpicture}[scale=1/2,auto=left]

 \path [fill=gray,opacity=0.3,draw=black]
(20.5,4) to [bend right=10] (16,0) to [bend right=5]
 (25,0) to [bend right=10] (20.5,4);

\tikzstyle{every node}=[inner sep=1.5pt, fill=black,circle,draw]
\node[orange,ultra thick] (v1) at (1,0) {};
\node[blue,ultra thick] (v34) at (34,0) {};
\draw[dotted, thick] (v1) -- (v34);
\node[orange,ultra thick] (v17) at (16,0) {};
\node[blue,ultra thick] (v18) at (19,0) {};
\node[Green,ultra thick] (v25) at (25,0) {};
\node[Green,ultra thick] (v29) at (30,0) {};
\node[Green,ultra thick] (v33) at (32,0) {};

\node[blue,ultra thick] (r1) at (17.5,10) {};
\node[blue,ultra thick] (s9) at (9.5,8) {};
\node[blue,ultra thick] (s27) at (25,8) {};

\node[magenta,ultra thick] (t1) at (23.188,6.4) {};
\node[magenta,ultra thick] (x1) at (22.292,5.6) {};
\node[magenta,ultra thick] (x2) at (24.084,7.2) {};

\node[orange,ultra thick] (n1) at (21.396,4.8) {};

\node[orange,ultra thick] (u1) at (20.5,4) {};

\draw[blue,ultra thick] (v18) -- (s9) -- (r1) -- (s27) -- (v34);
\draw[thick,dotted] (v1) -- (s9);
\draw[orange,ultra thick] (v1) -- (v17) -- (u1) -- (n1);
\draw[dotted,thick] (u1) -- (v25);
\draw[Green,ultra thick] (v25) -- (v33);

\draw[magenta,ultra thick] (x1) -- (x2);

\draw[cyan,ultra thick] (n1) -- (x1);
\draw[cyan,ultra thick] (t1) -- (v29);
\draw[cyan,ultra thick] (x2) -- (s27);

\tikzstyle{every node}=[]
\draw[orange] (1,-0.8) node [] {$\SO(G_{t+1})$};
\draw[blue] (34.9,-0.8) node [] {$\TO(G_{t+1})$};
\draw[Green] (31.7,-0.8) node [] {$\TI(G_{t+1})$};
\draw[blue] (17.5,9) node [] {$\RR(G_{t+1})$};

\draw[orange] (9,1) node [] {\Large $B'_1$};
\draw[blue] (30.5,5) node [] {\Large $B'_3$};
\draw[Green] (26.5,1) node [] {\Large $B'_2$};
\draw[magenta] (22.4,6.8) node [] {\Large $B^{t+1}$};

\draw[orange] (16,-0.8) node [] {$\SO(\Delta'_1)$};
\draw[blue] (19,-0.8) node [] {$\SI(\Delta'_1)$};
\draw[Green] (25,-0.8) node [] {$\TO(\Delta'_1)$};

\draw[gray] (9.5,5) node [] {\LARGE $\Delta_0$};
\draw[gray] (26,5) node [] {\LARGE $\Delta_1$};
\draw[gray] (20.5,1.5) node [] {\LARGE $\Delta'_1$};

\end{tikzpicture}
}
        \vspace{-3em}
        \caption{Performing the inductive step to construct $G_{t+1}$. The grey area represents a flipped (and recoloured) copy of $G_t$ containing a model of $K_{3,t}$.}
        \label{fig:K3t:2}
    \end{figure}
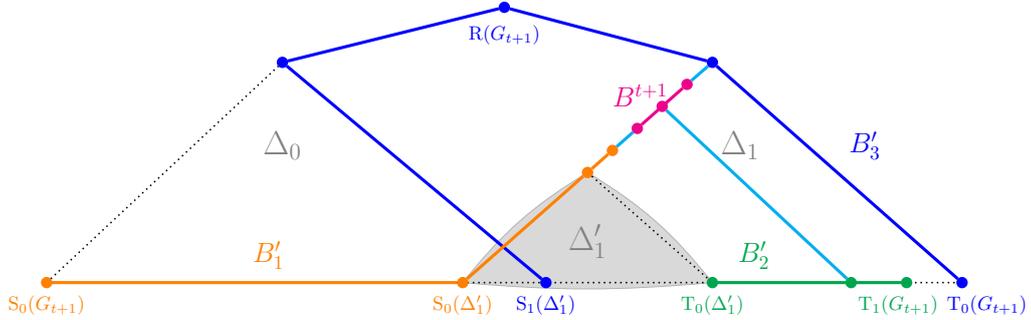

    Let $N$ be the tree path in $\Delta_1$ from $\RR(\Delta'_1)$ to $\RR(\Delta_1)$. Notice that $N$ has length $5K$, and let $m,n$ be the $K$-th and $3K$-th vertex of $N$, respectively.
    
    Recall that $\TO(\Delta'_1) \in B_1$  by \ref{it i} after flipping. We expand $B_1$ by the subpath $P_1$ of the bottom path of $G_{t+1}$ to $\TI(G_{t+1})$. 
    The new branch set $B'_2:= B_1 \cup P_1$ thus obtained will play the role of $B_2$ in our model of $K_{3,t+1}$ in $G_{t+1}$; this satisfies \ref{it ii} since $\TI(G_{t+1})\in B'_2$. 
    Recall that $\SI(\Delta'_1) \in B_2$.  We expand $B_2$ by the tree path $P_2^1$ in $P$ from $\SI(\Delta'_1)= \TO(\Delta_0)$ to $\RR(\Delta_0)$, the edge $e$ from $\RR(\Delta_0)$ to its parent $\RR(G_{t+1})$, and the tree path $P_2^2$ in $\Delta_0$ from $\RR(G_{t+1})$ to $\TO(G_{t+1})$. The new branch set $B'_3:= B_2\cup P_2^1 \cup \{e\} \cup P_2^2$ will play the role of $B_3$ in our new model; this satisfies \ref{it iii} because $P_2^2\subset B'_3$. 
    Recall that $\SO(Q') \in B_3$. We expand $B_3$ by the subpath $P_3$ of the bottom path of $P$ from $\SO(\Delta'_1)$ to $\SO(G_{t+1})$ and by the subpath $P'_3$ of $N$ from $\RR(\Delta'_1)$ to $m$. The new branch set $B'_1:= P_3 \cup B_3 \cup P'_3$ will play the role of $B_1$ in our new model; it satisfies \ref{it i}.

    To complete our model, it remains to define the new branch set $B^{t+1}$ and its incident branch paths. We let $B^{t+1}$ be the subpath of $N$ of length $2K$ having $n$ as its middle vertex. The two subpaths of $N$ (of length $K$ each) from $m$ to $B^t$ and from $B^t$ to $\RR(\Delta_1)$ are two of the desired branch paths: one of them joins $B^{t+1}$ to $m \in B'_1$, and the other joins $B^{t+1}$ to $\RR(G_t)\in B'_3$. Our last branch path is the rightward tree path in $\Delta_1$ from $n$ to the bottom path, which finishes at $B'_2$. 
    
    This completes the definition of our model of $K_{3,t+1}$ in $G_{t+1}$, which by construction satisfies our inductive hypothesis. Indeed, all distances between the various parts of this model are at least $K$. 
\end{proof}

\subsection{Asymptotic \texorpdfstring{$K_{5}$}{K5} minors} \label{sec:K5}

As mentioned in the introduction, we do not know whether $K_5$ is compressible, i.e.\ satisfies Conjecture~\ref{conj fat min}. The following proposition, similar to Proposition~\ref{prop:K3t}, shows that if $K_5$ is incompressible, then again this cannot be proved by using NSS graphs.

\begin{proposition} \label{prop:FatK5}
    For every $K \in \N$ there exists some $k \in \N$ such that $G_{k,K}$ contains $K_5$ as a $K$-fat minor.
\end{proposition}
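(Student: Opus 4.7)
The plan is to adapt the explicit construction from Theorem~\ref{thm:NSSGraph:K7}, which exhibits a $2$-fat $K_7$ minor in $G_{k,d}$ for every $k \geq 13$ and $d \geq 2$. Since $K_5$ is a subgraph of $K_7$, restricting that model to any five pairwise adjacent vertices of $K_7$ (for instance $\{1,2,3,4,7\}$ in the labelling of Figure~\ref{fig:K7}, which are pairwise adjacent in $K_7 - \{16,26\}$) will immediately yield a $2$-fat model of $K_5$ in $G_{13,d}$ --- with only five branch sets and ten branch paths surviving.

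Next, I would upgrade this from $2$-fatness to $K$-fatness by taking the division parameter to be $d := K$ instead of $d = 2$. The key structural observation is that in the model displayed in Figure~\ref{fig:K7}, every pair of non-incident branch sets and branch paths is separated by at least one full subdivided edge of the NSS graph, and the length of such a subdivided edge is proportional to $d$. Consequently, the separating distances scale linearly in $d$, so choosing $d := K$ inflates every such distance to at least $K$. The height $k$ may need to be modestly enlarged with $K$ (just as the passage from the $0$-fat model in the top of Figure~\ref{fig:K7} to the $2$-fat model in the bottom required raising $k$ from $7$ to $13$), but any $k = O(K)$ should suffice.

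The main obstacle will be the routine but delicate verification that every non-incident pair in the restricted $K_5$ sub-model is indeed separated by at least one full subdivided edge, so that the scaling argument delivers $K$-fatness. Concretely, one checks each pair in the sub-model and confirms that no branch set or branch path protrudes into the interior of a subdivided edge meant to act as a buffer; if some branch set does protrude, one trims it back to its endpoint vertex (which does not disconnect it, since the remaining part of the branch set remains connected through the upper tree structure), thereby restoring a buffer of length $d = K$. Since the sub-model has only five branch sets and ten branch paths, this is a bounded case analysis and closely parallels the $2$-fat verification sketched in the proof of Theorem~\ref{thm:NSSGraph:K7}.
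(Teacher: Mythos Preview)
Your scaling argument has a genuine gap. In $G_{k,d}$, only the edges incident with the bottom path---the ``dotted'' edges in Figure~\ref{fig:NSSGraph}---are subdivided into paths of length~$d$; the edges between internal nodes of the binary tree $B(G)$ have length~$1$ regardless of~$d$. In the $K_7$ model of Theorem~\ref{thm:NSSGraph:K7}, several branch sets live partly inside this tree: for instance, in the top picture of Figure~\ref{fig:K7}, branch set~$4$ (orange) and branch set~$7$ (blue) occupy adjacent tree nodes, so $d_G(U_4, U_7) = 1$ independently of~$d$. Your claim that ``every pair of non-incident branch sets and branch paths is separated by at least one full subdivided edge'' is therefore false for this model, and taking $d := K$ does not inflate these tree distances. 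The fatness of the restricted $K_5$ sub-model on $\{1,2,3,4,7\}$ does not grow with~$d$; it stays bounded by whatever the fixed height permits.

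Your fallback---enlarging the height~$k$---is the right lever, and in fact it is the entire mechanism rather than a minor adjustment: to reach $K$-fatness one must redesign the model so that branch sets occupying tree nodes are separated by at least~$K$ tree edges, which forces $k$ to grow linearly with~$K$ and requires specifying where the branch sets go in the taller tree, not merely embedding the old model into a larger graph. The paper carries this out with an explicit construction (Figure~\ref{fig:FatK5}), in which the separating paths inside $B(G)$ are arranged to have length at least~$K$.
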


\begin{proof}[Proof (Sketch)]
    Figure~\ref{fig:FatK5} shows the branch sets of a $K$-fat model of $K_5$ in $G_{k,K}$ for every large enough $k \in \N$. Each of the five branch sets is depicted with a different colour. Note that the dashed lines represent paths of $B(G_{k,K})$ of length at least $K$. The branch paths of the $K$-fat model of $K_5$ are not explicitly shown, but they can be chosen so that each of them is some dashed or dotted path in Figure~\ref{fig:FatK5}. 
\end{proof}

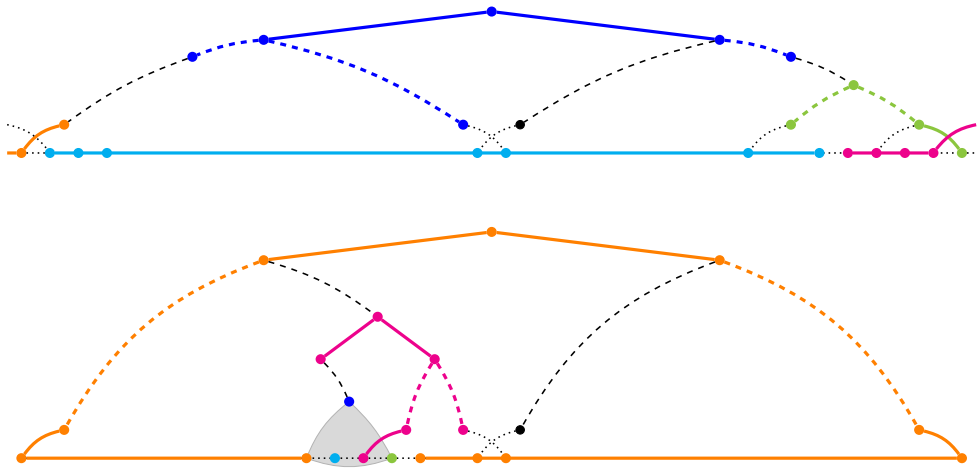
\begin{figure}[h]
    \centering
    \begin{subfigure}{1\linewidth}
    \centering
    \scalebox{0.75}{%
\begin{tikzpicture}[scale=1/2,auto=left]

\tikzstyle{every node}=[inner sep=1.5pt, fill=black,circle,draw]
\node[orange,thick] (v1) at (1,0) {};
\node[LimeGreen,thick] (v34) at (34,0) {};
\draw[dotted, thick] (v1)--(v34);
\node[cyan,thick] (v2) at (2,0) {};
\node[cyan,thick] (v3) at (3,0) {};
\node[cyan,thick] (v4) at (4,0) {};
\node[cyan,thick] (v17) at (17,0) {};
\node[cyan,thick] (v18) at (18,0) {};
\node[cyan,thick] (v27) at (26.5,0) {};
\node[cyan,thick] (v29) at (29,0) {};
\node[magenta,thick] (v30) at (30,0) {};
\node[magenta,thick] (v31) at (31,0) {};
\node[magenta,thick] (v32) at (32,0) {};
\node[magenta,thick] (v33) at (33,0) {};

\node[orange,thick] (u2) at (2.5,1) {};
\node[blue,thick] (u16) at (16.5,1) {};
\node (u18) at (18.5,1) {};
\node[LimeGreen,thick] (u27) at (28,1) {};
\node[LimeGreen,thick] (u32) at (32.5,1) {};

\draw[orange, ultra thick] (u2) to [bend right=20] (v1);
\draw[dotted, thick] (u18) to [bend right=20] (v17);
\draw[dotted, thick] (u27) to [bend right=20] (v27);
\draw[dotted, thick] (u32) to [bend right=20] (v31);

\draw[dotted, thick] (u16) to [bend left=20] (v18);
\draw[LimeGreen,ultra thick] (u32) to [bend left=20] (v34);

\node[blue,thick] (r9) at (9.5,4) {};
\node[blue,thick] (r25) at (25.5,4) {};

\node[blue,thick] (q17) at (17.5,5) {};
\draw[blue,ultra thick] (r9) -- (q17);
\draw[blue,ultra thick] (q17)--(r25);

\node[blue,thick] (x1) at (7,3.4) {};
\draw[dashed,blue, ultra thick] (r9) to [bend right=8] (x1);
\draw[dashed,thick] (x1) to [bend right=8] (u2);
\draw[dashed,blue, ultra thick] (r9) to [bend left=10] (u16);
\draw[dashed,thick] (r25) to [bend right=10] (u18);

\node[blue,thick] (x2) at (28,3.4) {};
\node[LimeGreen,thick] (s29) at (30.2,2.4) {};
\draw[blue,dashed,ultra thick] (r25) to [bend left=8] (x2);
\draw[dashed, thick] (x2) to [bend left=8] (s29);
\draw[LimeGreen,dashed,ultra thick] (s29) to [bend left=8] (u32);
\draw[LimeGreen,dashed,ultra thick] (s29) to [bend right=8] (u27);

\draw[dotted, thick] (0.5,1) to [bend left=20] (v2);
\draw[magenta,ultra thick] (34.5,1) to [bend right=20] (v33);

\draw[orange,ultra thick] (0.5,0)--(v1);

\draw[dotted, thick] (v34)--(34.5,0);

\draw[cyan, ultra thick] (v2) -- (v29);
\draw[magenta, ultra thick] (v30)--(v33);

\end{tikzpicture}
}
    \end{subfigure}
    \begin{subfigure}{1\linewidth}
    \centering
    \scalebox{0.75}{%
\begin{tikzpicture}[scale=1/2,auto=left]

\path [fill=gray,opacity=0.3,draw=black]
(12.5,2) to [bend right=15] (11,0) to [bend right=20]
 (14,0) to [bend right=10] (12.5,2);

\tikzstyle{every node}=[inner sep=1.5pt, fill=black,circle,draw]
\node[orange,thick] (v1) at (1,0) {};
\node[orange,thick] (v34) at (34,0) {};
\draw[dotted, thick] (v1)--(v34);
\node[orange,thick] (v11) at (11,0) {};
\node[cyan,thick] (v12) at (12,0) {};
\node[magenta,thick] (v13) at (13,0) {};
\node[LimeGreen,thick] (v14) at (14,0) {};
\node[orange,thick] (v15) at (15,0) {};
\node[orange,thick] (v17) at (17,0) {};
\node[orange,thick] (v18) at (18,0) {};

\node[orange,thick] (u2) at (2.5,1) {};
\node[blue,thick] (u12) at (12.5,2) {};
\node[magenta,thick] (u14) at (14.5,1) {};
\node[magenta,thick] (u16) at (16.5,1) {};
\node (u18) at (18.5,1) {};
\node[orange,thick] (u32) at (32.5,1) {};

\draw[orange,ultra thick] (u2) to [bend right=20] (v1);
\draw[magenta,ultra thick] (u14) to [bend right=20] (v13);

\draw[dotted,thick] (u18) to [bend right=20] (v17);

\draw[dotted, thick] (u16) to [bend left=20] (v18);
\draw[orange,ultra thick] (u32) to [bend left=20] (v34);

\node[magenta,thick] (t11) at (11.5,3.5) {};
\node[magenta,thick] (t15) at (15.5,3.5) {};

\node[magenta,thick] (s13) at (13.5,5) {};

\draw[magenta, ultra thick] (t11) -- (s13);
\draw[magenta,ultra thick] (s13)--(t15);

\node[orange,thick] (r9) at (9.5,7) {};
\node[orange,thick] (r25) at (25.5,7) {};

\node[orange,thick] (q17) at (17.5,8) {};
\draw[orange,ultra thick] (r9) -- (q17)--(r25);

\draw[orange,ultra thick] (v15) -- (v34);
\draw[orange, ultra thick] (v1) -- (v11);


\draw[dashed,thick] (r9) to [bend left=10] (s13);
\draw[magenta,dashed,ultra thick] (t15) to [bend left=10] (u16);
\draw[dashed,thick] (r25) to [bend right=20] (u18);
\draw[dashed,orange,ultra thick] (r9) to [bend right=20] (u2);
\draw[dashed,orange,ultra thick] (r25) to [bend left=20] (u32);

\draw[dashed,thick](t11) to [bend left=10] (u12);
\draw[magenta,dashed,ultra thick] (t15) to [bend right=10] (u14);

\tikzstyle{every node}=[]

\end{tikzpicture}
}
    \end{subfigure}
    \vspace{-2em}
    \caption{Depicted are the branch sets of a $K$-fat model of $K_5$ in some large enough NSS graph $G$. The dashed lines represent paths in $B(G)$ of length at least $K$. The grey area in the bottom picture represents the graph from the top picture.}
    \label{fig:FatK5}
\end{figure}

\section{Further problems}

Let $J$ be a finite or infinite graph, and $X$ an infinite graph or a family of finite (or infinite) graphs. If $X$ is a family, we say that $J$ is a ($K$-fat) minor of $X$ if $J$ is a ($K$-fat) minor of some element of $X$. 

Recall that $J$ is an \defi{asymptotic minor} of $X$, if $J$ is a $K$-fat minor of $X$ \fe\ $K\in\N$. In this case we write $J \asm X$.
We say that $J$ is a \defi{coarse minor} of $X$, and write $J \cmin X$, if $J\preceq X'$ holds \fe\ graph (family) $X'$ quasi-isometric to $X$. Finally, we say that  $J$ is a \defi{thin minor} of $X$, and write $J \thmin X$, if $J\cmin X'$ but $J\nasm X$. Thus  
\labtequ{bad}{$J$ is \bad\ ---i.e.\ a counterexample to \Cnr{conj fat min}--- \iff\ there is $X$ \st\ $J \thmin X$.}
As already mentioned, all known incompressible graphs are proved to be such using the NSS graphs. The following question asks whether this must be the case. Recall that $G_\infty$ denotes the infinite NSS graph $\dot{\bigcup}_{\nin} G_{n,n}$.

\begin{question} \label{Q NSS}
Suppose $J \thmin X$ for some graphs $J,X$. Must $G_\infty \cmin X$ hold?
\end{question}

Analogously to Question~\ref{Q NSS}, one can ask
\begin{question} \label{Q NSS 2}
Suppose $X$ is a counterexample to the coarse Menger conjecture. 
Must $G_\infty \cmin X$ hold?
\end{question}

Recall that the coarse Menger conjecture \cite{AHJKW,GeoPapMin} asserts that for every $d, k \in \N$, there exists some $r \in \N$ such that for every graph $G$ and two sets $X, Y \subseteq V(G)$ there are either $k$ paths between $X$ and $Y$ at distance at least $d$ from each other, or there is a set $Z \subseteq V(G)$ of size at most $k-1$ such that every $X$--$Y$ path in $G$ meets $B_G(Z, r)$.
This conjecture was disproven by Nguyen, Scott and Seymour \cite{NgScSeCou} using $G_\infty$. Hence, Question~\ref{Q NSS 2} asks whether this $G_\infty$ is essentially the only counterexample.

\bibliographystyle{plain}
\bibliography{collective}

\end{document}